\documentclass[a4paper,12pt]{amsart}
\usepackage[colorlinks=true, linkcolor=blue, citecolor=blue]{hyperref}
\usepackage{amscd,amssymb,epsfig,amsxtra,amsfonts}
\usepackage{amsmath}
\usepackage{stmaryrd}
\usepackage{mathrsfs}
\usepackage{upgreek}
\usepackage{marginnote}
\usepackage{verbatim}
\usepackage[utf8]{inputenc}
\usepackage{diagrams}
\usepackage{epsfig}
\newcommand{\ms}[1]{\mbox{\tiny$#1$}}
\newcommand{\epsh}[2]
{\begin{array}{c} \hspace{-1.3mm}
   \raisebox{-4pt}{\epsfig{figure=#1,height=#2}}
   \hspace{-1.9mm}\end{array}}

\newcommand{\lc}{{{f}}}
\newcommand{\Fou}{{\mathcal{F}}}
\newcommand{\ith}{{\textsuperscript{th}} }
\newcommand{\coint}{\Upsilon}
\newcommand{\pr}{\operatorname{p}}
\newcommand{\vp}{\varphi}
\newcommand{\ve}{\varepsilon}

\newcommand{\lk}{\operatorname{lk}}

\newcommand{\rcoev}{\stackrel{\longrightarrow}{\operatorname{coev}}}
\newcommand{\rev}{\stackrel{\longrightarrow}{\operatorname{ev}}}
\newcommand{\lev}{\stackrel{\longleftarrow}{\operatorname{ev}}}
\newcommand{\lcoev}{\stackrel{\longleftarrow}{\operatorname{coev}}}
\newcommand{\e}{{\operatorname{e}}}
\newcommand{\Id}{\operatorname{Id}}

\newcommand{\End}{\operatorname{End}}
\newcommand{\Hom}{\operatorname{Hom}}
\newcommand{\Proj}{\operatorname{\mathsf{Proj}}}

\newcommand{\tr}{\operatorname{tr}}
\newcommand{\ptr}{\operatorname{ptr}}

\newcommand{\Vect}{\operatorname{Vect}}
\newcommand{\ResF}{\operatorname{ResF}}

\newcommand{\rk}{\ensuremath{r}}

\newcommand{\Span}{\operatorname{Span}}
\newcommand{\C}{\ensuremath{\mathbb{C}}}
\newcommand{\Z}{\ensuremath{\mathbb{Z}}}
\newcommand{\R}{\ensuremath{\mathbb{R}}}
\newcommand{\N}{\ensuremath{\mathbb{N}}}
\newcommand{\wt}{\widetilde}
\newcommand{\wh}{\widehat}
\newcommand{\wb}{\overline}
\newcommand{\ba}{{\bar\alpha}}
\newcommand{\va}{{\overrightarrow{\alpha}}}
\newcommand{\Lat}{\Lambda}
\newcommand{\bb}{{\bar\beta}}
\newcommand{\si}{{\mu}}

\newcommand{\bp}[1]{{\left(#1\right)}}

\newcommand{\qn}[1]{{\left\{#1\right\}}}
\newcommand{\qN}[1]{{\left[#1\right]}}
\newcommand{\ang}[1]{{\left\langle{#1}\right\rangle}}
\newcommand{\bN}[1]{\left\|#1\right\|}
\newcommand{\U}{\ensuremath{\mathcal{U}}}
\newcommand{\Uqg}{\ensuremath{\mathcal U}}
\newcommand{\HH}{{\mathcal H}}
\newcommand{\RR}{{\mathcal R}}
\newcommand{\UqgH}{\ensuremath{\Uqg^{H}}}

\newcommand{\UH}{\U^H}
\newcommand{\Uo}{\ensuremath{{\U^0}}}
\newcommand{\tru}{\ensuremath{{\tr_u}}}
\newcommand{\hotimes}{\widehat{\otimes}}
\newcommand{\Ce}{\ensuremath{\mathscr{C}^{\omega}}}
\newcommand{\h}{\ensuremath{\mathfrak{h}}}

\newcommand{\cat}{\operatorname{\mathscr{C}}}
\renewcommand{\H}{\ensuremath{\mathfrak{H}}}
\newcommand{\Q}{\ensuremath{\mathsf{Q}}}
\newcommand{\B}{\ensuremath{\mathsf{B}}}
\newcommand{\BW}{\ensuremath{\mathfrak{B}_W}}
\newcommand{\He}{{\mathrm{H}}}
\newcommand{\tv}{\operatorname{\mathsf t}}
\newcommand{\w}[1]{{\left|#1\right|}} 

\newtheorem{theo}{Theorem}[section]
\newtheorem{Lem}[theo]{Lemma}
\newtheorem{Prop}[theo]{Proposition}
\newtheorem{Cor}[theo]{Corollary}
\newcounter{axi} 

\newtheorem{axiom}[axi]{Axiom}
\theoremstyle{definition}
\newtheorem{Def}[theo]{Definition}
\newtheorem{rmq}[theo]{Remark}
\newtheorem{Exem}[theo]{Example}

\newcounter{exo} \newcounter{numexercice}
\renewcommand{\theexo}{\arabic{exo}}


\begin{document}
\title[Modified graded Hennings invariant]{Modified graded Hennings invariants from unrolled quantum groups and modified integral}

\author{Nathan Geer}
\address{Utah State University, Department of Mathematics and Statistics, Logan UT 84341, USA}
\email{nathan.geer@usu.edu}
\author{Ngoc Phu HA}        
\address{Hung Vuong University, Faculty of Natural Sciences, Viet Tri, Phu Tho, Viet Nam}
\email{ngocphu.ha@hvu.edu.vn}
\author{Bertrand Patureau-Mirand}
\address{Universit\'e Bretagne Sud, Laboratoire de Math\'ematiques de Bretagne Atlantique, UMR CNRS 6205, Campus de Tohannic, BP 573     
F-56017 Vannes, France}
\email{bertrand.patureau@univ-ubs.fr}

\maketitle

\begin{abstract}
The second author constructed a topological ribbon Hopf algebra from the unrolled quantum group associated with the super Lie algebra $\mathfrak{sl}(2|1)$. 
We generalize this fact to the context of unrolled quantum groups and construct the associated topological ribbon Hopf algebras. Then we use such an algebra, the discrete Fourier transforms, a symmetrized graded integral and a modified trace to define a modified graded Hennings invariant.
Finally, we use the notion of a modified integral to extend this
invariant to empty manifolds and show that it recovers the
CGP-invariant.
\end{abstract}

\vspace{25pt}

MSC:	57M27, 17B37.

Key words: Unrolled quantum group, topological ribbon Hopf algebra, Hennings type invariant, discrete Fourier transform, modified integral.

\section{Introduction}

It is known that the category of modules over the semi-restricted
quantum group at a root of unity
produces 3-manifold invariants (see \cite{FcNgBp14}).  In
\cite{Ha18a}, the second author used the semi-restricted quantum group
associated to the super Lie algebra $\mathfrak{sl}(2|1)$ (not its
category of modules) to define a Virelizier-Hennings type 3-manifold
invariant (see \cite{Henning96, Vire01}).
Here we generalize the latter
construction to a large setting of Hopf algebras.

For the simplest example of a semi-restricted quantum group see Example \ref{ex1} below.
By adding Cartan elements this quantum group has been extended to a
Hopf algebra called the unrolled quantum group, see for example
\cite{FcNgBp14, NgBp13, FrNaBe}.  In \cite{AS18}, Andruskiewitsch and
Schweigert consider unrolled Hopf algebras which are a generalization
of the unrolled quantum groups containing the case of Lie
(super)algebras and more general diagonal Nichols algebra (see
\cite{Heck10}). 
In this paper we propose a general construction which should produce
quantum 3-manifold invariants for all of these unrolled Hopf algebras.
To prove this one needs to show these algebras satisfy the five axioms
listed in this paper.

Let us now summarize these axioms.  Starting with a free abelian group
$\Lambda$ of rank $r$ we consider three algebras $\U$, $\UH$ and
$\U_{\wb 0}=\U/I_{\wb 0}$ which are generalizations of the
semi-restricted, unrolled and small quantum groups, respectively (for
the simplest example of these quantum groups see Example \ref{ex1} below).
Loosely speaking, the axioms we require are:
\begin{enumerate}
\item[Axiom] \ref{pivotal axiom}:
$\U$
is a $\Lambda$-graded pivotal Hopf algebra (graded by ``weights'').
\item[Axiom] \ref{quasi axiom}: There exists a quasi R-matrix
for $\UH$.
\item[Axiom] \ref{ax:unicorn}:
$\U_{\wb 0}$
is  unimodular.
\item[Axiom] \ref{Ax:non-degenerate}: The integral of the twist and
  of its inverse are non zero.
\item[Axiom] \ref{A:proj}:
There exists a projective $\UH$-module whose restriction to a $\U$-module remains projective.
\end{enumerate}
We will prove that the semi-restricted, unrolled and small quantum groups associated to a simple Lie algebra of rank $r$ satisfy these five axioms and thus lead to the 3-manifold invariants define in this paper.
   
The first main result of this paper is to embed
the unrolled quantum group $\UH$
into a topological ribbon Hopf algebra
 $\widehat{\UH}$
 which has a topology of a complete nuclear space.  In particular, we
 give a completion of the unrolled quantum group which is a
 topological ribbon Hopf algebra describe as follows.  The subalgebra
 generated by the Cartan Lie algebra $\H$ in this unrolled quantum
 group is embedded into the space $\Ce(\H^*)$ of holomorphic functions
 on the dual space $\H^*$.  Then the unrolled quantum group is
 embedded in its completion which is a complete nuclear space and has
 the topology of uniform convergence on compact sets. Using this
 completion we show the quasitriangular structure of the small quantum
 group can be lifted to a topological quasitriangular structure and a
 topological ribbon Hopf algebra.  This leads to a large class of
 topological ribbon Hopf algebra.  In a different context, Markus
 J. Pflauma and Martin Schottenloher \cite{Markus13} already consider
 other kinds of nuclear Hopf algebras and their holomorphic
 deformations.

 The techniques
discussed above were first used by the second author to defined a
topological ribbon Hopf algebra from the super Lie algebra
$\mathfrak{sl}(2|1)$.  In \cite{Ha18a}, Ha used this algebra, its
associated universal invariant of links, a discrete Fourier transforms
and a right $G$-integral to construct an invariant of Virelizier-Hennings type of $3$-manifolds
decorated by a cohomology class.

Using different techniques a modifed Hennings type invariant is given in \cite{DGP}. This is an invariant defined on pairs $(M,T)$ consisting of a $3$-manifold $M$ and a bichrome graph $T$ inside.  The main ingredients of the construction in \cite{DGP} consists of a finite dimensional Hopf algebra $H$ and a modified trace of the category $H$-mod of finite dimensional left modules over $H$.
The second main result of this paper is to define a
modified graded Hennings type invariant
for the unrolled quantum groups satisfying the five axioms of this paper.

The modified invariants of this paper are thus graded versions of the
modified Hennings invariants of \cite{DGP}.  In \cite{DGP2} the later
invariants are shown to give a (non-graded) Hennings type formula for
the Reshetikhin-Turaev type quantum invariants of \cite{FcNgBp14} associated to zero
cohomology classes.  Here we give a similar modified graded Hennings-Virelizier
type formula for the graded version of the invariants of \cite{FcNgBp14} with non-zero cohomology classes.
To do this
we
introduce a new algebraic tool called the modified integral.

Let us discuss the organization of the paper.
 In Section \ref{Section
  unrolled qt group}, we recall some needed properties of unrolled
quantum groups and their representations. Section \ref{topo quntum and
  uni invariant} contains the construction of a topological ribbon
Hopf algebra from an unrolled quantum group.  It also describes some
special elements called {\em power elements} which will be used to
color the mixed coupons of the bichrome graphs. In Section
\ref{Section of inv of bichrome graphs} we adapt the construction of
the universal invariant associated to a ribbon Hopf algebra.
 In Section \ref{Section of inv of 3-manifolds} we define both the graded Hennings invariant $\He$ and its modified version $\He'$ which are invariants of a
 admissible 
 compatible triple $(M, \Gamma, \omega)$ where $M$ is a closed $3$-manifold, $\Gamma$ is a bichrome graph inside $M$ and $\omega$ is a cohomology class of $H^1(M\setminus\Gamma; G)$.
In Section \ref{modified integral}, we present the notion of a
modified integral which allows us to relax the admissibility condition
for the modified graded Hennings invariant and provides another way to
determine the invariant.  Finally, in Section \ref{s:rel} we explain
how this invariant generalize some previously defined non semi-simple
invariants.
\subsection*{Acknowledgments}
Nathan Geer was partially supported by the NSF grant DMS-1452093.  This grant also supported Ngoc Phu Ha to visit Utah State University where much of the work of this paper took place.   

\section{Quasi-triangular unrolled Hopf Algebras}\label{Section unrolled qt group}
In this section we give our definition of a quasi-triangular unrolled Hopf Algebra and consider its category of weight modules.  Similar algebras have been considered in \cite{AS18}.  We discuss why the previously defined unrolled quantum groups of \cite{FcNgBp14, NgBp13, FrNaBe,GP18} are examples of the algebras define in this section.  
\subsection{Unrolled Hopf Algebras}
Fix an integer $\ell$ and a $\ell$\ith root
of unity $\xi=\e^{\frac{2i\pi}\ell}$.
If $(G,+)$ is a abelian group and $V$ is a vector space we say $V$ is  \emph{$G$-graded} if there is a decomposition 
$V = \bigoplus_{g \in G} V_g$ 
where each $V_g$  is a vector space. We say an algebra $V$ is \emph{$G$-graded} if it its underlying vector space is $G$-graded, $1\in G_0$ and the multiplication preserves the grading:  if $v\in V_g$ and $w\in V_h$ then $vw\in V_{g+h}$.  If $v\in V_g$ we say $v$ is \emph{homogeneous}.

 Let  $\Lambda$ be a free abelian group of rank $\rk$ 
and
$W$ be a $\Lambda$-graded finite dimensional vector space over $\C$ with a special degree $0$ element denoted $1_W$.
If $w\in W$ is homogeneous, then we denote its degree by
$\w w\in \Lambda$.  Let
$\Lambda^*=\Hom_{\Z}(\Lambda,\Z)$ be the abelian group of group morphism between $\Lambda$ and $\Z$.   Fix a basis $\{a_1,\ldots,a_\rk\}$ of
$\Lambda^*$.  Finally, let 
$\H=\Lambda^* \otimes_{\Z} \C$.

The group ring $\C[\Lambda^*]$ is the free vector space on $\Lambda^*$ over $\C$ which is generated by the formal variables $\qn{K^a:a\in\Lambda^*}$,
with the relation $K^{a+b}=K^a K^b$ for any $a,b\in \Lambda^*$.  In particular,  
$\C[\Lambda^*]$ can be identified with  the ring of Laurent polynomials in the $\rk$ variables $K_i:=K^{a_i}$.
It has the structure of a Hopf algebra where each element of the set  $\qn{K^a:a\in\Lambda^*}$ is a group-like element.

Recall that a Hopf algebra  is \emph{pivotal} if the square of the antipode can be expressed via the conjugation by a group-like element, called pivot $g$.
We assume the following axiom.
\begin{axiom}\label{pivotal axiom} There exists a $\Lambda$-graded pivotal Hopf algebra 
  $\U$ with underlying vector space
$$\U=W\otimes_{\C} \C[\Lambda^*]$$
which is an extension of the Hopf algebra
$\C[\Lambda^*]=1_W\otimes\C[\Lambda^*]$ and in which for any
homogenous $x\in W\cong W\otimes1$ the following relation holds:
\begin{equation}
  \label{eq:K}
  K^a x K^{-a}=\xi^{a(\w x)}x
\end{equation}
for all $a\in \Lambda^*$.
\end{axiom}
We denote the comultiplication, counit and antipode maps of the  Hopf algebra $\U$ by $\Delta, \epsilon$ and $S$, respectively.
The axiom implies that $\U$ is a $\Lambda$-graded Hopf algebra in
which $\C[\Lambda^*]$ is a commutative Hopf subalgebra in degree $0$.
In particular, the unit is $1_\U=1_W\otimes1\in W\otimes_{\C} \C[\Lambda^*]$. Also, for any
homogenous $x,y\in\U$, $|xy|=|x|+|y|\in\Lambda$,
$|x_{(1)}|+|x_{(2)}|=|x|$ where $\Delta x=\sum x_{(1)}\otimes
x_{(2)}$ and finally $S^2(x)=gxg^{-1}$.

We can now build the unrolled version $\UH$ of $\U$: Let $S\H$ be the
tensor symmetric algebra of $\H$
which
can be identified
 with polynomial maps on $\H^*=\Lambda\otimes_\Z{\C}$.
It has a Hopf algebra structure with elements of $\H$ being primitive.
Consider the semi-direct product
\begin{equation}
  \label{eq:UH}
  \UH=\U\rtimes S\H
\end{equation}
where for each $a\in \Lambda^*$ the action of the associated
element $H_a\in S\H$ on a homogeneous element $x\in\U$ is given
by $[H_a, x]=a(\w x) x$ or equivalently 
\begin{equation}
  \label{eq:weight}
  H_ax=x(H_a+a(\w x)).
\end{equation}

\begin{Prop}\label{P:UHisHopf}
The Hopf algebra morphisms of $\U$ and $S\H$ naturally extend (via multiplication) to comultiplication $\Delta$, counit $\epsilon$ and  antipode $S$ maps on $\UH$, making $\UH$ into a $\Lambda$-graded pivotal Hopf algebra with the same pivot $g$ as in $\U$.
\end{Prop}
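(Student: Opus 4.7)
The plan is to define $\Delta, \epsilon, S$ on $\UH = \U \rtimes S\H$ as multiplicative extensions of the Hopf structures already present on $\U$ (Axiom \ref{pivotal axiom}) and on $S\H$ (where each $H \in \H$ is primitive). Since $\UH$ is generated as an algebra by $\U$ and $S\H$ modulo the cross relation \eqref{eq:weight}, I first attempt to define $\Delta, \epsilon$ as algebra homomorphisms and $S$ as an algebra antihomomorphism by extending the given values on $\U$ and on $S\H$ (with $\Delta(H_a) = H_a \otimes 1 + 1 \otimes H_a$, $\epsilon(H_a)=0$, $S(H_a)=-H_a$). The bulk of the work is to verify that these tentative definitions respect the cross relation.

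The compatibility checks are short. For the coproduct, combining $\Delta(H_a) = H_a \otimes 1 + 1 \otimes H_a$ with the identity $\w{x_{(1)}} + \w{x_{(2)}} = \w{x}$ coming from Axiom \ref{pivotal axiom} gives $[\Delta(H_a), \Delta(x)] = a(\w{x})\,\Delta(x) = \Delta([H_a,x])$. For the counit, both sides vanish because $\epsilon$ is supported in degree zero and $a(0)=0$. For the antipode, the antihomomorphism property yields $S(H_a x - xH_a) = -S(x)H_a + H_a S(x) = [H_a, S(x)]$, which matches $a(\w{x}) S(x)$ provided $\w{S(x)} = \w{x}$; this is the grading convention imposed by the graded Hopf structure on $\U$ (note that $\Lambda^*$ separates $\Lambda$ since $\Lambda$ is free abelian, so there is no ambiguity) and is consistent with the standard examples where, e.g., $S(E) = -EK^{-1}$ has the same weight as $E$.

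Once $\Delta, \epsilon, S$ are well defined on $\UH$, the Hopf axioms propagate: coassociativity, counitality, and the antipode identity are (anti)multiplicative in each factor, hold separately on $\U$ and on $S\H$, and therefore extend to their products. The $\Lambda$-grading on $\UH$ is set by placing $S\H$ in degree zero; the cross relation preserves degrees, so $\UH$ is a graded algebra, and the structure maps inherit their graded behaviour from $\U$ and $S\H$.

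For the pivotal property, any group-like element in a graded Hopf algebra has degree zero, as $\Delta(g) = g \otimes g$ forces $\w{g}+\w{g}=\w{g}$. Hence the pivot $g$ of $\U$ commutes with every $H_a$ via $[H_a,g] = a(\w{g})g = 0$, so $g H_a g^{-1} = H_a = S^2(H_a)$; combined with $S^2(x) = gxg^{-1}$ on $\U$ from Axiom \ref{pivotal axiom}, this yields the pivotal identity on all of $\UH$. The main subtle point is the antipode compatibility, which hinges on the grading behaviour of $S$ on $\U$; everything else is routine bookkeeping on the generators.
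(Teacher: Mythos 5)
Your proposal follows the same strategy as the paper: verify that the multiplicative extensions of $\Delta$, $\epsilon$, $S$ respect the cross relation \eqref{eq:weight}, and then note that the pivot $g$, being group-like, has degree zero and hence commutes with $S\H$, so the pivotal identity persists. The paper only spells out the $\Delta$ check and leaves $\epsilon$ and $S$ implicit; your explicit treatment of the antipode (identifying that the verification hinges on $S$ preserving the $\Lambda$-degree on $\U$, a consequence of the graded Hopf structure from Axiom~\ref{pivotal axiom}) fills in the one step where a subtlety actually lurks, and is correct.
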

\begin{proof} To see that $\UH$ is a Hopf algebra we need to check that the extended maps satisfy the relations given in Equation \eqref{eq:weight}.  For example, it is easy to check that  
$$\Delta(H_ax)=\Delta(H_a)\Delta(x)
= \Delta(x)\Delta(H_a)+ \Delta(x)a(\w x)= \Delta(x(H_a+a(\w x))).$$

Since $g$ is grouplike $\Delta(g)=g\otimes g$ so its $\Lambda$-degree is zero. So the conjugation by $g$ is trivial on $S\H$, as is the square of the antipode.  
\end{proof}

\begin{rmq}\label{R:sublattice}
  Suppose that $\Lambda'$ is a rank $r$ sub-lattice of $\Lambda^*$ such
  that $\U_{\Lambda'}=W\otimes_{\C} \C[\Lambda']\subset\U$ is a sub-Hopf
  algebra.  Then $\U_{\Lambda'}^H=\U_{\Lambda'}\rtimes S\H$ is a sub-Hopf algebra of
  $\U^H$.  Furthermore, if the pivot $g\in\U$ is in $\U_{\Lambda'}$ then $g$ is
  also a pivot for $\U_{\Lambda'}$ and $\U_{\Lambda'}^H$.  We say that $\U_{\Lambda'}$ and
  $\U_{\Lambda'}^H$ are $\Lambda'$ versions of $\U$ and ${\U}^H$.
\end{rmq}

 We call $\UH$ an {\em unrolled Hopf algebra}.
\begin{Exem}\label{ex1} (Continued in Examples \ref{ex1b}, \ref{ex1c} and \ref{ex12d}).
In this example we consider the case of quantum $\mathfrak{sl}_2$.
  Assume $\ell$ is greater than $3$ and let $\ell'=\ell/gcd(\ell,2)$.
  Let $\U$
  be the $\C$-algebra with generators 
$E$, $F$, $K$ and $K^{-1}$ with relations
\begin{align}\label{E:RelDCUqsl}
  KK^{-1}&=K^{-1}K=1, & KEK^{-1}&=qE, &
   KFK^{-1}&=q^{-1}F, \notag\\
  [E,F]&=\frac{K^2-K^{-2}}{q-q^{-1}}, & E^{\ell'}&=0, & F^{\ell'}&=0.
\end{align}
The algebra $\U$ is a Hopf algebra where the coproduct, counit and
antipode are defined by
\begin{align*}\label{E:HopfAlgDCUqsl}
  \Delta(E)&= 1\otimes E + E\otimes K^2, 
  &\varepsilon(E)&= 0, 
  &S(E)&=-EK^{-2}, 
  \\
  \Delta(F)&=K^{-2} \otimes F + F\otimes 1,  
  &\varepsilon(F)&=0,& S(F)&=-K^2F,
    \\
  \Delta(K)&=K\otimes K
  &\varepsilon(K)&=1,
  & S(K)&=K^{-1}.
\end{align*}
  The Hopf algebra $\U$ is pivotal with pivot
$g=K^{2-2\ell'}$.  The sub-Hopf algebra $\U_\xi(\mathfrak{sl}_2)$
generated by $E,F$ and $K^{\pm2}$ is known as the semi-restricted
quantum group where the word semi-restricted is used because
$E^{\ell'}=F^{\ell'}=0$.

Let $\UH$ be the 
$\C$-algebra given by generators $E, F, K^{\pm1}, H$ and
relations in Equation \eqref{E:RelDCUqsl} plus the relations:
\begin{align*}
  HK&=KH, 
& [H,E]&=E, & [H,F]&=-F. 
\end{align*} 
The algebra $\UH$ is a Hopf algebra where the coproduct, counit and 
antipode are defined by the above equations for $E, F$ and $K$ and 
\begin{align*}
  \Delta(H)&=H\otimes 1 + 1 \otimes H, 
  & \varepsilon(H)&=0, 
  &S(H)&=-H.
\end{align*}
The sub-Hopf algebra generated by $E,F,K^{\pm2}$ and $H$ is known as
$\UH_\xi(\mathfrak{sl}_2)$ and called the unrolled quantum group associated to 
$\mathfrak{sl}_2$\footnote{In most of the litterature, including
  \cite{FcNgBp14,NgBp13,NgBpVt09}, the element $K^2$ and $2H$ are
  called $K$ and $H$ respectively.}.

We now explain that our construction recovers these quantum groups.
Let $\Lambda$ be the rank one free abelian group $\Z$.  Let $W$ be the
$\Lambda$-graded $\C$-vector space with basis $F^iE^j$ for
$0\leq i,j< \ell'$ and grading given by $\w{F^iE^j}=j-i$.  Here
$\C[\Lambda^*]$ is identified with Laurent polynomials in the variable
$K$, which as above is a Hopf algebra where each element is group
like.  As a vector space $\U$ is isomorphic to
$W\otimes_{\C} \C[\Lambda^*] $ and satisfies Axiom \ref{pivotal
  axiom}.  Let $\Lambda'=2\Lambda^*$ then $\U_\xi(\mathfrak{sl}_2)=\U_{\Lambda'}$ where  $\U_{\Lambda'}$  is the $\Lambda'$ version of $\U$, see Remark \ref{R:sublattice}.  Moreover, $S\H$ is $\C[H]$ and the Hopf
algebra of Proposition \ref{P:UHisHopf} is isomorphic to $\UH$ where 
$\UH_\xi(\mathfrak{sl}_2)=\U_{\Lambda'}^H$ is its $\Lambda'$ version.
\end{Exem}

More generally, we have:
\begin{Exem}\label{ex2}
  (Continued in Examples \ref{ex2b}, \ref{ex2c} and \ref{ex12d}). Here let $\ell$ be odd and greater than $3$.  Let $\mathfrak{g}$ be a simple finite
  dimensional complex Lie algebra of rank $r$ with a  set of simple roots $\{\alpha_1,...,\alpha_r\}$.  Let $A =(a_{ij})_{1\leq i,j\leq r}$ be the Cartan matrix
  corresponding to these simple roots. Consider the unrolled
  quantum group $\UH_\xi(\mathfrak{g})$ associated to $\mathfrak{g}$, given in
  \cite{NgBp13}.  This algebra has generated denoted $K_\beta$, $X_i$,
  $X_{-i}$ and $H_{\alpha_i}$ where $\beta$ is in the root lattice $\Lambda$
  and $1\leq i\leq r$ (for the relations see
  \cite{NgBp13}).  The semi-restricted quantum group
  $\U_\xi(\mathfrak{g})$ is the subalgebra of $\UH_\xi(\mathfrak{g})$
  generated by $K_\beta$, $X_{\pm i}$ for all $\beta$ and $i$.

  To describe this example we will need the following notation here
  and later.   There exists a diagonal matrix
  $D=diag(d_1, ..., d_r)$ such that $DA$ is symmetric and
  positive-definite ($D$ is unique if $1\in\qn{d_i}_i\subset\Z$).  Let
  $\Lambda$ be the root lattice which is the $\Z$-lattice generated by
  the simple roots $\{\alpha_{i}\}$ and let $\{\alpha^*_{i}\}$ be the
  dual basis of $\Lambda^*$.  Let $\H= \Lambda^*\otimes_\Z \C$ be the
  Cartan subalgebra of $\mathfrak{g}$ and
  $\B : \H^* \times \H^*\to \C$ be the symmetric bilinear form defined
  by $\B(\alpha_i,\alpha_j)=d_ia_{ij}$.  For $\lambda \in \Lambda$ let
  $B_\lambda = \B(\lambda, \cdot)\in \C[\Lambda^*]$ and
  $\Lambda'=\qn{B_\lambda:\lambda\in\Lambda}$.  Define
  $H_{\alpha_i}=\sum_ja_{ij}H_{\alpha^*_j}$ so that the lattice
  $\Lambda'$ also identify with the free group generated by the
  elements $\{d_iH_{\alpha_i},i=1,\ldots,r\}$ in $\H$.

 To describe $W$, let $\beta_1,...,\beta_N\in \Lambda$ be an ordering of the set
  of positive roots where $N=\frac{\dim(\mathfrak{g})-r}2$.
  For each
  $i=1,...,N$, let $X_{\pm\beta_i}$ be the positive (resp. negative)
  root vector of $\U_\xi(\mathfrak{g})$ (see for example \cite[Section
  8.1 and 9.1]{Chari95}).  Let $W$ be the $\Lambda$-graded $\C$-vector space with
  homogeneous basis
$$X_{\beta_1}^{i_1}X_{\beta_2}^{i_2}...X_{\beta_N}^{i_N}X_{-\beta_1}^{j_1}X_{-\beta_2}^{j_2}...X_{-\beta_N}^{j_N}$$ 
with grading $\sum_{k=1}^r (i_k - j_k)\beta_k$ for $ i_1,...,i_N,j_1,...,j_N \in \{0,...,\ell-1\}$.

 The vector space $\U_\xi(\mathfrak{g})\simeq W\otimes_{\C} \C[\Lambda']$ embeds into $\U=W\otimes_{\C} \C[\Lambda^*]$.  The Hopf algebra structure of $\U_\xi(\mathfrak{g})$  extends uniquely to a Hopf algebra structure on $\U$ such that  relations \eqref{eq:K} hold where $\w{X_{\pm i}}=\pm\alpha_i$.  Then $\U$ satisfies Axiom \ref{pivotal
  axiom} with pivot given by $g=B_{2(1-\ell)\rho}$ where $\rho$ is the
half sum of all positive roots (see \cite{NgBp13}).
 Finally, 
  $\U_{\Lambda'}=\U_\xi(\mathfrak{g})$ and $\U_{\Lambda'}^H=\U^H_\xi(\mathfrak{g}) $ are 
 the $\Lambda'$ version of the quantum group
  $\U$ and  $\UH$.

\end{Exem}
\subsection{Category of weight modules}

In the following, we will use the notation
$\xi^x:=\exp\bp{\frac{2i\pi x}\ell}=\sum_{n=0}^\infty \frac{1}{n!}
\big(\frac{2i\pi x}\ell\big)^n$ for $x$ a complex number or an element
of a topological algebra.

A finite dimensional $\UH$-module $V$ is a \emph{weight module} if it
is a semi-simple module over the subalgebra $S\H$
and
\begin{equation}
  \label{eq:wm}
  K^a=\xi^{H_a}
\end{equation}
as operators on $V$, for any $a\in\Lambda^*$.
Let $\cat^H$ be the tensor category of $\UH$-weight modules.
The eigenspaces for the action of $S\H$ on a weight module $V$ are
called \emph{weight spaces} of $V$ and this action gives a
$\H^*$-grading on $V$.

Recall that a \emph{pivotal category} is a tensor category with left
duality $\{ \lcoev_V, \lev_V\}_V$ and right duality
$\{\rcoev_V , \rev_V \}_V$ which satisfy certain compatibility
conditions, see for example \cite{BW}.
The category $\cat^H$ is a pivotal category with duality maps:
	\begin{align*}
	\lcoev_V :& \C \rightarrow V\otimes V^{*}, \text{ given by } 1 \mapsto \sum
	v_i\otimes v_i^*,  \\
	\lev_V: & V^*\otimes V\rightarrow \C, \text{ given by }
	f\otimes w \mapsto f(w),\\
	\rcoev_V :& \C \rightarrow V^*\otimes V, \text{ given by } 1 \mapsto \sum
	v_i^*\otimes g^{-1}v_i,  \\
	\rev_V: & V\otimes V^*\rightarrow \C, \text{ given by }
	w\otimes f \mapsto f(g w)
	\end{align*}
where $\{v_i\}$ is a basis of $V$ and $\{v_i^*\}$ is its
dual basis of $V^*=\Hom_\C(V,\C)$.  

Let
\begin{equation}
   G=\H^*/\Lambda\simeq (\C/\Z)^\rk       
\end{equation}
then $\cat^H$ is $G$-graded: a module of $\cat^H$ is homogeneous of
degree $\ba\in G$ if all its weights belong to $\ba$.  We call $\cat^H_\ba$
the full subcategory of degree $\ba$ homogeneous modules. One easily
check that any module of $\cat^H$ is a direct sum of homogeneous
module and that the $\Hom$ set of two homogeneous module of different
degrees is zero.  We summarize this by writing
\begin{equation}\label{eq:G-graded}
  \cat^H=\bigoplus_{\ba\in G}\cat^H_\ba
\end{equation}

Given an element
$$\Q={\sum_i  c_i\otimes
  c'_i}\in\ \Lambda^*\otimes_\Z\Lambda^*\ \subset\ \H\otimes\H\
\subset\ \UH\otimes\UH$$
we define the following four maps.
First, let $\B: \H^*\times\H^*\to \C$ be the symmetric bilinear form given
by the element $\Q$, in particular,
$\B(\lambda,\mu)=\Q(\lambda\otimes\mu)\in\Z$ for
$(\lambda,\mu)\in\Lambda^2$. Second, if $V,W$ are $\H^*$-graded vector
space then let $\HH_{V,W}=\xi^\Q: V\otimes W\to V\otimes W$ be the
operator defined by
$\HH_{V,W}(v\otimes w)=\xi^{\B(\w v,\w w)}v\otimes w$ for homogeneous
vectors $v$ and $w$.
Third, consider the map 
$B:\Lambda\to\Lambda^*\subset\C[\Lambda^*]$, $\lambda\mapsto B_\lambda$ where $B_\lambda = \B(\lambda, \cdot)$.  Then  
we have $B_\lambda B_\mu=B_{\lambda+\mu}$ in $ \C[\Lambda^*]$ for $\lambda,\mu\in \Lambda$.
Using this map we can
define an outer automorphism $\wt\HH$ of $\UH\otimes\UH$ given by 
\begin{equation}
  \label{eq:tH}
  \wt\HH (x\otimes y)=xB_{\w y}\otimes B_{\w x}y.
\end{equation}
for $x,y\in \UH$.  
This outer automorphism is compatible with
conjugation by $\HH$ in $\cat^H$:  if $\rho_{V_i}:\UH\to \End_{\cat^H}(V_i)$, for $i=1,2$, are objects in $\cat^H$ then
$$
(\rho_{V_1}\otimes \rho_{V_2})(\wt\HH(x\otimes y))=\HH_{V_1,V_2}(\rho_{V_1}(x)\otimes \rho_{V_2}(y))\HH_{V_1,V_2}^{-1}
$$
for $x,y \in  \UH$.

Throughout this paper, for two spaces $X$ and $Y$ we denote their flip map as $\tau :X\otimes Y\to Y\otimes X$ which given by $x\otimes y\mapsto y\otimes x$.  We call an element
\begin{equation}
  \label{eq:cRR}
  \check{\RR}=\sum_ix_i\otimes y_i\in\U \otimes\U
\end{equation}
a \emph{quasi R-matrix for $\Q$} if it satisfies the Relations (\v R1)--(\v R4) given  below:
\begin{align}
  \tau(\Delta u)&=\wt\HH\bp{\check\RR(\Delta u)\check\RR^{-1}},\,
                  \text{ for all }  u\in\UH\tag{\v R1}, \label{eq:Rdelta}\\
  \Delta_1\check\RR&=\wt\HH_{23}^{-1}(\check\RR_{13})\check\RR_{23}
                     \tag{\v R2}\label{eq:deltaR1},\\
  \Delta_2\check\RR&=\wt\HH_{12}^{-1}(\check\RR_{13})\check\RR_{12}
                     \tag{\v R3}\label{eq:deltaR2},
\end{align}
where $\check R_{23}=\sum_j 1_{\UH}\otimes x_j\otimes y_j$, $\check R_{12}=\sum_j  x_j\otimes y_j\otimes 1_{\UH}$, 
$\wt\HH_{23}^{-1}(\check\RR_{13})=\sum_j x_j\otimes (B_{\w
  {y_j}})^{-1}\otimes y_j$ and
$\wt\HH_{12}^{-1}(\check\RR_{13})=\sum_j x_j\otimes (B_{\w
  {x_j}})^{-1}\otimes y_j$.
Finally, we require $\check R$ is compatible  
with the pivot $g$ by assuming
\begin{equation}
  \label{eq:chR-twist}
  \tag{\v R4}\sum_iy_igx_iB_{\w{x_i}}=\sum_ix_ig^{-1}y_i(B_{\w{x_i}})^{-1}.
\end{equation}
Note, above we assume the expression of
$\check R=\sum_j x_j\otimes y_j$ is given with homogeneous elements
$x_i,y_i\in \U$.

\begin{axiom}\label{quasi axiom}
  Assume Axiom \ref{pivotal axiom} and $\UH$ has an element $\check{\RR}$ which is a
  quasi R-matrix for $\Q$.
\end{axiom}
In the rest of the paper, we assume Axiom \ref{quasi axiom} is true.
Then Hopf algebra $\UH$ is not quasi-triangular as
$\HH \notin \UH\otimes \UH$ but as we will see next the category
$\cat^H$ is still braided even ribbon.

Recall a \emph{braiding} on a tensor category $\cat$ consists of a family of natural
isomorphisms $\{c_{V,W}: V \otimes W \rightarrow W\otimes V \}$
satisfying the Hexagon Axiom:
$$c_{U,V\otimes W}=(\Id_V\otimes c_{U,W})\circ(c_{U,V}\otimes\Id_W)\;\;\;\;
c_{U\otimes V,W}=(c_{U,W}\otimes \Id_V)\circ(\Id_U\otimes c_{V,W})$$
for all $U,V,W\in\cat$.  We say $\cat$ is \emph{braided} if it has a
braiding.  If $\cat$ is pivotal and braided, one can define a family
of natural automorphisms
$$\theta_V= (\Id\otimes \rev_V)(c_{V,V}\otimes\Id)(v\otimes \lcoev_V):V\to V.$$
Following \cite{GP18}, we say that $\cat$ is \emph{ribbon} and the
morphism $\theta$ is a \emph{twist} if
\begin{equation}
  \label{eq:twistc}
  \theta_{V^*}=(\theta_V)^*
\end{equation}
for all $V\in\cat$.

\begin{Prop} \label{CHribbon}
  Category $\cat^H$ is ribbon with pivotal structure
  given above and braiding given by
  $c_{V,W}=\tau\circ\HH\circ\check\RR:V\otimes W\to W\otimes V$.
\end{Prop}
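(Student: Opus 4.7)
The plan is to verify three things: (i) $c_{V,W} = \tau \circ \HH \circ \check\RR$ is a natural isomorphism inside $\cat^H$, (ii) the Hexagon Axiom holds, and (iii) the induced twist $\theta$ satisfies $\theta_{V^*} = (\theta_V)^*$. The key preliminary observation is that on weight modules $\HH_{V,W}$ is well-defined because $V$ and $W$ are $\H^*$-graded by the semi-simple action of $S\H$; moreover, multiplication by $B_\lambda \in \C[\Lambda^*]$ agrees (via $K^a = \xi^{H_a}$) with the diagonal operator $\xi^{\B(\lambda,\,\cdot\,)}$. Consequently the outer automorphism $\wt\HH$ of $\UH \otimes \UH$ defined in \eqref{eq:tH} acts on weight modules exactly as conjugation by $\HH_{V,W}$, providing the dictionary that translates the four axioms \eqref{eq:Rdelta}--\eqref{eq:chR-twist} into operator identities on modules.

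For Step (i), rewrite \eqref{eq:Rdelta} as $\check\RR \, \Delta(u) = \wt\HH^{-1}(\tau\Delta(u)) \, \check\RR$; applying both sides on $V \otimes W$ and using the dictionary gives the operator identity $\HH \check\RR \, \Delta(u) = \tau\Delta(u) \, \HH \check\RR$, which says precisely that $c_{V,W} : V \otimes W \to W \otimes V$ is $\UH$-linear. Invertibility of $c_{V,W}$ follows from invertibility of $\check\RR$ and $\HH_{V,W}$, and naturality in $V$ and $W$ is automatic since $\check\RR \in \U \otimes \U$ is a fixed element and $\HH$ depends only on the weight gradings, which are preserved by morphisms of $\cat^H$.

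For Step (ii), both hexagon identities reduce to Axioms \eqref{eq:deltaR1} and \eqref{eq:deltaR2} once one uses the multiplicative factorization $\HH_{U \otimes V, W} = (\HH_{U,W})_{13} (\HH_{V,W})_{23}$ on $U \otimes V \otimes W$ (and its symmetric analogue for $\HH_{U, V \otimes W}$), which is built into the bilinear definition of $\HH$. Unpacking $c_{U \otimes V, W}$ yields the action of $(\Delta \otimes \Id)\check\RR$ conjugated by $\HH_{U\otimes V, W}$, while $(c_{U,W} \otimes \Id_V) \circ (\Id_U \otimes c_{V,W})$ produces a product of $\check\RR_{13}$ and $\check\RR_{23}$ twisted by appropriate powers of $\HH$ and $B_\lambda$; Axiom \eqref{eq:deltaR1} is precisely the identity equating the two after the dictionary is applied, and the second hexagon follows identically using \eqref{eq:deltaR2}.

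For Step (iii), a direct computation of $\theta_V = (\Id_V \otimes \rev_V) \circ (c_{V,V} \otimes \Id_{V^*}) \circ (\Id_V \otimes \lcoev_V)$ with the explicit formulas for $\lcoev$, $\rev$, $\HH$, $\check\RR$ and the pivot $g$ shows that $\theta_V$ is the action of an element of the form $\sum_i y_i g x_i B_{\w{x_i}} \in \UH$; the analogous computation for $(\theta_V)^*$ on $V^*$ using pivotality identifies it with the action of $\sum_i x_i g^{-1} y_i (B_{\w{x_i}})^{-1}$, and Axiom \eqref{eq:chR-twist} is precisely the equality of these two elements. The main obstacle lies in Step (ii): the outer automorphism $\wt\HH$ on $\UH^{\otimes 3}$, which rewrites $x \otimes y$ as $x B_{\w y} \otimes B_{\w x} y$, must be aligned exactly with the conjugations by $\HH$ on $V_1 \otimes V_2 \otimes V_3$, since a misplaced weight in any $B_\lambda$ factor would invalidate the hexagons, whereas Steps (i) and (iii) are then essentially formal consequences of the corresponding axioms via the dictionary.
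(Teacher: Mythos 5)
Your proposal follows the paper's own argument essentially step for step: the dictionary $\wt\HH \leftrightarrow$ conjugation by $\HH_{V,W}$ and $B_\lambda \leftrightarrow \xi^{\B(\lambda,\cdot)}$ on weight spaces, then (\v R1)$\Rightarrow$morphism, (\v R2)/(\v R3)$\Rightarrow$hexagons, and (\v R4) via the twist computation for a weight-$\lambda$ vector. One small imprecision: the element you write, $\sum_i y_i g x_i B_{\w{x_i}}$, is not in $\UH$ and is not the full twist — the actual operator has an additional Cartan factor $m(\HH)$ contributing $\xi^{\B(\lambda,\lambda)}$ on the $\lambda$-weight space — but since this factor appears identically in both $\theta_V$ and the dual-side operator, it cancels and does not affect the reduction to (\v R4), which is exactly what the paper's computation shows.
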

\begin{proof}
  Axioms (\v R1)--(\v R4) imply that for weight modules
  $V,V_1,V_2,W$,
  \begin{enumerate}
  \item[(\v R1)]$\implies$ $c_{V,W}:V\otimes W\to W\otimes V$ is a morphism in $\cat^H$,
  \item[(\v R2)]$\implies$ $c_{V_1\otimes V_2,W}=(c_{V_1,W}\otimes\Id)(\Id\otimes c_{V_2,W})$,
  \item[(\v R3)]$\implies$ $c_{W,V_1\otimes V_2}=(\Id\otimes c_{W,V_2})(c_{W,V_1}\otimes\Id)$,
  \item[(\v R4)]$\implies$ the pivotal structure and the braiding are
    compatible i.e.
    $\theta_V:=(\Id\otimes \rev_V)(c_{V,V}\otimes\Id)(\Id\otimes
    \lcoev_V)$ is a twist in $\cat^H$.
  \end{enumerate}
  We justify with more details this last sentence.  The pivotal
  structure gives natural isomorphisms $V^{**}\cong V$. Using them,
  the dual and equivalent equality to \eqref{eq:twistc} is given by
  $$\theta'_V=\theta_V\text{ where }\theta'_V= (\lev_V\otimes\Id)
  (\Id\otimes c_{V,V}) (\rcoev_V\otimes\Id).$$
  Recall that $\rev_V=\lev_V\circ\tau\circ(g\otimes1)$ and
  $\rcoev_V=\bp{\Id\otimes g^{-1}}\circ\tau\circ\lcoev_V$.  As $|g|=0$, the pivot commute with $\H$.
  Similarly, as $\wt\HH$ and $\tau$ are trivial on $\Delta(\H)$,
 then Relation \eqref{eq:Rdelta} implies that $\check R$ commute with $\Delta(\H)$
  and thus $\w{x_i}+\w{y_i}=0$.  Then we can compute for a vector
  $v\in V$ with weight $\lambda$:
  \begin{align*}
    \theta_V(v)
    &=(\Id\otimes \rev_V)(c_{V,V}\otimes\Id)(v\otimes \lcoev_V)\\
    &=(\Id\otimes \lev_V\tau)(\Id\otimes\rho(g)\otimes\Id)
      (\tau\HH_{V,V}\rho^{\otimes2}(\check R)\otimes\Id)(v\otimes \lcoev_V)\\
    &=\sum_i(\Id\otimes \lev_V\tau)(\tau \HH_{V,V}\otimes\Id)
      \bp{\rho(gx_i)(v)\otimes (\rho(y_i)\otimes\Id)(\lcoev)}\\
    &=\sum_i\xi^{\B\bp{\w{x_i}+\lambda,\lambda}}(\Id\otimes \lev_V\tau)(\tau\otimes\Id)
      \bp{\rho(gx_i)(v)\otimes (\rho(y_i)\otimes\Id)(\lcoev)}\\
    &=\sum_i\xi^{\B\bp{\w{x_i}+\lambda,\lambda}}(\rho(y_i)\otimes \lev_V)
      (\Id\otimes \tau)(\tau\otimes\Id)\bp{\rho(gx_i)(v)\otimes \lcoev}\\
    &=\sum_i\xi^{\B\bp{\w{x_i}+\lambda,\lambda}}(\rho(y_i)\otimes \lev_V)
      \bp{\lcoev\otimes\rho(gx_i)(v)}\\
    &=\xi^{\B\bp{\lambda,\lambda}}\sum_i\xi^{\B\bp{\w{x_i},\lambda}}\rho(y_igx_i)(v).
  \end{align*}
  Here the third equality is because $\rho(g)$ commutes with
  $\HH_{V,V}$, the fourth is because $\theta_V$ is $\H$-equivariant so
  $\theta_V(v)$ has weight $\lambda$. This implies that the operator
  $\HH_{V,V}$ is applied on vectors of weight $\lambda$ on the right
  (and $|gx_iv|$ on the left).  Finally the last equality is the
  zig-zag relation.
  Similarly, we compute
  \begin{equation}
    \label{eq:ltwist}
    \theta'_V(v)=\sum_i\xi^{\B\bp{\lambda,\w{y_i}+\lambda}}\rho(x_ig^{-1}y_i)(v)
    =
    \xi^{\B\bp{\lambda,\lambda}}\sum_i\xi^{-\B\bp{\w{x_i},\lambda}}
    \rho(x_ig^{-1}y_i)(v)
  \end{equation}
  and the equality $\theta_V(v)=\theta'_V(v)$ follows from
  Relation \eqref{eq:chR-twist} with $B_{\w{x_i}}v=\xi^{\B(\w{x_i},\lambda)}v$.
\end{proof}
\begin{rmq}\label{R:L'b} Suppose that $\Lambda'$ is a rank $r$ sub-lattice of
  $\Lambda^*$ as in Remark \ref{R:sublattice}.  Then one easily check
  that the analogous category of $\U_{\Lambda'}^H$-weight module is
  indentified with $\cat^H$.  Indeed the restriction functor gives any
  $\UH$-weight module a structure of $\U_{\Lambda'}^H$-weight module but
  reciprocally, Condition \eqref{eq:wm} gives a unique way to extend
  any $\U_{\Lambda'}^H$-weight module to a $\UH$-module.
\end{rmq}
{\bf \noindent Notation:} For $q\in\C\setminus\qn1$ and $j\in\N$, we use
\begin{equation*}
  [j;q]=\frac{1-q^j}{1-q}\text{ and }\qN{j;q}!=[j;q]\cdots[1;q].
\end{equation*}
\begin{Exem}\label{ex1b}
  This example builds upon Example \ref{ex1} and is continued in
  Examples \ref{ex1c} and \ref{ex12d}.  Recall $\ell\ge3$,
  $\ell'=\ell/gcd(\ell,2)$ and $\U$ is a degree $2$ extension of the
 semi-restricted quantum group $\U_{\Lambda'}=\U_\xi(\mathfrak{sl}_2)$.  
  Then a quasi R-matrix for $\Q=2H\otimes H$
   is given by 
  \begin{equation*}
    \check {\mathcal R}=
    \sum_{j=0}^{\ell' -1}\frac{(\xi-\xi^{-1})^{j}}{\qN{j;\xi^{-2}}!}E^j\otimes F^j\in \U\otimes \U,
  \end{equation*}
  and if $v\otimes w\in V\otimes W\in\cat^H$ where $v,w$ have weights
  $\w v$ and $\w w$, then
  $\mathcal H_{V,W}(v\otimes w)=\xi^{2\w v.\w w}v\otimes w$.  With slight abuse of notation  we
  write the equality of operator on $\cat^H$ as
  $$\mathcal H=\xi^{2H\otimes H}.$$
  In \cite{ChKa95}, it is shown that $\check {\mathcal R}$ satisfies Relations \eqref{eq:Rdelta}-\eqref{eq:deltaR2}.
  For $\ell$ even, \cite{Ohtsuki02} has shown
  an equivalent form of Relation \eqref{eq:chR-twist}.  For $\ell$ odd
  the proof of Example \ref{ex2b} can be applied.  
 Hence for any $\ell\ge3$, the quantum group $\U$ associated to $\mathfrak{sl}_2$ of Example \ref{ex1}
 satisfies Axiom \ref{quasi axiom}.
\end{Exem}
\begin{Exem}\label{ex2b}
Here we use the notation and build upon Example \ref{ex2} (continued in Examples \ref{ex2c} and \ref{ex12d}).
 Recall $\U$ is an extension of the semi-restricted quantum group
  $\U_\xi(\mathfrak{g})$ associated to a simple finite
  dimensional complex Lie algebra $\mathfrak{g}$.
Let $\Q$ be the quadratic element 
  given by
  $\Q={\sum_{i,j} d_i \bar a_{ij} H_{\alpha_i}\otimes H_{\alpha_j}}$
  where $(\bar a_{ij})$ is the inverse of the Cartan matrix. In
  the basis $\{H_{\alpha^*_i}\}_{i=1\cdots n}$
  of $\H$
  dual to
  $\{\alpha_i\}_{i=1\cdots n}$, the form $\Q$ is given by
  ${\sum_{i,j} d_i a_{ij}H_{\alpha^*_i}\otimes H_{\alpha^*_j}}$.   So $\HH$ and
  $\check {\mathcal R}\in \U\otimes \U$
  are given by
\begin{equation}\label{eq:unrolled_R-matrix_g}
  \mathcal H=\xi^{\sum_{i,j} d_ia_{ij} H_{\alpha^*_i}\otimes H_{\alpha^*_j}},\quad
  \check {\mathcal R}=\prod_{i=1}^{N}\left
    ( \sum_{j=0}^{\ell -1}\dfrac{\left( (q_{\beta_i}-q_{\beta_i}^{-1})X_{\beta_i}\otimes
        X_{-\beta_i}\right)^j}{\qN{j;q_{\beta_i}^{-2}}!}\right),
\end{equation}
where
$q_{\beta_i}=\xi^{\B(\beta_i, \beta_i)/2}.$
In \cite{NgBp13}, $\check {\mathcal R}$ is shown to satisfy Relations \eqref{eq:Rdelta}-\eqref{eq:deltaR2}.
The Relation \eqref{eq:chR-twist} follows from the fact that
Proposition \ref{CHribbon} holds for quantum groups as shown in
\cite[Theorem 19]{GP18}:  in particular this implies that $\theta'_V=\theta_V$ for any
module $V\in\cat^H$ (see Equation \eqref{eq:ltwist}).  This equality can be interpreted as follows.  Let  $x$ (resp.\ $x'$) be the element on the left (resp.\  right) side of Equation \eqref{eq:chR-twist}.   Then $\theta'_V=\theta_V$ implies that $\rho_{V}(x-x')=0$ where $\rho_V: \UH\to \End_{\cat^H}(V)$ is the representation $V$.  Finally, Proposition~\ref{P:U inject in CH} implies that $x-x'=0$ which gives the desired result.
Thus, the quantum group $\U$
associated to $\mathfrak{g}$ of Example \ref{ex2} satisfies Axiom
\ref{quasi axiom}.
\end{Exem}
\section{Topological unrolled quantum groups}\label{topo quntum and uni invariant} 
Unrolled quantum groups are not ribbon Hopf algebras but they admit a
ribbon category of representations.  The goal of this section is to
define a topology on an unrolled quantum group $\UH$ so that its
completion $\wh\UH$ is a topological ribbon Hopf algebra.  This
topology is the one of uniform convergence on compact sets induced by
the Cartan part of the algebra. The topology is nice because it
produces explicit topological bases for $\wh\UH$ and allows the
identification of $\cat^H$ with finite dimensional $\wh\UH$ weight
modules.  The topological Hopf algebras are Hopf algebra objects in
the category of nuclear spaces. We first recall some definitions from
\cite{Markus13, Gro52, Ha18a}.
\subsection{Topological Hopf algebras}
Let $E$ and $F$ be locally convex spaces.  A topology is \emph{compatible} with $\otimes$ if both 1)
$\otimes: E \times F \rightarrow E \otimes F$ is continuous
and 2) for all $(e, f) \in E' \times F'$ the linear form
$e \otimes f: E \otimes F \rightarrow \C, x \otimes y
\mapsto e(x)f(y)$ is continuous \cite{Markus13,Gro52}.
The locally convex space $E$ is called \emph{nuclear}, if all the compatible
topologies on $E\otimes F$ agree after completion, for all locally
convex spaces $F$, i.e. the topology on the completion of $E\otimes F$
compatible with $\otimes$ is unique.
  For two nuclear
spaces $E$ and $F$ the completion of the tensor product $E \otimes F$
endowed with its compatible topology is denoted
$E \widehat{\otimes} F$.  We recall some facts about nuclear spaces (see  \cite{Gro52}): 1) a finite dimensional space is nuclear, 2) the
tensor product of two nuclear spaces is a nuclear space and 3) a space is
nuclear if and only if its completion is nuclear.  Complete nuclear spaces form a symmetric monoidal category
$\textbf{Nuc}$ with the product $\widehat{\otimes}$ (see
\cite{Markus13}).  Recall we denote the flip  isomorphism by
$\tau:E \widehat{\otimes} F\stackrel \sim\to F \widehat{\otimes} E$.

\newcommand{\Hopf}{{\mathcal A}}
A {\em topological Hopf algebra} is a Hopf algebra object in the monoidal category $\textbf{Nuc}$. That is a complete nuclear $\C$-space $\Hopf$ endowed with the $\C$-linear maps called the product, unit, coproduct, counit and antipode 
\begin{equation*}
m: \Hopf\widehat{\otimes} \Hopf \rightarrow \Hopf,  \eta: \C\rightarrow \Hopf,  \Delta: \Hopf \rightarrow \Hopf\widehat{\otimes} \Hopf, \ve: \Hopf\rightarrow \C \text{ and } S: \Hopf\rightarrow \Hopf 
\end{equation*}
which satisfy the axioms:
\begin{enumerate}
\item the product $m$ is associative on $\Hopf$ admitting $1_{\Hopf}=\eta(1)$ as unity,
\item $(\ve \widehat{\otimes} \Id_{\Hopf})\circ \Delta=(\Id_{\Hopf}\widehat{\otimes} \ve)\circ \Delta=\Id_{\Hopf}$ and the coproduct $\Delta$ is coassociative: $(\Delta \widehat{\otimes} \Id_{\Hopf})\circ \Delta=(\Id_{\Hopf}\widehat{\otimes} \Delta)\circ \Delta$,
\item $\Delta$ and $\ve$ are algebra morphisms where the associative product in $\Hopf\widehat{\otimes} \Hopf$ is determined by $(m\widehat{\otimes}  m)\circ (\Id_\Hopf \widehat{\otimes}  \tau \widehat{\otimes} \Id_\Hopf)$,
\item $m\circ (S\widehat{\otimes}  \Id_\Hopf)\circ \Delta=m\circ (\Id_\Hopf \widehat{\otimes} S)\circ \Delta=\eta\circ \ve$.  
\end{enumerate}

If $V$ is a finite dimensional $\C$-vector space we denote by
$\Ce(V)$ the space of entire
functions on $V$ endowed with the topology of uniform convergence on
compact sets.  Then $\Ce(V)$ is a complete nuclear
space. If $\{Z_i\}_{i=1,\ldots,n}$ are the coordinate functions of $V$ in some basis then we denote $\Ce(V)$ by $\Ce(Z_1,\ldots,Z_n)$. Remark that we have
$\Ce(V_1) \widehat{\otimes}
\Ce(V_2)\simeq \Ce(V_1 \times V_2)$
 where $V_1, V_2$ are finite dimensional
$\C$-vector spaces (see \cite[Theorem 51.6]{Fran67}).

We will need the following example later.  Recall the vector spaces $W$ and $\H$ given in Section \ref{Section unrolled qt group}.
The nuclear space $W \widehat{\otimes} \Ce(\H^{*})$ can be seen as
$W$-valued entire
functions, whose elements are power series in the variables
$(H_i)_{i=1\cdots n}$ with coefficients in $W$.  Alternatively, if
$\BW$ is a base of $W$, one can think of
$W \widehat{\otimes} \Ce(\H^{*})$ as the span of $\BW$ with
coefficients in $\Ce(\H^{*})$.
\begin{Prop}[\cite{Ha18a}] \label{completion}  
Let $\H_1,\H_2,W_1,W_2$ be finite dimensional $\C$-vector spaces. Then
\begin{equation*}
(W_1 \otimes \Ce(\mathfrak{H}_{1}^{*})) \widehat{\otimes} (W_2 \otimes \Ce(\mathfrak{H}_{2}^{*})) \simeq (W_1 \otimes W_2) \otimes \Ce(\mathfrak{H}_{1}^{*}\times \mathfrak{H}_{2}^{*}).
\end{equation*}
\end{Prop}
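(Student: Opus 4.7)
The plan is to reduce everything to the already-quoted identity $\Ce(V_1)\widehat{\otimes}\Ce(V_2)\simeq\Ce(V_1\times V_2)$ by exploiting the fact that $W_1$ and $W_2$ are finite dimensional, so that the ordinary tensor product with them coincides with the completed tensor product.

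First I would observe that for any finite dimensional $\C$-space $W$ and any complete nuclear space $E$, the algebraic tensor product $W\otimes E$ is already complete: choosing a basis $\{e_j\}_{j=1}^{\dim W}$ of $W$ identifies $W\otimes E$ with a finite direct sum $E^{\dim W}$, which is complete and nuclear, and the unique compatible topology on $W\otimes E$ agrees with the product topology. Hence $W\otimes E\simeq W\widehat{\otimes} E$ canonically, both as topological vector spaces. In particular $W_i\otimes\Ce(\H_i^*)\simeq W_i\widehat{\otimes}\Ce(\H_i^*)$ for $i=1,2$, and $(W_1\otimes W_2)\otimes\Ce(\H_1^*\times\H_2^*)\simeq(W_1\otimes W_2)\widehat{\otimes}\Ce(\H_1^*\times\H_2^*)$.

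Next I would use the symmetric monoidal structure of $(\mathbf{Nuc},\widehat{\otimes})$ to rebracket:
\begin{equation*}
(W_1\widehat{\otimes}\Ce(\H_1^*))\widehat{\otimes}(W_2\widehat{\otimes}\Ce(\H_2^*))
\simeq (W_1\widehat{\otimes} W_2)\widehat{\otimes}\bigl(\Ce(\H_1^*)\widehat{\otimes}\Ce(\H_2^*)\bigr),
\end{equation*}
where the isomorphism is built from the associators and the flip $\tau$ on $\Ce(\H_1^*)\widehat{\otimes} W_2$. Both are natural topological isomorphisms in $\mathbf{Nuc}$, so the rearrangement is legitimate.

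Finally I would apply the already-cited isomorphism $\Ce(\H_1^*)\widehat{\otimes}\Ce(\H_2^*)\simeq\Ce(\H_1^*\times\H_2^*)$ (\cite[Theorem 51.6]{Fran67}), and collapse the finite-dimensional hat to an ordinary tensor using the observation from the first step, giving
\begin{equation*}
(W_1\widehat{\otimes} W_2)\widehat{\otimes}\Ce(\H_1^*\times\H_2^*)\simeq(W_1\otimes W_2)\otimes\Ce(\H_1^*\times\H_2^*),
\end{equation*}
as required. The only mildly delicate point — which I do not expect to be a real obstacle — is checking that the compatible topology on $W\otimes E$ really is unique when $W$ is finite dimensional; this falls out of the basis argument above (or equivalently from the fact that a finite dimensional space is nuclear and any two compatible topologies on a tensor product of nuclear spaces coincide on completion).
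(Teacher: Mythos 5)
The paper states this proposition with a citation to \cite{Ha18a} and supplies no argument of its own, so there is no in-text proof to compare against. Your proof is correct and is the natural one: the key reduction is exactly the observation that $W\otimes E\simeq W\widehat{\otimes} E$ when $W$ is finite dimensional (since $W\otimes E\simeq E^{\dim W}$ is already a complete nuclear space with a unique compatible topology), after which the symmetric-monoidal rebracketing in $(\textbf{Nuc},\widehat{\otimes})$ together with the quoted isomorphism $\Ce(\H_1^*)\widehat{\otimes}\Ce(\H_2^*)\simeq\Ce(\H_1^*\times\H_2^*)$ from \cite[Theorem 51.6]{Fran67} closes the argument.
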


\subsection{Topology on the completion of $\UqgH$}
The space of entire functions is a nuclear space obtained as the completion of polynomial functions for the topology of uniform convergence on compact sets. We use a similar completion to define a topological ribbon Hopf algebra from $\UH$.\\
Recall that as a vector space,
\begin{equation}\UH=W\otimes_{\C} \C[\Lambda^*]\otimes_{\C} S\H
\end{equation}
where $W$ is a finite dimensional $\C$-vector space, $\C[\Lambda^*]=\C[K_1^{\pm1},\ldots,K_\rk^{\pm1}]$ is the space of Laurent polynomials in $\rk$ variables and $S\H=\C[H_1,\ldots,H_\rk]$ is the space of polynomials in $\rk$ variables (here we write $H_i$ for $H_{a_i}$ and $K_i$ for $K^{a_i}$, where $(a_i)_i$ is the fixed basis of $\Lambda^*$).

Denote $\Ce(\H^*)$ by $\Ce(H_1, \ldots, H_\rk )$.
We embed the commutative algebra 
$\C[\Lambda^*]\otimes_{\C} S\H$
 into
$\Ce(H_1, \ldots, H_\rk )$  by seeing $H_i$ as a
linear function on $\H^*$ and by sending
$K_i=K^{a_i}$ to $\xi^{H_i}=\exp(\frac{2 i \pi}{\ell} H_i)$.  Then
$\UH$ is embedded in $W\otimes_\C\Ce(\H^*)$.

Now we put on $\Ce(\H^*)$ the topology of uniform convergence on compact sets.  Then it is a complete nuclear space and as $W$ is finite dimensional, one has
$$W\otimes_\C\Ce(\H^*)=W\wh\otimes_\C\Ce(\H^*).$$
Furthermore the embedding of $\UH$ in this complete nuclear space is dense in it so we have:
\begin{equation}\label{UH=}
\widehat{\UH}=W \widehat{\otimes} \Ce(\H^*)\simeq W \otimes_{\C} \Ce(H_1, \ldots, H_\rk).
\end{equation}

Let us now describe a family of semi-norms that generates the topology
of $\widehat{\UH}$.  Any two norms on $W$ are equivalent. Let choose a
norm $\bN\cdot$ on $W$.  Then for any compact set $C$ in $\H^*$, we
have a semi-norm $\bN\cdot_C$ on $W\otimes_\C\Ce(\H^*)$ defined as
follows: If $\varphi \in C$ and $x$ is a multivariable power series
such that $x(H_1,\ldots,H_\rk)\in \Ce(\mathfrak{H}^*)$ then
$\varphi_*x(H_1,\ldots,H_\rk)$ is the evaluation of $x$ at $\varphi$,
that is
$\varphi_*x(H_1,\ldots,H_\rk)=x(\varphi(H_1),\ldots,\varphi(H_\rk))\in
\C$. Then for
$u=\sum_{k}w_{k}x_{k}(H_1,\ldots,H_\rk) \in \widehat{\UH}$ the
semi-norm of $u$ associated to $C$ is defined by
\begin{equation}
  \label{norm of x}
  \bN u_C=\sup_{\varphi \in C}\bN{ \varphi_{*}u}
\end{equation}
where $\varphi_{*}u=\sum_{k}w_{k}\varphi_{*}x_k\in W$.  One easily
sees that $\bN\cdot_C$ does not depend of the norm on $W$ up to
equivalence.  The family of semi-norms
$\{\bN\cdot_C\}_{C\text{ compact }}$ generates the topology of uniform
convergence on compact sets of $W$-valued entire functions.

The rest of this subsection is dedicated to proving that the complete
nuclear space $\widehat{\mathcal{U}^H}$ has a natural structure of a
topological ribbon Hopf algebra.
This means that $\widehat{\mathcal{U}^H}$ is a topological Hopf
algebra with an invertible element ${\RR}\in \UH\hotimes\UH$ called
the \emph{universal R-matrix} and a central element
$\theta\in \widehat{\mathcal{U}^H}$ called the
\emph{twist}\footnote{In the literature the inverse of the element
  $\theta$ is often considered.}
  \begin{align} \label{dkcm}
  \tag{R1}\RR\Delta(x)&=\Delta^{op}(x)\RR
                             \text{ for all }x \in \UH,  \\ 
  \tag{R2}\Delta \otimes \Id(\RR)&=\RR_{13}\RR_{23},  \label{dkcm2}\\
  \tag{R3}\Id \otimes \Delta(\RR)&=\RR_{13}\RR_{12}, \label{dkcm3}\\
  \tag{R4}S(\theta) &= \theta, \label{dkcm4}\\
  \tag{R5} \Delta(\theta)&=\RR_{21}\RR(\theta \otimes \theta), \label{dkcm5}\\
  \tag{R6} \ve(\theta)&=1. \label{dkcm6}
\end{align}
Recall the element $\Q={\sum_i c_i\otimes c'_i}\in\ \H\otimes\H$ and
define
$$\HH=\xi^\Q\in \UH\hotimes\UH.$$

\begin{theo}\label{main theorem 1}
$\wh\UH$ is a topological ribbon Hopf algebra
  with universal R-matrix $${\RR}=\HH\check{\RR}\in \UH\hotimes\UH$$
  and twist
  \begin{equation}\label{eq:whtwist}
    \theta = g^{-1}\left( m\circ (S^2\otimes\Id)(\RR)\right)\in\wh\UH.
  \end{equation}
\end{theo}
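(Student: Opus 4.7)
The plan is to proceed in three stages: (1) establish that $\wh\UH$ inherits the structure of a topological Hopf algebra from $\UH$; (2) verify that $\HH = \xi^\Q \in \UH \hotimes \UH$ and that conjugation by $\HH$ realizes the outer automorphism $\wt\HH$, which then upgrades the quasi R-matrix axioms (\v R1)--(\v R3) to the genuine R-matrix axioms (R1)--(R3) for $\RR = \HH\check\RR$; (3) use standard quantum-group manipulations to derive the ribbon axioms (R4)--(R6) for $\theta$.

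For stage (1), I would note that multiplication on $\UH$ restricts, under the embedding \eqref{UH=}, to pointwise multiplication of $W$-valued entire functions, which is continuous for the semi-norms $\bN\cdot_C$ of \eqref{norm of x} since $C$ compact implies $\sup_C \bN{\varphi_*(uv)} \le \sup_C \bN{\varphi_*u}\cdot\sup_C \bN{\varphi_*v}$ up to a constant coming from the product on $W$. The coproduct extends because it sends $\Ce(\H^*) \to \Ce(\H^* \times \H^*)$, and Proposition \ref{completion} identifies the target with $\wh\UH \hotimes \wh\UH$. The counit (evaluation at $0 \in \H^*$) and antipode extend similarly. This hands us the topological Hopf algebra structure.

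For stage (2), observe that $\Q \in \H \otimes \H$ is a polynomial map on $\H^* \times \H^*$, so $\xi^\Q$ is entire and lies in $\wh\UH \hotimes \wh\UH$, with inverse $\xi^{-\Q}$. The key identity is that for homogeneous $x,y \in \U$, one has $\HH(x \otimes y)\HH^{-1} = \wt\HH(x\otimes y) = xB_{\w y} \otimes B_{\w x}y$; this follows by expanding $\xi^\Q$ as a convergent series and repeatedly applying \eqref{eq:weight}, which yields Cartan factors that assemble into $B_{\w x}$ and $B_{\w y}$ through the definition of $\B$. Given this, axiom (\v R1) reads $\Delta^{op}(u) = \HH\check\RR\,\Delta(u)\,\check\RR^{-1}\HH^{-1}$, which is precisely (R1). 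For (R2), the primitivity of the $c_i \in \H$ forces $(\Delta \otimes \Id)(\Q) = \Q_{13} + \Q_{23}$ with $\Q_{13}, \Q_{23}$ commuting (both have Cartan factors in the third slot), hence $(\Delta \otimes \Id)(\HH) = \HH_{13}\HH_{23}$; combining with (\v R2) and using the conjugation identity to move $\HH_{23}$ past $\wt\HH_{23}^{-1}(\check\RR_{13})$ produces $\HH_{13}\check\RR_{13}\HH_{23}\check\RR_{23} = \RR_{13}\RR_{23}$. Axiom (R3) is symmetric via (\v R3).

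For stage (3), once (R1)--(R3) hold one forms the Drinfeld element $u = m(S\otimes \Id)(\RR_{21})$ and checks via standard manipulations that $S^2(x) = uxu^{-1}$ on $\wh\UH$; combined with $S^2(x) = gxg^{-1}$ from the pivotal axiom, the element $ug^{-1}$ is central, and a routine rearrangement identifies $\theta = g^{-1}m(S^2\otimes \Id)(\RR)$ with the standard ribbon element $u g^{-1}$ (up to conventions). Axioms (R5) and (R6) are then the classical identities for a ribbon element derived purely from (R1)--(R3) and $\Delta(g) = g\otimes g$. Axiom (R4), $S(\theta) = \theta$, is the algebraic incarnation of the compatibility (\v R4): the computation in the proof of Proposition \ref{CHribbon} already shows $\theta_V = \theta_V'$ as operators on every weight module $V$, and since the action of $\wh\UH$ on the total family of weight modules is faithful (which I would verify as part of stage (1), using density of $\UH$ and an analogue of Proposition \ref{P:U inject in CH}), this equality lifts to $S(\theta) = \theta$ in $\wh\UH$.

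The main obstacle I anticipate is the rigorous passage from the outer automorphism $\wt\HH$ of $\UH \otimes \UH$ to inner conjugation by $\HH$ inside $\wh\UH \hotimes \wh\UH$. This is where the nuclear completion earns its keep: one must justify the convergent expansion $\xi^\Q = \sum \tfrac{1}{n!}(2i\pi/\ell)^n \Q^n$ inside the completed tensor product and track how each factor acts on weight-homogeneous elements, since it is this identification that converts (\v R1)--(\v R3) into (R1)--(R3). A secondary delicate point is (R4), which must be traced back through the Drinfeld construction to the algebraic identity $\sum y_igx_iB_{\w{x_i}} = \sum x_ig^{-1}y_i(B_{\w{x_i}})^{-1}$ supplied by (\v R4), rather than merely its representation-theoretic shadow.
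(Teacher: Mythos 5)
Your overall scheme matches the paper's: continuity of the structure maps, lifting the quasi-R-matrix axioms to the genuine ones by realizing $\wt\HH$ as conjugation by $\HH$, then Drinfeld-element manipulations for the twist. However, there are two places where your argument departs from what the paper actually does, and in one of them this creates a real gap.

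First, in stage (1) you describe multiplication as ``pointwise multiplication of $W$-valued entire functions.'' This is not correct: $W$ is a graded vector space, not an algebra, and the product on $\UH$ is not pointwise in the Cartan variables. To multiply $x=\sum_i w_i x_i(h)$ and $y=\sum_j w_j y_j(h)$ one must push the entire function $x_i(h)$ past the homogeneous element $w_j$, which shifts its argument by the degree $\lambda_j$ of $w_j$ via relation \eqref{eq:weight}. The correct bound is therefore $\bN{xy}_C\le\lambda_C\bN{x}_{C'}\bN{y}_C$ with the \emph{enlarged} compact $C'=C+\Lambda_W$, as in Lemma \ref{bdt}; your estimate $\sup_C\bN{\varphi_*(uv)}\le\sup_C\bN{\varphi_*u}\sup_C\bN{\varphi_*v}$ would fail. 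This is a fixable slip, but the variable shift is the whole content of the continuity lemma.

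Second, and more seriously, for axiom (R4) you propose to deduce $S(\theta)=\theta$ from the categorical identity $\theta_V=\theta'_V$ together with faithfulness of the action on weight modules, ``using density of $\UH$ and an analogue of Proposition \ref{P:U inject in CH}.'' But Proposition \ref{P:U inject in CH} requires generic semisimplicity of $\cat^H$, which is an additional hypothesis \emph{not} assumed in Theorem \ref{main theorem 1}: that theorem is stated under Axioms \ref{pivotal axiom}--\ref{quasi axiom} only. The paper avoids this by a purely algebraic computation: writing $\RR=\HH\check\RR$ it derives the two expressions \eqref{eq:Ra1}--\eqref{eq:Ra2}, from which $S(u^{-1})=m(\HH)g\sum_ix_ig^{-1}y_iB_{|x_i|}^{-1}$ and $u^{-1}=m(\HH)g^{-1}\sum_iy_igx_iB_{|x_i|}$; relation \eqref{eq:chR-twist} then gives $g^{-1}S(u^{-1})=gu^{-1}=\theta$ directly, and $S(\theta)=\theta$ follows since $\theta$ is central. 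You do flag exactly this as the ``secondary delicate point'' at the end, so you already see that the representation-theoretic argument is a shadow of the right one --- but as written your primary argument relies on a hypothesis you do not have, whereas the direct computation closes (R4) under the theorem's stated assumptions.
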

 
 Remark that as for ribbon Hopf algebras, following the lines
of
\cite[\S VIII]{ChKa95} we can prove that $(\wh\UH,{\RR},\theta)$ satisfies the following additional
properties:
  \begin{align*}
\RR_{12}\RR_{13}\RR_{23}&=\RR_{23}\RR_{13}\RR_{12},& (\ve \otimes \Id)(\RR)&=1=(\Id \otimes \ve)(\RR),
\\
(S \otimes \Id)(\RR)&=\RR^{-1}=(\Id \otimes S^{-1})(\RR),&\RR&=(S \otimes S)(\RR).
\end{align*}

To prove the theorem, we need the following three lemmas which imply that the product, coproduct and antipode are continuous.

As $W$ is finite dimensional, all norms on $W$ are equivalent.
Nevertheless to prove the theorem, we fix a convenient choice of norm
as follows: let $\BW=(w_1,\ldots,w_n)$ be a basis of $W$ where $w_i$
is homogeneous of degree $\lambda_i\in\Lambda$. Then let $\bN\cdot$ be
the maximum norm in $\BW$,
which means
$\bN{\sum_ix_iw_i}=\sup_i|x_i|$.
Similarly, $\BW^{\otimes k}$ is a basis of $W^{\otimes k}$ that will
be equipped with the associated maximum norm.  Then one easily checks
that $\bN{w\otimes w'}=\bN{w}\bN{w'}$.

Let $E$, $F$ be nuclear spaces and let $\mathcal N_F$ be a set of
semi-norms on $F$ that generate its topology.  Then remark that a
linear map $f:E \rightarrow F$ is continuous if and only if for any
continuous semi-norm $\bN{\cdot}_F\in\mathcal N_F$, there exists a continuous
norm $\bN{\cdot}_E$ on $E$ and a constant $\eta \in \mathbb{R}^{+}$ such
that
\begin{equation}\label{E:checking continuity of f}
\forall x\in E,\,\bN{f(x)}_F\leq \eta \bN{x}_E.
\end{equation}
\begin{Lem}\label{bdt}
  For each compact set $C \subset \mathfrak{H}^{*}$, there exists a
  compact set $C'$ and a constant $\lambda_C \in \mathbb{R}$ such that
  $\forall x,y \in \UH$, we have
\begin{equation*} 
  \bN{ xy }_{C} \leq
  \lambda_{C} \bN{ x \otimes y }_{C'\times C}.
\end{equation*}
\end{Lem}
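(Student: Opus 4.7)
The plan is to make the product $xy$ of two elements of $\UH$ explicit inside $\widehat{\UH} = W \hotimes \Ce(\H^*)$ and then bound each factor separately. Writing $x = \sum_i w_i f_i(H)$ and $y = \sum_j w'_j g_j(H)$ in the basis $\BW$, the commutation relation \eqref{eq:weight} lets me move the Cartan factor of $x$ past each $w'_j$ via $f_i(H) w'_j = w'_j f_i(H + \w{w'_j})$. Multiplying the pairs $w_i w'_j$ in $\U$ yields an expansion $w_i w'_j = \sum_k w_k\, p_k^{ij}(K)$ with Laurent polynomials $p_k^{ij}$, which under $K^a \mapsto \xi^{H_a}$ become entire functions of $H$. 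Therefore
\begin{equation*}
xy = \sum_k w_k \otimes \Bigl(\sum_{i,j} p_k^{ij}(\xi^H)\, f_i\bigl(H + \w{w'_j}\bigr)\, g_j(H)\Bigr).
\end{equation*}

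The key observation is that only finitely many shifts $\w{w'_j}$ occur, one for each basis vector of $\BW$. The finite set $T = \{\w{w} : w \in \BW\} \subset \Lambda \subset \H^*$ is therefore fixed, independently of $x$ and $y$. I would then take $C' = C + T$, a finite union of translates of $C$, hence compact. For any $\varphi \in C$ and any $w'_j \in \BW$ the point $\varphi + \w{w'_j}$ lies in $C'$, so that $|f_i(\varphi + \w{w'_j})| \leq \bN{x}_{C'}$ when the maximum norm in $\BW$ is used on $W$. Likewise $|g_j(\varphi)| \leq \bN{y}_{C}$, and $|p_k^{ij}(\xi^\varphi)|$ is uniformly bounded on $C$ by a constant $M$ depending only on $C$ and the finitely many structure polynomials $p_k^{ij}$ of the product of $\U$ in the basis $\BW$.

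Evaluating the formula for $xy$ at $\varphi \in C$ and applying the max norm in $\BW$ then gives $\bN{\varphi_*(xy)} \leq (\dim W)^2\, M\, \bN{x}_{C'}\, \bN{y}_{C}$, and taking the supremum over $\varphi \in C$ yields $\bN{xy}_{C} \leq \lambda_C\, \bN{x \otimes y}_{C' \times C}$ with $\lambda_C = (\dim W)^2 M$, using the identification $\bN{x \otimes y}_{C' \times C} = \bN{x}_{C'}\, \bN{y}_{C}$ deduced from Proposition \ref{completion} and the compatibility of the max norm on $\BW \otimes \BW$ with tensor products.

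The main conceptual point is recognizing that only finitely many weight shifts arise, so that a single compact set $C'$ (depending only on $C$ and the fixed data of $W$) controls all the shifted evaluations of $x$; once this is set up, the rest is a routine multilinear estimate relying on continuity of polynomial maps on compacts. Everything else—the bound on the structure constants $p_k^{ij}$, the factorization of the product semi-norm, and the max-norm identities on $\BW^{\otimes 2}$—is standard.
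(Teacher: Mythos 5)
Your proof is correct and follows essentially the same route as the paper: decompose $x$ and $y$ in the basis $\BW$, commute the Cartan part of $x$ past the $W$-part of $y$ via Equation \eqref{eq:weight} to produce the shifted arguments $f_i(H+\w{w'_j})$, take $C' = C + \{\w{w}: w\in\BW\}$ to absorb the finitely many weight shifts, and close with a multilinear estimate using the submultiplicativity of the max norm on $\BW$. The paper writes its constant as $\lambda_C = \sum_{i,j}\bN{w_i w_j}_C$ rather than expanding the products $w_iw_j$ in structure polynomials $p_k^{ij}(K)$ and bounding them uniformly, but this is a cosmetic difference; both are finite constants depending only on $C$ and the fixed algebra data, and your $(\dim W)^2 M$ is simply a coarser bound for the same quantity.
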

\begin{proof}
  Let $\Lambda_W=\qn{\lambda_i,i=1\cdots N}$ which is a
  finite set and $C'=C+\Lambda_W$ which is also compact.  Then we write
  $$\varphi_*(xy)=\varphi_*\bp{\sum_iw_ix_i(H_1,\ldots,H_\rk)
    \sum_jw_jy_j(H_1,\ldots,H_\rk)}$$
  $$=\varphi_*\bp{\sum_{i,j}w_iw_jx_i(H_1+a_1(\lambda_j),\ldots,H_\rk+a_\rk(\lambda_j))y_j(H_1,\ldots,H_\rk)}$$
  $$=\sum_{i,j}\varphi_*\bp{w_iw_j}.(\varphi+\lambda_j)_*(x_i).\varphi_*(y_j).$$
  Then
  \begin{align*}
    \bN{xy}_C&=\sup_{\varphi\in C}\bN{\varphi_*(xy)}\le\sup_{\varphi\in C}\sum_{i,j}\bN{\varphi_*\bp{w_iw_j}}.|(\varphi+\lambda_j)_*(x_i)|.|\varphi_*(y_j)|\\
             &\le\sum_{i,j}\sup_{\varphi\in C}\bN{\varphi_*\bp{w_iw_j}}
               \sup_{\varphi\in C'}|\varphi_*(x_i)|\sup_{\varphi\in C}|\varphi_*(y_j)|\\
             &\le\bN{x}_{C'}\bN{y}_C\sum_{i,j}\bN{w_iw_j}_C,
  \end{align*}
  and one can take $\lambda_C=\sum_{i,j}\bN{w_iw_j}_C$ so that
  $ \bN{xy}_C\le\lambda_{C} \bN{ x }_{C'} \bN{ y }_{C}= \lambda_{C}
  \bN{ x \otimes y }_{C'\times C}$.
\end{proof}
By Proposition \ref{completion} we have
$\mathcal{U}^H\widehat{\otimes} \mathcal{U}^H \simeq W^{\otimes
  2}\otimes \Ce(H_{i,j})$ where
$H_{i, 1}= H_i \otimes 1,H_{i, 2}=1\otimes H_i$ for $i=1,\ldots,\rk$ and
the $H_{i,j}$ are seen as coordinates functions on
$\mathfrak{H}^*\times \mathfrak{H}^*$.  If
$C_2 \subset \H^* \times \H^*$ is compact, we have the associated
seminorm
\begin{equation}\label{norm of x, dn 2}
\bN{x}_{C_2}=\sup_{\varphi \in C_2}\bN{\varphi_*x}
\end{equation}
where $\bN{\cdot}$ is the maximum norm in the basis $\BW^{\otimes 2}$ of $W^{\otimes 2}$.
\begin{Lem}\label{L:contCopro}
  For each compact set $C_2\subset \H^* \times \H^*$, there exists a
  compact set $C\subset \H^*$ and $\lambda_{C_2} \in \mathbb{R}$ such
  that $\forall x \in \UH$, we have
  $$\bN{\Delta x}_{C_2}\leq \lambda_{C_2}\bN{ x}_{C}.$$
\end{Lem}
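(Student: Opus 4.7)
The plan is to evaluate $(\varphi_1,\varphi_2)_*\Delta(x)$ explicitly for an arbitrary $(\varphi_1,\varphi_2)\in C_2$ using a uniform finite expansion of the coproduct on the basis $\BW$, then read off the compact set and the constant. Write $x=\sum_i w_i f_i(H_1,\ldots,H_\rk)$ uniquely with $w_i\in\BW$ and $f_i\in\C[\Lambda^*]\otimes S\H$. Since $\BW$ is finite and each $\Delta(w_i)\in\U\otimes\U=W\otimes\C[\Lambda^*]\otimes W\otimes\C[\Lambda^*]$, fix once and for all a finite expansion
\[
\Delta(w_i)=\sum_{\alpha\in A_i}c_{i,\alpha}\,(w_{i,\alpha}'K^{a_{i,\alpha}'})\otimes(w_{i,\alpha}''K^{a_{i,\alpha}''}),
\]
with $c_{i,\alpha}\in\C$, $w_{i,\alpha}',w_{i,\alpha}''\in\BW$ and $a_{i,\alpha}',a_{i,\alpha}''\in\Lambda^*$. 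This finite data depends only on $\BW$, not on $x$.

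Because each $H_j$ is primitive, $\Delta(f_i(H_1,\ldots,H_\rk))=f_i(H^{(1)}+H^{(2)})$, where $H^{(k)}_j$ denotes $H_j$ in the $k$-th tensor factor. Combining and using that in each tensor factor the weight vector already sits to the left of the Cartan elements, which mutually commute, evaluation at $(\varphi_1,\varphi_2)$ produces no commutation shift and yields
\[
(\varphi_1,\varphi_2)_*\Delta(x)=\sum_{i,\alpha}c_{i,\alpha}\,\xi^{a_{i,\alpha}'(\varphi_1)+a_{i,\alpha}''(\varphi_2)}\,f_i(\varphi_1+\varphi_2)\,w_{i,\alpha}'\otimes w_{i,\alpha}''\in W^{\otimes 2}.
\]

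I would then take $C=\{\varphi_1+\varphi_2:(\varphi_1,\varphi_2)\in C_2\}\subset\H^*$, the image of $C_2$ under the continuous addition map, which is compact. For $(\varphi_1,\varphi_2)\in C_2$ one has $|f_i(\varphi_1+\varphi_2)|\leq\bN{(\varphi_1+\varphi_2)_*x}\leq\bN{x}_C$, and $|\xi^{a_{i,\alpha}'(\varphi_1)+a_{i,\alpha}''(\varphi_2)}|$ is bounded on the compact $C_2$ by a constant $\kappa_{i,\alpha}$ depending only on the fixed data and $C_2$. Using $\bN{w\otimes w'}=1$ for basis vectors of $\BW^{\otimes 2}$ together with the triangle inequality, one gets $\bN{(\varphi_1,\varphi_2)_*\Delta(x)}\leq\lambda_{C_2}\bN{x}_C$ with the finite sum $\lambda_{C_2}=\sum_{i,\alpha}|c_{i,\alpha}|\kappa_{i,\alpha}$. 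The argument is essentially bookkeeping; the point worth noting---in contrast with Lemma~\ref{bdt}---is that no enlargement of $C$ by weights of $\BW$ is needed here, because the primitivity of the $H_j$ absorbs all translation structure directly into the single argument $\varphi_1+\varphi_2$ of $f_i$.
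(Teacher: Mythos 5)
Your proof is correct and follows essentially the same route as the paper's: both take $C$ to be the image of $C_2$ under the sum map $\sigma(\varphi_1,\varphi_2)=\varphi_1+\varphi_2$, use primitivity of the $H_j$ to write $\Delta(f_i(H))=f_i(H^{(1)}+H^{(2)})$, and bound via the finitely many coefficients arising from $\Delta(w_i)$ for $w_i\in\BW$. The only cosmetic difference is that you expand $\Delta(w_i)$ explicitly in a basis and sum absolute values, whereas the paper packages the same bound as $\lambda_{C_2}=\sum_i\bN{\Delta w_i}_{C_2}$; your closing observation that no enlargement $C'=C+\Lambda_W$ is needed (unlike Lemma~\ref{bdt}) is accurate.
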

\begin{proof}
  Let $\sigma:\H^*\times \H^*\to\H^*$ be the sum
  $(\vp,\vp')\mapsto \vp+\vp'$ and $C=\sigma(C_2)$ which is a compact
  in $\H^*$.  Let $x=\sum_iw_ix_i(H_1,\ldots,H_\rk)$ be written in the
  basis $\BW$, then
  $$\bN {\Delta x}_{C_2}=
  \bN {\sum_i(\Delta w_i) x_i\bp{H_{1,1}+H_{1,2},\ldots,H_{\rk,1}+H_{\rk,2}}}_{C_2}$$
  $$\leq\sum_i\sup_{\varphi \in C_2}\bN{\varphi_*\Delta w_i}
  \sup_{\varphi \in C_2}|\varphi_*x_i\bp{H_{1,1}+H_{1,2},\ldots,H_{\rk,1}+H_{\rk,2}}|$$
  $$\leq\sum_i\bN{\Delta w_i}_{C_2}
  \sup_{\varphi \in C}|\varphi_*x_i(H_1,\ldots,H_\rk)|
  \leq\bN {x}_{C}\sum_i\bN{\Delta w_i}_{C_2},$$
  and one can take $\lambda_{C_2}=\sum_i\bN{\Delta w_i}_{C_2}.$
\end{proof}
\begin{Lem}\label{L:contAnti}
  For each compact set $C \subset \mathfrak{H}^{*}$ there exists a
  compact set $C' \subset \mathfrak{H}^{*}$ and a constant
  $\lambda_C$ such that
  $$\bN{S(x)}_{C}\leq \lambda_C \bN{x}_{C'}
  \text{ for } x \in \UH.$$
\end{Lem}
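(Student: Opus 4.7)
The strategy mirrors that of Lemma \ref{bdt}: decompose $x$ in the basis $\BW$, apply the antipode term-by-term using the explicit formulas, move the resulting Cartan factors past $W$ via Equation \eqref{eq:weight}, and then bound each factor by a seminorm evaluated on a translate of $C$.

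First, since $S$ restricts to $\U\to\U$ and $\U=W\otimes_\C\C[\Lambda^*]$ is finite-dimensional over the commutative subalgebra $\C[\Lambda^*]$, for each basis element $w_i\in\BW$ we can write
\begin{equation*}
S(w_i)=\sum_j w_j\, s_{ji}(H_1,\ldots,H_\rk),
\end{equation*}
where each $s_{ji}\in\C[\Lambda^*]$ is viewed, via $K_a\mapsto\xi^{H_a}$, as an element of $\Ce(\H^*)$, and the sum has only finitely many nonzero terms (with $|w_j|=-|w_i|$). Combined with $S(H_a)=-H_a$ and $S(ab)=S(b)S(a)$, this yields, for $x=\sum_i w_i\,x_i(H_1,\ldots,H_\rk)\in\UH$,
\begin{equation*}
S(x)=\sum_{i,j} x_i(-H_1,\ldots,-H_\rk)\,w_j\,s_{ji}(H_1,\ldots,H_\rk).
\end{equation*}

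Second, using Equation \eqref{eq:weight} repeatedly to commute the polynomial (or entire function) $x_i(-H_1,\ldots,-H_\rk)$ past the homogeneous vector $w_j$ introduces a shift by the weight $|w_j|\in\Lambda\subset\H^*$:
\begin{equation*}
S(x)=\sum_{i,j} w_j\,x_i\bigl(-H_1-a_1(|w_j|),\ldots,-H_\rk-a_\rk(|w_j|)\bigr)\,s_{ji}(H_1,\ldots,H_\rk).
\end{equation*}
Set $\Lambda_W=\{|w_k|:w_k\in\BW\}$, which is a finite subset of $\H^*$, and let $C'=-C-\Lambda_W\subset\H^*$, which is compact.

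Third, evaluating at $\varphi\in C$, the shifted argument $(-\varphi(H_a)-a(|w_j|))_a$ is precisely the evaluation of the $H_a$'s by the element $-\varphi-|w_j|\in C'$. Using the fact that $\BW^{\otimes\bullet}$-max norms are submultiplicative, we therefore obtain
\begin{equation*}
\bN{\varphi_*S(x)}\leq \sum_{i,j} \bN{w_j}\,|s_{ji}(\varphi(H))|\sup_{\psi\in C'}|x_i(\psi(H))|\leq \Bigl(\sum_{i,j}\bN{s_{ji}}_{C}\Bigr)\bN{x}_{C'},
\end{equation*}
since for each fixed $i$ one has $\sup_{\psi\in C'}|x_i(\psi(H))|\leq \bN{x}_{C'}$ (by the very definition of the max norm in $\BW$). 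Taking $\lambda_C=\sum_{i,j}\bN{s_{ji}}_C$, which is finite because each $s_{ji}\in\Ce(\H^*)$ is bounded on the compact set $C$ and the sum is finite, gives $\bN{S(x)}_C\leq \lambda_C\bN{x}_{C'}$.

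The only delicate step is keeping track of the shift coming from pushing the Cartan factors past the basis elements of $W$; once the shift is correctly identified, the compactness of $C'=-C-\Lambda_W$ is immediate because $\Lambda_W$ is finite, and the finiteness of $\lambda_C$ follows from the finiteness of $\BW$ and the continuity of the $s_{ji}$ on $C$.
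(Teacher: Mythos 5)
Your proof is correct and takes essentially the same route as the paper's: expand $x$ in $\BW$, use $S(x_i(H))=x_i(-H)$ and $S(w_i)$, commute the resulting entire function past $W$ via Equation \eqref{eq:weight} (which shifts the evaluation point by a weight in $\Lambda_W$), and take $C'=-C-\Lambda_W$; the paper simply leaves $S(w_i)$ unexpanded and bounds by $\sum_i\bN{S(w_i)}_C$ instead of $\sum_{i,j}\bN{s_{ji}}_C$. One small slip: your parenthetical ``$|w_j|=-|w_i|$'' is wrong—applying the anti-homomorphism $S$ to $K^aw_iK^{-a}=\xi^{a(|w_i|)}w_i$ gives $K^aS(w_i)K^{-a}=\xi^{a(|w_i|)}S(w_i)$, so $S$ preserves the $\Lambda$-grading and $|w_j|=|w_i|$; fortunately your commutation formula keeps $|w_j|$ symbolic and $C'=-C-\Lambda_W$ already accommodates every weight of a basis vector, so the estimate is unaffected.
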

\begin{proof}
  Let $C'=-\Lambda_W-C=\bigcup_i-(\lambda_i+C)$ which is compact.  Let
  $x=\sum_iw_ix_i(H_1,\ldots,H_\rk)$ be written in the basis $\BW$, then
  \begin{align*}
    \bN{S(x)}_{C}&=\bN{\sum_ix_i(-H_1,\ldots,-H_\rk)S(w_i)}_{C}\\
                 &\leq\bN{\sum_iS(w_i)
                   x_i(-a_1(\lambda_i)-H_1,\ldots,-a_\rk(\lambda_i)-H_\rk)}_{C}\\
                 &\leq\bN{x}_{C'}\sum_i\bN{S(w_i)}_{C}.
  \end{align*}
\end{proof}

\begin{proof}[Proof of Theorem \ref{main theorem 1}] Recall $\wh\UH$
  is the completion of $\UH$.  Therefore, to show that $\wh\UH$ is a
  topological Hopf algebra it is enough to show the structure
  morphisms of the Hopf algebra $\UH$ are continuous for the topology
  of uniform convergence on compact sets:  This is obvious for the unit and follows for the counit
  as $\ve(x(H_1,\ldots,H_\rk))=x(0,\ldots,0)$.  Lemmas \ref{bdt},
  \ref{L:contCopro} and \ref{L:contAnti} combined with Equation
  \eqref{E:checking continuity of f} imply the continuity for the
  product, the coproduct and the antipode, respectively.

  Finally, we need to show that $\RR$ is a universal R-matrix and
  $\theta$ is a twist.  Properties \eqref{dkcm}--\eqref{dkcm3} follow
  from combining the fact that $\wt\HH$ is the conjugation by $\HH$
  and  $\check\RR$ satisfies Relations \eqref{eq:Rdelta}--\eqref{eq:deltaR2}.
  
  Next we prove Axiom \eqref{dkcm4}. Let $u=\sum_i S(b_i) a_i \in \wh\UH$ be the Drinfeld element where
  $\RR=\sum_i a_i\otimes b_i \in \UH\hotimes\UH$.  Then following
  \cite[\S VIII]{ChKa95} we have $u$ is invertible with inverse
  $u^{-1}=\sum_i b_iS^2(a_i)$.
  Then
  $$S(u^{-1})=\sum_i S^3(a_i)S(b_i)=\sum_i S^2(a_i)b_i=g\theta.$$
  But $\RR=\HH\check{\RR}$ where
  $\check{\RR}=\sum_ix_i\otimes y_i\in\U \otimes\U$. Then (recall $x_i$ and
  $y_i$ have opposite degree)
  \begin{align}
    \RR&=\sum_i\wt\HH(1\otimes y_i)\cdot\HH\cdot (x_i\otimes1)
    =\sum_i(B_{|x_i|}^{-1}\otimes y_i)\cdot\HH\cdot (x_i\otimes1)\nonumber\\
    \label{eq:Ra1}&=\sum_i(1\otimes y_i)\cdot\HH\cdot (B_{|x_i|}^{-1}x_i\otimes1)
  \end{align}
  and similarly
  \begin{equation}
    \RR=\sum_i(x_i\otimes 1)\cdot\HH\cdot(1\otimes B_{|x_i|}y_i)\\
    \label{eq:Ra2}
  \end{equation}
  thus from \eqref{eq:Ra1},
  \begin{equation}
    \label{eq:sui}
    S(u^{-1})=m\bp{\HH}\sum_iB_{|x_i|}^{-1}S^2(x_i)y_i
    =m\bp{\HH}g\sum_ix_ig^{-1}y_iB_{|x_i|}^{-1}
  \end{equation}
  and from \eqref{eq:Ra2}, one gets
  \begin{equation}
    \label{eq:ui}
    u^{-1}=m\bp{\HH}\sum_iB_{|x_i|}y_iS^2(x_i)=m\bp{\HH}g^{-1}\sum_iy_igx_iB_{|x_i|}.
  \end{equation}
  Hence Relation \eqref{eq:chR-twist} implies that
  $\theta=g^{-1}S(u^{-1})=gu^{-1}$ and property \eqref{dkcm4} follows
  from $g^{-1}S(u^{-1})=S(u^{-1})g^{-1}=S(gu^{-1})=S(\theta)$.

  To prove
  the last two equalities we follow the proof given in \cite[\S
  VIII]{ChKa95} for braided Hopf algebras.
  In particular, the element $u$ satisfies
  $$
  \Delta(u)=(\RR_{21}\RR)^{-1}(u \otimes u)=(u \otimes u)(\RR_{21}\RR)^{-1}
  \;\; \text{ and } \;\; \ve(u)=1.
  $$
  Then as $\theta=gu^{-1}$, we have 
  \begin{align*}
    \Delta(\theta)= \Delta(gu^{-1})
    &=\Delta(g)\Delta(u^{-1})\\
    &=(g\otimes g)((\RR_{21}\RR)^{-1}(u \otimes u))^{-1}\\
    &=(g\otimes g)(u^{-1} \otimes u^{-1})(\RR_{21}\RR)\\
    &=(\theta\otimes \theta)(\RR_{21}\RR)=(\RR_{21}\RR)(\theta\otimes \theta).
  \end{align*}
  This proves Equation \eqref{dkcm5}.  The proof of Equation \eqref{dkcm6} is similar.  
\end{proof}

\subsection{Category of module}
Let $\wh\U^H$-mod be the category of topological modules (i.e. object
of the category of complete nuclear space equipped with
$\wh\U^H$-module maps in this catetegory).
\begin{Prop}
  $\cat^H$ is a full subcategory of $\wh\UH$-mod.
\end{Prop}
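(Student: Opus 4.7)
The plan is to send a weight module $V\in\cat^H$ to itself, equipped with an extended topological action of $\wh\UH$, and to check that this assignment is fully faithful.

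The key observation is that if $V\in\cat^H$ then the weight decomposition $V=\bigoplus_{\mu\in\H^*}V_\mu$ is a finite direct sum, so the joint spectrum of the operators $H_1,\ldots,H_\rk$ on $V$ is a finite set $\Sigma_V\subset\H^*$. Writing an element of $\wh\UH$ in the form $u=\sum_{i=1}^n w_i x_i(H_1,\ldots,H_\rk)$ with $w_i\in\BW$ and $x_i\in\Ce(\H^*)$, I would define its action on $v\in V_\mu$ by
\begin{equation*}
u\cdot v=\sum_{i=1}^n x_i(\mu(H_1),\ldots,\mu(H_\rk))\,w_i\cdot v,
\end{equation*}
that is, by functional calculus on the finite spectrum $\Sigma_V$. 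This is well defined because each $x_i$ is entire so the scalar evaluation makes sense, and because the sum over $i$ is finite. The action agrees with the original $\UH$-action on the dense subalgebra $\UH\subset\wh\UH$ since the defining relation $K^a=\xi^{H_a}$ of a weight module (Equation \eqref{eq:wm}) is exactly the compatibility that makes the Laurent polynomial embedding $\C[\Lambda^*]\hookrightarrow\Ce(\H^*)$ send the $\UH$-action to this functional-calculus action.

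Next I would verify that this extended action is continuous, which is essentially automatic: since $V$ is finite dimensional, $\End_\C(V)$ is finite dimensional, and the map $\Ce(\H^*)\to\End_\C(V)$, $x\mapsto\bigoplus_{\mu\in\Sigma_V}x(\mu(H_1),\ldots,\mu(H_\rk))\Id_{V_\mu}$, is continuous because it factors through evaluation at the finite set $\Sigma_V$. Tensoring with the finite-dimensional $W$ preserves continuity, so the action map $\wh\UH\times V\to V$ is continuous. The associativity and unit axioms for this action pass from the dense subalgebra $\UH$ to $\wh\UH$ by continuity.

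For full faithfulness, given $V,V'\in\cat^H$ and a $\UH$-linear map $f:V\to V'$, the source and target are finite dimensional so $f$ is automatically continuous. Since $\UH$ is dense in $\wh\UH$ and both actions on $V$ and $V'$ are continuous, $\UH$-linearity forces $\wh\UH$-linearity. Conversely any $\wh\UH$-linear map restricts to a $\UH$-linear map, giving a bijection on Hom-sets. The only subtle point — and the place to be careful — is that the $\H$-action on $V$ must genuinely be semi-simple (this is part of the definition of weight module) for the functional calculus to be well defined; without semi-simplicity one would only have a Jordan-type calculus and Equation \eqref{eq:wm} could fail to extend $K^a$ as the exponential. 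Given the axioms defining $\cat^H$, this presents no real obstacle.
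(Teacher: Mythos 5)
Your argument is essentially the paper's: both hinge on the observation that a finite-dimensional weight module has a finite (hence compact) set of weights, so the $\UH$-action on $V$ is continuous and extends by density to $\wh\UH$. The paper's one-line proof stops there; you add two genuinely useful pieces of detail — the explicit functional-calculus formula for the extended action, making the continuity claim transparent rather than asserted, and an explicit verification of fullness and faithfulness (that $\UH$-linearity of a map between finite-dimensional modules already forces continuous $\wh\UH$-linearity, and conversely), which the paper's proof leaves implicit. Your closing caveat about semi-simplicity of the $\H$-action is also correct and correctly dispatched by the definition of weight module.
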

\begin{proof}
  Since a module $V$ of $\cat^H$ is finite dimensional, its set of
  weights is compact and the action of $\UH$:
  $$\rho_V:\UH\otimes V\to V$$
  is continuous and extend to a continuous map
  $\rho_V:\wh\UH\otimes V\to V$.  
\end{proof}

\subsection{Power elements}\label{SS:PowerElements}
Recall
\begin{align*}
  \Lambda&\simeq\Z^\rk,&\Lambda^*&=\Hom_{\Z}(\Lambda,\Z)=\oplus_i\Z H_i,\\
  \H^*&=\Lambda\otimes_\Z{\C},&\H&=\Lambda^* \otimes_{\Z} \C=\oplus_i\C H_i.
\end{align*}
Let
$$G=\H^*/\Lambda\simeq (\C/\Z)^\rk\qquad \text{ and }\qquad G'=\H/\Lambda^* \simeq (\C/\Z)^\rk.$$

Also, let 
$$\H^{(n)}=(\H\otimes 1\otimes\cdots\otimes1) \oplus (1\otimes\H\otimes 1\otimes\cdots\otimes1) \oplus\cdots  \oplus ( 1\otimes\cdots\otimes1\otimes\H)$$ 
which is a subset of $ (S\H)^{\otimes n}\subset{(\UH)}^{\otimes n}$.
The elements of $\H^{(n)}$ are called \emph{linear}.
For $i=1\cdots\rk$ and $j=1\cdots n$ let  $H_{i,j}$ be the element of $\H^{(n)}$ which is 
$H_i$ in the $j$th direct sum (or tensor product) of $\H^{(n)}$.  The elements
$\{H_{i,j}:i=1\cdots\rk,j=1\cdots n\}$ form a basis of $\H^{(n)}$.  
Recall $\widehat{\UH}\simeq W \otimes_{\C} \Ce(H_1, \ldots, H_\rk)$ and it follows that
$$
(\UH)^{\wh\otimes n}\simeq \Ce(H_{i,j})\otimes_{\C} W^{\otimes n} .
$$

An element of $(\UH)^{\wh\otimes n}$  the form $\xi^H$ for some ${H\in\H^{(n)}}$ is called  {\em power linear}.    The
set $\mathcal L_n$ of power linear elements of $(\UH)^{\wh\otimes n}$
form a commutative Lie group among invertible elements of
$(\UH)^{\wh\otimes n}$.  Furthermore, we have the following two facts: 1) the elements of
$\mathcal L_1$ are group-like and 2) 
$\mathcal L_n=(\mathcal L_1)^{\otimes n}$. The complex vector space $\H^{(n)}$ contains a $\Z$-lattice
$\Lambda^{*(n)}=\bigoplus_{i,j}\Z {H_{i,j}}$.
The
exponential of elements of $\Lambda^{*(n)}$ form 
a multiplicative subgroup of $\mathcal L_n$ 
generated by the elements $K_{i,j}=\xi^{H_{i,j}}$.

We say a finite sum $\sum_\alpha H_\alpha H'_\alpha$ in ${(\UH)}^{\otimes n}$, for some $H_\alpha,H'_\alpha\in\Lambda^{*(n)}$ is {\em quadratic}.
 Similarly, elements of the set 
$$\mathcal Q_n=\qn{\xi^q:q\text{ in }{(\UH)}^{\otimes n}\text{ is quadratic}}$$
are called {\em power quadratic}.
For example, the element $\HH=\xi^\Q$ is a power quadratic element in
$\mathcal Q_2$.

Let us write $h_\bullet=\{H_{i,j}\}$ for the vector of coordinates functions. If
$q(h_\bullet)$ is a quadratic element then let
\begin{equation}
  \label{eq:polarisation}
  \wt q(h_\bullet,h_\bullet')=q(h_\bullet+h_\bullet')-q(h_\bullet)-q(h_\bullet')
\end{equation}
be the associated integral symmetric bilinear form such that
$\wt q(h_\bullet,h_\bullet)=2q(h_\bullet)$.  For example, if
$q(h_\bullet)=H_{1,1}H_{3,2}=H_1\otimes H_3$ and
$q(h'_\bullet)=H'_{1,1}H'_{3,2}$ are in $\Lambda^{*(2)}$ then
\begin{align*}\wt q(h_\bullet,h_\bullet')&=(H_{1,1}+H'_{1,1})(H_{3,2}+H'_{3,2})-H_{1,1}H_{3,2}-H'_{1,1}H'_{3,2}\\&=H_{1,1}H'_{3,2} +H'_{1,1}H_{3,2}.
\end{align*}
Remark that the notation $q(h_\bullet,h_\bullet')$ makes easy the
change of variables.  As such a function can be interpreted as a
function on $(\H^{(n)})^*\times(\H^{(n)})^*$, if
$\omega\in(\H^{(n)})^*$, we denote $q(\omega,h_\bullet')$ the function
$q(h_\bullet,h_\bullet')$ where the variables $h_\bullet$ has been
evaluated with $\omega$ i.e., we substitute $H_{i,j}$ with
$\omega(H_{i,j})\in\C$.
 
Since $q$ is an integral linear combination of the elements $H_{i,j}$,
we have
\begin{Lem}\label{lemma transfers quadratic to linear}
Let $q\in (\UH)^{\otimes n}$ be quadratic.  We have (1) if $\omega\in(\H^{(n)})^*\simeq (\H^*)^n$ then $\wt q(\omega,h_\bullet)$
  defines an element of $\H^{(n)}$, (2) furthermore, if 
  $\omega\in\Lambda^n$, then $\wt q(\omega,h_\bullet)\in(\Lambda^*)^n$ and (3)
 $q$ induces a map $G^n\to{G'}^n$ given by
  \begin{equation}
    \label{eq:G'}
    \bar\omega\mapsto \wt q(\bar\omega,h_\bullet)\in (G')^n.
  \end{equation}
\end{Lem}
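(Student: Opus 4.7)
The plan is to reduce each assertion to a direct computation in the basis $\{H_{i,j}\}$ of $\H^{(n)}$, using that a quadratic element $q\in(\UH)^{\otimes n}$ is by definition a $\Z$-linear combination of products $H_{i,j}H_{k,l}$, so that in coordinates
\[
q(h_\bullet)=\sum_{(i,j),(k,l)} m_{(i,j),(k,l)}\,H_{i,j}H_{k,l}
\]
with $m_{(i,j),(k,l)}\in\Z$. Polarizing as in Equation \eqref{eq:polarisation} gives
\[
\wt q(h_\bullet,h'_\bullet)
=\sum_{(i,j),(k,l)} m_{(i,j),(k,l)}\bigl(H_{i,j}H'_{k,l}+H'_{i,j}H_{k,l}\bigr),
\]
which is the symmetric $\Z$-bilinear form associated to $q$, viewed as a polynomial function on $\H^{(n)}\times\H^{(n)}$ (and hence on $(\H^{(n)})^*\times(\H^{(n)})^*$ under the standard pairing).

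For \textbf{(1)}, I would observe that fixing one slot with $\omega\in(\H^{(n)})^*$ produces a $\C$-linear form in $h'_\bullet$: explicitly,
\[
\wt q(\omega,h'_\bullet)=\sum_{(k,l)}\Bigl(\sum_{(i,j)} m_{(i,j),(k,l)}\,\omega(H_{i,j})\Bigr)H'_{k,l}
+\sum_{(i,j)}\Bigl(\sum_{(k,l)} m_{(i,j),(k,l)}\,\omega(H_{k,l})\Bigr)H'_{i,j},
\]
which is manifestly a $\C$-linear combination of the $H_{i,j}$'s and therefore an element of $\H^{(n)}=\bigoplus_{i,j}\C H_{i,j}$.

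For \textbf{(2)}, the key is integrality: if $\omega\in\Lambda^n\subset(\H^*)^n$, then each $\omega(H_{i,j})$ lies in $\Z$ because $H_{i,j}\in\Lambda^{*(n)}$ pairs integrally with $\Lambda^n$. Consequently every coefficient in the displayed expression for $\wt q(\omega,h'_\bullet)$ is an integer, and the output lies in the sub-lattice $\Lambda^{*(n)}=(\Lambda^*)^n$. For \textbf{(3)}, I would use $\Z$-bilinearity of $\wt q$ to note that if $\omega-\omega'\in\Lambda^n$, then $\wt q(\omega,h'_\bullet)-\wt q(\omega',h'_\bullet)=\wt q(\omega-\omega',h'_\bullet)$, which lies in $(\Lambda^*)^n$ by (2). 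Therefore the assignment $\omega\mapsto\wt q(\omega,h'_\bullet)\bmod(\Lambda^*)^n$ descends to a well-defined map $G^n=(\H^*)^n/\Lambda^n\to\H^{(n)}/\Lambda^{*(n)}=(G')^n$.

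There is no real obstacle; the whole content is bookkeeping under the identifications $\H^{(n)}\simeq\bigoplus_{i,j}\C H_{i,j}$, $(\H^{(n)})^*\simeq(\H^*)^n$, and $\Lambda^{*(n)}\simeq(\Lambda^*)^n$. The only point requiring any care is the integrality in (2), which is what propagates through (3) to yield a map into the quotient $(G')^n$ rather than merely into $\H^{(n)}$.
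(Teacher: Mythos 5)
Your proof is correct and follows exactly the route the paper takes: the paper offers no formal proof, only the one-line remark preceding the lemma that $q$ has integer coefficients when expanded in the basis $\{H_{i,j}\}$ of $\Lambda^{*(n)}$, which is precisely the integrality observation you isolate and propagate through (1)--(3). Your write-up simply spells out the bookkeeping that the paper leaves implicit.
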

\begin{rmq}\label{rk:powerG'}
  If $l$ is a linear element, the space
  $\xi^l\U^{\otimes n}\subset \mathcal L_n\U^{\otimes n}$ only depends
  on the class $\wb l$ of $l$ in $(G')^n$.
  Indeed, if $l'$ and $l$ are in the same class in $(G')^n$ then
  $\xi^{l'-l}$ is a unit in $\U^{\otimes n}$ so
  $\xi^{l'-l}\U^{\otimes n}=\U^{\otimes n}$.
  In particular, if $q$
  is quadratic and $\bar\omega\in G^n$ then
  $\xi^{\wt q(\bar\omega,h_\bullet)}\U^{\otimes n}$ is the
  space $\xi^l\U^{\otimes n}$ for any representant $l$ of
  $\wt q(\bar\omega,h_\bullet)\in (G')^n$.
\end{rmq}

The following lemma generalizes the conjugation by $\HH$ given by
Equation \eqref{eq:tH}:
\begin{Lem}\label{lemma changes the order of elements}
  Let $L=\xi^{l}\in\mathcal L_n$ and $Q=\xi^q\in\mathcal Q_n$ where $l\in \H^{(n)}$ is a linear element and $q\in (\UH)^{\otimes n}$ is quadratic.  Let $x$ be an
  homogeneous element of $(\UH)^{\wh\otimes n}$ of weight
  $\lambda\in\Lambda^n$.  Then
  $$LxL^{-1}=\lambda_*(L)x,$$
  and for 
  $K=\xi^{\wt q(\lambda,h_\bullet)}\in\xi^{\Lambda^{*(n)}}\subset\U^{\otimes n}$ we have
  \begin{equation}
    \label{eq:pow-com}
    QxQ^{-1}=\lambda_*(Q)xK.
  \end{equation}  
 \end{Lem}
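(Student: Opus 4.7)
The plan is to reduce both identities to the fundamental commutation rule \eqref{eq:weight} by exploiting that $S\H^{\otimes n}$ is a commutative polynomial algebra and then passing to exponentials by continuity. First, iterating \eqref{eq:weight} in each tensor slot, for any polynomial $P\in S\H^{\otimes n}$ and any homogeneous $x\in(\UH)^{\wh\otimes n}$ of weight $\lambda\in\Lambda^n$ one obtains
$$P(h_\bullet)\,x\;=\;x\,P\!\left(h_\bullet+\lambda_*(h_\bullet)\right),$$
where $\lambda_*(H_{i,j})=a_i(\lambda_j)\in\C$ is the scalar by which $H_{i,j}$ acts on the weight $\lambda$. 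By continuity of the multiplication (Lemma~\ref{bdt}) and density of polynomials inside $\Ce(\H^{(n)*})$, this identity extends to any entire function $P$ of the $H_{i,j}$; equivalently one can argue termwise on the convergent exponential series, using that $l^k x=x(l+\lambda_*(l))^k$ and similarly for $q$.

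Applying this to $L=\xi^l$ with linear $l$, the substitution produces $l+\lambda_*(l)$ with $\lambda_*(l)\in\C$. Since $l$ commutes with the scalar $\lambda_*(l)$, the exponential splits as $\xi^{l+\lambda_*(l)}=\lambda_*(L)\,\xi^l$, so $Lx=\lambda_*(L)\,x\,L$, which yields the first formula $LxL^{-1}=\lambda_*(L)\,x$.

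For $Q=\xi^q$ the polarization identity \eqref{eq:polarisation} gives
$$q\!\left(h_\bullet+\lambda_*(h_\bullet)\right)\;=\;q(h_\bullet)\,+\,\wt q(\lambda,h_\bullet)\,+\,\lambda_*(q),$$
where $\lambda_*(q)\in\C$ is a scalar and $\wt q(\lambda,h_\bullet)\in\Lambda^{*(n)}$ by Lemma~\ref{lemma transfers quadratic to linear}. All three summands lie in the commutative subalgebra $S\H^{\otimes n}$, so the exponentials split without ordering ambiguity:
$$\xi^{q(h_\bullet+\lambda_*(h_\bullet))}\;=\;\xi^q\,\xi^{\wt q(\lambda,h_\bullet)}\,\xi^{\lambda_*(q)}\;=\;\lambda_*(Q)\,Q\,K.$$
Thus $Qx=\lambda_*(Q)\,x\,QK$, and since $Q$ and $K$ both sit in $S\H^{\otimes n}$ they commute, so right-multiplying by $Q^{-1}$ gives $QxQ^{-1}=\lambda_*(Q)\,x\,K$, as claimed. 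The fact that $K$ actually lies in $\xi^{\Lambda^{*(n)}}\subset\U^{\otimes n}$ is immediate from $\wt q(\lambda,h_\bullet)\in\Lambda^{*(n)}$.

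There is no substantive obstacle: the only delicate point is the passage from polynomial to analytic substitution in the single variable $h_\bullet$, which is justified by the topology of uniform convergence on compact sets established in Theorem~\ref{main theorem 1}. Everything else reduces to a polarization computation inside the commutative algebra $S\H^{\otimes n}$.
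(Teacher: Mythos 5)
Your proof is correct and follows essentially the same route as the paper: move the exponential across $x$ via the substitution $h_\bullet\mapsto h_\bullet+\lambda$ coming from iterating Equation~\eqref{eq:weight}, then split the exponent using the polarization identity~\eqref{eq:polarisation} and commutativity of power elements. The only difference is that you make explicit the continuity/termwise argument justifying the passage from polynomials to entire functions, which the paper leaves implicit.
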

\begin{proof}
  Recall we write $h_\bullet=\{H_{i,j}\}$, $L=\xi^{l(h_\bullet)}$ and
  $Q=\xi^{q(h_\bullet)}=\xi^{\frac12\wt q(h_\bullet,h_\bullet)}$ where
  $\wt q(h_\bullet,h'_\bullet)$ is the symmetric bilinear form
  determined by $q$.  Then from Equation \eqref{eq:weight}
  we have
  \begin{align*}
    Lx&=\xi^{l(h_\bullet)}\,x=x\,\xi^{l(h_\bullet+\lambda)}=
        x\,{\xi^{l(h_\bullet)}\,\xi^{l(\lambda)}}
        =x\,L\,\lambda_*(L),\quad\text{and}\\
    Qx&
        =x\,\xi^{q(h_\bullet+\lambda)}=
        x\,{\xi^{q(h_\bullet)}\,\xi^{q(\lambda)}\,\xi^{\wt q(\lambda,h_\bullet)}}
        =x\,{Q}\,\lambda_*(Q)\,\xi^{\wt q(\lambda,h_\bullet)}.
  \end{align*}
  The result follows because $Q$ commutes with $\xi^{\wt q(\lambda,h_\bullet)}$ as they are power elements.  
\end{proof}
The following proposition records some properties of power elements,
the proof follows directly from the previous lemma and the definitions
of power elements.  
\begin{Prop}
  (1) We have inclusions of algebras:
  $$\U^{\otimes n}\subset\ang{\mathcal Q_n\U^{\otimes n}}\subset
  \ang{\mathcal Q_n\mathcal L_n\U^{\otimes n}}\subset(\UH)^{\wh\otimes
    n},$$ where $\ang{X}$ means linear combinations of elements of $X$.
  
  \noindent
  (2) The sets $\mathcal Q_n\U^{\otimes n}$ and
  $\mathcal Q_n\mathcal L_n\U^{\otimes n}$ are stable by permutation
  of the factors, by the maps $S_i=\Id\otimes S\otimes \Id$ (antipode
  at the $i$\ith position).
  
  \noindent
  (3)  For
  $\Delta_i= \Id\otimes \Delta\otimes \Id$ and
  $m_i=\Id\otimes m\otimes \Id$, we have
  $$\Delta_i\bp{\mathcal Q_n\U^{\otimes n}}\subset
  {\mathcal Q_{n+1}\U^{\otimes n+1}}\text{ and } \Delta_i\bp{\mathcal
    Q_n\mathcal L_n\U^{\otimes n}}\subset {\mathcal Q_{n+1}\mathcal
    L_{n+1}\U^{\otimes n+1}},$$ 
  $$m_i\bp {\mathcal Q_{n+1}\U^{\otimes n+1}}\subset
  {\mathcal Q_n\U^{\otimes n}}\text{ and } m_i\bp{\mathcal
    Q_{n+1}\mathcal L_{n+1}\U^{\otimes n+1}}\subset {\mathcal
    Q_n\mathcal L_n\U^{\otimes n}}.$$ 
    
  \noindent
  (4) If $V\in\cat^H$ is an homogeneous
  weight module then
  $$\Id^{\otimes n}\otimes\rho_V\bp{{\mathcal Q_{n+1}\mathcal
      L_{n+1}\U^{\otimes n+1}}}\subset\mathcal
  Q_n\mathcal L_n\U^{\otimes n}\otimes_\C \End_\C(V).$$
\end{Prop}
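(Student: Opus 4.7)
The proposition decomposes into four claims. Parts (1)--(3) are essentially bookkeeping checks whose only nontrivial ingredient is Lemma \ref{lemma changes the order of elements} (controlling how power elements commute past $\U^{\otimes n}$-factors). Part (1) is immediate: $1=\xi^0\in\mathcal Q_n\cap\mathcal L_n$ gives the middle two inclusions, and $\mathcal Q_n\mathcal L_n\U^{\otimes n}\subset(\UH)^{\wh\otimes n}$ holds by construction of the power elements inside the completion.

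For part (2), permuting tensor factors permutes the index $j$ of the basis $\{H_{i,j}\}$ of $\Lambda^{*(n)}$ and of $\H^{(n)}$ and preserves $\U^{\otimes n}$, so both product sets are stable under $\mathfrak{S}_n$. For $S_i$, primitivity of $H_k$ gives $S(H_k)=-H_k$, hence $S_i$ maps $\mathcal L_n\to\mathcal L_n$ and $\mathcal Q_n\to\mathcal Q_n$; Axiom \ref{pivotal axiom} guarantees $S(\U)\subset\U$. The only subtlety is antimultiplicativity of the antipode, which forces one to swap the order of $S_i(Q)$ and $S_i(u)$ when computing $S_i(Qu)$. Lemma \ref{lemma changes the order of elements} shows this swap introduces only a scalar $\lambda_*(Q)$ and a correction $K\in\xi^{\Lambda^{*(n)}}\subset\U^{\otimes n}$, so the product remains in $\mathcal Q_n\U^{\otimes n}$ (and similarly in $\mathcal Q_n\mathcal L_n\U^{\otimes n}$). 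Part (3) for $\Delta_i$ uses primitivity $\Delta(H_{k,i})=H_{k,i}+H_{k,i+1}$ to see that $\Delta_i$ maps $\Lambda^{*(n)}\to\Lambda^{*(n+1)}$ and hence $\mathcal Q_n\to\mathcal Q_{n+1}$ and $\mathcal L_n\to\mathcal L_{n+1}$, while $\Delta(\U)\subset\U\otimes\U$ handles the $\U^{\otimes n}$-factor. For $m_i$, the restriction of $m_i$ to the commutative subalgebra $(S\H)^{\wh\otimes n+1}$ is an algebra morphism, so $m_i(\xi^q)=\xi^{m_i(q)}$ is power quadratic in the new $n$-fold tensor; combining this with the $\U^{\otimes n+1}$-factor via Equation \eqref{eq:weight} produces shifts $a(|x_j|)\in\Z$, and the resulting exponential corrections $\xi^{(\text{integer linear})}$ lie in $\C[\Lambda^*]\subset\U$, landing the output in $\mathcal Q_n\U^{\otimes n}$.

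Part (4) is the step I expect to require the most care and will be the main obstacle. Fix $V\in\cat^H_{\bar\beta}$ and, once and for all, a representative $\mu_0\in\bar\beta$ so that every weight of $V$ writes uniquely as $\mu=\mu_0+\lambda_\mu$ with $\lambda_\mu\in\Lambda$. Applying $\Id^{\otimes n}\otimes\rho_V$ to a power quadratic or power linear element whose position-$(n+1)$ Cartan variables are contracted against a weight vector gives a scalar $\mu(\cdot)=\mu_0(\cdot)+\lambda_\mu(\cdot)$ whose two pieces are qualitatively different: the $\mu_0$-piece is a $\mu$-independent transcendental, which factors out of the sum over weights as a single power linear element of $\mathcal L_n$ (depending only on $\bar\beta$), while the $\lambda_\mu$-piece has integer exponent and yields group-like generators $K^a\in\C[\Lambda^*]\subset\U$, which assemble with the weight projectors $P_\mu\in\End_\C(V)$ into an element of $\U^{\otimes n}\otimes_\C\End_\C(V)$. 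Combined, the two give an element of $\mathcal Q_n\mathcal L_n\U^{\otimes n}\otimes_\C\End_\C(V)$. The delicate point to verify is that the $\mu_0$-dependent factor can be pulled outside the sum as a single element of $\mathcal L_n$ (which rests on the fact that only the class $\bar\beta\in G$, not a specific representative $\mu_0$, is intrinsic) and that finite-dimensionality of $V$ keeps the sum over weights inside the algebraic tensor product, so that the final result lives in $\mathcal Q_n\mathcal L_n\U^{\otimes n}\otimes_\C\End_\C(V)$ rather than in some larger completion.
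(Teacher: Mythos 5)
Your proposal is correct and takes exactly the route the paper itself indicates: the paper states only that the proposition ``follows directly from the previous lemma and the definitions of power elements,'' and your argument does indeed reduce all four parts to Lemma~\ref{lemma changes the order of elements} (equivalently, repeated use of Equation~\eqref{eq:weight} to commute Cartan elements past homogeneous elements of $\U$, which preserves integrality of the exponents) together with the bookkeeping that primitivity of the $H_k$ makes $S_i$, $\Delta_i$, and $m_i$ respect the linear and quadratic pieces. Your treatment of part~(4) via the fixed representative $\mu_0$ of $\bar\beta$, isolating a $\mu$-independent factor in $\mathcal L_n$ from the integer shifts $\lambda_\mu\in\Lambda$ that produce elements of $\xi^{\Lambda^{*(n)}}\subset\U^{\otimes n}$, is the same mechanism used elsewhere in the paper (cf.\ Remark~\ref{rk:powerG'}) and correctly fills in the detail the authors elide.
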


\section{$G$-coalgebra and Discrete Fourier transforms}\label{S:Galg_DFT}
In this section we continue an algebraic investigation of $\U$.  The results of this section will not be used until Section \ref{Section of inv of 3-manifolds}.  In particular, Section \ref{Section of inv of bichrome graphs} can be read independently.  
\subsection{Pivotal Hopf $G$-coalgebra from $\U$}
We now recall the definition of Hopf group-coalgebra from
\cite{Tu2000,Vire02}.
For coherence we will keep an additive notation for the group
$G$\footnote{The original definition of Hopf group-coalgebra uses
  more general non abelian group.}. A Hopf $G$-coalgebra is a family
$H = \{H_\alpha\}_{\alpha\in G}$ of algebras over $\C$ endowed with a
comultiplication
$\Delta = \{\Delta_{\alpha,\beta} : H_{\alpha+\beta} \to H_{\alpha}
\otimes H_{\beta}\}_{\alpha,\beta\in G}$, a counit
$\epsilon: H_0 \to \C$, and an antipode
$S = \{S_\alpha : H_\alpha \to H_{-\alpha}\}_{\alpha \in G}$ which
verify some compatibility conditions, see \cite{Oh93, Turaev10,
  Vire02}.

Let $\mathcal{Z}^0=\Span\qn{K^{\ell a}:a\in\Lambda^*}=\C[K_1^{\pm \ell},\ldots,K_r^{\pm \ell}]$, recall $K_i=K^{a_i}$ where $\{a_1,\ldots,a_r\}$ is the fixed basis of $\Lambda^*$. 
Then $\mathcal{Z}^0$ is a Hopf subalgebra of $\U$ which is in the center of $\U$.
Let $\Hom_{\text{Alg}}(\mathcal{Z}^{0},\C)$ be the group of characters
on $\mathcal{Z}^{0}$ where the multiplication is given by
$kh=(k\otimes h)\circ \Delta$ and
$k^{-1}=k\circ S|_{\mathcal{Z}^{0}}$.  This group is isomorphic to
$G=\bp{(\C/\Z)^r,+}$: Indeed for each
$(\alpha_1,\ldots,\alpha_r)\in \C^r$ denote its image in $(\C/\Z)^r$
by $\ba=(\ba_1, \cdots, \ba_r)$.  Consider the isomorphism of groups
$G\to \Hom_{\text{Alg}}(\mathcal{Z}^{0}, \C)$ given by
$\ba=(\ba_1, \cdots, \ba_r)\mapsto (K_i^{\ell}\mapsto
\xi^{\ell\alpha_i}) $ for $i=1, \cdots, r$.  We will use this
isomorphism to identify $G$ with the characters on $\mathcal{Z}^{0}$.

We now give a Hopf $G$-coalgebra structure associated to $\U$.  
For each 
$\ba=(\ba_1, \cdots,
\ba_r)\in G$, consider the ideal  $I_{\ba}$ (resp.\ $I_{\ba}^H$) of $\U$ (resp.\ $\UH$) generated by the elements
$K_i^{\ell} -\xi^{\ell\alpha_i}$ for $i=1, \cdots, r$.  Equivalently,  $I_{\ba}$ and  $I_{\ba}^H$ are the ideals generated by $\bp{z-\ba(z)}_{z\in \mathcal{Z}^{0}}$ where $\ba$ is a character under the identification $ G\to \Hom_{\text{Alg}}(\mathcal{Z}^{0}, \C)$.

Let $\U_{\ba}= \U/I_{\ba}\text{ and }\UH_{\ba}= \UH/I_{\ba}^H$ with the projection maps 
$$\pi_\ba :\U\to   \U_{\ba} \text{ and } \pi_\ba^H: \UH\to   \UH_{\ba}.
$$
  As discussed in Example 2.3 of \cite{Ha18} there exists morphisms $\Delta_{\ba,\bb}$ and $S_\ba$ given by the commutative diagrams:
\\
\hspace*{4ex}
\begin{minipage}{0.4\linewidth}
\begin{diagram}
\U &\rTo^{\Delta} &\U \otimes \U\\
\dTo_{\pi_{\ba+ \bb}} & &\dTo_{\pi_{\ba}\otimes \pi_{\bb}}\\
\U_{\ba+\bb} &\rTo^{\Delta_{\ba, \bb}} &\U_{\ba} \otimes \U_{\bb} 
\end{diagram}
\end{minipage}
\hspace*{3ex} and \hspace*{3ex}
\begin{minipage}{0.4\linewidth}
\begin{diagram}
\U &\rTo^{S} &\U \\
\dTo_{\pi_{\ba}} & &\dTo_{\pi_{-\ba}}\\
\U_{\ba} &\rTo^{S_{\ba}} &\U_{-\ba} .
\end{diagram}
\end{minipage}
\\[2ex]
making $\U_\bullet=\{\U_{\ba}\}_{\ba\in G}$ a Hopf $G$-coalgebra.   Similarly, using $\pi_\ba^H$ there exists $\Delta^H_{\ba,\bb}$ and $S^H_\ba$ making  $\{\UH_{\ba}\}_{\ba\in G}$ a Hopf $G$-coalgebra.  Remark that $\U_{\wb 0}$ and its dual are ordinary Hopf algebras.

Let us now recall some definitions from \cite{Vire02,Ha18}
\begin{Def}
  Let $H_\bullet=\{H_{\ba}\}_{\ba\in G}$ be a Hopf $G$-coalgebra.
  \begin{enumerate}
  \item $H_\bullet$ is of \emph{finite type} if for any ${\ba}\in G$,
    $\dim_\C(H_{\ba})<\infty$.
  \item A \emph{$G$-grouplike} element is a family
    $\{x_{\ba}\in H_{\ba}\}_{\ba\in G}$ such that 
    $\Delta_{\ba,\bb}(x_{\ba+\bb})=x_{\ba}\otimes x_{\bb}$, for all $\ba,\bb\in G$.
  \item A \emph{pivot} for $H_\bullet$ is a $G$-grouplike element
    $\{g_{\ba}\}_{\ba\in G}$ such that
    $S_{-\ba}S_\ba=g_\ba\cdot g_\ba^{-1}$ ,  for all $ \ba\in G$.  If 
    $H_\bullet$ has a pivot we say it is a \emph{pivotal} Hopf
    $G$-coalgebra.
  \item A \emph{right (resp. left) $G$-integral} for $H_\bullet$ is a family
    of linear forms $\lambda^R=\{\lambda^R_\ba\in H_{\ba}^*\}_{\ba\in G}$
    (resp. $\lambda^L=\{\lambda^L_\ba\in H_{\ba}^*\}_{\ba\in G}$)  such that
$$   (\lambda^R_{\ba} \otimes \Id_{H_{\bb}})\Delta_{\ba,     \bb}(x)=\lambda^R_{\ba+\bb}(x)1_{\bb},$$
$$      \text{ (resp. }
      (\Id_{H_{\ba}}\otimes \lambda^L_{\bb}) \Delta_{\ba,
      \bb}(x)=\lambda^L_{\ba+\bb}(x)1_{\ba})
$$
for all $\ba,\bb\in G$ and $x\in{H_{\ba+\bb}}$.
  \item $H_\bullet$ is \emph{unimodular} if $H_{\wb0}$ is a unimodular Hopf
    algebra i.e.
    there exists a non zero element $\coint\in H_{\wb0}$, called a \emph{two side cointegral}, such that
    $$x\coint=\coint x=\ve(x)\coint \text{ for all } x\in H_{\wb0}. $$
  \end{enumerate}
\end{Def}
From now we make the following assumption for $\U_{\wb0}$:
\begin{axiom}\label{ax:unicorn}
 The Hopf $G$-coalgebra $\U_\bullet=\{\U_{\ba},g_\ba\}_{\ba\in G}$ is unimodular.
\end{axiom}
\begin{theo}[Virelizier \cite{Vire02}]\label{Th:right-int}
  If $(H_\bullet,\coint)$ is a finite type unimodular Hopf $G$-coalgebra, then
  \begin{enumerate}
  \item the space of right $G$-integral is $1$-dimensional,
  \item the right $G$-integral $\{\lambda^R_\ba\}_{\ba\in G}$
    such that $\lambda^R_{\wb 0}(\coint)=1$ is unique,
  \item there exists a unique $G$-grouplike element $\gamma_\bullet$
    called distinguished such that
    $\{\lambda^R_\ba(\gamma_\ba\cdot)\}_{\ba\in G}$ is a left
    $G$-integral,
  \item $\lambda^R_\ba(\gamma_\ba\cdot)=\lambda^R_{-\ba}\circ S_{\ba}$
    and $(S_{-\ba}S_{\ba})^2=\gamma_\ba\cdot\gamma_\ba^{-1}$
  \item     $\lambda^R_\ba(xy)=\lambda^R_\ba(S_{-\ba}S_{\ba}(y)x)$ for all $x,y\in H_\ba$.
  \end{enumerate}
\end{theo}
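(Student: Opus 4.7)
The plan is to adapt the classical Larson--Sweedler--Radford theory of integrals for finite-dimensional Hopf algebras to the Hopf $G$-coalgebra setting, exploiting that $H_{\wb 0}$ is an ordinary finite-dimensional unimodular Hopf algebra whose integral theory is well-behaved, and then transporting the results fiberwise using the structural maps $\Delta_{\ba,\bb}$ and $S_\ba$.

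For parts (1) and (2), first observe that setting $\ba=\bb=\wb 0$ in the defining relation shows $\lambda^R_{\wb 0}$ is an ordinary right integral on $H_{\wb 0}$; the classical Larson--Sweedler theorem then gives 1-dimensionality for $\lambda^R_{\wb 0}$. To extend uniqueness to the whole family, set $\bb=-\ba$: the resulting identity $(\lambda^R_\ba\otimes\mathrm{id})\Delta_{\ba,-\ba}(c)=\lambda^R_{\wb 0}(c)1_{-\ba}$ for $c\in H_{\wb 0}$, applied to the cointegral $\coint$, lets one recover each $\lambda^R_\ba$ from the known $\lambda^R_{\wb 0}$. Conversely, given a right integral on $H_{\wb 0}$ one builds a family by this formula and verifies the $G$-integral relations hold by coassociativity of $\Delta_{\bullet,\bullet}$. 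Unimodularity ensures $\lambda^R_{\wb 0}(\coint)\neq 0$, so the normalization $\lambda^R_{\wb 0}(\coint)=1$ is achievable and unique.

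For parts (3) and (4), I would first verify, using the antipode--coproduct compatibility axiom for Hopf $G$-coalgebras, that the family $\{\lambda^R_{-\ba}\circ S_\ba\}_\ba$ is a left $G$-integral. The analog of (1) for left $G$-integrals (proved by the same reduction to $H_{\wb 0}$) then shows this family must equal $\lambda^R_\bullet(\gamma_\bullet\,\cdot)$ for a unique family $\gamma_\bullet=\{\gamma_\ba\in H_\ba\}$, giving one half of (3) and one half of (4). To see $\gamma_\bullet$ is $G$-grouplike, apply $\Delta_{\ba,\bb}$ to both sides of the identity defining $\gamma_{\ba+\bb}$ and invoke uniqueness of the left $G$-integral to conclude $\Delta_{\ba,\bb}(\gamma_{\ba+\bb})=\gamma_\ba\otimes\gamma_\bb$. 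The second identity in (4), $(S_{-\ba}S_\ba)^2=\gamma_\ba\cdot\gamma_\ba^{-1}$, is Radford's $S^4$ formula in the $G$-graded context; in the unimodular case the would-be dual modular contribution drops out, leaving only conjugation by $\gamma_\ba$.

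Finally, for part (5) the Nakayama-type identity follows from the observation that $(x,y)\mapsto\lambda^R_\ba(xy)$ is a nondegenerate Frobenius pairing on $H_\ba$ (the map $x\mapsto\lambda^R_\ba(x\,\cdot)$ being a linear isomorphism $H_\ba\to H_\ba^*$, as in the classical case), and the twist $S_{-\ba}S_\ba$ relating left to right multiplication under this pairing is computed directly by combining coassociativity of $\Delta_{\bullet,\bullet}$ with the integral property. The main obstacle throughout is not a single step but the consistent bookkeeping of the $G$-grading: the antipode components $S_\ba\colon H_\ba\to H_{-\ba}$ shift the grading, every formula must be indexed carefully, and verifying that $\gamma_\bullet$ is genuinely $G$-grouplike (rather than just a family of fiberwise grouplikes) relies on the uniqueness argument at the $G$-coalgebra level.
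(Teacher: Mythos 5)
The paper offers no argument here: its ``proof'' is a one-line citation, noting that the statement is an adapted version of Theorems 3.6, 4.2 and Lemma 4.6 of Virelizier's paper \cite{Vire02}, specialized to the case where $(G,+)$ is commutative and $H_\bullet$ is unimodular (so that the distinguished element of $H_{\wb 0}^*$ in \cite{Vire02} collapses to the counit). Your proposal, in contrast, sketches a genuine proof by transporting Larson--Sweedler--Radford integral theory to the $G$-coalgebra setting and reducing to the degree-$\wb 0$ Hopf algebra. That is essentially the strategy Virelizier himself follows, so the route is philosophically the same; the difference is only whether the work is carried out or delegated to a reference.

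There is, however, a concrete gap that recurs throughout your sketch and deserves to be flagged. When you set $\bb=-\ba$ and apply the defining relation at $c=\coint$, you obtain the vector equation
\begin{equation*}
  \lambda^R_{\ba}\bigl(\coint_{(1)}^{\ba}\bigr)\coint_{(2)}^{-\ba}
  = \lambda^R_{\wb 0}(\coint)\,1_{-\ba}
  \qquad\text{in } H_{-\ba},
\end{equation*}
but this pins down $\lambda^R_\ba$ uniquely only if the linear map $H_\ba^*\to H_{-\ba}$, $f\mapsto(f\otimes\Id)\Delta_{\ba,-\ba}(\coint)$, is injective --- equivalently, that $\Delta_{\ba,-\ba}(\coint)$ is a non-degenerate tensor. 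This is the $G$-graded analogue of the classical fact that a cointegral induces a Frobenius isomorphism $H\cong H^*$; it is not automatic, it must be proved, and it is one of the substantive inputs to Virelizier's Theorem 3.6. The same non-degeneracy is what you implicitly invoke in (3) when you assert that the left $G$-integral $\lambda^R_{-\ba}\circ S_\ba$ can be written as $\lambda^R_\ba(\gamma_\ba\,\cdot)$ for a \emph{unique} $\gamma_\ba$: surjectivity gives existence and injectivity gives uniqueness of $\gamma_\ba$, and both need the Frobenius isomorphism $H_\ba\to H_\ba^*$, $g\mapsto\lambda^R_\ba(g\,\cdot)$. If you supply that non-degeneracy lemma (or cite it explicitly), the rest of your outline --- the fiberwise reduction to $H_{\wb 0}$, deducing that $\gamma_\bullet$ is $G$-grouplike by uniqueness of left $G$-integrals, Radford's $S^4$ formula specialising under unimodularity, and the Nakayama identity in (5) --- goes through as you describe.
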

\begin{proof}
  These results are adapted versions of \cite{Vire02}, Theorem 3.6,
  Theorem 4.2 and Lemma 4.6 when the group $(G,+)$ is commutative and
  $H_\bullet$ is unimodular so that the element called in
  \cite{Vire02} the distinguish element of $H_{\wb 0}^*$ simplifies to
  the counit which acts trivially on $H_\ba$.  Remark that the
  unimodularity is not needed for (1) and (3).
\end{proof}
The Hopf $G$-coalgebras $\U_\bullet=\{\U_{\ba}\}_{\ba\in G}$ and
$\UH_\bullet=\{\UH_{\ba}\}_{\ba\in G}$ are pivotal with the pivot
given by the image $g_\ba$ of the pivotal element $g$ of $\U$ into
$\U_{\ba}$ (resp. into $\UH_{\ba}$).  Moreover, $\U_\bullet$ is of
finite type, thus it has a right $G$-integral.

\subsection{Discrete Fourier transforms}\label{ss:DFT}
In this subsection we recall the discrete Fourier transform from
\cite{Ha18a} that will be used to send the universal invariant into a
Hopf $G$-coalgebra of finite type. The results allow us to construct
an invariant of $3$-manifolds in the next subsection.

\newcommand{\Ri}{{\Ce_{[m]}}}
Let $V$ be the space $(\h^*)^m$ for some $m$.   Denote $Z_1,\ldots,Z_n$ as the coordinate functions of $V=(\h^*)^m$.  Recall $\Ce(Z_1,\ldots,Z_n)$ is the set of entire functions on $V$ which is a subalgebra of $(\UH)^{\wh\otimes m}$.  Given a
map $f: \C^n\rightarrow \C$,
we define $t_i(f)$ on $\C^n$ by
\begin{equation*}
t_i(f)(Z_1, \ldots,  Z_n)=f(Z_1,  \ldots, Z_i + 1, \ldots, Z_n) \text{ for }1\leq i \leq n.
\end{equation*}
Let $\Lat_\va=\{(\alpha_1,\ldots, \alpha_n)+\mathbb{Z}^n \}$ be the
lattice of $\C^n$ corresponding to
$\va=(\ba_1,\ldots, \ba_n)\in (\C/\mathbb{Z})^n$.  

 A function
$f(Z_1,\ldots, Z_n)\in \Ce(Z_1,\ldots,Z_n)$ is called \emph{$\ell$-periodic} in
$Z_i$ on the lattice $\Lat_\va$ if it satisfies
$f_{\mid_{\Lat_\va}}=t_{i}^{\ell}\bp f_{\mid_{\Lat_\va}}$.
The functions $\{\xi^{mZ_i}\}_{m\in \mathbb{Z}}^{i=1,\ldots, n}$ are
$\ell$-periodic and $\xi^{\ell Z_i}-\xi^{\ell \alpha_i}$ is zero on
$\Lat_\va$.
Let $I_\va$ be the ideal generated by
$\xi^{\ell Z_i}-\xi^{\ell \alpha_i}$ for $1 \leq i \leq n$.  For
$m\le n$, let $\Ri$ be the subring of entire functions that are
Laurent polynomial in the first $m$ variables:
$$\Ri=\Ce(Z_{m+1},\ldots,Z_n)[\xi^{\pm Z_1},\ldots, \xi^{\pm Z_m}]\subset
\Ce(Z_1,\ldots,Z_n).$$
Then an element of ${\Ri}/I_\va$ defines a
$\ell$-periodic map in all first $m$ variables on $\Lat_\va$.
\begin{Prop} \label{key} Let
  $f=f(Z_1,\ldots, Z_n)\in \Ce(Z_1,\ldots,Z_n)$ be a $\ell$-periodic
  function on $\Lat_\va$ in the first $m$ variable. Then there is an
  element $\mathcal{F}_\va(f)\in {\Ri}$ called the \emph{discrete
    Fourier transform of $f$}
    which is
  unique modulo $I_\va$, coincides with $f$ on
  $\Lat_{\ba_1,\ldots,\ba_m}\times\C^{n-m}$ and is given by
\begin{equation}\label{F alpha}
\Fou_{\va,m}(f)=\sum_{k_1,\ldots, k_m=0}^{\ell-1}a_{k_1\cdots k_m}\xi^{k_1Z_1+\cdots+k_mZ_m}.
\end{equation}
The coefficients $a_{k_1\cdots k_m}\in\Ce(Z_{m+1},\ldots,Z_n) $ (Fourier
coefficients) are determined by
\begin{equation*}
  a_{k_1\cdots k_m}=\frac{1}{\ell^m}\sum_{\tiny
  \begin{array}{c}
    i_1,\ldots, i_m=0\\\beta_j=\alpha_j+i_j
  \end{array}}^{\ell-1}
\xi^{-k_1\beta_1-\cdots-k_m\beta_m}f(\beta_1,\ldots, \beta_m,Z_{m+1},\ldots,Z_{n}).
\end{equation*}
\end{Prop}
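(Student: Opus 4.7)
My plan is in two steps: first establish that an element of $\Ri$ is determined modulo $I_\va$ by its values on $\Lat_{\ba_1,\ldots,\ba_m}\times\C^{n-m}$ (giving uniqueness), and then verify that the explicit formula \eqref{F alpha} defines an element of $\Ri$ that agrees with $f$ on this set (giving existence).

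For uniqueness, any element of $\Ri$ is a Laurent polynomial in $\xi^{Z_1},\ldots,\xi^{Z_m}$ with coefficients in $\Ce(Z_{m+1},\ldots,Z_n)$. The relation $\xi^{\ell Z_i}\equiv \xi^{\ell\alpha_i}$ in $\Ri/I_\va$ allows me to reduce every monomial to have exponents in $\{0,\ldots,\ell-1\}$, so any class is represented by
\begin{equation*}
g=\sum_{k_1,\ldots,k_m=0}^{\ell-1}c_{k_1\cdots k_m}(Z_{m+1},\ldots,Z_n)\,\xi^{k_1Z_1+\cdots+k_mZ_m}.
\end{equation*}
Substituting $Z_j=\alpha_j+i_j$ with $i_j\in\{0,\ldots,\ell-1\}$ and fixing $(Z_{m+1},\ldots,Z_n)$, the resulting system has coefficient matrix factoring as the tensor product of $m$ Vandermonde-type matrices with entries $\xi^{k_j i_j}$, each invertible because $\xi$ is a primitive $\ell$-th root of unity. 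Hence if $g$ vanishes on $\Lat_{\ba_1,\ldots,\ba_m}\times\C^{n-m}$, all $c_{k_1\cdots k_m}$ vanish identically and $g\in I_\va$.

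For existence, the function $\Fou_{\va,m}(f)$ defined by \eqref{F alpha} lies in $\Ri$ by construction. To check it agrees with $f$ on $\Lat_{\ba_1,\ldots,\ba_m}\times\C^{n-m}$, I evaluate at $Z_j=\beta_j=\alpha_j+i_j$ with $i_j\in\{0,\ldots,\ell-1\}$ (all lattice points are congruent to such a point modulo $\ell\Z$, and $f$ is $\ell$-periodic there), substitute the formula for $a_{k_1\cdots k_m}$, interchange the finite sums, and apply the orthogonality identity
\begin{equation*}
\sum_{k=0}^{\ell-1}\xi^{k(i-i')}=\ell\,\delta_{i,i'}\pmod{\ell}
\end{equation*}
in each of the $m$ variables. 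The delta collapses the sum over the dummy $(i'_1,\ldots,i'_m)$ to the single term $i'_j=i_j$, returning $f(\beta_1,\ldots,\beta_m,Z_{m+1},\ldots,Z_n)$ as desired.

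The main obstacle is not conceptual but bookkeeping: this is essentially the inverse discrete Fourier transform on $(\Z/\ell\Z)^m$ with coefficients in the Fr\'echet algebra $\Ce(Z_{m+1},\ldots,Z_n)$, and the only subtlety is the preliminary reduction of an arbitrary element of $\Ri$ modulo $I_\va$ to the normal form with exponents in $\{0,\ldots,\ell-1\}$. Once uniqueness modulo $I_\va$ is established and the orthogonality computation is carried out, both parts of the statement follow.
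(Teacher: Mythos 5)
Your proof is correct and takes essentially the same approach as the paper. The paper's own proof is a one-line citation to \cite[Proposition 4.6]{Ha18a} for the same fact (uniqueness and existence of the normal form with exponents in $\{0,\ldots,\ell-1\}$ coinciding with $f$ on the lattice); you have simply supplied the standard details that citation delegates, namely the reduction to normal form modulo $I_\va$, the invertibility of the associated Vandermonde/DFT matrix over the coefficient ring $\Ce(Z_{m+1},\ldots,Z_n)$, and the character-orthogonality computation for existence.
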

\begin{proof}
  As proven, in \cite[Proposition 4.6]{Ha18a}, for any fixed
  $(Z_{m+1},\ldots,Z_n)\in\C^{n-m}$, the element in Equation \eqref{F
    alpha} is the unique linear combination of the elements
  $\{\xi^{k_1Z_1+\cdots+k_mZ_m}:0\le k_i<\ell\}$ which coincide with $f$
  for $(Z_1,\ldots,Z_m)\in\Lat_{\ba_1,\ldots,\ba_m}$.
\end{proof}

The complex analytic Nullstellensatz Theorem from the work of
H. Cartan {\cite{Ca1953}}
implies that $I_\va$ is precisely the ideal of entire functions
that vanish on $\Lat_\va$.  Here we give an elementary proof that was
shown to us by David E. Speyer:
\begin{Prop}\label{P:Nullstellensatz}
  Let $f\in\Ce(Z_1,\ldots,Z_n)$ be zero on
  $\Lat_{\ba_1,\ldots,\ba_m}\times\C^{n-m}$ then $f\in I_\va$.
\end{Prop}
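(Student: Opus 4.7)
The plan is to argue by induction on $m$, reducing the number of ``periodic'' variables constraining $f$ one at a time. Throughout, write $h_i(Z_i) := \xi^{\ell Z_i} - \xi^{\ell \alpha_i}$, so the claim is $f \in (h_1, \ldots, h_m) \subseteq I_\va$.

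Base case $m = 1$: If $f \in \Ce(Z_1, \ldots, Z_n)$ vanishes on $\Lat_{\ba_1} \times \C^{n-1}$, then because $h_1$ has simple zeros precisely on $\Lat_{\ba_1}$, I would show the quotient $g := f/h_1$ extends from $\C^n \setminus V(h_1)$ to a jointly entire function on $\C^n$ by a local Riemann-extension argument: near each $(\alpha_1 + k, z_2, \ldots, z_n)$ one has $h_1(Z_1) = (Z_1 - \alpha_1 - k)u(Z_1)$ with $u$ a local unit, while $f(Z_1, \ldots, Z_n) = (Z_1 - \alpha_1 - k)\tilde f$ with $\tilde f$ holomorphic (since $f$ vanishes identically on the hyperplane $\{Z_1 = \alpha_1 + k\}$), so $g = \tilde f / u$ is holomorphic. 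Hence $f = h_1 g \in I_\va$.

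Inductive step from $m-1$ to $m$: Given $f$ vanishing on $\Lat_{\ba_1, \ldots, \ba_m} \times \C^{n-m}$, I will construct $g \in \Ce(Z_1, \ldots, Z_n)$ such that $\tilde f := f - h_m g$ vanishes on the larger slab $\Lat_{\ba_1, \ldots, \ba_{m-1}} \times \C^{n-m+1}$; the inductive hypothesis applied to $\tilde f$ then gives $\tilde f = \sum_{i=1}^{m-1} h_i g_i$ and hence $f \in I_\va$. For each lattice point $\kappa \in \Lat_{\ba_1, \ldots, \ba_{m-1}}$ the slice $(Z_m, \ldots, Z_n) \mapsto f(\kappa, Z_m, \ldots, Z_n)$ is entire and vanishes at $Z_m \in \Lat_{\ba_m}$, so by the base case applied in the variable $Z_m$, the slice-quotient $\phi_\kappa(Z_m, \ldots, Z_n) := f(\kappa, Z_m, \ldots, Z_n)/h_m(Z_m)$ is entire. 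The needed $g$ must satisfy $g(\kappa, Z_m, \ldots, Z_n) = \phi_\kappa(Z_m, \ldots, Z_n)$ for all $\kappa$. I would produce $g$ by a Mittag--Leffler-style interpolation: set
\[
e_{k_i}(Z_i) \;=\; \frac{h_i(Z_i)}{h_i'(\alpha_i + k_i)(Z_i - \alpha_i - k_i)},
\]
an entire Lagrange-type basis function with $e_{k_i}(\alpha_i + j) = \delta_{j, k_i}$, form $\mathcal L_\kappa(Z_1, \ldots, Z_{m-1}) = \prod_{i=1}^{m-1} e_{k_i}(Z_i)$, and define
\[
g(Z_1, \ldots, Z_n) \;=\; \sum_{\kappa \in \Z^{m-1}} \phi_\kappa(Z_m, \ldots, Z_n) \,\mathcal L_\kappa(Z_1, \ldots, Z_{m-1})\, c_\kappa(Z_1, \ldots, Z_{m-1}),
\]
where $c_\kappa$ are entire convergence factors normalized so that $c_\kappa(\kappa) = 1$.

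The main obstacle is arranging the convergence factors $c_\kappa$ so that the series converges locally uniformly on $\C^n$. Since no a priori growth control on $\phi_\kappa$ in $\kappa$ is available, the $c_\kappa$ must be tailored to each $\phi_\kappa$ individually, in the classical spirit of Mittag--Leffler (for instance, by inserting sufficiently high powers of factors such as $(1 - (Z_i - \alpha_i)/k_i)e^{p_\kappa(Z)}$ that vanish appropriately at the other lattice points while remaining $1$ at $\kappa$). An alternative, more conceptual route bypasses the interpolation entirely: invoke Cartan's Theorem B on the Stein manifold $\C^n$. A local computation at each point of $\Lat_{\ba_1, \ldots, \ba_m} \times \C^{n-m}$ shows that $h_1, \ldots, h_m$ form a regular sequence with simple zeros on the subvariety (after writing $h_i = (Z_i - \alpha_i - k_i) u_i$ with $u_i$ a local unit), so the ideal sheaf of the subvariety is locally generated by $(h_1, \ldots, h_m)$; Theorem A then promotes this to global generation, yielding $f \in (h_1, \ldots, h_m) \subseteq I_\va$ directly.
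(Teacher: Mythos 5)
Your base case $m=1$ matches the paper's, and the overall plan of peeling one periodic variable per step is the right shape. The trouble is in the inductive step. By slicing away the \emph{last} periodic variable $Z_m$, you end up needing to interpolate the family $\{\phi_\kappa\}$ over $\kappa\in\Z^{m-1}$, a genuinely multidimensional interpolation; and as you yourself note, the convergence factors $c_\kappa$ are "the main obstacle," which your sketch does not resolve. This is a real gap, not a detail: without an actual construction of $g$, the step $\tilde f := f-h_m g$ never gets off the ground. The paper sidesteps the difficulty by structuring the induction differently. It inducts on $n$ and slices the \emph{first} variable $Z_1=\alpha_1+k$, so the slices are a one-parameter family indexed by $k\in\Z$; after applying the inductive hypothesis to each slice to write $f(\alpha_1+k,Z_2,\ldots,Z_n)=\sum_{i\ge2}(\xi^{\ell Z_i}-\xi^{\ell\alpha_i})\vp_{i,k}$, it only needs to interpolate the one-dimensional families $\{\vp_{i,k}\}_{k\in\Z}$. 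That reduction is exactly what the paper's preliminary lemma handles, with explicit convergence factors $(Z_1^2/k^2)^{C_k}$ chosen term-by-term after $a_{\pm k}$ are determined. In short, the key idea you are missing is: arrange the induction so that at every stage the interpolation is only over $\Z$, and then prove a one-variable interpolation lemma with tailored convergence factors.

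Your alternative via Cartan is valid in outline (though you would want Theorem B, not just Theorem A, to conclude that the specific global section $f$ lies in the image of $H^0(\mathcal O^m)\to H^0(\mathcal I)$). But note that this is precisely the "complex analytic Nullstellensatz" route that the paper explicitly cites and then deliberately replaces by an elementary argument. So it is a correct fallback, but it is the heavy machinery the paper is trying to avoid rather than a second independent elementary proof.
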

To prove this proposition, we need the following lemma:
\begin{Lem}
  Let $\{\vp_k\in \Ce(Z_2,\ldots,Z_n)\}_{k\in\Z}$ be a family of entire functions.  Then there exists
  $\vp\in\Ce(Z_1,\ldots,Z_n)$ such that 
  $$\quad\vp_k=\vp(k,Z_2,\ldots,Z_n)$$
  for all $k\in\Z$.
\end{Lem}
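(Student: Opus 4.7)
The plan is to construct $\vp$ explicitly by Lagrange-type interpolation in the variable $Z_1$, using entire damping factors to guarantee normal convergence. For each $k\in\Z$, the function
$$h_k(Z_1)=(-1)^k\frac{\sin(\pi Z_1)}{\pi(Z_1-k)}$$
is entire in $Z_1$ and satisfies $h_k(j)=\delta_{jk}$ for every integer $j$. The naive candidate $\sum_k h_k(Z_1)\vp_k(Z_2,\ldots,Z_n)$ already interpolates the required values formally, but convergence fails in general: on the compact set $\{|Z_1|\le R\}$ one only has $|h_k(Z_1)|\le e^{\pi R}/(\pi(|k|-R))$ for $|k|>R$, which does not offset arbitrary growth of the $\vp_k$.

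To repair convergence, I would multiply by the entire damping factors $\chi_k(Z_1)=e^{-a_k(Z_1-k)^2}$, with positive real coefficients $a_k$ to be chosen. Since $\chi_k(k)=1$, multiplying by $\chi_k$ does not affect interpolation at the integers: at $Z_1=j$ every term with $k\ne j$ vanishes because $h_k(j)=0$, and the remaining $k=j$ term is $\vp_j(Z_2,\ldots,Z_n)$. Moreover, writing $Z_1=x+iy$ with $|Z_1|\le R$, one has $|\chi_k(Z_1)|=e^{-a_k((x-k)^2-y^2)}$, so for $|k|\ge 2R$ one gets $|\chi_k(Z_1)|\le e^{-a_kk^2/8}$, and by enlarging $a_k$ this decay can be made to beat any prescribed sequence.

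Concretely, exhaust $\C^{n-1}$ by an increasing sequence of compact sets $K_1\subset K_2\subset\cdots$ and set $M_{k,m}=\sup_{K_m}|\vp_k|<\infty$. Choose $a_k>0$ so large that for every $m$ with $m\le|k|/2$ one has $e^{-a_k k^2/8}\,e^{\pi m}\,M_{k,m}\le 2^{-|k|}$, and then define
$$\vp(Z_1,Z_2,\ldots,Z_n)=\sum_{k\in\Z}h_k(Z_1)\,\chi_k(Z_1)\,\vp_k(Z_2,\ldots,Z_n).$$
On any compact set $\{|Z_1|\le R\}\times K_m$, the tail with $|k|\ge \max(2R,2m)$ is dominated by $\sum_k 2^{-|k|}$, so the series converges absolutely and uniformly there. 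Hence $\vp$ is entire on $\C^n$ by the Weierstrass theorem (uniform limits of holomorphic functions), and the interpolation property $\vp(k,Z_2,\ldots,Z_n)=\vp_k(Z_2,\ldots,Z_n)$ holds term by term.

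The only subtle step is the diagonal choice of the $a_k$: one must pick each $a_k$ so that it dominates the growth of $\vp_k$ on \emph{every} compact of $\C^{n-1}$ of size up to $|k|/2$. This is just a standard diagonal exhaustion, but it is the one place where the multivariable nature of the problem matters; the rest of the argument is the same as in the one-variable Weierstrass interpolation.
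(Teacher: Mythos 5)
Your proof is correct, and it takes a genuinely different route from the paper's.

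The paper uses a Newton-type interpolation series
\[\vp_0 + \sum_{k\ge1}\frac{\prod_{m=1}^{k-1}(Z_1^2-m^2)}{\prod_{m=1}^{k-1}(k^2-m^2)}\left(a_{-k}\tfrac{k-Z_1}{2k}+a_k\tfrac{k+Z_1}{2k}\right)\left(\tfrac{Z_1^2}{k^2}\right)^{C_k},\]
where the coefficients $a_{\pm k}$ must be determined \emph{inductively}: since the $k$\ith summand vanishes at $Z_1=\pm p$ only for $p<k$, earlier terms do contribute at $Z_1=\pm k$, and $a_{\pm k}$ has to correct for that accumulated contribution before $C_k$ is picked large enough to kill the term on the polydisc $\{|Z_i|\le k-1\}$. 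Your construction replaces the Newton basis by the cardinal-sine basis $h_k(Z_1)=(-1)^k\sin(\pi Z_1)/(\pi(Z_1-k))$, which already satisfies $h_k(j)=\delta_{jk}$ for all integers $j$; this makes the interpolation \emph{local}, so each coefficient is just $\vp_k$ itself and nothing inductive is needed on the values. The price is that $h_k$ decays only like $1/|k|$ with an $e^{\pi|\mathrm{Im}\,Z_1|}$ growth factor, which you compensate with the Gaussian dampers $e^{-a_k(Z_1-k)^2}$; these are $1$ at $Z_1=k$ and, on any disc $\{|Z_1|\le R\}$ with $|k|>2R$, satisfy $\mathrm{Re}\,(Z_1-k)^2\ge(|k|-R)^2>k^2/4$, so $a_k$ can be chosen large enough (by the same diagonal exhaustion in $m$ that the paper uses implicitly when choosing $C_k$) to dominate both the $\sin(\pi Z_1)$ growth and $\sup_{K_m}|\vp_k|$. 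In short: the paper offloads the difficulty onto an inductive choice of interpolation coefficients with a polynomial-power damper tied to a specific polydisc, while you offload it onto a Lagrange basis that trivializes the interpolation and a Gaussian damper that handles the convergence uniformly; both are correct, and yours is arguably the cleaner decoupling of the two concerns. (One micro-quibble: your statement that $|\chi_k(Z_1)|\le e^{-a_k k^2/8}$ for $|k|\ge 2R$ holds when $\{|Z_1|\le R\}$ is read as the \emph{disc} $x^2+y^2\le R^2$, giving $(x-k)^2-y^2\ge(|k|-R)^2$; if one read it as the square $|x|,|y|\le R$ the constant would need adjusting. Since discs are the natural choice, this is fine as written.)
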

\begin{proof}
  Let $\vp(Z_1,\ldots,Z_n)$ be of the form
  \[\vp_0(y) + \sum_{k=1}^{\infty} \frac{\prod_{m=1}^{k-1} (Z_1^2-m^2)}
    {\prod_{m=1}^{k-1} (k^2-m^2)} \left( a_{-k} \frac{k-Z_1}{2k} + a_k
      \frac{k+Z_1}{2k} \right) \left( \frac{Z_1^2}{k^2}
    \right)^{C_k}\] for some entire functions
  $a_{\pm k}\in\Ce(Z_2,\ldots,Z_n)$ and some positive integers $C_k$
  which we compute inductively as we will now describe.  Note that,
  for $k>p$, the product $\prod_{m=1}^{k-1}$ vanishes at $Z_1=\pm p$,
  so terms beyond the $p$-th term don't affect the value of the sum at
  $Z_1=\pm p$. Also, at $Z_1=\pm k$, the $k$\ith term of the series
  reduces to $a_{\pm k}$.  So, if we have chosen $a_m$, $b_m$ and
  $C_m$ for $m<k$, there is a unique choice of $a_{\pm k}$ which will
  make the sum correct at $Z_1=\pm k$.  Make that choice for
  $a_{\pm k}$. Then choose $C_k$ large enough that the $k$\ith summand
  has module less than $2^{-k}$ on the compact
  $\{|Z_i|\le k-1,i=1\cdots n\}$ for all $i={1\cdots n}$. (Since
  $Z_1^2/k^2\le(1-1/k)^2$ on this compact, taking $C_k$ large enough
  will work). Thus, the sum will be uniformly convergent on compact
  subsets, and will thus give an entire function.
\end{proof}
\begin{proof}[Proof of Proposition \ref{P:Nullstellensatz}]
  First remark that the entire function
  $$\bp{\xi^{\ell Z}-\xi^{\ell \alpha}}\in\Ce(Z)$$ only has simple zeros
  at integers.  We prove the Proposition by induction on $n$: for any
  $k\in\Z$, $f(\alpha_1+k,Z_2,\ldots,Z_n)$ is zero on the lattice
  $\Lambda_{\alpha_2,\ldots,\alpha_n}$.  Then assume this function
  belongs to the ideal generated by
  $\bp{\xi^{\ell Z_i}-\xi^{\ell \alpha_i}}_{i=2\cdots n}$.  Then we
  can write
  $$f(\alpha_1+k,Z_2,\ldots,Z_n)=\sum_{i=2}^n\bp{\xi^{\ell Z_i}-\xi^{\ell \alpha_i}}
  \vp_{i,k}(Z_2,\ldots,Z_n)$$ then by the above Lemma
  applied to the family $\bp{\vp_{i,k}}_{k\in\Z}$, we obtain entire
  functions $\vp_i\in\Ce(Z_1,\ldots,Z_n)$ such that for
  $$\vp=\sum_{i=2}^n\bp{\xi^{\ell Z_i}-\xi^{\ell \alpha_i}}\vp_{i},$$
  $f-\vp$ vanishes on $(\alpha_1+\Z)\times\C^{n-1}$.  But then
  $\psi=\dfrac{f-\vp}{\xi^{\ell Z_1}-\xi^{\ell \alpha_1}}$ is an
  entire function and
  $f=\vp+\bp{\xi^{\ell Z_1}-\xi^{\ell \alpha_1}}\psi\in I_\va$.
\end{proof}
The following proposition says,
modulo the ideal $I_\va$, the
$\ell$-periodic
elements are equal to their Fourier transform and power quadratic
elements reduce to power linear.
Here we use the notation $z_\bullet=(Z_1,\ldots,Z_n)$ and say a \emph{linear element} is any complex linear combination of the $Z_i$ while a \emph{quadratic element} is a homogeneous degree $2$ polynomial in the variables $z_\bullet$ with integer coefficients.
\begin{Prop}\label{P:q=l}
  If $f\in\Ce(Z_1,\ldots,Z_n)$ is periodic in its first $m$ variables
  then $f=\mathcal{F}_{\va,m}(f)$ modulo
  $I_\va$.  Consequently, if $q(z_\bullet)$ is a quadratic element and
  $l$ is the linear element
  $l(z_\bullet)=\wt q(\tilde \alpha,z_\bullet)$ for any
  $\wt \alpha=(\alpha_1,\ldots,\alpha_n)\in\Lambda_\va$, then
  $\xi^{q(z_\bullet)}\in\xi^{l(z_\bullet)}\C[\xi^{Z_1},\ldots,\xi^{Z_n}]$
  modulo $I_\va$.
\end{Prop}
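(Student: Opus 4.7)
The plan is to prove the two assertions in sequence, using Proposition~\ref{key} and Proposition~\ref{P:Nullstellensatz} for the first part, and then a direct periodicity computation to deduce the second.

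For the first assertion, let $f\in\Ce(Z_1,\ldots,Z_n)$ be $\ell$-periodic on $\Lat_\va$ in its first $m$ variables. By Proposition~\ref{key}, the discrete Fourier transform $\mathcal{F}_{\va,m}(f)$ coincides with $f$ on $\Lat_{\ba_1,\ldots,\ba_m}\times\C^{n-m}$. Hence the entire function $f-\mathcal{F}_{\va,m}(f)$ vanishes on $\Lat_{\ba_1,\ldots,\ba_m}\times\C^{n-m}$, so Proposition~\ref{P:Nullstellensatz} gives $f-\mathcal{F}_{\va,m}(f)\in I_\va$.

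For the second assertion, I would apply the first part (with $m=n$) to the entire function $g(z_\bullet)=\xi^{q(z_\bullet)-l(z_\bullet)}$. The key step is to check that $g$ is $\ell$-periodic on $\Lat_\va$ in every variable. Write $\wt\alpha=(\alpha_1,\ldots,\alpha_n)$, fix $\wt\beta\in\Lat_\va$, and let $e_i$ be the $i$-th standard basis vector. Using $q(x+y)=q(x)+q(y)+\wt q(x,y)$ one gets
\begin{align*}
q(\wt\beta+\ell e_i)-q(\wt\beta)&=\ell\,\wt q(e_i,\wt\beta)+\ell^2 q(e_i),\\
l(\wt\beta+\ell e_i)-l(\wt\beta)&=\ell\,\wt q(\wt\alpha,e_i),
\end{align*}
so that
\begin{equation*}
(q-l)(\wt\beta+\ell e_i)-(q-l)(\wt\beta)
=\ell\,\wt q(e_i,\wt\beta-\wt\alpha)+\ell^2 q(e_i).
\end{equation*}
Since $\wt\beta-\wt\alpha\in\Z^n$ and $\wt q$ has integer coefficients, this difference lies in $\ell\Z$, so $\xi$ raised to it equals $1$. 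Hence $g$ is $\ell$-periodic on $\Lat_\va$ in all $n$ variables.

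Applying the first part with $m=n$ yields $\xi^{q-l}\equiv\mathcal{F}_{\va,n}(\xi^{q-l})\pmod{I_\va}$, and $\mathcal{F}_{\va,n}(\xi^{q-l})$ is by construction a finite linear combination of the monomials $\xi^{k_1Z_1+\cdots+k_nZ_n}$, hence lies in $\C[\xi^{Z_1},\ldots,\xi^{Z_n}]$. Multiplying by $\xi^l$ gives $\xi^q\in\xi^l\,\C[\xi^{Z_1},\ldots,\xi^{Z_n}]$ modulo $I_\va$. The only delicate point in the whole argument is the periodicity bookkeeping above, where one must use both the integrality of the coefficients of $q$ (so that $\wt q$ takes integer values on $\Z^n$) and the specific form $l=\wt q(\wt\alpha,\cdot)$ of the linear correction to cancel the odd $\ell$-multiple of $\wt q(e_i,\wt\alpha)$.
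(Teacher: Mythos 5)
Your proof is correct and follows essentially the same route as the paper's: first part via Propositions~\ref{key} and~\ref{P:Nullstellensatz}, second part by showing $\xi^{q-l}$ is $\ell$-periodic in all $n$ variables on $\Lat_\va$ (using the integrality of $q$ and the choice $l=\wt q(\wt\alpha,\cdot)$ to kill the cross term) and then applying the first part with $m=n$. The only cosmetic difference is that you expand $q(\wt\beta+\ell e_i)$ directly while the paper phrases the same computation via the shift operators $t_i^\ell$; the substance is identical.
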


\begin{proof}

As $f-\mathcal{F}_{\va,m}(f)$ is zero on $\Lat_\va$ then Proposition \ref{P:Nullstellensatz} implies it belongs to $I_\va$. To prove the second statement of the proposition,
let $e_i$ be the vector $(0,\ldots,1,\ldots,0)$ then
$t_i^\ell(l(z_\bullet))-l(z_\bullet)=\wt q(\wt \alpha,\ell
e_i)=\ell\wt q(e_i,\wt \alpha)$ while
$t_i^\ell(q(z_\bullet))-q(z_\bullet)=\wt q(\ell e_i,z_\bullet)+ q(\ell
e_i)$ which on $\Lambda_\va$ takes values in
$\ell\wt q(e_i,\wt \alpha)+\ell\Z$.  So
$\xi^{q(z_\bullet)-l(z_\bullet)}$ is $\ell$-periodic in all its
variable on $\Lat_\va$ so, modulo $I_\va$, it is equal to
$\mathcal{F}_{\va,n}(\xi^{q(z_\bullet)-l(z_\bullet)})$ and
$\xi^{q(z_\bullet)}=\xi^{l(z_\bullet)}\mathcal{F}_{\va,n}(\xi^{q(z_\bullet)-l(z_\bullet)})$.
\end{proof}

\newcommand{\LU}{\mathcal L\U} Next we would like to apply the above
results to the context of the topological unrolled quantum groups.
For $\va=(\ba_1,\ldots,\ba_n)\in G^{n}$ we denote by $I_\va^H$ the ideal of $\bp{\UH}^{\wh\otimes n}$ generated by the $rn$
elements $\xi^{\ell H_{i,j}}-\xi^{\ell \ba_{i,j}}$
where $\ba_{i,j}=\ba_j(H_i)\in\C/\Z$ is the value of $H_{i,j}$ at $\va$.
Recall in Subsection \ref{SS:PowerElements} we define the power linear
$\mathcal L_n $ and quadratic elements $\mathcal Q_n$.  In particular,
recall $\mathcal L_n=\{\xi^H : {H\in\H^{(n)}} \}$ where the elements
$\{H_{i,j}\}_{i=1,\ldots,\rk,\;j=1,\ldots, n}$ are a basis of $\H^{(n)}$.
Let $\LU_\ba$ be the image of $\mathcal L_1\U$ in the quotient $\UH_\ba$.
\begin{Prop}\label{P:quad-lin-modI}
  Let $\va=(\ba_1,\ldots,\ba_m,\ldots,\ba_{m+n})\in G^{m+n}$ and let
  $h_\bullet$ denotes the set of $r(m+n)$ variables
 $\{H_{i,j}\}_{i=1,\ldots,\rk,\;j=1,\ldots, m+n}$.
 Let
  $$u\in \mathcal Q_{m+n}\mathcal L_{m+n}\U^{\otimes
    m+n}\subset\Ce\bp{h_\bullet}\otimes W^{\otimes m+n} \simeq(\UH)^{\wh\otimes m+n} 
    $$ be an
  $\ell$-periodic  element in the first $rm$ variables
       $\{H_{i,j}\}_{i=1,\ldots,\rk,\;j=1,\ldots, m}$
      on the
  lattice $\Lambda_\va$.
Then
  $$\pi(u)\in \U_{\ba_1}\otimes\cdots\otimes\U_{\ba_m}\otimes
  \LU_{\ba_{m+1}}\otimes\cdots\otimes\LU_{\ba_{m+n}}$$
  where $\pi:(\UH)^{\wh\otimes m+n}\to (\UH)^{\wh\otimes m+n}/ I_\va^H$  is the projection.  
\end{Prop}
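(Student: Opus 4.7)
The plan is to reduce $u$ modulo $I_\va^H$ in two stages: first eliminating the power quadratic contribution via Proposition \ref{P:q=l}, and then collapsing the dependence on the first $rm$ Cartan variables into integer powers of $\xi^{H_{i,j}}$ using the discrete Fourier transform of Proposition \ref{key}.

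First I would write $u$ as a finite sum
$$u=\sum_\alpha \xi^{q_\alpha(h_\bullet)}\,\xi^{l_\alpha(h_\bullet)}\,u_\alpha,$$
where $q_\alpha$ is quadratic in the $H_{i,j}$ with integer coefficients, $l_\alpha$ is a $\C$-linear form in the $H_{i,j}$, and $u_\alpha\in\U^{\otimes m+n}$. Fixing a representative $\wt\va\in \Lambda^{m+n}$ of $\va$, Proposition \ref{P:q=l} gives, modulo $I_\va^H$,
$$\xi^{q_\alpha(h_\bullet)}\equiv \xi^{\wt q_\alpha(\wt\va,\,h_\bullet)}\,R_\alpha(\xi^{H_{\bullet,\bullet}}),$$
where $R_\alpha$ is a Laurent polynomial that can be absorbed into the $\U^{\otimes m+n}$ factor. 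Combining with the original $\xi^{l_\alpha(h_\bullet)}$ rewrites $u$ modulo $I_\va^H$ as $\sum_\beta \xi^{L_\beta(h_\bullet)}\,v_\beta$, with $L_\beta$ linear in the $H_{i,j}$ and $v_\beta\in\U^{\otimes m+n}$; that is, as an element of $\mathcal L_{m+n}\U^{\otimes m+n}$.

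Next I would invoke the $\ell$-periodicity of $u$ in the first $rm$ variables on $\Lat_\va$. Because the generators of $I_\va^H$ vanish on $\Lat_\va$, the reduced expression is itself $\ell$-periodic in those variables on $\Lat_\va$. Expanding in the basis $\BW^{\otimes m+n}$ of $W^{\otimes m+n}$ and using the tensor factorisation $\Ce(\H^{(m+n)*})\simeq \Ce((\H^*)^m)\hotimes\Ce((\H^*)^n)$ from Proposition \ref{completion}, each $\BW^{\otimes m+n}$-coefficient is an entire function which is $\ell$-periodic in its first $rm$ arguments on $\Lat_\va$. Applying the discrete Fourier transform of Proposition \ref{key} in these variables, each such coefficient is congruent modulo $I_\va^H$ to a linear combination of $\xi^{\sum k_{i,j}H_{i,j}}$ with integer exponents $k_{i,j}$, whose scalars are entire functions in the remaining $rn$ variables $\{H_{i,j}\}_{j>m}$.

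Combining the two reductions, $\pi(u)$ is a sum of elementary tensors whose first $m$ slots involve the Cartan only through integer powers of $\xi^{H_{i,j}}$ and therefore lie in $(W\otimes\C[\Lambda^*])/I_{\ba_j}=\U_{\ba_j}$, while the last $n$ slots may still carry power linear factors $\xi^{\alpha H_{i,j}}$ with $\alpha\in\C$ together with elements of $\U$, hence lie in $\LU_{\ba_{m+j}}$. This gives the required containment. The main obstacle is to ensure that, after the quadratic reduction of Step 1, the residual linear term $\wt q_\alpha(\wt\va,\,h_\bullet)$ does not obstruct a clean separation of the first $rm$ from the last $rn$ Cartan variables when the Fourier transform is applied; this is handled by the additive bilinearity of $\wt q$ together with the nuclear tensor factorisation of Proposition \ref{completion}, which lets the Fourier transform act on the first $rm$ variables alone.
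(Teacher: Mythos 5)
Your proof is correct and follows essentially the same strategy as the paper's own: reduce the power-quadratic part modulo $I_\va^H$ to a power-linear factor times a Laurent polynomial via Proposition \ref{P:q=l}, then invoke the $\ell$-periodicity hypothesis and the discrete Fourier transform of Proposition \ref{key} to collapse the first $rm$ Cartan variables to Laurent monomials, leaving a residual power-linear factor only on the last $n$ tensor slots. The paper merely performs the $\BW^{\otimes m+n}$-expansion before the quadratic reduction rather than after, which is an immaterial reordering of the same steps.
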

\begin{proof}
  Let us write $u=\sum_ku_k(h_\bullet)w_k$ in a basis $(w_k)_{k}$ of
  $W^{\otimes m+n}$.  By the previous proposition, each $u_k$ is equal
  modulo $I_\va^H$ to a Laurent polynomial in $\C[\xi^{\pm H_{i,j}}]$
  times a power linear element in the last $rn$ variables
  $\xi^{\sum\beta_{i,j}H_{i,j}}$ where the sum range for
  ${i=1\cdots r,\ j=m+1\cdots m+n}$. Then for
  $\gamma_j(H_1,\ldots,H_r)=\sum_{i=1}^r\beta_{i,j}H_{i}$, we have:
  \\\hspace*{2ex}
  $\pi(u)\in \U_{\ba_1}\otimes\cdots\otimes\U_{\ba_m}\otimes
  \xi^{\gamma_{m+1}}\U_{\ba_m+1}\otimes\cdots\otimes\xi^{\gamma_{m+n}}\U_{\ba_m+n}$.
\end{proof}

\subsection{The symmetrized $G$-integral}\label{SS:symmetrizedGintegral}
Recall that we denote by $h=(H_1,\ldots,H_r)$ and
$h_j=(H_{1,j},\ldots,H_{r,j})$ so for example,
$\HH=\xi^{\Q(h_1,h_2)}$. Fix $\alpha\in\H$ and let $\ba\in G$ be its
class modulo $\Lambda$. Then we have
\begin{align*}
 \wt\Q((-\alpha,\alpha),(h_1,h_2))  &=  \Q(h_1-\alpha,h_2+\alpha)-\Q(h_1,h_2)+\Q(\alpha,\alpha) \\
    &  =\Q(\alpha,h_1)-\Q(\alpha,h_2).
\end{align*}
So
$\xi^{\wt\Q((-\alpha,\alpha),(h_1,h_2))}= L(h_1-h_2)$ where
$L(h)=\xi^{\Q(\alpha,h)}$ and Proposition~\ref{P:q=l} implies that
$L(h_2-h_1)\HH $ is in $\C[\xi^{h_1},\xi^{h_2}]$ modulo $I_{(-\alpha,\alpha)}$.  In particular, $L(h_2-h_1)\HH $ is
$\ell$-periodic in all its variable on $\Lambda_{(-\ba,\ba)}$.  Let
$\HH_\alpha\in\U_{-\ba}\otimes\U_{\ba}$ be its Fourier transform on
this lattice.  Hence $\HH=L_\alpha\HH_\alpha$ modulo
$I_{(-\alpha,\alpha)}$ where $L_\alpha=L(h_1-h_2)$.
\begin{Prop}
The finite dimensional  Hopf algebra  $\U_{\wb0}$ is ribbon  with
  universal $R$-matrix
  $$\mathcal R_{\wb 0}=\HH_{0}\check {\mathcal R}\in\U_{\wb 0}\otimes\U_{\wb 0}$$ and twist
  $\theta_{\wb 0}=g^{-1}\left( m\circ
  (S^2\otimes\Id)(\RR_{\wb0})\right)=m(\HH_{0})\sum_iy_igx_iB_{|x_i|}\in\U_{\wb 0}$.  
\end{Prop}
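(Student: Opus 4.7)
The plan is to derive the ribbon structure on $\U_{\wb 0}$ by pushing forward the topological ribbon structure of Theorem~\ref{main theorem 1} along the projection $\pi_{\wb 0}$, which must first be extended from $\U$ to the power-quadratic elements appearing in $\RR$ and $\theta$. The main obstruction is that $\RR=\HH\check\RR\in\wh\UH\wh\otimes\wh\UH$ involves $\HH=\xi^{\Q}$, which does not lie in $\U^{\otimes 2}$, so a direct quotient is not defined on the nose. This is precisely why the Fourier-transform machinery of Section~\ref{S:Galg_DFT} was developed.

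Specializing the computation preceding the proposition to $\alpha=0$ gives $L_0=1$, hence $\HH\equiv\HH_0$ modulo $I_{(0,0)}^H$, where $\HH_0\in\U_{\wb 0}\otimes\U_{\wb 0}$ is the finite Fourier sum from Proposition~\ref{P:q=l} (each monomial $\xi^{k_1H_1+k_2H_2}$ becomes a $K$-monomial of $\U_{\wb 0}^{\otimes 2}$ via $K_i=\xi^{H_i}$). Combined with $\check\RR\in\U\otimes\U$, this produces the well-defined element $\mathcal R_{\wb 0}=\HH_0\check\RR\in\U_{\wb 0}^{\otimes 2}$, which realizes the image of $\RR$ under the natural projection. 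The stability properties listed in Proposition~\ref{P:quad-lin-modI}, together with the fact that the Fourier transform respects the coproduct, product and antipode modulo the relevant ideals, guarantee that the induced projection is a Hopf-algebra morphism on the sub-algebra of $\wh\UH$ that contains $\RR$ and $\theta$. Applying $\pi_{\wb 0}^{\otimes 2}$ and $\pi_{\wb 0}^{\otimes 3}$ to the relations \eqref{dkcm}--\eqref{dkcm3} of Theorem~\ref{main theorem 1} then yields (R1)--(R3) for $\mathcal R_{\wb 0}$.

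For the twist, Equation~\eqref{eq:ui} in the proof of Theorem~\ref{main theorem 1} gives $u^{-1}=m(\HH)g^{-1}\sum_i y_igx_iB_{|x_i|}$, and since $m(\HH)=\xi^{m(\Q)}$ lies in the Cartan part and so commutes with the Cartan pivot $g$, the identity $\theta=gu^{-1}$ simplifies to $\theta=m(\HH)\sum_i y_igx_iB_{|x_i|}$ in $\wh\UH$. Applying $\pi_{\wb 0}$ and using that $m$ commutes with the projection produces the stated formula $\theta_{\wb 0}=m(\HH_0)\sum_i y_igx_iB_{|x_i|}\in\U_{\wb 0}$; the equivalent description $\theta_{\wb 0}=g^{-1}\bigl(m\circ(S^2\otimes\Id)\bigr)(\mathcal R_{\wb 0})$ is the direct image under $\pi_{\wb 0}$ of the defining formula~\eqref{eq:whtwist}. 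Axioms \eqref{dkcm4}--\eqref{dkcm6} and the centrality of $\theta_{\wb 0}$ then descend from the corresponding properties of $(\RR,\theta)$ by surjectivity of $\U\to\U_{\wb 0}$ and functoriality of $\pi_{\wb 0}$ with respect to $\Delta$, $m$, $S$ and $\ve$. The point that genuinely requires care, and the main obstacle, is ensuring that each intermediate expression appearing when applying the Hopf operations to $\RR$ and $\theta$ remains inside the stability classes $\mathcal Q_n\mathcal L_n\U^{\otimes n}$ on which the projection is unambiguous; this is exactly what the stability statements of Proposition~\ref{P:quad-lin-modI} provide.
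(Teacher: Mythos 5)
Your proof is correct and follows the same route as the paper's: specialize the Fourier-reduction machinery to $\alpha=0$ (so $L_0=1$ and $\HH\equiv\HH_0$ modulo $I^H_{(\wb0,\wb0)}$) to land $\RR_{\wb0}=\HH_0\check\RR$ in $\U_{\wb0}^{\otimes2}$, inherit (R1)--(R6) through the projection, and read off the twist from $\theta=gu^{-1}$ together with Equation~\eqref{eq:ui}. The only difference is that you spell out, via Proposition~\ref{P:quad-lin-modI} and the stability of the classes $\mathcal Q_n\mathcal L_n\U^{\otimes n}$ under the Hopf operations, why the projection is well defined on everything encountered, whereas the paper takes that for granted; this is a welcome amplification, not a different argument.
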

\begin{proof}
  For $\alpha=0$, one has $L_\alpha=1$ and the image of the
  $R$-matrix in $\UH_{\wb 0}\wh\otimes\UH_{\wb
    0}$ belongs in fact to $\U_{\wb 0}\otimes\U_{\wb
    0}$.  So ${\mathcal R}=\HH_{0}\check {\mathcal
    R}$ modulo the ideal $I^H_{\vec0}$ and ${\mathcal
    R}_{\wb 0}$ inherits all properties of the
    $R$-matrix
    ${\mathcal R}$. Finally the formula for the twist comes from
  $\theta=gu^{-1}$ and Equation \eqref{eq:ui}.
\end{proof}
Both the quotient morphism $\UH\to\UH_{\wb 0}$ and the inclusion morphism
$\U_{\wb 0}\to\UH_{\wb 0}$ are morphisms of ribbon Hopf algebra.

To build a invariant of 3-manifold, we will need the
following axiom that will fix the normalization of the cointegral
$\coint$ and integral $\lambda^R$:
\begin{axiom}\label{Ax:non-degenerate}
  Let $\delta=\lambda^R_{\wb0}(\theta_{\wb0})$ and
  $\wb\delta=\lambda^R_{\wb0}(\theta_{\wb0}^{-1})$.  We assume:
  $\delta\wb\delta=1.$
\end{axiom}
The true assumption in this axiom is the ``non-degeneracy'' condition
$\delta\wb\delta\neq0$.  Indeed, we have
\begin{Prop}
  \label{P:norm} If $\delta\wb\delta\neq0$ then 
  there exists a choice of $\coint$ so that Axiom \ref{Ax:non-degenerate} is satisfied. 
\end{Prop}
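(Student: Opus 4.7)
The plan is to exploit the one-parameter freedom in choosing the cointegral. By Theorem \ref{Th:right-int}, the unique right $G$-integral satisfying the normalization $\lambda^R_{\wb 0}(\coint) = 1$ is tied to the choice of $\coint$: rescaling the cointegral forces a reciprocal rescaling of the entire $G$-integral. I would first isolate precisely how $\delta$ and $\wb\delta$ transform under this rescaling, then solve a simple quadratic equation to pin down the normalization.

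Concretely, suppose we replace $\coint$ by $\coint' = \mu \coint$ for some nonzero $\mu \in \C$. Since the space of two-sided cointegrals of $\U_{\wb 0}$ is one-dimensional (unimodularity) and the normalization condition $\lambda^R_{\wb 0}(\coint') = 1$ must still hold, the new right $G$-integral is
\[
\lambda'^R_\ba = \mu^{-1} \lambda^R_\ba \quad \text{for all } \ba \in G.
\]
Since the twists $\theta_{\wb 0}$ and $\theta_{\wb 0}^{-1}$ depend only on the ribbon structure of $\U_{\wb 0}$ and not on the choice of cointegral, the new quantities are
\[
\delta' = \lambda'^R_{\wb 0}(\theta_{\wb 0}) = \mu^{-1} \delta, \qquad \wb\delta' = \lambda'^R_{\wb 0}(\theta_{\wb 0}^{-1}) = \mu^{-1} \wb\delta.
\]
Hence $\delta'\wb\delta' = \mu^{-2} \delta\wb\delta$.

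Assuming $\delta\wb\delta \neq 0$, pick any square root $\mu \in \C$ with $\mu^2 = \delta\wb\delta$ (which exists since $\C$ is algebraically closed). Then with the rescaled cointegral $\coint' = \mu\coint$ one obtains $\delta'\wb\delta' = 1$, so Axiom \ref{Ax:non-degenerate} is satisfied. There is essentially no obstacle: the only subtlety is checking that the twist elements are intrinsic to the ribbon Hopf algebra $\U_{\wb 0}$ and do not depend on the normalization of $\coint$, which is immediate from the formula $\theta_{\wb 0} = m(\HH_0)\sum_i y_i g x_i B_{|x_i|}$ established just above. The square root is non-unique (two choices differing by a sign), so the normalization of $\coint$ is fixed only up to sign, which is the expected residual ambiguity.
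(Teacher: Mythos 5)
Your proof is correct and follows essentially the same route as the paper: rescale $\coint$ by a square root $\mu$ of $\delta\wb\delta$, observe that the normalized right $G$-integral scales by $\mu^{-1}$, and conclude $\delta'\wb\delta'=1$. You simply spell out the intermediate transformation law $\delta'\wb\delta'=\mu^{-2}\delta\wb\delta$ that the paper leaves implicit.
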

\begin{proof}
  If $\delta\wb\delta\neq0$, it has a square root $\rho\in\C$ such
  that replacing $\coint$ with $\rho\coint$ will lead to a new right
  integral equal to
  $\frac1\rho\lambda^R$ 
  for which Axiom
  \ref{Ax:non-degenerate} is satisfied.
\end{proof}
For $\ba\in G$
 and $\nu\in\Lambda$, let
$\pr_\nu^\ba:\U_\ba\to\U_\ba$ be the projection on the subspace of
weight $\nu$ of $\U_\ba$.
\begin{Lem}\label{L:int-weight0}
  The cointegral $\coint\in \U_{\wb0}$ and the right $G$-integral
  $\{\lambda^R_\ba\}_{\ba\in G}$ are of degree $0$, meaning  they satisfy: 
  $\coint=\pr^{\wb0}_{0}(\coint)$ and
  \begin{equation}
    \label{eq:int_deg0}
   \lambda^R_\ba=\lambda^R_\ba\circ\pr^\ba_{0} 
  \end{equation}
  for all $ \ba\in G$.
\end{Lem}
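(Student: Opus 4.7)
The plan is to deduce both statements from the action of the Cartan part $\C[\Lambda^*]$ on $\U_{\wb0}$ combined with the defining relations of $\coint$ and $\lambda^R$.

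For the cointegral the key observation is that since $\U$ is a $\Lambda$-graded Hopf algebra and $\C$ sits in degree zero, the counit $\ve$ annihilates every homogeneous element of nonzero weight. Decomposing $\coint=\sum_\nu\coint_\nu$ along the induced $\Lambda$-grading on $\U_{\wb0}$, the identities $x\coint=\coint x=\ve(x)\coint$ applied to a homogeneous $x_\mu\in\U_{\wb0}$ of weight $\mu$ split along the grading: for $\mu\neq 0$ the right side vanishes, forcing $x_\mu\coint_\nu=0=\coint_\nu x_\mu$ for every $\nu$; for $\mu=0$ they give $x_0\coint_\nu=\ve(x_0)\coint_\nu=\coint_\nu x_0$ for each $\nu$. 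Hence each $\coint_\nu$ is itself a two-sided cointegral of $\U_{\wb0}$, so by the one-dimensionality of the space of two-sided cointegrals in the finite-dimensional unimodular Hopf algebra $\U_{\wb0}$ (Theorem~\ref{Th:right-int} and its left analog), all $\coint_\nu$ are scalar multiples of $\coint$. Since distinct $\coint_\nu$ live in distinct weight subspaces, at most one is nonzero, so $\coint$ is concentrated in a single weight $\nu_0$. Combining $K^a\coint=\coint$ (from $\ve(K^a)=1$) with $K^a\coint K^{-a}=\xi^{a(\nu_0)}\coint$ (Axiom~\ref{pivotal axiom}) forces $\xi^{a(\nu_0)}=1$ for all $a\in\Lambda^*$; in the examples of interest the weights occurring in $W$ meet $\ell\Lambda$ only at $0$, so $\nu_0=0$.

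For the right $G$-integral I apply Theorem~\ref{Th:right-int}(5) with $y=K^{-a}$ and with $x$ replaced by $K^ax$. Since the pivot lies in the commutative Cartan part, $S_{-\ba}S_\ba(K^{-a})=g_\ba K^{-a}g_\ba^{-1}=K^{-a}$, and the identity becomes
\[\lambda^R_\ba(K^axK^{-a})=\lambda^R_\ba\bp{S_{-\ba}S_\ba(K^{-a})\cdot K^ax}=\lambda^R_\ba(K^{-a}K^ax)=\lambda^R_\ba(x).\]
If $x$ is homogeneous of weight $\nu$, Axiom~\ref{pivotal axiom} gives $K^axK^{-a}=\xi^{a(\nu)}x$, so $(\xi^{a(\nu)}-1)\lambda^R_\ba(x)=0$. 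Choosing $a\in\Lambda^*$ with $\xi^{a(\nu)}\neq 1$ yields $\lambda^R_\ba(x)=0$, which is exactly Equation~\eqref{eq:int_deg0}.

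The main obstacle in both parts is excluding nonzero weights $\nu$ with $\xi^{a(\nu)}=1$ for every $a\in\Lambda^*$, i.e.\ $\nu\in\ell\Lambda$, because the Cartan action alone cannot detect such weights. This must be ruled out by inspecting the explicit weight spectrum of $W$ in the concrete examples under consideration (a step straightforward for $\mathfrak{sl}_2$ where $|\nu|<\ell$, and requiring a case-by-case check in the general simple Lie algebra setting).
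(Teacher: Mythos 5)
Your decomposition of $\coint$ into weight components and the appeal to one-dimensionality of the cointegral space follow the paper's first step closely. Your treatment of the integral via Theorem~\ref{Th:right-int}(5) (using $S_{-\ba}S_\ba(K^{-a})=K^{-a}$ and conjugation by $K^a$) is genuinely different from the paper, which instead proves that each weight projection $\lambda^\nu_\ba=\lambda^R_\ba\circ\pr^\ba_\nu$ is again a right $G$-integral and invokes uniqueness. Your method has the merit of being a direct one-line computation; the paper's method has the merit of directly showing that $\lambda^R$ is concentrated in a single weight $\nu_0$, which your approach does not establish (you only show vanishing on weights outside $\ell\Lambda$, not that the surviving weights collapse to one), and of automatically tying that weight to the same $\nu_0$ as $\coint$ via $\lambda^R_{\wb0}(\coint)=1\neq 0$.

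The real gap, which you correctly identify but do not fill, is that the Cartan conjugation only detects weights modulo $\ell\Lambda$, so neither argument rules out $\nu_0\in\ell\Lambda\setminus\{0\}$. Your fallback — inspect the weight spectrum of $W$ case by case — is not a proof of the lemma as stated, since the lemma is asserted for any $\U$ satisfying the axioms, not just the quantum-group examples. The paper closes the gap by invoking Axiom~\ref{Ax:non-degenerate}: having shown that both $\coint$ and $\lambda^R$ sit in a single common weight $\nu_0$, it observes that $\theta_{\wb0}$ has weight $0$ and $\lambda^R_{\wb0}(\theta_{\wb0})=\delta\neq 0$, which forces $\nu_0=0$. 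This is the missing idea in your argument; without it your proof establishes only the weaker statement that $\coint$ and $\lambda^R_\ba$ are of degree lying in $\ell\Lambda$.
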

\begin{proof}
  For any $\nu\in\Lambda$, let $\coint_\nu=\pr^{\wb0}_{\nu}(\coint)$.
  Now for any homogeneous $x\in\U_{\wb0}$ of weight $|x|$,
  $$x\coint_\nu=\pr^{\wb0}_{|x|+\nu}(x\coint)=\ve(x)\pr^{\wb0}_{|x|+\nu}(\coint)
  =\ve(x)\coint_\nu,$$ where the last equality is because $\ve(x)=0$
  unless $|x|=0$. Hence $\coint_\nu$ is a left cointegral in
  $\U_{\wb0}$.  Since the space on left cointegral is one dimensional
  (see \cite[Theorem 10.2.2]{Ra12}), we get that $\coint_\nu=\coint$
  or $\coint_\nu=0$.  Finally, as $\coint=\sum_\nu\coint_\nu$, there
  exists a unique $\nu_0\in\Lambda$ such that
  $\coint=\coint_{\nu_0}$.

  Similarly, for any $\ba\in G$ and for any $\nu\in\Lambda$, let
  $\lambda^\nu_\ba=\lambda^R_\ba\circ\pr^\ba_\nu$.  Then let
  $x\in\U_{\ba+\bb}$ of weight $|x|$.
  Then
  we have $\lambda^\nu_\ba(x_{(1)})x_{(2)}=\lambda^R_\ba(\pr^{\ba}_\nu(x_{(1)}))x_{(2)}=\lambda^R_\ba(x_{(1)})\pr^{\bb}_{|x|-\nu}(x_{(2)})=\lambda^R_{\ba+\bb}(x)\pr^{\bb}_{|x|-\nu}(1_{\U_\bb})
  =\lambda^\nu_{\ba+\bb}(x)1_{\U_\bb}$.
   Thus $\lambda^\nu$ is a
  right $G$-integral. Once again, since
  the space of right $G$-integral is one dimensional, one has
  $\lambda^\nu=\lambda^R$ or $\lambda^\nu=0$ and since
  $\lambda^R_{\wb0}(\coint)=1$, we have $\lambda^R=\lambda^{\nu_0}$.

The fact that $\nu_0=0$ is now the consequence of Axiom
  \ref{Ax:non-degenerate} since $\theta_{\wb0}$ has weight $0$ and
  $\lambda^R(\theta_{\wb0})\neq0$.
\end{proof}
The next property of $\U_\bullet$ is called {\em unibalanced} in
\cite{BBG17,Ha18}:
\begin{Prop}\label{P:unibalanced}
  The distinguished element of $\U_\bullet$ is the square of its pivot:
  $$\gamma_\ba =g_{\ba}^{2} \text{ for all } \ba\in G. $$
\end{Prop}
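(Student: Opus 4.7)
The plan is to identify the distinguished $G$-grouplike $\gamma_\bullet$ with $g_\bullet^2$ by reducing the claim to the triviality of a central $G$-grouplike family.

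\textbf{Step 1 (reduction to a central family).} By Axiom \ref{pivotal axiom} the square of the antipode of $\U$ is conjugation by $g$, which descends to $S_{-\ba}\circ S_\ba(x) = g_\ba x g_\ba^{-1}$ in $\U_\ba$. Iterating yields $(S_{-\ba}S_\ba)^2(x) = g_\ba^2 x g_\ba^{-2}$. On the other hand Theorem \ref{Th:right-int}(4) gives $(S_{-\ba}S_\ba)^2(x) = \gamma_\ba x \gamma_\ba^{-1}$, which is the Radford-type formula for the unimodular Hopf $G$-coalgebra $\U_\bullet$, applicable by Axiom \ref{ax:unicorn}. Comparing the two, the element $c_\ba := \gamma_\ba\,g_\ba^{-2}$ is central in $\U_\ba$ for each $\ba\in G$. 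Moreover $\{c_\ba\}$ inherits the $G$-grouplike property from the families $\{\gamma_\ba\}$ and $\{g_\ba^2\}$ (the latter since $g^2\in\U$ is grouplike), using centrality of $c_\ba$ to commute through the coproduct factors.

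\textbf{Step 2 (scalar structure of $c_\bullet$).} For $\ba = \wb 0$, $c_{\wb 0}$ is a central grouplike element of the ordinary Hopf algebra $\U_{\wb 0}$. By Axiom \ref{pivotal axiom} the grouplikes of $\U$ are among $\{K^a : a\in\Lambda^*\}$, and centrality together with the commutation relation \eqref{eq:K} forces $a\in\ell\Lambda^*$; such a $K^a$ lies in $\mathcal Z^0$ and projects under $\pi_{\wb 0}$ to the scalar $\ba(K^a)=1$ in $\U_{\wb 0}$. Since any grouplike scalar in a Hopf algebra must equal $1$ (from $\mu\cdot 1\otimes 1 = \mu^2\cdot 1\otimes 1$ and $\mu\in\C^*$), we obtain $c_{\wb 0}=1$. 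For general $\ba$, the $G$-grouplike identity $\Delta_{\ba,-\ba}(c_{\wb 0})=c_\ba\otimes c_{-\ba}$ reduces to $c_\ba\otimes c_{-\ba}=1_{\U_\ba}\otimes 1_{\U_{-\ba}}$; invertibility of $G$-grouplikes then forces $c_\ba = \lambda_\ba\cdot 1_{\U_\ba}$ for some $\lambda_\ba\in\C^*$, with $\lambda:G\to\C^*$ a group homomorphism satisfying $\lambda_{\wb 0}=1$.

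\textbf{Step 3 (main obstacle).} The heart of the argument, and the step I expect to be genuinely delicate, is to show that the character $\lambda:G\to\C^*$ is trivial: a priori the divisible group $G=(\C/\Z)^r$ admits many non-trivial characters, so this cannot come from formal nonsense alone. My approach would be to specialize Theorem \ref{Th:right-int}(4), rewritten as $\lambda_\ba \lambda^R_\ba(g_\ba^2\,\cdot) = \lambda^R_{-\ba}\circ S_\ba$, to carefully chosen elements of $\U_\ba$, combining the weight-zero support of $\lambda^R_\ba$ from Lemma \ref{L:int-weight0}, the non-degeneracy normalization $\delta\bar\delta=1$ from Axiom \ref{Ax:non-degenerate} (pinning down the case $\ba=\wb 0$ via the twist $\theta_{\wb 0}$, which satisfies $S(\theta_{\wb 0})=\theta_{\wb 0}$), and the compatibility between the ribbon structure on $\wh{\UH}$ from Theorem \ref{main theorem 1} and its restriction to the ordinary ribbon Hopf algebra $\U_{\wb 0}$, to extract an identity forcing $\lambda_\ba = 1$ for every $\ba$. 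This completes the identification $\gamma_\ba = g_\ba^2$.
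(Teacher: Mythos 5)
Your Step 1 is sound: by Theorem \ref{Th:right-int}(4) and the pivot axiom, the element $c_\ba = \gamma_\ba g_\ba^{-2}$ is central, and since both $\gamma_\bullet$ and $g_\bullet^2$ are $G$-grouplike so is $c_\bullet$. But your proof stops being a proof at Step 3, and you acknowledge this yourself. You have not actually shown that the character $\lambda : G \to \C^*$ is trivial; you have only named the ingredients you would try to combine. This is precisely the non-formal content of the proposition, so the argument as written does not establish it. The paper avoids this obstacle entirely by computing $\gamma_\ba$ directly: using the Lemma preceding the proposition, $\gamma_\ba = (\lambda^R_{-\ba}\otimes\Id)(\mathcal R\,\Delta_{-\ba,\ba}(\coint)\,\mathcal R^{-1})$, and then expanding $\mathcal R = L_\alpha\HH_\alpha\check{\mathcal R}$, applying $f_\pm$ to the $\check R$-matrix legs to produce $g\theta^{-1}$ and $g\theta$, and invoking the weight-zero support of $\lambda^R$ from Lemma \ref{L:int-weight0} to cancel the power-linear factors $L(\pm h)$. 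The computation collapses to $uS(u^{-1}) = g^2$ with no case analysis over $\ba$ — the explicit R-matrix calculation does the work that your character-triviality argument was hoping to do abstractly.

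There is also a secondary soft spot in Step 2. You assert that the grouplikes of $\U$ are among $\{K^a : a\in\Lambda^*\}$, citing Axiom \ref{pivotal axiom}, but that axiom only specifies the underlying vector space, the grading, and the commutation \eqref{eq:K}; it does not classify grouplike elements. One can plausibly argue that a \emph{central} grouplike must have degree zero and hence lie in $\C[\Lambda^*]$, and then centrality forces $a \in \ell\Lambda^*$ so that it dies in $\U_{\wb 0}$, but this reasoning should be made explicit rather than attributed to the axiom. In any case, even if Step 2 were airtight, Step 3 remains an outline and not a proof, so the overall argument is incomplete.
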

To prove the proposition, we use the following Lemma (see also
\cite{Vire02}):
\begin{Lem}
  Let
  $\coint_{(1)}^{-\ba}\otimes\coint_{(2)}^{\ba}
  =\Delta_{-\ba,\ba}(\coint)\in\U_{-\ba}\otimes\U_{\ba}$.  Then   
  $$\lambda^R_{-\ba}(\coint_{(1)}^{-\ba})\coint_{(2)}^{\ba}=1\text{ and }
  \lambda^R_{-\ba}(\coint_{(2)}^{-\ba})\coint_{(1)}^{\ba}=\gamma_{\ba}.$$
  Furthermore for any $x\in\U_{-\ba}$, one has
  $$x\coint_{(1)}^{-\ba}\otimes\coint_{(2)}^{\ba}=
  \coint_{(1)}^{-\ba}\otimes S^{-1}_{-\ba}(x)\coint_{(2)}^{\ba}\text{
    and } \coint_{(1)}^{-\ba}x\otimes\coint_{(2)}^{\ba}=
  \coint_{(1)}^{-\ba}\otimes \coint_{(2)}^{\ba}S_{-\ba}(x).$$
\end{Lem}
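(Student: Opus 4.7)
The plan is to handle the four identities separately: the first two follow by direct application of the right and left $G$-integral properties respectively, while the last two require adapting the standard Hopf algebra cointegral manipulation to the $G$-coalgebra setting.

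For the first equality, I would apply the right $G$-integral property of Theorem~\ref{Th:right-int} to the cointegral $\coint \in \U_{\wb 0}$: evaluating $(\lambda^R_{-\ba} \otimes \Id_{\U_\ba}) \Delta_{-\ba, \ba}(\coint)$ yields $\lambda^R_{\wb 0}(\coint) \cdot 1_\ba = 1_\ba$. For the second equality, I would apply the left $G$-integral $\lambda^L_\bb = \lambda^R_\bb(\gamma_\bb \cdot)$ to $\Delta_{\ba, -\ba}(\coint)$, obtaining $\coint^\ba_{(1)} \lambda^R_{-\ba}(\gamma_{-\ba} \coint^{-\ba}_{(2)}) = \lambda^L_{\wb 0}(\coint) \cdot 1_\ba = 1_\ba$, where $\lambda^L_{\wb 0}(\coint) = 1$ since $\gamma_{\wb 0}\coint = \ve(\gamma_{\wb 0})\coint = \coint$. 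To extract the factor of $\gamma_\ba$, I would use the $G$-grouplike identity $\Delta_{\ba,-\ba}(\gamma_{\wb 0}) = \gamma_\ba \otimes \gamma_{-\ba}$ together with $\gamma_{\wb 0}\coint = \coint$ to obtain $\coint^\ba_{(1)} \otimes \gamma_{-\ba}\coint^{-\ba}_{(2)} = \gamma_\ba^{-1}\coint^\ba_{(1)} \otimes \coint^{-\ba}_{(2)}$; substituting into the previous equation and multiplying by $\gamma_\ba$ then delivers the desired formula.

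For the last two identities, I would adapt the standard Hopf algebra cointegral argument. Starting from $y\coint = \ve(y)\coint$ for $y \in \U_{\wb 0}$ and an arbitrary $x \in \U_{-\ba}$, I apply $\Delta_{\wb 0, -\ba}$ to $x$ and use the counit axiom to get $\sum x^{\wb 0}_{(1)} \coint \otimes x^{-\ba}_{(2)} = \coint \otimes x$. Applying $\Delta_{-\ba, \ba}$ to the first tensor and invoking coassociativity produces
\[
\sum x^{-\ba}_{(1)} \coint^{-\ba}_{(1)} \otimes x^\ba_{(2)} \coint^\ba_{(2)} \otimes x^{-\ba}_{(3)} = \sum \coint^{-\ba}_{(1)} \otimes \coint^\ba_{(2)} \otimes x.
\]
Next, I would apply $\Id \otimes \Id \otimes S^{-1}_{-\ba}$ to both sides and follow with multiplication of the last two tensors after a flip. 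To simplify the resulting left hand side to $x\coint^{-\ba}_{(1)} \otimes \coint^\ba_{(2)}$, I would invoke the $S^{-1}$-version of the antipode axiom, namely $\sum S^{-1}_{-\ba}(y^{-\ba}_{(2)}) y^\ba_{(1)} = \ve(y) 1_\ba$ for $y \in \U_{\wb 0}$ with $\Delta_{\ba,-\ba}(y) = \sum y^\ba_{(1)} \otimes y^{-\ba}_{(2)}$, applied to $y = x^{\wb 0}_{(2)}$ (the pair $(x^\ba_{(2)}, x^{-\ba}_{(3)})$ being exactly $\Delta_{\ba,-\ba}(x^{\wb 0}_{(2)})$). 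This proves identity (c); identity (d) follows by the symmetric argument starting from $\coint y = \ve(y)\coint$ and applying $\Delta_{-\ba,\ba}$ to the second tensor of $\sum x^{-\ba}_{(1)} \otimes \coint x^{\wb 0}_{(2)} = x \otimes \coint$.

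The main obstacle will be establishing the $S^{-1}$-form of the antipode axiom for a Hopf $G$-coalgebra, which is not stated explicitly in the excerpt. I would derive it by applying $S^{-1}_\bb$ (an antihomomorphism, since $S^{-1}$ is one on $\U$) to the standard antipode axiom $\sum S_{-\bb}(x^{-\bb}_{(1)}) x^\bb_{(2)} = \ve(x) 1_\bb$ and exploiting $S^{-1}_\bb \circ S_{-\bb} = \Id_{\U_{-\bb}}$. The remaining work is careful Sweedler-index bookkeeping: each antipode $S_\pm$ or $S^{-1}_\pm$ must be applied to the correct graded component and each product must take place in the correct algebra $\U_\ba$, but once this is tracked the argument mirrors the classical Hopf algebra proof line by line.
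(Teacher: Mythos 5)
Your proposal is correct and takes essentially the same route as the paper's proof: the first two identities follow from the right $G$-integral axiom and its left counterpart $\lambda^L_\ba=\lambda^R_\ba(\gamma_\ba\,\cdot\,)$, and the last two are the standard Hopf-algebra cointegral manipulation (expand $x$ via $\Delta^{(2)}$, use the cointegral property, kill the spare leg with the antipode axiom and its $S^{-1}$-form), transported to the $G$-coalgebra setting. The only cosmetic difference is that the paper starts from the claimed left-hand side and contracts in one line, whereas you build up from $y\coint=\ve(y)\coint$ and only then apply $S^{\pm1}$; these are the same computation read in opposite directions.
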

\begin{proof}
  First
  $(\lambda^R\otimes\Id)\bp{\Delta_{-\ba,\ba}(\coint)}=\lambda^R(\coint)1=1$
  and since $\lambda^L=\lambda^R(\gamma\cdot)$ is a left $G$-integral,
  $(\Id\otimes\lambda^R)\bp{\Delta_{-\ba,\ba}(\coint)}=
  (\gamma\otimes\lambda^L)\bp{\Delta_{-\ba,\ba}(\gamma^{-1}\coint)}=\gamma.$
  $$\coint_{(1)}^{-\ba}x\otimes\coint_{(2)}^{\ba}
  =\coint_{(1)}^{-\ba}x_{(1)}\otimes\coint_{(2)}^{\ba}x_{(2)}S_{-\ba}(x_{(3)})=$$$$
  \Delta_{-\ba,\ba}(\coint x_{(1)})1\otimes S_{-\ba}(x_{(2)})=
  \coint_{(1)}^{-\ba}\otimes \coint_{(2)}^{\ba}S_{-\ba}(x).
  $$
  Similarly,
  $$x\coint_{(1)}^{-\ba}\otimes\coint_{(2)}^{\ba}
  =x_{(1)}\coint_{(1)}^{-\ba}\otimes S^{-1}_{-\ba}(x_{(3)})x_{(2)}\coint_{(2)}^{\ba}=$$$$
  \ \quad1\otimes S^{-1}_{-\ba}(x_{(2)})\Delta_{-\ba,\ba}(x_{(1)}\coint)
  =\coint_{(1)}^{-\ba}\otimes S^{-1}_{-\ba}(x)\coint_{(2)}^{\ba}.$$
\end{proof}
\begin{proof}[Proof of Proposition \ref{P:unibalanced}]
We have  $$\gamma_{\ba}=\lambda^R_{-\ba}(\coint_{(2)}^{-\ba})\coint_{(1)}^{\ba}
  =(\lambda^R_{-\ba}\otimes\Id)(\mathcal R\Delta_{-\ba,\ba}(\coint)\mathcal R^{-1})$$
 and
  $$\mathcal R\Delta_{-\ba,\ba}(\coint)\mathcal R^{-1}= L_\alpha\HH_\alpha\check{\mathcal
    R}\Delta_{-\ba,\ba}(\coint)(L_\alpha\HH_\alpha\check{\mathcal R})^{-1}.$$
  Now we write the linear element $L_\alpha$ given at the beginning of
  this subsection
  as $L_\alpha=L(h)\otimes L(-h)=L(h_1)L(h_2)^{-1}$ where
  $L(h)=\xi^{\Q(\alpha,h)}\in\mathcal L_1$.  Consider the application
  $f_+:\UH_{-\ba}\otimes\UH_{\ba}\to\UH_{\ba}$ given by
  $f_+(x\otimes y)=S(x)y$ and
  $f_-:\UH_{-\ba}\otimes\UH_{\ba}\to\UH_{\ba}$ given by
  $f_-(x\otimes y)=yS^{-1}(x)$.  Remark that
  $f_-(\mathcal R)=f_-((S\otimes S)(\mathcal R))=u=g\theta^{-1}$ and
  $f_+(\mathcal R^{-1})=f_+((S\otimes \Id)(\mathcal
  R))=S(u^{-1})=g\theta$.

  Then from Lemma above, one has:
  $$\mathcal R\Delta_{-\ba,\ba}(\coint)\mathcal R^{-1}=
  L_\alpha(1\otimes f_-\bp{\HH_\alpha\check{\mathcal R}})\Delta_{-\ba,\ba}(\coint)
  (1\otimes f_+(\check{\mathcal R}^{-1}\HH_\alpha^{-1}))L_\alpha^{-1}$$
  $$=L_\alpha(1\otimes f_-\bp{L^{-1}_\alpha{\mathcal R}})\Delta_{-\ba,\ba}(\coint)
  (1\otimes f_+({\mathcal R}^{-1}L_\alpha))L_\alpha^{-1}$$
  $$=L_\alpha L(h_2)(1\otimes f_-\bp{{\mathcal R}}) L(h_2)\Delta_{-\ba,\ba}(\coint)
  L(-h_2)( 1\otimes f_+({\mathcal R}^{-1}))L(-h_2)L_\alpha^{-1}$$
  $$=(L(h)\otimes g\theta^{-1}L(h)) \Delta_{-\ba,\ba}(\coint)
  (L(-h)\otimes L(-h)g\theta).$$ Finally, since
  $\lambda^R$ is homogeneous of weight $0$, only terms with
  $\coint_{(1)}$ commuting with $L(\pm h_1)$ will contribute so:
  $$\lambda^R_{-\ba}\otimes\Id(\mathcal R\Delta_{-\ba,\ba}(\coint)\mathcal R^{-1})=\lambda^R_{-\ba}\otimes\Id((1\otimes g\theta^{-1}L(h)) \Delta_{-\ba,\ba}(\coint)
  (1\otimes L(-h)g\theta))$$
  $$=f_-\bp{{\mathcal R}}L(h_2) L(-h_2)f_+({\mathcal R}^{-1})=uS(u^{-1})=g^2.$$
\end{proof}
The following terminology first appeared in \cite{BBG17} in the non-graded case, then in \cite{Ha18} for the example of special linear superalgebra $\mathfrak{sl}(2|1)$.
\begin{Def}
  The {\em symmetrised $G$-integral} of $H_\bullet$ is the family of
  linear forms $\si=\{\si_{\ba}\in \U_{\ba}^*\}_{\ba\in G}$ defined by
$$\si_{\ba}(x)= \lambda_{\ba}(g_{\ba}x) \text{ for } x\in \U_{\ba}.$$
\end{Def}

The symmetrized $G$-integral $\si$ has the properties of a {\em
  $G$-trace} in \cite{Vire02}. More precisely, we have the properties
below:
\begin{Prop}
  These equalities hold
\begin{align}
  \bp{\si_{\wb\alpha}\otimes g_{\wb\beta}}\Delta_{\wb\alpha,\wb\beta}(x)
  &=\si_{\wb\alpha+\wb\beta}(x)1_{\wb\beta} \text{ for } x\in \U_{\wb\alpha+\wb\beta},\label{eq:rightsi}\\
  \bp{g^{-1}_{\wb\alpha}\otimes \si_{\wb\beta}}\Delta_{\wb\alpha,\wb\beta}(x)
  &=\si_{\wb\alpha+\wb\beta}(x)1_{\wb\beta} \text{ for } x\in \U_{\wb\alpha+\wb\beta},\\
  \si_{\ba}(xy)&=\si_{\ba}(yx)\ \text{for}\ x, y\in \U_{\ba},\label{eq:mucyc}\\
  \si_{-\ba}\bp{S^{\pm1}_{\pm\ba}(x)}&=\si_{\ba}(x)\ \text{for}\ x\in \U_{\ba}.\label{eq:muS}\\
  \si_{\ba}(x)&=\si_{\ba}(\pr^\ba_0(x))\ \text{for}\ x\in \U_{\ba}.\label{eq:muW0}
\end{align}
\end{Prop}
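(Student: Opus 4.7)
The plan is to prove each identity by reducing to the corresponding property of $\lambda^R$ from Theorem~\ref{Th:right-int}, using the defining formula $\si_\ba(x)=\lambda^R_\ba(g_\ba x)$ together with three inputs: the unibalanced identity $\gamma_\ba=g_\ba^2$ from Proposition~\ref{P:unibalanced}; the pivotal identity $S_{-\ba}S_\ba(x)=g_\ba x g_\ba^{-1}$; and the grouplike properties $\Delta_{\ba,\bb}(g_{\ba+\bb})=g_\ba\otimes g_\bb$ and $S_\ba(g_\ba)=g_{-\ba}^{-1}$.

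For \eqref{eq:rightsi}, I would apply the right $G$-integral relation $(\lambda^R_\ba\otimes\Id)\Delta_{\ba,\bb}(y)=\lambda^R_{\ba+\bb}(y)\,1_\bb$ to $y=g_{\ba+\bb}x$. Using grouplikeness of $g_\bullet$ to rewrite $\Delta_{\ba,\bb}(g_{\ba+\bb}x)=(g_\ba\otimes g_\bb)\Delta_{\ba,\bb}(x)$, the left-hand side becomes $(\si_\ba\otimes g_\bb)\Delta_{\ba,\bb}(x)$ and the right-hand side becomes $\si_{\ba+\bb}(x)\,1_\bb$. The second identity (with $g_\ba^{-1}\otimes\si_\bb$) is proved in the dual manner: I would use $\lambda^L_\bb=\lambda^R_\bb(\gamma_\bb\,\cdot)=\lambda^R_\bb(g_\bb^2\,\cdot)$ to turn the left $G$-integral relation into a statement about $\lambda^R$, then apply it to $g_{\ba+\bb}^{-1}x$ and simplify using $\Delta_{\ba,\bb}(g_{\ba+\bb}^{-1}x)=(g_\ba^{-1}\otimes g_\bb^{-1})\Delta_{\ba,\bb}(x)$.

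For cyclicity \eqref{eq:mucyc}, Theorem~\ref{Th:right-int}(5) gives $\lambda^R_\ba(ab)=\lambda^R_\ba(S_{-\ba}S_\ba(b)\,a)$; substituting $a=g_\ba x$ and $b=y$, and using the pivotal identity $S_{-\ba}S_\ba(y)=g_\ba y g_\ba^{-1}$, the right-hand side simplifies to $\lambda^R_\ba(g_\ba yx)=\si_\ba(yx)$. For \eqref{eq:muS}, Theorem~\ref{Th:right-int}(4) gives $\lambda^R_{-\ba}\circ S_\ba=\lambda^R_\ba(g_\ba^2\,\cdot)$, and the antipode computation $S_\ba(xg_\ba^{-1})=S_\ba(g_\ba^{-1})S_\ba(x)=g_{-\ba}S_\ba(x)$ lets me rewrite $\si_{-\ba}(S_\ba(x))=\lambda^R_{-\ba}(S_\ba(xg_\ba^{-1}))=\lambda^R_\ba(g_\ba^2\,xg_\ba^{-1})$. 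Applying cyclicity (already proved) to move $g_\ba^{-1}$ to the front then gives $\lambda^R_\ba(g_\ba x)=\si_\ba(x)$. The $S^{-1}$-version follows by applying the identity just proved with $\ba$ replaced by $-\ba$ and $x$ replaced by $S^{-1}_{-\ba}(x)$.

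Finally \eqref{eq:muW0} is immediate from Equation~\eqref{eq:int_deg0}: since the pivot $g_\ba$ has weight $0$, the projection $\pr^\ba_0$ commutes with left multiplication by $g_\ba$, so $\si_\ba(x)=\lambda^R_\ba(g_\ba x)=\lambda^R_\ba(\pr^\ba_0(g_\ba x))=\lambda^R_\ba(g_\ba\pr^\ba_0(x))=\si_\ba(\pr^\ba_0(x))$. The main obstacle throughout is bookkeeping: each manipulation requires care about which quotient $\U_\ba$ the elements live in and which tensor component $g_\ba$ acts on. The one substantive ingredient beyond routine Sweedler-notation computations is the unibalanced identity $\gamma_\ba=g_\ba^2$, without which $\lambda^L$ could not be expressed in terms of $\lambda^R$ and the pivot, blocking both the second identity of the proposition and the reduction of \eqref{eq:muS} via Theorem~\ref{Th:right-int}(4).
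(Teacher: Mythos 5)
Your proof is correct and follows essentially the same route the paper takes (which it states only as a one-line sketch): substitute $x\mapsto g_{\ba+\bb}x$ into the integral axioms of Theorem \ref{Th:right-int}, invoke Lemma \ref{L:int-weight0} for the weight-zero statement, use the pivotal identity $S_{-\ba}S_\ba=g_\ba\cdot g_\ba^{-1}$, and reduce $\lambda^L$ and $\lambda^R_{-\ba}\circ S_\ba$ to $\lambda^R$ via the unibalanced identity $\gamma_\ba=g_\ba^2$. One small remark: your derivation of the second identity correctly produces $\si_{\ba+\bb}(x)1_{\ba}$ on the right-hand side (the output lies in $\U_\ba$), whereas the paper's display writes $1_{\bb}$; this appears to be a typo in the statement, not a gap in your argument.
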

\begin{proof}
  These identities are directly obtained from Theorem
  \ref{Th:right-int} and Lemma \ref{L:int-weight0} by the change
  of variable $x\mapsto g_{\ba+\bb}x$ and by using the properties of
  the pivot.
\end{proof}
The following proposition implies Examples \ref{ex1} and \ref{ex2}  satisfy 
Axiom~\ref{ax:unicorn}:
\begin{Prop}
  Suppose that $\Lambda'$ is a rank $r$ sub-lattice of $\Lambda^*$ as
  in Remark \ref{R:sublattice} and let
  $\U'_{\wb0}=\U_{\Lambda'}/(I_{\wb0}\cap\U_{\Lambda'})$.  If $\U'_{\wb0}$ is
  unimodular with a weight $0$ cointegral, then so is $\U_{\wb0}$.
\end{Prop}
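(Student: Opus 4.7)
The plan is to build a two-sided cointegral of $\U_{\wb0}$ by a finite averaging of $\coint'$ over translates by $\C[\Lambda^*]$. Since $\xi^\ell=1$, the generators $K^{\ell a_i}-1$ of $I_{\wb0}$ are central in $\U$, so $\U_{\wb0}\cong W\otimes\C[\Lambda^*/\ell\Lambda^*]$. The inclusion $\U_{\Lambda'}\hookrightarrow\U$ descends to an injection $\U'_{\wb0}\hookrightarrow\U_{\wb0}$ by the very definition $I_{\wb0}\cap\U_{\Lambda'}$ of the kernel. Identifying $T'=\Lambda^*/(\Lambda'+\ell\Lambda^*)$, one obtains a $\U'_{\wb0}$-module decomposition $\U_{\wb0}=\bigoplus_{[c]\in T'}K^c\,\U'_{\wb0}$. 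I would fix representatives $c\in\Lambda^*$ of the classes $[c]\in T'$ and define
\[
\coint=\sum_{[c]\in T'}K^c\coint'\in\U_{\wb0}.
\]

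First, I would check that $\coint$ does not depend on the choice of representatives: if $c''=c+\lambda+\ell a$ with $\lambda\in\Lambda'$ and $a\in\Lambda^*$, then in $\U_{\wb0}$ one has $K^{c''}\coint'=K^cK^{\ell a}K^{\lambda}\coint'=K^c\coint'$, using $K^{\ell a}=1$ and the cointegral identity $K^\lambda\coint'=\ve(K^\lambda)\coint'=\coint'$ valid in $\U'_{\wb0}$. For the two-sided cointegral property it suffices to treat two families of generators of $\U_{\wb0}$. For $x=K^d$, $d\in\Lambda^*$: translation permutes the representatives, so $K^d\coint=\coint=\ve(K^d)\coint$; and since $\coint'$ has weight $0$ it commutes with every $K^c$, giving $\coint K^d=\coint$ as well. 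For $x\in\U_{\Lambda'}$ homogeneous of weight $\mu\in\Lambda$, one moves $x$ past each $K^c$ via the defining relation $xK^c=\xi^{-c(\mu)}K^cx$, then applies $x\coint'=\ve(x)\coint'$ in $\U'_{\wb0}$; if $\ve(x)\ne0$, the short argument from Lemma \ref{L:int-weight0} forces $\mu=0$, the phase $\xi^{-c(\mu)}$ is trivial, and one recovers $x\coint=\ve(x)\coint$. The identity $\coint x=\ve(x)\coint$ is analogous and in fact simpler.

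Finally, nonvanishing of $\coint$ follows from the $\U'_{\wb0}$-module decomposition above: the summands $K^c\coint'$ lie in distinct direct summands and each is nonzero because $\coint'\ne0$ and left multiplication by $K^c$ is an isomorphism of $\U'_{\wb0}$-modules. The weight $0$ property is immediate since each $K^c$ and $\coint'$ have weight $0$. The only conceptually delicate step is the first one---that $I_{\wb0}$ is central and the decomposition of $\U_{\wb0}$ as a free $\U'_{\wb0}$-module of rank $|T'|$ is controlled by the finite group $\Lambda^*/(\Lambda'+\ell\Lambda^*)$; once this structural picture is in place, the construction and verification of $\coint$ are essentially mechanical.
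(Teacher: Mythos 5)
Your proof is correct and follows the same strategy as the paper's: average $\coint'$ over translates by group-like elements $K^c$ of $\C[\Lambda^*]$. The one genuine refinement is the index set. You sum over $T'=\Lambda^*/(\Lambda'+\ell\Lambda^*)$, which is what actually indexes the vector-space decomposition $\U_{\wb0}=\bigoplus_{[c]\in T'}K^c\U'_{\wb0}$ and hence cleanly yields $\coint\neq0$; the paper instead sums over representatives of $\Lambda^*/\Lambda'$ (written there with a typo as $\Lambda/\Lambda'$) and asserts the corresponding direct sum, which fails whenever $\ell\Lambda^*\not\subset\Lambda'$. For instance, in Example~\ref{ex1} with $\ell$ odd one has $\U'_{\wb0}=\U_{\wb0}$, your $T'$ is trivial and $\coint=\coint'$, while the paper's formula gives $(1+K)\coint'=2\coint'$ and its claimed decomposition $\U_{\wb0}=\U'_{\wb0}\oplus K\U'_{\wb0}$ is not in fact direct. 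The two constructions differ by the positive integer $|(\Lambda'+\ell\Lambda^*)/\Lambda'|$, so both give valid cointegrals, but only your index set supports the direct-sum argument for nonvanishing. Your verification of the two-sided cointegral identity on both $K^d$ and homogeneous $x\in\U_{\Lambda'}$ (using that $\ve(x)\neq0$ forces $|x|=0$, so the phase $\xi^{-c(|x|)}$ is trivial) is also written out more fully than in the paper, which only displays the group-like case.
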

\begin{proof}
  Since $\Lambda'$ has rank $r$, the quotient $\Lambda/\Lambda'$ is
  finite with $d$ elements. Let $(k_1,\ldots,k_d)\in\Lambda^d$ be
  representatives of these elements, then $\U'_{\wb0}\subset\U_{\wb0}$ and as
  $\C$-vector space, $\U_{\wb0}=\bigoplus_i k_i\U'_{\wb0}$.  Let
  $k_\coint=\sum_ik_i$. If $\coint'$ is the cointegral of $\U'_{\wb0}$ then
  one easily check that $\coint=k_\coint\coint'=\coint'k_\coint$ is a
  two side cointegral for $\U_{\wb0}$.  In particular, if $k\in\Lambda$,
  then $k$ will permute elements of $\Lambda/\Lambda'$ and there
  exists $K'_i\in\Lambda'$ such that $kk_\coint=\sum_ik_iK'_i$ thus
  $\coint k=k\coint=\sum_ik_i\ve(K_i')\coint'=\ve(k)\coint$.
\end{proof}
Since the $\Lambda'$ version of the quantum groups considered in
Examples \ref{ex1} and \ref{ex2} are unimodular (see \cite{DGP2}), we
have:
\begin{Cor}\label{C:Axiom3True}
  The quantum groups $\U_{\wb0}$ associated to Examples \ref{ex1}
  and \ref{ex2} are unimodular.  Thus, the quantum groups $\U$ of Examples \ref{ex1}
  and \ref{ex2} satisfy Axiom \ref{ax:unicorn}.  
\end{Cor}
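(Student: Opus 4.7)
The plan is to apply directly the Proposition stated immediately before the Corollary. That Proposition reduces unimodularity of $\U_{\wb0}$ with its full Cartan lattice to two things for the $\Lambda'$-version $\U'_{\wb0}=\U_{\Lambda'}/(I_{\wb0}\cap\U_{\Lambda'})$: (i) $\U'_{\wb0}$ is unimodular, and (ii) its two-sided cointegral is of weight $0$. So the entire Corollary reduces to checking (i) and (ii) for the small quantum groups arising in Examples \ref{ex1} and \ref{ex2}.

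First, I would identify $\U'_{\wb0}$ explicitly. By the discussion in Examples \ref{ex1} and \ref{ex2}, $\U_{\Lambda'}$ is the semi-restricted quantum group $\U_\xi(\mathfrak{sl}_2)$ (resp.\ $\U_\xi(\mathfrak{g})$), and quotienting by the central ideal generated by the elements $K_i^{\ell}-1$ produces the familiar finite dimensional restricted small quantum group. For point (i) I would then cite the corresponding unimodularity statement from \cite{DGP2}, which covers exactly this family.

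For point (ii) I would proceed either explicitly or abstractly. Explicitly, in the $\mathfrak{sl}_2$ case the standard formula for a two-sided cointegral is (up to a nonzero scalar) $\coint'\propto\sum_k F^{\ell'-1}K^kE^{\ell'-1}$, each summand of which has weight $-(\ell'-1)+0+(\ell'-1)=0$; for a general simple $\mathfrak{g}$ the analogous PBW-type expression---a product of all positive root vectors with their maximal admissible exponents, of all negative root vectors with the same multi-exponents, and a summation against the Cartan part---is weight $0$ because the positive and negative root contributions cancel in $\Lambda$. Abstractly, one can argue that the two-sided cointegral is unique up to a scalar (as $\U'_{\wb0}$ is unimodular), that the antipode of a two-sided cointegral is again a two-sided cointegral, and that the antipode reverses the $\Lambda$-grading; hence the unique weight $\nu_0$ of $\coint'$ must satisfy $\nu_0=-\nu_0$, so $\nu_0=0$.

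The main obstacle is precisely the weight-$0$ verification in (ii), since unimodularity alone does not determine the weight of the cointegral. Once (ii) is established alongside (i), the hypotheses of the preceding Proposition are met, so $\U_{\wb0}$ is unimodular in each of Examples \ref{ex1} and \ref{ex2}, and Axiom \ref{ax:unicorn} follows by its very definition.
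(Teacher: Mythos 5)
Your approach is the same as the paper's: reduce to the preceding Proposition and cite \cite{DGP2} for unimodularity of the $\Lambda'$-version $\U'_{\wb0}$. You are also right to notice that the Proposition requires the cointegral of $\U'_{\wb0}$ to be of weight $0$ -- a hypothesis the paper's one-line justification passes over silently -- and your explicit verification of this via the PBW formula $\coint'\propto\sum_k F^{\ell'-1}K^kE^{\ell'-1}$ (and its analogue for $\mathfrak{g}$) is correct. However, your proposed \emph{abstract} alternative is wrong: the antipode does \emph{not} reverse the $\Lambda$-grading on $\U$; it preserves it. Indeed, applying the anti-homomorphism $S$ to the relation $K^a x K^{-a}=\xi^{a(|x|)}x$ and using $S(K^a)=K^{-a}$ gives $K^a S(x)K^{-a}=\xi^{a(|x|)}S(x)$, so $|S(x)|=|x|$; concretely, $S(E)=-EK^{-2}$ has the same degree as $E$. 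Thus the argument ``$S(\coint')=c\coint'$ together with degree-reversal forces $\nu_0=-\nu_0$'' does not apply, and one genuinely needs the explicit formula (or some other input, such as Axiom~\ref{Ax:non-degenerate} as used in Lemma~\ref{L:int-weight0}, which however is logically downstream of this Corollary) to pin down $\nu_0=0$.
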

\begin{Exem}\label{ex1c}
Here we build upon Examples \ref{ex1} (which is continued in Examples \ref{ex1b} and \ref{ex12d}). As shown in 
  \cite[section 5.1]{Ha18},  the symmetrized integral does not
  depend on $\ba$ in the PBW basis: There exists $\eta\in\C$ such that
  it is given for any $0\le i,j\le\ell'-1$ and any $0\le k\le\ell-1$ by
  $$\si_\ba(E^i F^j K^k ) = \eta\delta_{ i,\ell'-1}\delta_{ j,\ell'-1}\delta_{ k,0}.$$
  Furthermore, Equation \eqref{eq:mucyc} and the commutation relations
  for $K$ imply that the same formula holds for the product of $E^i$,
  $F^j$ and $K^k$ in any order.
  Using the Fourier transform, it can be rewritten for any $\ell$-periodic
  element $x=\sum_{i,j}E^iF^j\vp_{ij}(H)$
  as
  $$\si_\ba(x)=\frac\eta\ell\sum_{k=0}^{\ell-1}\vp_{\ell'-1,\ell'-1}(\alpha+k).$$
  We will apply this to compute
  $\lambda_{\wb0}(\theta_{\wb0}^{\pm1})$:
  In
  the formula of $\check \RR$, only the last term will contribute and
  this term is $c\,E^{\ell'-1}\otimes F^{\ell'-1}$ where
$$c=\frac{(\xi-\xi^{-1})^{\ell'-1}}{\qN{\ell'-1;\xi^{-2}}!}=\frac{-(-1)^\ell\xi}{\ell'}(\xi-\xi^{-1})^{2(\ell'-1)}.
$$
   Thus
  $$\lambda_{\wb0}(\theta_{\wb0})=\si_{\wb0}(g^{-2}S(u)^{-1})=
  \si_{\wb0}(cK^{-2}\xi^{2H^2}E^{\ell'-1}K^{2\ell'-2}F^{\ell'-1}K^{2-2\ell'})
  $$
  $$=c\xi^{-2}\frac\eta\ell\sum_{k=0}^{\ell-1}\xi^{2k^2-2k}.$$
  This last Gauss sum is $0$ if and only if $\ell\in8\N$ (see for
  example \cite{FcNgBp14}).  Similarly, using that
  $u=\sum_iS^2(b_i)S(a_i)=m(\HH^{-1})\sum_iS^2(y_i)S(x_i)$, one
  has
  $$\lambda_{\wb0}(\theta_{\wb0}^{-1})=\si_{\wb0}(g^{-2}u)
  =\si_{\wb0}\bp{cg^{-2}m(\HH^{-1})S^2(F^{\ell'-1})S(E^{\ell'-1})}$$
  $$=\si_{\wb0}\bp{cg^{-2}\xi^{-2H^2}\xi^{2-2\ell'}F^{\ell'-1}\xi^{-2}E^{\ell'-1}K^{2-2\ell'}}
  =c\frac\eta\ell\sum_{k=0}^{\ell-1}\xi^{2k^2-2k}.$$ Thus if
  $\ell\notin8\N$, one can choose
  $\eta=\frac{\xi\ell }c(\sum_{k=0}^{\ell-1}\xi^{2k^2-2k})^{-1}$ and
  Axiom \ref{Ax:non-degenerate} holds with $\lambda_{\wb0}(\theta_{\wb0}^{-1})=\xi=\lambda_{\wb0}(\theta_{\wb0})^{-1}$.
\end{Exem}
\begin{Exem}\label{ex2c}
Here we build upon Example \ref{ex2} (which is continued in Examples \ref{ex2b} and \ref{ex12d}) 
and sketch a proof that
  Axiom \ref{Ax:non-degenerate} hold for these examples.  To simplify notation in this example we set $\U'=\U_{\Lambda'}$.    First, the restriction of right
  integral of $\U_{\wb0}$ to $\U'_{\wb0}=\U'/(I_{\wb0}\cap\U')$ is a right integral for $\U'_{\wb0}$.  Second, 
  $R_{\wb0}$ and $\theta_{\wb0}$ belongs to the subalgebra
  $\U'_{\wb0}$. Hence $\delta$ can be computed in the
  small quantum group $\U'_{\wb0}$.  Finally, in \cite{LO17, Ly95} it is shown that $\U'_{\wb0}$ is ribbon and factorizable 
  thus \cite[Corollary 3.9]{DGP} implies it is non
  degenerate and $\delta\neq0$.
\end{Exem}

\subsection{Modified trace on $\Proj(\cat^H)$}
Here we discuss modified traces, or m-trace for short, defined in \cite{GPV13, GKP18}.   
If $\cat$ is a linear pivotal category, let $\Proj(\cat)$ be the ideal of projective objects of $\cat$. The \textit{right partial trace} of an endomorphism $f \in \End_{\cat}(V \otimes V')$ is the endomorphism $\ptr(f) \in \End_{\cat}(V)$ given by
\[
 \ptr(f) := (\Id_V \otimes \rev_{V'}) \circ (f \otimes \Id_{V'^*}) \circ (\Id_V \otimes \lcoev_{V'}).
\]
Following \cite{GPV13,GKP18},
a   \textit{right m-trace $\tv$ on $\Proj(\cat)$} is a family of linear maps
$\{ \tv_V : \End_{\cat}(V) \rightarrow \Bbbk \mid V \in \Proj(\cat) \}$ satisfying:
\begin{enumerate}
 \item \textit{Cyclicity}: $\tv_{V}(f' \circ f) = \tv_{V'}(f \circ f')$ for all objects $V,V' \in \Proj(\cat)$ and for all morphisms $f \in \Hom_{\cat}(V,V')$ and $f' \in \Hom_{\cat}(V',V)$;
 \item \textit{Partial trace}: $\tv_{V \otimes V'} (f) = \tv_V(\ptr(f))$ for all objects $V \in \Proj(\cat)$ and $V' \in \cat$ and for every morphism $f \in \End_{\cat}(V \otimes V')$.
\end{enumerate}
Similarly, using the left partial trace one can define a left m-trace.

Now let $\cat$ be the $\C$-linear category
$\bigoplus_{\ba\in G}\cat_{\ba}$ in which $\cat_{\ba}$ is the category
$\U_{\ba}$-mod of finite dimensional left $\U_{\ba}$-modules for
$\ba\in G$.  Since $\U$ is unimodular then there exist a right m-trace
$\tv$ on $\Proj(\cat)$.  Moreover, since $\U_\bullet$ is unibalanced
(i.e. Proposition \ref{P:unibalanced}), then \cite[Theorem 1.1]{Ha18}
implies the right m-trace $\tv$ is also a left m-trace and it is
determined by
\begin{equation}
  \label{eq:t}
  \tv_{\U_\ba}(R_x)=\si_\ba(x)
\end{equation}
where $R_x$ is the right multiplication by $x\in\U_\ba$ seen as a
morphism of left $\U_{\ba}$-module.

Recall that $\cat^H=\bigoplus_{\ba\in G}\cat_{\ba}^H$ is the category of weight modules over $\UH$, see Section \ref{Section unrolled qt group}. Since $\U\subset\UH$, we can
consider the forgetful functor obtain by restriction
$\ResF: \cat_{\ba}^H \rightarrow \cat_{\ba},\ V^H \mapsto
V=\ResF(V^H)$ and
$f\in \Hom_{\cat^H}(V^H,W^H)\mapsto \ResF(f)\in
\Hom_{\cat}(V,W)$. Here $V^H$ and $V$ have the same underlying vector
space, and $V$ is obtained from $V^H$ by forgetting the action of
$\H$.

Call $\Proj(\cat^H)$ the ideal of projective modules of $\cat^H$.
We now need to assume 
\begin{axiom}\label{A:proj}
  There exists a projective module $P_0$ of $\cat^H$ with $\ResF(P_0)$
  projective in $\cat$.
\end{axiom}
Axiom \ref{A:proj} implies that 
\begin{Prop}
  For any projective module $P$ of $\cat^H$, $\ResF(P)$ is projective
  module module of $\cat$.
\end{Prop}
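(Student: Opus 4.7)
The plan is to introduce the class
\[
\mathcal{J} = \{V \in \cat^H \colon \ResF(V) \in \Proj(\cat)\},
\]
verify it has the closure properties of an ideal, and then reduce the statement to showing that $\Proj(\cat^H)$ is contained in $\mathcal{J}$ via the generating projective $P_0$ provided by Axiom \ref{A:proj}.

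First, I would check that $\mathcal{J}$ is a two-sided ideal of $\cat^H$. Closure under direct sums and direct summands is immediate because $\ResF$ is an additive functor and $\Proj(\cat)$ itself is closed under these operations. Closure under $-\otimes W$ (and $W \otimes -$) for arbitrary $W \in \cat^H$ follows from the key compatibility
\[
\ResF(V \otimes W) = \ResF(V)\otimes \ResF(W),
\]
valid because the tensor product in $\cat^H$ only uses the coproduct of $\UH$, while $\ResF$ simply forgets the $\H$-action; together with the fact that $\Proj(\cat)$ is an ideal in $\cat$, this gives $V \otimes W \in \mathcal{J}$ whenever $V\in \mathcal{J}$. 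By Axiom \ref{A:proj}, $P_0 \in \mathcal{J}$.

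The main step, and the essential obstacle, is to prove the inclusion $\Proj(\cat^H) \subseteq \mathcal{J}$. For this I would show that every projective $P$ of $\cat^H$ is a direct summand of $P_0 \otimes V$ for some $V \in \cat^H$. This reduces to the statement that $P_0$ generates $\Proj(\cat^H)$ as a tensor ideal. The natural tool is the standard argument for pivotal categories with enough projectives: one considers the morphism
\[
P \;\cong\; \mathbf{1}\otimes P \;\xrightarrow{\;\lcoev_{P_0}\otimes \Id_P\;}\; P_0 \otimes P_0^* \otimes P
\]
and, using that $P_0 \otimes P_0^* \otimes P$ is projective (by the ideal property applied to $P_0$) and that $P$ is projective so that every surjection onto $P$ splits, one identifies $P$ as a direct summand of $P_0 \otimes (P_0^* \otimes P)$. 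Since in our non-semisimple graded setting the naive coevaluation--evaluation composite can be zero, the actual argument must use the modified trace $\tv$ on $\Proj(\cat^H)$ (or its left analogue) together with the non-degeneracy of the associated pairing to produce a splitting. This is the technical heart of the proof and mirrors the generation arguments in \cite{GKP18,GPV13}.

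Granting this generation statement, the conclusion is immediate: writing $P$ as a direct summand of $P_0 \otimes V$, we obtain $\ResF(P)$ as a direct summand of $\ResF(P_0 \otimes V)=\ResF(P_0)\otimes \ResF(V)$. Since $\ResF(P_0)\in \Proj(\cat)$ and $\Proj(\cat)$ is an ideal, this tensor product is projective in $\cat$, and hence so is $\ResF(P)$.
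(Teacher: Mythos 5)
Your proposal takes essentially the same route as the paper: both proofs hinge on the standard fact that $\Proj(\cat^H)$ is the tensor ideal generated by any nonzero projective object (here $P_0$ from Axiom \ref{A:proj}), and then push the containment through the tensor functor $\ResF$. The paper's proof is just two sentences — it invokes this fact directly and transports it via $\ResF$ — while you unpack the same argument by introducing the pullback ideal $\mathcal J=\ResF^{-1}\bigl(\Proj(\cat)\bigr)$ and checking that it is a tensor ideal containing $P_0$; these are equivalent phrasings.

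One small caveat about your sketch of the ``technical heart'': the splitting of $P\hookrightarrow P_0\otimes P_0^*\otimes P$ is not usually obtained via the modified trace. You are right that $\rev_{P_0}\circ\lcoev_{P_0}=\qdim(P_0)\,\Id$ may vanish, but the standard fix is not to invoke $\tv$. Rather, one uses the \emph{other} direction: the evaluation $\lev\colon P_0^*\otimes P_0\to\mathbf 1$ is nonzero onto the simple unit, hence surjective; lifting this through the projective cover $P(\mathbf 1)\twoheadrightarrow\mathbf 1$ and using projectivity to split shows $P(\mathbf 1)$ is a retract of $P_0^*\otimes P_0$, and then $P(\mathbf 1)\otimes P\twoheadrightarrow P$ splits because $P$ is projective. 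This exhibits $P$ as a retract of $P_0^*\otimes P_0\otimes P$, which is exactly the generation statement. Your overall structure is correct; just replace the appeal to the modified-trace pairing by this projective-cover argument (or simply cite the fact as the paper does).
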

\begin{proof}
  Since $P$ is projective it belongs to the ideal generated by $P_0$.
  But then $\ResF(P)$ belongs to the ideal generated by $\ResF(P_0)$
  which is the subcategory of projective objects of $\cat$.
\end{proof}
For $V^H\in \Proj(\cat^H)$, we define a linear map
$$\tv_{V^H}: \End_{\cat^H}(V^H)\rightarrow \C, f\mapsto \tv_{V^H}(f):=\tv_{\ResF(V^H)}\bp{\ResF(f)}. $$
Since the forgetful functor is
  pivotal and it commutes with the right and left partial traces and we have
\begin{Prop}
  The family $\{\tv_{V^H}\}_{V^H\in \Proj(\cat^H)}$ determines a
  right and left m-trace on $\Proj(\cat^H)$.
\end{Prop}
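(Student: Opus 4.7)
The plan is to reduce everything to the already known right and left m-trace structure on $\Proj(\cat)$ via the forgetful functor $\ResF \colon \cat^H\to\cat$, which is the content of the paragraph preceding the proposition. The two pieces of data one must verify are the cyclicity property and the partial trace property, each in its right and left version.

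First, I would record the three structural facts about $\ResF$ that make the reduction work: (i) $\ResF$ is a $\C$-linear monoidal functor, sending $V^H\otimes W^H$ to $\ResF(V^H)\otimes \ResF(W^H)$; (ii) because the pivot $g\in\U\subset\UH$ has degree $0$ and already belongs to $\U$, the pivotal duality morphisms $\lcoev,\lev,\rcoev,\rev$ of $\cat^H$ defined in Section \ref{Section unrolled qt group} use the same formulas as those of $\cat$, so $\ResF$ is a strict pivotal functor; (iii) by the Proposition just above, the image $\ResF(P)$ of any $P\in\Proj(\cat^H)$ lies in $\Proj(\cat)$, so $\tv_{\ResF(V^H)}$ is defined. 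Together (i) and (ii) imply that $\ResF$ commutes with the left and right partial traces: for $f\in\End_{\cat^H}(V^H\otimes W^H)$,
\begin{equation*}
\ResF(\ptr(f)) = \ptr(\ResF(f)),
\end{equation*}
and likewise on the other side.

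Second, with these facts the cyclicity property on $\Proj(\cat^H)$ is an immediate transport: for $f\in\Hom_{\cat^H}(V^H,W^H)$ and $f'\in\Hom_{\cat^H}(W^H,V^H)$ with $V^H,W^H\in\Proj(\cat^H)$, cyclicity of $\tv$ on $\Proj(\cat)$ gives
\begin{equation*}
\tv_{V^H}(f'\circ f)=\tv_{\ResF(V^H)}(\ResF(f')\circ\ResF(f))
=\tv_{\ResF(W^H)}(\ResF(f)\circ\ResF(f'))=\tv_{W^H}(f\circ f').
\end{equation*}
The partial trace property transports equally directly: for $V^H\in\Proj(\cat^H)$, $W^H\in\cat^H$ and $f\in\End_{\cat^H}(V^H\otimes W^H)$,
\begin{equation*}
\tv_{V^H\otimes W^H}(f)=\tv_{\ResF(V^H\otimes W^H)}(\ResF(f))
=\tv_{\ResF(V^H)}(\ptr(\ResF(f)))=\tv_{\ResF(V^H)}(\ResF(\ptr(f)))=\tv_{V^H}(\ptr(f)),
\end{equation*}
where the second equality uses that $\tv$ on $\Proj(\cat)$ is a right m-trace and the third uses that $\ResF$ commutes with $\ptr$.

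The same two computations, with the left partial trace replacing the right one, give the left m-trace axioms, using that $\tv$ on $\Proj(\cat)$ is already both a right and left m-trace by \cite[Theorem 1.1]{Ha18} together with Proposition \ref{P:unibalanced}. There is no real obstacle in this proof; the only nontrivial input is that $\ResF$ preserves projectivity, which is exactly the content of the previous proposition and is where Axiom \ref{A:proj} enters. Once that is granted, each axiom of an m-trace on $\Proj(\cat^H)$ is obtained by pulling back the corresponding axiom from $\Proj(\cat)$ along $\ResF$.
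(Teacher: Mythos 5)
Your proof fills in exactly what the paper leaves implicit: the paper's own justification is the one-line remark that the forgetful functor is pivotal and commutes with partial traces, after which the proposition is stated without further argument. Your spelled-out transport of the cyclicity and partial trace axioms along $\ResF$, together with the preservation of projectivity from the preceding proposition, is the same argument made explicit, so the two coincide.
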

We finish by a proposition used to complete Example \ref{ex2b} and below:
\begin{Prop}\label{P:U inject in CH}
 Suppose there exists a  dense open subset $O$ of $G$ such that the categories $\cat_\ba$ and $\cat^H_\ba$ are semi-simple for any $\ba\in O$ (i.e.\ $\cat$ and $\cat^H$ are generically semi-simple).  Also, suppose
 that for $\ba\in O$, every simple module in $\cat_\ba$ is the image of a simple module in  $\cat^H_\ba$.
   Then
   \begin{enumerate}
   \item If $x\in\UH$ satifies $\rho_V(x)=0$ for all $ V\in\cat^H$ 
     then $x=0$.
   \item Axiom \ref{A:proj} holds.
   \end{enumerate}
\end{Prop}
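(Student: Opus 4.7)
Part (2) is immediate from the hypothesis: for $\ba\in O$ the category $\cat^H_\ba$ is semi-simple, so every object in $\cat^H_\ba$ is projective, hence projective in the block decomposition $\cat^H=\bigoplus_{\bb}\cat^H_\bb$. Likewise $\cat_\ba$ is semi-simple, so $\ResF(V^H)\in\cat_\ba$ is projective in $\cat$. Thus any non-zero $V^H\in\cat^H_\ba$ with $\ba\in O$ witnesses Axiom \ref{A:proj}, and this will serve as the projective module $P_0$.

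For part (1), the plan is to isolate the PBW coefficients of $x$ by acting on weight vectors and then invoking the faithful action of $\U_\ba$ on the simples of $\cat_\ba$.  Writing $x=\sum_{w,\vec n,\vec m}c_{w,\vec n,\vec m}\,wK^{\vec n}H^{\vec m}$ in the basis coming from the identification $\UH\simeq W\otimes_\C\C[\Lambda^*]\otimes_\C S\H$ of Proposition \ref{P:UHisHopf}, I would compute that for a weight vector $v_\lambda\in V^H_\lambda$ one has
\begin{equation*}
x\cdot v_\lambda=\sum_{\mu\in\Lambda}X_\mu(\lambda)\cdot v_\lambda,\qquad X_\mu(\lambda)=\sum_{|w|=\mu}\phi_w(\lambda)\,w\in W_\mu,
\end{equation*}
where $\phi_w(\lambda)=\sum_{\vec n,\vec m}c_{w,\vec n,\vec m}\,\xi^{\sum_i n_i\lambda(H_i)}\prod_i\lambda(H_i)^{m_i}$ is an entire function of $\lambda$.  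The vanishing hypothesis, decomposed by weight, then forces $X_\mu(\lambda)v_\lambda=0$ in $V^H_{\lambda+\mu}$ for every simple $V^H\in\cat^H_{\bar\lambda}$ with $\lambda$ in its support.  Once I establish that this implies $X_\mu(\lambda)=0$ in $W_\mu$ for $\lambda$ in a dense subset of $\H^*$, the linear independence of the monomials $\xi^{\sum n_i\lambda(H_i)}\prod\lambda(H_i)^{m_i}$ together with the rigidity of entire functions vanishing on a dense set force $\phi_w\equiv0$, whence $c_{w,\vec n,\vec m}=0$ for all indices and $x=0$.

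The hard part is precisely this upgrade from ``$X_\mu(\lambda)$ annihilates the $\lambda$-weight subspace of every simple of $\cat^H_{\bar\lambda}$'' to ``$X_\mu(\lambda)=0$ in $W_\mu$''.  For one fixed $\lambda$ the condition is strictly weaker, since an element of $\U_{\bar\lambda}$ can annihilate a single weight subspace without vanishing.  I would overcome this by exploiting the whole family $\{X_\mu(\lambda)\}_{\lambda\in\bar\lambda}$ simultaneously: for $\ba\in O$, semi-simplicity of $\cat_\ba$ together with the lifting hypothesis gives $\U_\ba\simeq\prod_i\End_\C(\ResF(S_i^H))$, so the composite $W_\mu\hookrightarrow\U_\ba\hookrightarrow\prod_i\End_\C(\ResF(S_i^H))$ is injective; then, as $\lambda$ varies through the coset $\ba$ the subspaces $\bigoplus_{S^H}S^H_\lambda$ exhaust the faithful $\U_\ba$-module $\bigoplus_{S^H\text{ simple}}\ResF(S^H)$, and analyticity of the $\phi_w$ should convert pointwise-in-$\lambda$ vanishing on a dense subset into global vanishing.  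Making this last step rigorous, most likely by studying the kernel of a suitable evaluation map $W_\mu\otimes\Ce(\H^*)\to\prod_{S^H}\Hom_\C(S^H_\bullet,S^H_{\bullet+\mu})$, is the delicate piece that the hypothesis is designed to enable.
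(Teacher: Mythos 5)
Your argument for part (2) is fine and is essentially the paper's: pick $\ba\in O$, use semi-simplicity of both $\cat^H_\ba$ and $\cat_\ba$ to conclude any (nonzero) object of $\cat^H_\ba$ and its restriction are projective. The paper lifts a specific simple, but your version works the same way.

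For part (1), however, you have isolated the right obstruction but not resolved it, and the way you propose to resolve it would not work as stated. The difficulty, which you correctly name, is passing from ``$X_\mu(\lambda)$ kills the $\lambda$-weight spaces of all simples'' to ``$X_\mu(\lambda)=0$ in $W_\mu$'': for a \emph{fixed} $\lambda$, the evaluation $W_\mu\to\bigoplus_{S^H}\Hom_\C(S^H_\lambda,S^H_{\lambda+\mu})$ need not be injective, since the $\lambda$-weight spaces of the finitely many simple $\U_\ba$-modules may be far too small (some may even be zero). Your suggestion to ``exhaust the faithful $\U_\ba$-module $\bigoplus_{S^H}\ResF(S^H)$ as $\lambda$ varies through the coset $\ba$'' does not repair this, because changing $\lambda$ also changes the element $X_\mu(\lambda)$ being tested — you cannot assemble the pointwise conditions at different $\lambda$'s into a single faithfulness statement for a single element. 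The paper circumvents this with a different and crucial device: it fixes one point $\alpha$ with $x(\alpha)\neq 0$ and $\bar\alpha\in O$, replaces $x$ by its discrete Fourier transform $\Fou_\alpha(x)\in\U_{\bar\alpha}$ (which agrees with $x$ on the fundamental domain and is nonzero), uses semi-simplicity and the lifting hypothesis to find a simple $V^H\in\cat^H_{\bar\alpha}$ and a weight vector $v$ of some weight $\alpha'$ with $\Fou_\alpha(x)\cdot v\neq 0$, and then \emph{tensors by a one-dimensional translation module} $\sigma$ with $\rho_\sigma(H_i)=\ell\lambda(H_i)$ for a suitable $\lambda\in\Lambda$ so that $v\otimes 1$ acquires weight $\alpha''=\alpha'+\ell\lambda$ lying in the fundamental domain where $\Fou_\alpha(x)(\alpha'')=x(\alpha'')$, whence $x\cdot(v\otimes 1)=(\Fou_\alpha(x)\cdot v)\otimes 1\neq 0$. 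This translation-by-$\sigma$ shift is the ingredient your proposal is missing; without it, the analyticity argument you gesture at has nothing to grip, since you never establish $X_\mu(\lambda)=0$ at even one $\lambda$.
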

\begin{proof}
  Consider a non zero $x(h)=\sum x_if_i(h)\in\wh\UH$ seen as a
  $W$-valued entire function. Since $x$ is not zero, there exists
  $\alpha\in\H^*$ such that $x(\alpha)\neq 0$ and $\cat_\alpha$ is
  semi-simple.  Then
  $\Fou_\alpha(x)\neq0\in\U_\ba\simeq\bigoplus_i\End_\C(V_i)$.
  Lifting one of the $V_i$ to $\cat^H$, one can found a simple module
  $V\in\cat^H$ such that $\rho_V(\Fou_\alpha(x))\neq0$.  Let $v\in V$
  be a homogeneous vector of weight $\alpha'$ such that
  $\Fou_\alpha(x)v\neq0$.  $\Fou_\alpha(x) $ and $x$ have the same
  values when $h$ belongs to
  $\{\alpha+\sum_ik_i\alpha_i,\,0\le k_i\le\ell-1\}$ thus there exists
  $\alpha''\in\H^*$ such that
  $\alpha''-\alpha'=\ell\lambda\in\ell\Lambda$ and
  $\Fou_\alpha(x)(\alpha'')=x(\alpha'')$.  Now let $\sigma=\C$ endowed
  with the action of $\UH$ given by
  $\forall u\in\U,\rho_\sigma(u)=\ve(u)\Id_\sigma$ and
  $\rho_\sigma(H_i)=\ell\lambda(H_i)\Id_\sigma$.  Then the vector
  $v'=v\otimes1\in V'=V\otimes\sigma$ satifies
  $\rho_{V'}(x)(v')\neq0$.  Indeed, $v'$ has weight $\alpha''$ and for
  $y=x(\alpha'')=\Fou_\alpha(x)(\alpha'')=\Fou_\alpha(x)(\alpha')\in
  W\subset\U$ one has
  $$x.v'=y.v'=(y_{(1)}.v)\otimes(\ve(y_{(2)})1)
  =(y.v)\otimes1=(\Fou_\alpha(x).v)\otimes1\neq0.$$

  Now consider $\ba\in G$ such that $\cat_\ba$ and $\cat^H_\ba$ are
  semi-simple. Then the finite dimensional algebra $\U_\ba$ is
  isomorphic to the product of algebra $\End_\C(V_i)$ for
  representatives $(V_i)_i$ of the isomorphism class of simple
  $\U_\ba$-modules.  Lift one of these modules $V_0$ to a simple
  module $V_0^H\in\cat^H_\ba$ then by semi-simplicity, $V_0^H$ is
  projective in $\cat^H_\ba$ thus also in $\cat^H$ and so is its image
  $V_0$ in $\cat$.
\end{proof}
\begin{Exem}\label{ex12d}
  Combining Examples \ref{ex1}, \ref{ex1b}, \ref{ex1c}, Corollary
  \ref{C:Axiom3True} and Proposition \ref{P:U inject in CH} we can
  conclude that Axioms \ref{pivotal axiom}--\ref{A:proj} are satisfied
  for the quantum group $\U$ associated to $\mathfrak{sl}_2$ at any
  root of unity $\xi=\exp(\frac{2i\pi}\ell)$ with $\ell\ge3$ and
  $\ell\notin8\Z$.
  Moreover, Examples \ref{ex2}, \ref{ex2b}, \ref{ex2c}, Corollary
  \ref{C:Axiom3True} and Proposition \ref{P:U inject in CH}
imply that Axioms \ref{pivotal axiom}--\ref{A:proj} are satisfied for the quantum group $\U$ associated to simple finite
  dimensional complex Lie algebra for any root of unity $\xi=\exp(\frac{2i\pi}\ell)$ with $\ell$ odd. 
     \end{Exem}
\section{Topological invariant of bichrome graphs}\label{Section of inv of bichrome graphs}
This section does not require the results of Section \ref{S:Galg_DFT}.  It contains the construction of topological invariant corresponding from  the algebra of Section \ref{topo quntum and uni invariant}. In particular, in this section we use the topological ribbon Hopf algebra
$\widehat{\UH}$ and the category $\cat^H$ of weight modules over $\UH$
to construct a topological universal invariant. The invariant is
defined on bichrome graphs colored by both the algebra $\widehat{\UH}$ and objects of $\cat^H$.
Every manifold we will consider in this paper will be oriented, every diffeomorphism of manifolds will be positive, and every link and tangle will be oriented and framed. 

 Let $L:{\UH}^{\wh\otimes n}\to\End_\C(\wh\UH^{\wh\otimes n})$
be the left multiplication: $L_{x}(y)=xy$.  
\subsection{Bichrome graphs}
For a  non-negative integer $n$, we describe the category $[n]\cat^H$ as follows.   The objects of $[n]\cat^H$ are vector spaces of the form
$$[n]V: = ({\UH})^{\wh\otimes n}\otimes V$$
 for some object $V$ of
$\cat^H$.  A morphism in $[n]\cat^H$ from $[n]V$ to $[n]V'$ is a  morphism of topological $\wh\UH$-modules $f:[n]V\to [n]V'$ which is of the form
\begin{equation}\label{coupon-morphism}
f=(L_{\xi^{q+l}}\otimes\Id)\sum_{i=1}^{m}L_{{u_i}}\otimes f_i
\end{equation}
for some ${u_i}\in \U^{\otimes n}$, a quadratic element $q$, a linear
element $l$
and for some linear maps $f_i \in \Hom_{\C}(V, V'),\ 1\leq i\leq m$.

Let $f:[n]V\to [n]V'$ be a non-zero morphism as in Equation \eqref{coupon-morphism}.  Then $f$ determines
$q$ and $\wb l\in(G')^n$ the class of $l$ modulo
$(\Lambda^*)^n$ uniquely.    The pair $(q,\wb l)$ is called the {\em exponent} of
$f$.

As we will explain, bichrome ribbon graph mimic Turaev's definition of ribbon graphs, given in \cite{Tura94}, but have additional information including red and blue edges and special coupons.  For more on ribbon graphs, ribbon categories and their associated Reshetikhin-Turaev functors see \cite{Tura94}.
Here 

An \textit{$n$-string link} is an $(n,n)$-tangle whose $i$-th incoming
boundary vertex is connected to the $i$-th outgoing boundary vertex by
an edge directed from bottom to top for every $1 \leq i \leq n$.

A \textit{bichrome graph} is a ribbon graph with edges divided into two groups, red and blue, satisfying the following condition:
for every coupon there exists a number $k \geq 0$ such that the first $k$ input legs and
the first $k$ output legs are red with positive orientation, meaning incoming and outgoing respectively, while all the other ones are blue.  The \textit{smoothing} of a coupon with $k$ red edges is the union of $k$ parallel red strings
union the coupon with its red edges removed, see Figure 
\ref{Smoothing of a coupon}.  Note this is a different smoothing than given in \cite{DGP} where the blue part was removed in this process.
\begin{figure}
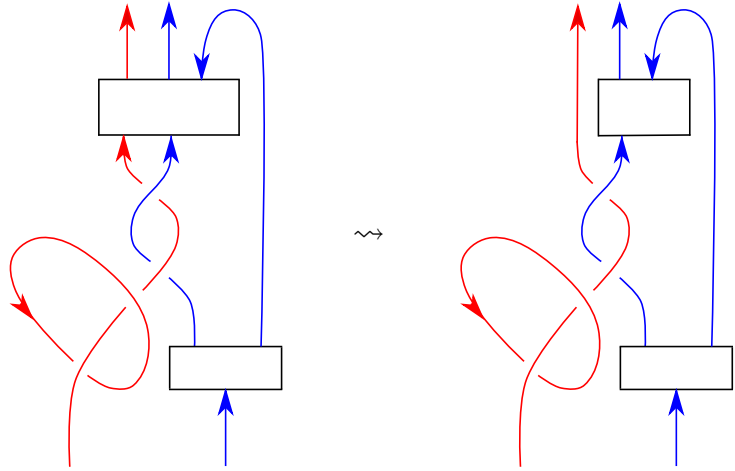

$$
\epsh{fig3_7b}{34ex}\qquad \leadsto \qquad \epsh{fig3_7b_smoothb}{34ex}
$$
\caption{Smoothing of a coupon}
\label{Smoothing of a coupon}
\end{figure}

A \emph{$n$-string link graph} is a bichrome ribbon graph $\Gamma$ in
$\R^2\times[0,1]$ satisfying the following conditions: (1) the first
$n$ incoming boundary vertices and the first $n$ outgoing boundary
vertices are red, while all the other ones are blue and, (2) the red
sub-tangle of the graph obtained by smoothing all coupons of $\Gamma$
is an $n$-string link.
We say such a graph is \emph{($\wh\UH$,$\cat^H$)-colored} if: (1) the red
edges are colored by $\wh\UH$, (2) the blue edges colored by objects of $\cat^H$ and (3) coupons are colored by morphisms in $[k]\cat^H$ (here $k$ can vary for the different coupons).

 To simplify notation, when it is clear we will say $n$-string link graph for a ($\wh\UH$,$\cat^H$)-colored $n$-string link graph. 
 We also consider the special kind of coloring on a $n$-string link graph:  We say a $n$-string link graph is a  {\em $\Vect_\C$-graph
with $\widehat{\UH}$-beads} if:  (1) the red
edges are colored by $\wh\UH$, (2) the blue edges colored by objects of $\cat^H$, (3) coupons do not have any red edges and are colored by morphisms in $\Vect_\C$ and (4) the graph is equipped with \emph{beads} which form a finite set $b$
of points on edges colored by an element of
$(\widehat{\UH})^{\wh\otimes b}$.
Here and in what follows, if $S$ is a finite set with $n$ element the tensor
product $V^{\otimes S}$ means $V^{\otimes n}$ where the factors are
indexed by elements of $S$.

\subsection{The universal invariant}\label{SS:UnivInv}
Let $\Uo=\text{H\!H}_{0}\bp{\wh\UH}:=\wh\UH/[\wh\UH,\wh\UH]$ seen as a
topological $\wh\UH$-module with trivial action, and let
$$\tru:\wh\UH\to\Uo$$ be the canonical projection.

Let  $\Gamma$ be a $n$-string link
graph.
We will now define  the
universal invariant associate to $\Gamma$ which is an element
$$J(\Gamma)\in (\Uo)^{\wh\otimes C}\,\wh\otimes\,\, (\UH)^{\wh{\otimes} n}\otimes\Hom_{\C}(V,V')$$
where $C$ is the  set  of closed red components of  $\Gamma$.  
Here $V$ (respectively $V'$) is the tensor product of the colors of
the blue bottom (respectively top) boundary points of $\Gamma$.
We will also show that $J(\Gamma)$ is $\wh \UH$-equivariant.

Choose a \emph{diagram} $D$ of $\Gamma$ which  is a regular projection
  of $\Gamma$.  We define an element $J(D)$ in the following three steps.

\vspace{5pt}
\noindent
\textbf{Step 1, Construction of
 $B(D)$:}  
Using the rules of Figure \ref{Fig1} we put beads  at the cup, the cap and the
  crossing (this is same as the usual universal invariant). In this figure the symbol  $S^{\downarrow}(x)$ for $x\in \widehat{\UH}$ is defined by
\begin{equation*}
 \epsh{fig3_0}{8ex}\put(-25,2){\ms{S^{\downarrow}(x)}}\quad = \quad 
 \begin{cases}
\quad \qquad \epsh{fig3_3}{6ex}\put(-10,2){\ms{x}}\ \text{if the arrow is upward},\\
 \qquad \epsh{fig3_4}{6ex}\put(-22,2){\ms{S(x)}} \ \text{if the arrow is downward}.
 \end{cases}
\end{equation*}
Then we replace each coupon with $k$ red edges with its smoothing
equipped with $k$ beads as shown in Figure \ref{Fig2}. The result is a
linear combination $B(D)$ of $\Vect_\C$-graphs with $\widehat{\UH}$-beads.
\begin{figure}
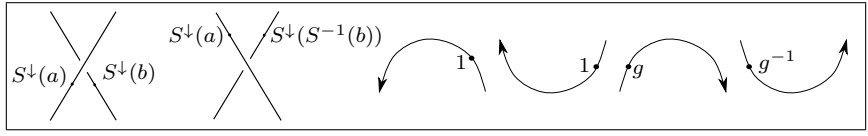

$$\fbox{$\quad
\epsh{fig1_1}{8ex} \put(-39,-3){\ms{S^{\downarrow}(a)}} \put(-5,-2){\ms{S^{\downarrow}(b)}} \qquad \quad
\epsh{fig1_2}{8ex} \put(-42,12){\ms{S^{\downarrow}(a)}}
\put(-4,12){\ms{S^{\downarrow}(S^{-1}(b))}} \qquad \quad \epsh{fig1_12}{4ex}
\put(-11,2){\ms{1}} \  \epsh{fig1_10}{4ex} \put(-10,1){\ms{1}}
\  \epsh{fig1_11}{4ex} \put(-35,0){\ms{g}} \ 
\epsh{fig1_9}{4ex} \put(-33,2){\ms{{g^{-1}}}}\ $}
$$
\caption{Place beads on the strings where $\mathcal{R}=\sum a\otimes b$}
	\label{Fig1}
\end{figure}
\begin{figure}
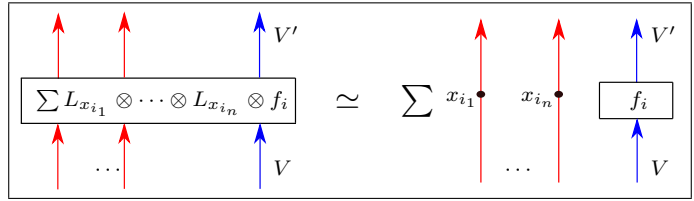

  $$\fbox{$
  \epsh{fig3_6b}{13ex}\put(-97, 1){\ms{\sum L_{x_{i_1}}\otimes \cdots
      \otimes L_{x_{i_n}}\otimes f_i}} \put(-75,-25){\ms{\cdots}}
  \put(-8,25){\ms{V'}} \put(-8,-25){\ms{V}} \quad
  \simeq
  \quad \sum\quad \epsh{fig3_3r}{12ex}
  \put(-15,3){\ms{x_{i_1}}} \put(7,-25){\ms{\cdots}} \qquad
  \epsh{fig3_3r}{12ex}\put(-16,3){\ms{x_{i_n}}} \quad
  \epsh{fig3_2b}{12ex}\put(-16,1){\ms{f_i}} \put(-8,25){\ms{V'}}
  \put(-8,-25){\ms{V}}\ $}
$$
\caption{Smoothing a mixed coupon using beads.}
	\label{Fig2}
\end{figure}

\vspace{5pt}
\noindent
\textbf{Step 2, Construction of $J_{D}^b$ via contraction:}  
For each closed red component of $D$ fix a base point $b$.  
Using the rules of Figure \ref{Fig0}, we replace each bead on a blue strand colored by
  $V$ by using the action of $\wh \UH$ on $V$ or its
  transpose\footnote{Remark that the convention for beads on blue
    shown in Figure \ref{Fig0} is different from \cite{DGP} where we
    used ${^t\rho_V(S(x))}$.}.  We obtain 
    $$J_D^b\in (\UH)^{\wh\otimes C}\,\wh\otimes\,\, (\UH)^{\wh{\otimes} n}\otimes\Hom_{\C}(V,V')$$ by 
  multiplying the beads on each red strand in order opposite to the
  orientation of the component (if the red component is closed start at the fixed base point) and we separately evaluate the blue part of the graph
  using the Penrose graphical calculus for $\Vect_\C$-colored ribbon
  graphs.  
 \begin{figure}
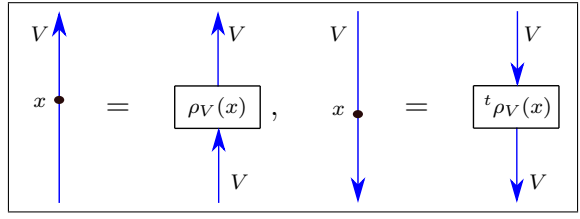

   $$\fbox{$\quad
\epsh{fig3_3b}{14ex} \put(-12,3){\ms{x}} \put(-13,28){\ms{V}}\quad = \quad \epsh{fig3_2b}{14ex}\put(-27,1){\ms{\rho_V(x)}}\put(-12,28){\ms{V}} \put(-11,-28){\ms{V}} \ , \qquad
\epsh{fig3_5b}{14ex}\put(-12,0){\ms{x}} \put(-13,28){\ms{V}}\quad = \quad \epsh{fig3_4b}{14ex} \put(-28,1){\ms{^t\rho_V(x)}}\put(-13,28){\ms{V}} \put(-11,-28){\ms{V}}\ $}
$$
\caption{Beads on blue edges are $\Vect_{\C}$-colored coupons.}
	\label{Fig0}
\end{figure}

\vspace{5pt}
\noindent
\textbf{Step 3, Definition of $J(D)$ via trace:}  For each closed red component apply the trace $\tru:\wh\UH\to\Uo$  to obtain the element 
$$J(D)=\bp{\tru^{\otimes C}\otimes \Id^{\otimes n}\otimes \Id }\bp{J_{D}^b} \in (\Uo)^{\widehat{\otimes} C}\wh \otimes \,\, (\UH)^{\wh{\otimes} n}\otimes \Hom_\C(V, V').$$
The element $J(D)$ is independent of the choice of base points $b$ because the starting point of multiplying the beads on
  any closed red circle component does not matter as the result is sent in the
  quotient $\Uo$.  

This construction illustrated for the diagram $D$ given in
Figure \ref{Fig3}, where $J\bp{D}$ is an element of
$\Uo\widehat{\otimes}\wh \UH \otimes \Hom_\C(V_1, V_4)$
given by 
\begin{align*}J\bp{D}=\sum \tru\bp{g^{-1}S(b_s)1a_k}\otimes
x_ib_ta_jS^{-1}(b_k)a_s\otimes F_{{\C}}(T)
\end{align*}
where 
$F_{{\C}}(T)=\bp{\Id_{V_4}\otimes \lev_{V_3}}\circ \bp{f_i
  \circ\rho_{V_2}(a_t b_j)\otimes \Id_{V_3}}\circ f\in \Hom_{\C}(V_1,
V_4).$
Here $F_{{\C}}$ is the Reshetikhin-Turaev functor from the category of $\Vect_{\C}$-colored ribbons graphs to the category of finite dimensional vector spaces $\Vect_{\C}$.  

\begin{figure}
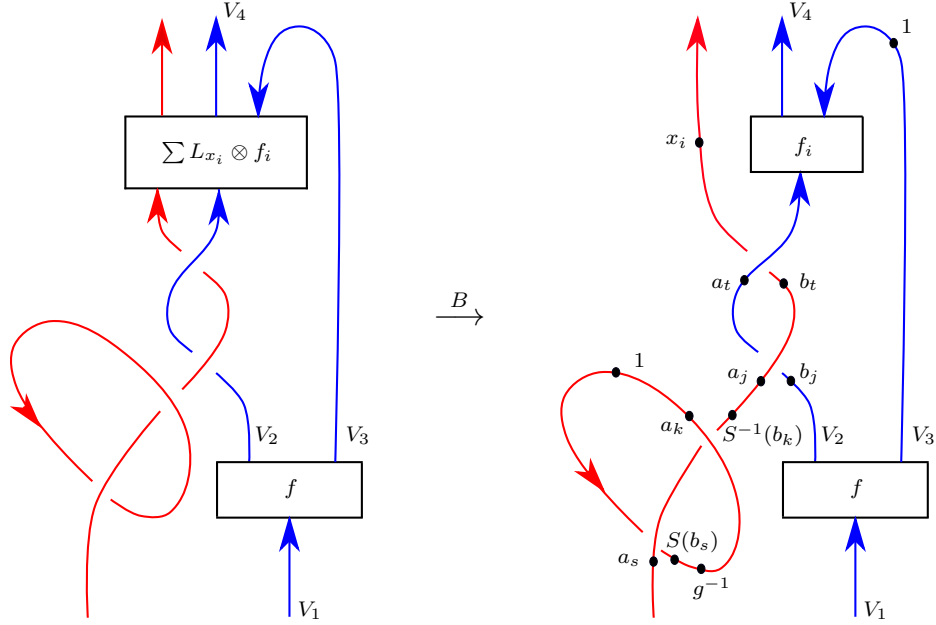

   $$
   \epsh{fig3_7b}{44ex}\put(-75, 63){\ms{\sum L_{x_i}\otimes
       f_i}}\put(-29,-64){\ms{f}}\put(-24,
   -110){\ms{V_1}}\put(-40,-44){\ms{V_2}} \put(-6,-44){\ms{V_3}}
   \put(-52,116){\ms{V_4}} \qquad\stackrel B\longrightarrow
   \qquad \epsh{fig3_8b}{44ex}
   \put(-50, 65){\ms{f_i}}\put(-29,-64){\ms{f}}\put(-24,
   -110){\ms{V_1}}\put(-40,-44){\ms{V_2}} \put(-6,-44){\ms{V_3}}
   \put(-52,116){\ms{V_4}} \put(-10,110){\ms{1}} \put(-98,
   68){\ms{x_i}} \put(-81, 14){\ms{a_t}} \put(-48, 14){\ms{b_t}}
   \put(-75, -20){\ms{a_j}} \put(-48, -20){\ms{b_j}} \put(-100,
   -40){\ms{a_k}} \put(-77, -44){\ms{S^{-1}(b_k)}} \put(-110,
   -16){\ms{1}} \put(-116, -90){\ms{a_s}} \put(-98, -84){\ms{S(b_s)}}
   \put(-88, -102){\ms{g^{-1}}}
$$
\caption{Putting beads on a graph $D$}
	\label{Fig3}
\end{figure}
The following theorem is an adapted version of the universal invariant of \cite{Henning96,Ohtsuki02,kauffman2001oriented}:
\begin{theo} \label{T:univ-inv}
 The element $J(D)$
  computed above on does not depend on the choice of $D$ and is an isotopy  invariant $J(\Gamma)$ of $\Gamma$
  in $\R^2\times[0,1]$.  Furthermore $J(\Gamma)$ is
  $\wh \UH$-equivariant, i.e.,
\begin{equation}\label{E:EquivarJ}
  J(\Gamma)\bp{u_{(1)}\otimes\cdots\otimes u_{(k)}\otimes\rho_{V}(u_{(k+1)})}
  =\bp{u_{(1)}\otimes\cdots\otimes
    u_{(k)}\otimes\rho_{V'}(u_{(k+1)})}J(\Gamma)
  \end{equation}
  for all $u\in \wh \UH$ where $\Delta^{k+1}(u)=u_{(1)}\otimes\cdots\otimes u_{(k+1)}$ and $k$ equals $n$ plus the number of closed red components of $\Gamma$.  
\end{theo}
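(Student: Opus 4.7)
The plan is to reduce the theorem to the three familiar ingredients of a Reshetikhin--Turaev/Hennings type construction, applied in parallel to the two colors of the graph: (i) isotopy invariance of the blue sub-graph from the ribbon structure on $\cat^H$, (ii) isotopy invariance of the red sub-graph from the topological ribbon Hopf algebra structure on $\wh\UH$, and (iii) compatibility at mixed coupons. The purely blue part poses no difficulty: away from red edges and mixed coupons the construction coincides with the Reshetikhin--Turaev functor $F_\C$ applied to the ribbon category $\cat^H$, and invariance under the Turaev moves of blue ribbon graphs follows directly from Proposition \ref{CHribbon}.

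For the purely red part I would follow the classical Hennings/Kauffman--Radford pattern: invariance under the framed oriented Reidemeister moves and Turaev's coupon moves is equivalent to the ribbon Hopf algebra axioms \eqref{dkcm}--\eqref{dkcm6} of $\wh\UH$, together with their formal consequences $(S\otimes S)\RR=\RR$, $\RR\RR^{-1}=1$, and the Yang--Baxter equation, which all hold by Theorem \ref{main theorem 1}. The conventions of Figure \ref{Fig1} insert $g^{\pm1}$ at extrema, so that the zig-zag identities are absorbed by the pivotal structure. Because beads on a strand are multiplied against the orientation, the product on each closed red component is only defined up to cyclic permutation, i.e.\ up to commutators; this ambiguity is exactly what is killed by the projection $\tru:\wh\UH\to\Uo$ in Step~3, proving independence of the base point.

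At a mixed coupon, smoothing replaces the coupon by its purely blue colour $f_i$ together with a row of beads labeled $L_{x_{i_1}}\otimes\cdots\otimes L_{x_{i_n}}$ on the $k$ parallel red strings, see Figure \ref{Fig2}. To see that this is compatible with every isotopy that touches the coupon, I would appeal to the very definition of a morphism in $[n]\cat^H$: such a morphism is, up to a central power element $\xi^{q+l}$, a sum of left-multiplications $L_{u_i}\otimes f_i$ with $u_i\in\U^{\otimes n}$, and is therefore automatically $\wh\UH$-equivariant on the red factors. Combined with part (ii), this yields invariance of $J(D)$ under all local moves involving coupons. The equivariance property \eqref{E:EquivarJ} then follows by a standard induction on a height decomposition of $D$: one pushes $u\in\wh\UH$ downward through each elementary slice, using \eqref{dkcm} at a crossing, the compatibility of $\Delta$ with $S$ and the pivot at cups and caps, and $\wh\UH$-equivariance of the coloring at a mixed coupon. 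The count of factors of $\Delta^{k+1}(u)=u_{(1)}\otimes\cdots\otimes u_{(k+1)}$ matches exactly the closed red components, the $n$ free red strands, and the blue boundary.

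The main obstacle I anticipate is the interaction of the quadratic power element $\xi^{q+l}$ appearing in a coupon coloring with the beads produced by smoothing a neighbouring red crossing. Naive commutation fails; the correction is supplied by Lemma \ref{lemma changes the order of elements}, which tells us that moving $\xi^{q}$ past a homogeneous bead $x$ of weight $\lambda$ produces an extra factor $\xi^{\wt q(\lambda,h_\bullet)}$ lying in $\xi^{\Lambda^{*(n)}}\subset\U^{\otimes n}$. This extra factor can then be absorbed into the $\U^{\otimes n}$-part of the adjacent coupon or strand, so the class of the coloring inside $\mathcal Q_n\mathcal L_n\U^{\otimes n}$ is preserved. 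Verifying that this bookkeeping closes up consistently across every Reidemeister-type move, and hence that the resulting element of $(\Uo)^{\wh\otimes C}\wh\otimes(\UH)^{\wh\otimes n}\otimes\Hom_\C(V,V')$ is indeed invariant, is the essential algebraic check behind the theorem.
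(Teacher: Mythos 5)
Your approach matches the paper's: the bead-moving relations \eqref{eq:mult-beads}--\eqref{eq:cap-beads} plus the ribbon Hopf axioms handle the Reidemeister moves in Ohtsuki/Kauffman--Radford fashion, coupon slides reduce to the $\wh\UH$-equivariance of $[n]\cat^H$-morphisms, base point independence is absorbed by the projection $\tru$ to $\Uo$, and equivariance is checked by pushing $u_{(i)}$-beads through a height decomposition. The one place where you go off course is the closing paragraph: the ``essential algebraic check'' you anticipate, tracking the power element $\xi^{q+l}$ past crossing beads via Lemma~\ref{lemma changes the order of elements}, is not actually needed for Theorem~\ref{T:univ-inv}. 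A coupon color $(L_{\xi^{q+l}}\otimes\Id)\sum_i L_{u_i}\otimes f_i$ is by definition a morphism of topological $\wh\UH$-modules, and it is this equivariance as a whole (as used in the displayed computation around Equation~\eqref{equality move coupon}) that makes the coupon moves go through; no explicit commutation bookkeeping of $\xi^{q+l}$ is required at this stage. That bookkeeping, and Lemma~\ref{lemma changes the order of elements}, belong instead to Theorem~\ref{value of invariant}, where one pins down the shape of $J^b_D$ inside $\xi^{P_D}\U^{\otimes m+n}\otimes\Hom_\C(V,V')$. So your proof is correct, but you have imported a concern from a later statement into this one.
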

\begin{proof}
  We will use the symbol $\equiv$ to relate two
  $\Vect_\C$-graphs with $\widehat{\UH}$-beads whose images are equal under $F_{{\C}}$.
  
  The following local relation for $\Vect_\C$-graphs with $\widehat{\UH}$-beads 
  apply for any color of their edges:
  \begin{equation}
    \label{eq:mult-beads}
    \epsh{fig3_3u2}{10ex}\put(-12,8){\ms x}\put(-12,-9){\ms y}\quad
    \equiv\quad\hspace{1ex}\epsh{fig3_3u1}{10ex}\put(-15,2){\ms {xy}}\qquad\text{ and }\qquad
    \epsh{fig3_3d2}{10ex}\put(-12,12){\ms x}\put(-12,-5){\ms y}\quad
    \equiv\quad\hspace{1ex}\epsh{fig3_3d1}{10ex}\put(-15,2){\ms {yx}}\quad.
  \end{equation}
  Also beads can freely move around cap and cup with any orientation:
  \begin{equation}
    \label{eq:cap-beads}
    \epsh{capb1}{4ex}\put(-30,-1){\ms x}\quad\equiv\quad
    \epsh{capb2}{4ex}\put(-10,-1){\ms {x}}\qquad\text{ and }\qquad
    \epsh{cupb1}{4ex}\put(-30,4){\ms x}\quad\equiv\quad
    \epsh{cupb2}{4ex}\put(-10,4){\ms {x}}\quad.
  \end{equation}
  Using these relations the invariance of $J(\Gamma)$ under
  Redeimeister's moves follows the proof of \cite{Ohtsuki02} using the
  properties of the R-matrix.
  The invariance for the local move of a strand sliding over (or
  under) a coupon follows from the $\wh \UH$-equivariance of the
  morphisms coloring the coupon.
 We will prove the  invariance of the local move given in Figure \ref{move coupon 1}, the other moves are proved similarly.
  After putting the beads and smoothing the
  coupon the associated equality is
\begin{equation*}
\sum a_ja_i\otimes x b_i \otimes f\circ b_j = \sum a_t a_s \otimes b_s x\otimes b_t\circ f.
\end{equation*} 
This equality is equivalent to 
\begin{equation}\label{equality move coupon}
\bp{1\otimes \sum x \otimes f}\RR_{13}\RR_{12}=\RR_{13}\RR_{12}\bp{1\otimes \sum x \otimes f}.
\end{equation}
We consider the right hand side of Equality \eqref{equality move coupon}, one gets
\begin{align*}
  \RR_{13}\RR_{12}\bp{1\otimes \sum x \otimes f}
  &= \bp{\Id\otimes \Delta}(\RR)\bp{1\otimes \sum x \otimes f}\\
  &= \bp{\sum a_i\otimes \Delta(b_i)}\bp{1\otimes \sum x \otimes f}\\
  &= \sum a_i \otimes \Delta(b_i)\bp{\sum x \otimes f}\\
  &= \sum a_i \otimes \bp{\sum x \otimes f}\Delta(b_i)\\
  &= \bp{1\otimes \sum x \otimes f}\bp{\Id\otimes \Delta}(\RR)\\
  &= \bp{1\otimes \sum x \otimes f}\RR_{13}\RR_{12}.
\end{align*}
In the third equality we used $\sum L_x \otimes f$ being
$\wh \UH$-invariant. Hence Equality \eqref{equality move coupon}
holds.
\begin{figure}
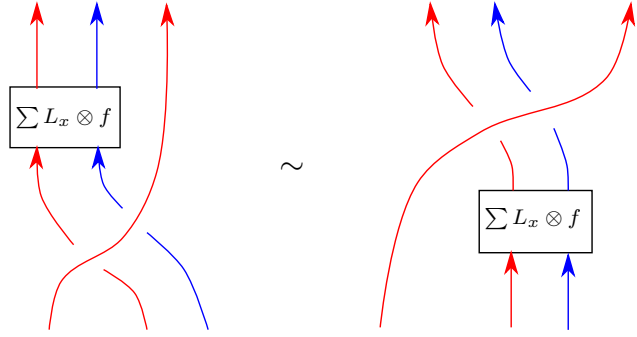

$$
\epsh{fig3_9}{24ex}\put(-72,19){\ms{\sum L_x \otimes f}}\qquad
\sim\qquad \epsh{fig3_12}{24ex} \put(-56,-19){\ms{\sum L_x \otimes f}}
$$
\caption{Moving at local coupon}
\label{move coupon 1}
\end{figure}

As claimed above, the element $J(D)$ is independant of the choice of
base point: indeed, if one move the base point so that it pass a bead
colored by $a\in\UH$ the two corresponding values for $J(D)$ will be
the same except for a factor which will be $\tr_u(xa)$ in one case and
$\tr_u(ax)$ on the other case but these two elements are equal in
$\U^0$ so moving the base does not change $J(D)$.

We now prove that $J(\Gamma)$ is $\wh\UH$-equivariant, i.e.,
Equation \eqref{E:EquivarJ} holds.
One can compute the
left hand side of Equation \eqref{E:EquivarJ} by applying $F_{{\C}}$ to the diagram
$B(D)$ where some beads colored by the $u_{(i)}$ have been
threaded at the intersection point of $D$ with an horizontal line
near the bottom of $D$.  Similarly, the right hand side is the
image by $F_{{\C}}$ of the diagram $B(D)$ with some $u_{(i)}$ beads
along a horizontal line near the top of $D$.  For a horizontal
line at any level (not intersecting coupons) we can define similarly a
diagrams obtain from $B(D)$ with some $S^{\downarrow}(u_{(i)})$
beads at the intersection point of $\Gamma$ with the horizontal line.

We claim that all diagrams obtained by this process will have the same
image by $F_{{\C}}$.  Indeed, it is enough to see this is true when one moves
the horizontal line to pass an elementary diagram. Except for red caps
and cups, this follows from the $\wh\UH$-equivariance of the braiding
given by $\tau \RR$, of the morphisms coloring coupons and of the
(co)-evalutations in $\cat^H$.  For red cups and caps it is a direct
computation~; we develop here the case of red caps (see Figure
\ref{caps morphisms}): Assume the horizontal line under the cap
intersect $\Gamma$ at $N+2$ points and at $N$ points when the line is
above the cap.  If the cap is joining the $i$\ith and
the $i+1$\ith strands, we can write
$\Delta^{N}u=u_{(1)}\otimes\cdots\otimes u_{(i-1)}\otimes a\otimes
u_{(i+1)}\otimes\cdots\otimes u_{(N+1)}$. Then
$\Delta^{N-1}u=u_{(1)}\otimes\cdots\otimes u_{(i-1)}\otimes
\ve(a)\otimes u_{(i+1)}\otimes\cdots\otimes u_{(N+1)}$ and
$\Delta^{N+1}u=u_{(1)}\otimes\cdots\otimes u_{(i-1)}\otimes
a_{(1)}\otimes a_{(2)}\otimes u_{(i+1)}\otimes\cdots\otimes
u_{(N+1)}$.
Then the statements for the red caps (see Figure \ref{caps morphisms})
hold from the equalities $S(a_{(1)})a_{(2)}=\ve(a)$ and
$S(a_{(2)})ga_{(1)}=gS^{-1}(a_{(2)})a_{(1)}
=\ve(a)g$.
The similar
statements for the red cups hold from the first equality above and
$a_{(2)}g^{-1}S(a_{(1)})=\ve(a)g^{-1}$ for $a\in \wh \UH$.
\begin{figure}
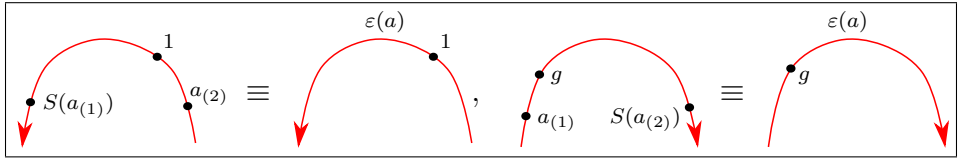

$$
\fbox{$\epsh{caps_morphism_1}{8ex} \put(-57,-3){\ms{S(a_{(1)})}}
\put(-2,2){\ms{a_{(2)}}}\put(-12,20){\ms{1}}\quad\ \equiv \
\epsh{caps_morphism_11}{8ex}, \put(-17,20){\ms{1}}
\put(-45,28){\ms{\ve(a)}} \quad \epsh{caps_morphism_2}{8ex}
\put(-61,-8){\ms{a_{(1)}}} \put(-56,8){\ms{g}}
\put(-34,-8){\ms{S(a_{(2)})}}\ \equiv \
\epsh{caps_morphism_21}{8ex}\put(-56,8){\ms{g}}\put(-45,28){\ms{\ve(a)}}$}
$$
\caption{Equivariance for red caps}
	\label{caps morphisms}
\end{figure}
\end{proof}

\begin{Lem}\label{reducing commutator}
  Let $\wh\UH_0$ be the weight $0$ subspace of $\wh\UH $. Then
  $$\Uo\cong \wh\UH_0/([\U,\wh\UH]\cap\wh\UH_0).$$
\end{Lem}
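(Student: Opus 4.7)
The plan is to produce mutually inverse linear maps between $\Uo=\wh\UH/[\wh\UH,\wh\UH]$ and $\wh\UH_0/([\U,\wh\UH]\cap\wh\UH_0)$: the weight-zero projection one way and the inclusion the other. First I would invoke the $\Lambda$-grading. Since $W$ is finite dimensional, the isomorphism $\wh\UH\simeq W\hotimes\Ce(\H^*)$ from \eqref{UH=} gives a finite direct sum decomposition $\wh\UH=\bigoplus_{\lambda\in\Lambda}\wh\UH_\lambda$ with $\wh\UH_\lambda=W_\lambda\otimes\Ce(\H^*)$, and hence a continuous projection $\pi_0\colon\wh\UH\to\wh\UH_0$.

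The key computational input is the identity
\[
[wf(H),vg(H)]=[w,\,vf(H-\lambda)g(H)]
\]
for homogeneous $w\in W_\lambda$, $v\in W_{-\lambda}$ and $f,g\in\Ce(\H^*)$. I would prove it by direct expansion using Equation \eqref{eq:weight}, which gives $f(H)v=vf(H-\lambda)$ and $g(H)w=wg(H+\lambda)$, together with the commutativity of $\Ce(\H^*)$. Using this identity, if $a=\sum_\lambda a_\lambda$ and $b=\sum_\mu b_\mu$ are the weight decompositions, then $\pi_0([a,b])=\sum_\lambda[a_\lambda,b_{-\lambda}]$ rewrites as a sum of commutators each of the form $[w,\cdot]$ with $w\in W_\lambda\subset\U$, hence lies in $[\U,\wh\UH]\cap\wh\UH_0$. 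So $\pi_0$ descends to a well-defined $\bar\pi_0\colon\Uo\to\wh\UH_0/([\U,\wh\UH]\cap\wh\UH_0)$. Conversely, because $[\U,\wh\UH]\subset[\wh\UH,\wh\UH]$, the inclusion $\wh\UH_0\hookrightarrow\wh\UH$ descends to a map $\iota$ in the opposite direction.

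Finally, $\bar\pi_0\circ\iota=\Id$ is immediate, since $\pi_0$ restricts to the identity on $\wh\UH_0$. For $\iota\circ\bar\pi_0=\Id$ I would show that $x-\pi_0(x)\in[\wh\UH,\wh\UH]$ for every $x\in\wh\UH$: because $\Lambda^*$ separates $\Lambda$, for each $\lambda\neq 0$ I can pick $a\in\Lambda^*$ with $a(\lambda)\neq 0$, and then Equation \eqref{eq:weight} gives $[H_a,x_\lambda]=a(\lambda)x_\lambda$, so $x_\lambda=\frac{1}{a(\lambda)}[H_a,x_\lambda]\in[\wh\UH,\wh\UH]$. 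Summing over $\lambda\neq 0$ finishes the proof.

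The main obstacle is the computational identity in the second paragraph. It is essential that $|v|=-|w|$ for the shifts $f(H-\lambda)$ and $g(H+\lambda)$ to combine into a single $\Ce(\H^*)$-factor sitting to the right of $w\in\U$; this is exactly the situation that arises in $\pi_0([a,b])$, but not in arbitrary commutators. One cannot circumvent this by the $[H_a,\cdot]$ trick used in the last step, because that produces commutators only in the larger space $[\wh\UH,\wh\UH]$, not in $[\U,\wh\UH]$ as required for the well-definedness of $\bar\pi_0$.
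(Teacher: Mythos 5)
Your argument is correct and follows essentially the same route as the paper's proof: both show that a homogeneous element of nonzero weight is a commutator with an element of $\H$ (giving surjectivity of $\wh\UH_0\to\Uo$), and that the weight-0 projection of an arbitrary commutator lies in $[\U,\wh\UH]$ by shuffling the $\Ce(\H^*)$-factor past the $W$-factor. Your write-up spells out the mutually inverse maps more explicitly, but the computational core — and in particular the key identity reducing $[wf(H),vg(H)]$ to a bracket with $w\in\U$ — coincides with the paper's.
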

\begin{proof}
  First $\wh\UH$ is a weight module and if $x$ is homogeneous of
  degree $\lambda\neq0$, then there exists $h\in\H$ such that
  $\lambda(h)=1$ and then $x=[h,x]$ is a commutator. Hence only weight
  $0$ elements contribute to $\Uo$. Next, remark that if
  $x=f(h)w\in\wh\UH$ and $y\in\wh\UH$ have opposite weights, then
  $f(h)wy=wyf(h)$ and
  $$[x,y]=[f(h)w,y]=f(h)wy-yf(h)w
  =[w,yf(h)]\in[\U,\wh\UH].$$
\end{proof}
\subsection{Value of the invariant}
Here we describe some properties of the invariant $J$ of the previous section.
Let $D$ be a planar diagram representing a
$n$-string graph $\Gamma$ in $\R^2\times[0,1]$ having $m$
closed red components.
In the rest of the paper we will assume that all $n$-string graph will be \emph{homogenous}:
meaning that each blue edge is colored homogenous object of $\cat^H$,
see \eqref{eq:G-graded}.  In particular, we use the following
notation: the $j$\ith blue edge is associated with an element
$\bb_j\in G$ such that this edge is colored by an object of
$\cat^H_{\bb_j}$.

Choose a base point $b$ for each closed red component of the diagram $D$.  Consider the element $J_{D}^b$ and  $\Vect_\C$-graph with $\wh{\UH}$-beads $B(D)$
given in Section \ref{SS:UnivInv}.  The diagram $B(D)$ consists of $m+n$ red components and a blue graph
$D_{bl}$ with beads and $n_{D_{bl}}$ edges. We numbered the components and edges
of $D$ starting with the $m$ red closed components then the $n$
ordered open red components and finally the blue edges.  As in Section \ref{SS:UnivInv} we 
express
$$J(\Gamma)=
\bp{\tru^{\otimes m}\otimes \Id^{\otimes n}\otimes \Id}\bp{J_{D}^b} \in \Uo^{\widehat{\otimes} m}\wh  \otimes \,\, (\UH)^{\wh{\otimes} n}\otimes \Hom_\C(V, V')$$
where
$J_{D}^b=\sum w_{b_1}\otimes \cdots \otimes w_{b_m}\otimes
y_1\otimes \cdots \otimes y_n\otimes F_{{\C}}(D_{bl})$
depends on the choice $b$.

The element $J^b_D$ can be seen as a vector-valued entire function with $r(m+n)$ variables: indeed, we can associate to the
$i$\ith red component, the set of variables
$$h_i=(H_{1,i},\ldots,H_{\rk,i}).$$
Recall  
that a morphism $f:[k]V\to [k]V'$ determines a pair $(q,\wb l)$ called the  exponent determined by Equation \eqref{coupon-morphism}.
For each coupon $c$ of $D$ with  $k$ red legs let $(q,l)$ be the exponent corresponding to the morphism
in $c$ with $(\xi^q,l)\in\mathcal Q_k\times(G')^k$.  For such a coupon, let $i_1,i_2,\ldots,i_k\in \{1,\ldots, m+n\}$ be the labels  (with possible repetition) of its adjacent red edges and let $q(c,D)=q(h_{i_1},\ldots,h_{i_k})$ then $\xi^{q(c,D)}\in\mathcal Q_{m+n}$ and $l(c,D)=l(h_{i_1},\ldots,h_{i_k})\in (G')^{m+n}$.
Finally, let
\begin{equation}
  \label{eq:power-coupons}
  \Q_{D}^c=\sum_{c\text{ coupon}}q(c,D)\text{ and }\wb L_{D}^c
  =\sum_{c\text{ coupon}}l(c,D).
\end{equation}

Next, we will use the elements $\Q_{D}^c$ and $\wb L_{D}^c$ to assign an exponent to  $J_{D}^{b}$. 
This exponent is a sum of three contributions: the first is a quadratic element
coming from the crossing between red strands, the second is a linear
element coming from bichrome crossings and the last is coming from the
coupons.

Let 
$\lk=\bp{\lk_{ij}}_{1\leq i, j\leq m+n+n_{D_{bl}}}$ be the 
linking matrix associated $D$.  Remark that the first $(m+n)\times(m+n)$ block of $\lk$ does
not depend on the planar projection but the second does: the coefficient $(i,j)$ is the sum of the sign of crossings of the planar diagram where the strand $j$ is above the strand $i$ (see \cite{FcNgBp14}). 
Recall the element 
$\Q=\Q(h_1,h_2)=\sum_i c_i \otimes c'_i\in\ \Lambda^*\otimes_\Z\Lambda^*\
\subset\ \H\otimes\H$ which corresponds to the symmetric bilinear form $\B$ on $\Lambda$ and
$\mathcal{H}=\xi^{\Q(h_1,h_2)}$.
Let 
\begin{align}\label{eq:Q_Gamma}
\Q_{D}&=\sum_{1\leq k,s\leq m+n}\lk_{ks}\Q(h_k,h_s),\quad\text{ so }\xi^{\Q_{D}}\in\mathcal Q_{m+n},\\
 \wb L_{D}&=\sum_{1\leq k\leq m+n< s\leq m+n+n_{D_{bl}}}(\lk_{ks}+\lk_{sk})\Q(\bb_s,h_k)\in(G')^{m+n}.\
\end{align}

The following theorem is analogous to  \cite[Theorem 3.4]{Ha18a}:
\begin{theo}\label{value of invariant}
  Let $L_{D}$ and $L_{D}^c$ be any representatives
  in $\H^{(m+n)}$ of $\wb L_{D}$ and $\wb L_{D}^c$ in
  $(G')^{m+n}=(\H/\Lambda^*)^{m+n}$, respectively and let
  $P_{D}=\Q_{D}+\Q_{D}^c+L_{D}+L_{D}^c$.  Then the element
  $\xi^{P_{D}}$ is independent of the choice of these representatives
  up to a unit in $\U^{\otimes m+n}$ (i.e. $\xi$ to an integral linear
  combination of the $H_{i,j}$) and
  \begin{equation}\label{E:JDPD}
  J_{D}^{b} \in \xi^{P_{D}}\U^{\otimes m+n}\otimes \Hom_\C(V, V').
  \end{equation}
\end{theo}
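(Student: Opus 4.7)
The plan is to trace every source of Cartan factors in the construction of $J_D^b$ and show that, once the power elements are collected to the outside, the residue lies in $\U^{\otimes m+n} \otimes \Hom_\C(V,V')$ while the accumulated power element is $\xi^{P_D}$ up to a unit in $\U^{\otimes m+n}$. The first, quick, point is that if $L_D$ and $L_D'$ are two lifts of $\wb L_D \in (G')^{m+n}$ then $L_D - L_D' \in (\Lambda^*)^{m+n}$, so $\xi^{L_D - L_D'} \in \xi^{\Lambda^{*(m+n)}} \subset \U^{\otimes m+n}$ is a unit (a monomial in the $K_{i,j}$); the same remark for $L_D^c$ gives the claimed independence of $\xi^{P_D}$ up to units (note $\Q_D$ and $\Q_D^c$ are not subject to any choice). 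By Remark \ref{rk:powerG'}, the space $\xi^{P_D} \U^{\otimes m+n}$ depends only on the classes $\wb L_D$ and $\wb L_D^c$.

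For the inclusion \eqref{E:JDPD}, I would inspect Steps 1--3 of Section \ref{SS:UnivInv} and list the elementary bead contributions. Each $R$-matrix factor decomposes as $\RR = \HH \check\RR$ with $\HH = \xi^{\Q(h_1,h_2)} \in \mathcal Q_2$ and $\check\RR \in \U \otimes \U$; likewise $\RR^{-1} = \check\RR^{-1}\HH^{-1}$. The pivotal beads $g$, $g^{-1}$, $1$ attached at cups and caps lie in $\U$ (the pivot is in $\U$ by Axiom \ref{pivotal axiom}). Each coupon color, by Equation \eqref{coupon-morphism}, splits as $L_{\xi^{q+l}} \cdot L_{u_i}$ with $\xi^{q+l} \in \mathcal Q_k \mathcal L_k$ and $u_i \in \U^{\otimes k}$. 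Thus every bead on a red strand is either an element of $\U$, or a power element of $\mathcal Q_{m+n} \mathcal L_{m+n}$ (after labelling each bead by the indices of the red components it sits on).

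The heart of the argument is the collection of all power elements to the outside. I would repeatedly apply Lemma \ref{lemma changes the order of elements}: pushing a power linear $L = \xi^l \in \mathcal L_{m+n}$ past a homogeneous $x \in \U^{\otimes m+n}$ of weight $\lambda$ produces the scalar $\lambda_*(L)$, while pushing a power quadratic $Q = \xi^q$ past $x$ produces a factor $\lambda_*(Q)$ together with the auxiliary $K = \xi^{\wt q(\lambda, h_\bullet)} \in \xi^{\Lambda^{*(m+n)}} \subset \U^{\otimes m+n}$, i.e.\ a unit. Because the power elements $\mathcal Q_{m+n}\mathcal L_{m+n}$ are commutative among themselves and stable under the operations used in Steps 1--2 (Proposition about $\mathcal Q_n, \mathcal L_n$), this procedure terminates with a single external factor $\xi^{\Q_{\operatorname{tot}} + L_{\operatorname{tot}}}$ multiplying an element of $\U^{\otimes m+n} \otimes \Hom_\C(V,V')$, up to units. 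The blue part of $J_D^b$ lies in $\Hom_\C(V,V')$ automatically: when the $\xi^\Q$ half of an $R$-matrix crosses a blue edge colored by a homogeneous object of degree $\bb_s$, its action on the blue strand produces exactly the scalar $\xi^{\Q(h_k,\bb_s)}$, so that this contribution survives only on the red strand $k$ as a linear power element of class $\Q(\bb_s, h_k) \in G'$.

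It then remains to identify $\Q_{\operatorname{tot}}$ and $L_{\operatorname{tot}}$ with $\Q_D + \Q_D^c$ and (a lift of) $\wb L_D + \wb L_D^c$. This is a book-keeping step governed by the linking matrix $\lk$: a positive crossing of red strands $k$ above $s$ contributes $\Q(h_s,h_k)$ through the braiding, a negative crossing contributes $-\Q(h_s,h_k)$, and summing signs over all crossings between $k$ and $s$ yields the coefficient $\lk_{ks}$, reproducing $\Q_D = \sum_{k,s} \lk_{ks}\Q(h_k,h_s)$. The red-blue contributions described in the previous paragraph sum to $\sum_{k \leq m+n < s}(\lk_{ks}+\lk_{sk})\Q(\bb_s,h_k) = \wb L_D$. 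The coupon contributions sum by construction to $\Q_D^c + \wb L_D^c$, by \eqref{eq:power-coupons}. The main obstacle, and the step where I expect the proof to be the most technical, is this last bookkeeping: one must verify that the systematic use of Lemma \ref{lemma changes the order of elements} (to move every $\xi^q$ or $\xi^l$ factor past every intervening $\U$-valued bead) only introduces elements of $\xi^{\Lambda^{*(m+n)}} \subset \U^{\otimes m+n}$ and never new quadratic contributions, so that the quadratic total is genuinely $\Q_D + \Q_D^c$ and the linear total is $\wb L_D + \wb L_D^c$ modulo $(\Lambda^*)^{m+n}$, i.e.\ modulo units.
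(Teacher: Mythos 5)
Your proposal is correct and takes essentially the same approach as the paper: decompose each bead via $\RR = \HH\check\RR$ (so Cartan parts live in $\mathcal Q_{m+n}\mathcal L_{m+n}$ and the rest in $\U^{\otimes m+n}$), invoke Lemma \ref{lemma changes the order of elements} to collect the power elements to the outside while only producing scalars and units in $\xi^{\Lambda^{*(m+n)}}\subset\U^{\otimes m+n}$, and then do the linking-matrix bookkeeping for red-red and red-blue crossings. The paper's proof is more compressed (it cites Remark \ref{rk:powerG'} for the first statement and just records the crossing contributions), but the underlying argument is the same; the paper doesn't highlight the "no new quadratic terms appear during reordering" point that you flag as the main technical worry, but as you yourself note, Lemma \ref{lemma changes the order of elements} already disposes of it since conjugation by $\xi^q$ only produces $\xi^{\wt q(\lambda,h_\bullet)}$, which lands in $(\Lambda^*)^{(m+n)}$.
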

\begin{proof}
  The first statement of the theorem follows from Remark
  \ref{rk:powerG'}.  To prove Equation \eqref{E:JDPD} notice that
  the elements $H_{i,j}$ form a commutative algebra and by Lemma
  \ref{lemma changes the order of elements} power quadratic and power
  linear elements commutes with $\U$ so all contributions to the
  exponent can be added in any order.  We just have to check that the
  contribution of crossings is given by the elements $\Q_{D}$ and
  $\wb L_{D}$.

  At each crossing point of type $(k,s)$ in the diagram of $B(D)$
  with beads for $1\leq k, s\leq m+n$, i.e., a red-red type crossing
  point, the Cartan part of the $\mathcal{R}$-matrix gives us the
  factor
  $$\xi^{\ve\Q(h_k,h_s)}\in \mathcal Q_{m+n}$$
  where $\ve\in \{\pm 1\}$ is the sign of the
  crossing. Summing all these contributions, we get a factor
  $$\xi^{\lk_{ks}\Q(h_k,h_s)}\in \mathcal Q_{m+n}.$$
  Next we consider a crossing point $(k,s)$ in the the diagram of
  $B(D)$ for $1\leq k\leq m+n,\ m+n < s\leq m+n+n_{D_{bl}}$, i.e.,
  a red-blue type crossing point. Suppose that the color of the blue
  strand is $V\in \cat^H_\bb$ for $\bb\in
  G$.
  Then any weight $\lambda$ of $V$ is congruent to $\bb$ modulo
  $\Lambda$ and the Cartan part of the element $\mathcal{R}$-matrix
  acting on the $\lambda$ weight space of $V$ gives us a factor (see
  Remark \ref{rk:powerG'})
  $$ \xi^{\ve_{ks}\Q(\lambda,h_k)}\in
  \xi^{\ve_{ks}\Q(\bb,h_k)}\U^{\otimes
    m+n}\subset\mathcal{L}_{m+n}\U^{\otimes
    m+n}.$$ So the
  $k$\ith red component and the
  $s$\ith blue component give the factor
  $$ \xi^{(\lk_{ks}+\lk_{sk})\Q(\bb,h_k)}\in \mathcal{L}_{m+n}\U^{\otimes m+n}.$$
\end{proof}
\begin{Lem}\label{L:reverse}
  Let $D$ be as above and let
  $\stackrel{\leftarrow}D$ be the same diagram with the
  orientation of the
  $i$\ith closed red component reversed ($1\le i\le
  m$).  Here we assume that this component does not pass through a
  coupon. Then
  $J_{\stackrel{\leftarrow}D}^{b}$ is obtained from
  $J_{D}^{b}$ by applying the antipode or its inverse
  to the
  $i$\ith factor, depending if the strand is oriented
  to the top or to the bottom at the base point.
\end{Lem}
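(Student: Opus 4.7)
The plan is to verify the statement by a local analysis of how the bead decorations of $B(D)$ along the $i$-th closed red component transform under orientation reversal, and then to assemble these local changes using the anti-multiplicativity of $S$.

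First, I would decompose the $i$-th component into elementary tangles: caps, cups, and crossings with other red strands (including possible self-crossings). For each elementary piece, the rules of Figure \ref{Fig1} prescribe specific beads that depend on the orientation, through the symbol $S^{\downarrow}$ at crossings and the $g^{\pm 1}$ beads at the extrema. Reversing the orientation of the $i$-th component changes these prescriptions in a controlled way: at a cap or a cup the $1$ beads stay put while the $g$ and $g^{-1}$ beads are swapped, which is compatible with $S(g) = g^{-1}$ and $S(1)=1$. At a crossing in which the $i$-th strand participates, the identities $(S\otimes\Id)(\RR) = \RR^{-1} = (\Id \otimes S^{-1})(\RR)$ and $(S\otimes S)(\RR)=\RR$, recorded just after Theorem \ref{main theorem 1}, imply that each bead sitting on the $i$-th strand is replaced by the antipode of its original value, while the partner bead on the other strand is unchanged.

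Second, I would combine these per-bead transformations with the fact that along the reversed component the beads are now multiplied in the opposite order, as prescribed in Section \ref{SS:UnivInv}. Using that $S$ is an anti-algebra homomorphism, if the original ordered product of beads along the $i$-th component starting at $b$ is $y_1 y_2 \cdots y_k$, then the reversed product equals $S(y_k) S(y_{k-1}) \cdots S(y_1) = S(y_1 y_2 \cdots y_k)$. This identifies the reversed contribution as $S$ applied to the $i$-th factor of $J_D^b$. The alternative prescription with $S^{-1}$ arises in exactly the same way when the strand at the base point has the opposite orientation: the starting side of the product is swapped, and the rule for $S^{\downarrow}$ applied to the bead crossed first by $b$ differs by an $S$, which globally converts $S$ into $S^{-1}$. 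Both prescriptions are correct; note in particular that after applying $\tru$ they agree in $\Uo$ because $S^2$ is inner by the pivot $g$ and therefore trivial on $\Uo$.

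The main obstacle is the bookkeeping at possible self-crossings of the $i$-th component, where both strands are reversed simultaneously. Here one uses the identity $(S\otimes S)(\RR) = \RR$ to see that the pair of R-matrix beads is left unchanged at the level of the two individual positions, and then one must carefully re-insert these unchanged beads into the product after the multiplication order has been reversed; the anti-multiplicativity of $S$ again yields the desired global antipode. It is at this point that the assumption that the $i$-th component does not pass through any coupon plays a role, as it avoids interacting with the $\wh\UH$-equivariance constraints carried by coupon colorings and reduces the verification entirely to the Hopf-algebraic identities recalled above together with the rules of Figures \ref{Fig1} and \ref{Fig2}.
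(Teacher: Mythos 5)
Your overall plan---local analysis of bead transformations under orientation reversal, then assembly via anti-multiplicativity of $S$---matches the paper's strategy, but the per-bead claims at the heart of your argument are incorrect, and the errors do not vanish without an explicit computation that you omit.

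First, the transformation at caps and cups. The $g^{\pm 1}$ bead at an extremum encodes whether the cap/cup is left- or right-oriented; in the paper's notation this is $g^{\pm\ve_j}$ with $\ve_j\in\{0,1\}$, and orientation reversal sends $\ve_j\mapsto 1-\ve_j$. Concretely, a cap-bead changes $1\leftrightarrow g$ and a cup-bead changes $1\leftrightarrow g^{-1}$. It is \emph{not} the case that ``the $1$ beads stay put while the $g$ and $g^{-1}$ beads are swapped'': a $g$ bead becomes $1$, not $g^{-1}$, and a $1$ bead can become $g$ or $g^{-1}$. Second, the transformation at crossings. The bead is $S^{\downarrow}(x)$ where $x$ is fixed by the geometric picture and $S^{\downarrow}$ applies $S$ exactly when the strand points downward at that point. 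Reversing orientation flips $S^{\downarrow}$, so an originally upward bead $x$ becomes $S(x)$, but an originally \emph{downward} bead $S(x)$ becomes $x=S^{-1}(S(x))$, i.e.\ acquires $S^{-1}$, not $S$. So the assertion that ``each bead sitting on the $i$-th strand is replaced by the antipode of its original value'' fails on every downward-oriented segment.

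Your stated conclusion is nevertheless correct, because after reassembly the two families of errors cancel through $S^2=g(\cdot)g^{-1}$. This is exactly what the paper verifies: it groups the crossing beads by the segments between critical points $p_1,\ldots,p_{2k}$, writes the total bead contribution as $X=a_{2k}g^{-\ve_{2k}}S(a_{2k-1})g^{\ve_{2k-1}}\cdots g^{\ve_1}a_0$ with alternating $S^{\downarrow}$, and checks directly that $S\bigl(g^{-\ve_{2j}}S(a_{2j-1})g^{\ve_{2j-1}}\bigr)=g^{\ve'_{2j-1}}a_{2j-1}g^{-\ve'_{2j}}$, so that $S(X)=X'$. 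Without displaying this cancellation, your argument would in fact produce spurious $S^2$ factors at every originally downward segment and spurious $g^{\pm1}$ factors at every extremum. (Your remark about $S^2$ being inner is used in the paper's proof, but for this exact cancellation inside the product---not, as you frame it, to identify $S$ with $S^{-1}$ after passing to $\Uo$.) Also note that self-crossings need no separate treatment and $(S\otimes S)(\RR)=\RR$ plays no role: once beads are collected segment-by-segment, a self-crossing just contributes one factor to each of two different $a_j$'s. To make the proof correct you should follow the paper and organise the beads by segments, then verify $X'=S(X)$ directly.
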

\begin{proof}
  We can split the
  $i$\ith component into a sequence of segments whose
  end points are the extremal points (critical points of the second
  coordinate)
  $p_1,\ldots,p_{2k}$ at cups and caps (cups and caps alternate when
  one follows the component).

  Let us assume first that the base point is on
  the segment
  $[p_{2k},p_0]$ which is oriented upward.
  Then multiplying together the beads on each segment, the
  contribution to the
  $i$\ith factor of $J_{D}^{b}$ has the form
  \begin{align*}
  X&=S^{\downarrow}(a_{2k})g^{-\ve_{2k}}S^{\downarrow}(a_{2k-1})g^{\ve_{2k-1}}S^{\downarrow}(a_{2k-2})\cdots
  g^{-\ve_{2}}S^{\downarrow}(a_1)g^{\ve_1}S^{\downarrow}(a_0)\\
  &=a_{2k}g^{-\ve_{2k}}S(a_{2k-1})g^{\ve_{2k-1}}a_{2k-2}\cdots
  g^{-\ve_{2}}S(a_1)g^{\ve_1}a_0
  \end{align*}
  where (see also Figure \eqref{Fig1})
  \begin{enumerate}
  \item
    $S^{\downarrow}(a_j)$ is the product of the beads on the
    $j$\ith segment $[p_j,p_{j+1}]$ where the
    $a_j$ are products of factors of $\RR$ or
    $\RR^{-1}$,
  \item the $j$\ith segment $[p_j,p_{j+1}]$ is oriented upward if
    $j$ is even and downward if $j$ is odd,
  \item $\ve_j$ is $0$ if the cap/cup is oriented to the left and
    $1$ if it is oriented to the right.
  \end{enumerate}
  Then the corresponding contribution to the
  $i$\ith factor of
  $J_{\stackrel{\leftarrow}D}^{b}$ has the form
  $$X'=S(a_0)g^{\ve'_1} a_1 g^{-\ve'_2}\cdots
  S(a_{2k-2})g^{\ve'_{2k-1}}a_{2k-1}g^{-\ve'_{2k}}S(a_{2k})$$
  where
  $\ve'_j=1-\ve_j$ because a left oriented cap/cup become a right
  oriented cap/cup  (once again see Figure \eqref{Fig1}).
  Then the lemma follows from the fact that for each $j$
  $$S\big(g^{-\ve_{2j}}S(a_{2j-1})g^{\ve_{2j-1}}\big)
  =g^{-\ve_{2j-1}}S^2(a_{2j-1})g^{\ve_{2j}}
  =g^{\ve'_{2j-1}}a_{2j-1}g^{-\ve'_{2j}},$$
  so we have $X'=S(X)$.
  
  Since changing the orientation twice recover the initial diagram,
  and $X=S^{-1}(X')$, we get that in case the base point is on a
  downward segment, the new invariant is obtained by applying
  $S^{-1}$.
\end{proof}
The following proposition established the universal character of the
red colored strands:
\begin{Prop}\label{P:univ-red}
  Let $\Gamma$ be a bichrome graph and
  $\Gamma_V$ be obtained by replacing the
  $i$\ith colored red strand by a blue strand colored
  by $V\in\cat^H$ then 
  \begin{enumerate}
  \item if the strand is closed,
    $J(\Gamma_V)$ is obtained from
    $J(\Gamma)$ by applying
    $\tr_\C^V\circ\rho_V$ on the factor of
    $\U^0$ coresponding to this strand.
  \item if the strand is the last open red strand, then
    $J(\Gamma_V)$ is obtained from $J_\Gamma$ by applying
    $\rho_V$ on the last factor of $\UH$.
  \end{enumerate}
\end{Prop}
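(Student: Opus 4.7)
My plan is a local comparison of the bead diagrams $B(D)$ and $B(D_V)$, where $D$ is a planar diagram of $\Gamma$ and $D_V$ is obtained from $D$ by recoloring the designated red strand with the blue color $V$. Outside this strand the two diagrams coincide, so I only need to track what happens on the component that has been recolored.

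I would first treat case (2). Label the beads encountered along the open red strand, in the direction of its orientation, by $x_1, \ldots, x_k$. They come from the $\mathcal R$-factors at crossings, the pivot factors $\{1, g, g^{-1}\}$ at cups and caps (Figure \ref{Fig1}), and the smoothings of mixed coupons (Figure \ref{Fig2}). On the red side the strand contributes the product $y = x_k \cdots x_1 \in \wh{\UH}$ to the last tensor factor of $J_D^b$. On the blue side, Figure \ref{Fig0} turns each $x_j$ into a $\Vect_\C$-coupon colored by $\rho_V(x_j)$, while the cups and caps of the blue strand become the pivotal dualities $\lcoev_V, \lev_V, \rcoev_V, \rev_V$ of $\cat^H$ under $F_\C$. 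I would then inspect the four cup/cap configurations and check that each $g^{\pm 1}$ bead attached to a red cup/cap in Figure \ref{Fig1} is precisely what is needed so that, after applying $\rho_V$ and composing with the naive $\Vect_\C$-duality, one recovers the corresponding pivotal duality of $\cat^H$ recalled at the beginning of Section \ref{Section unrolled qt group}. Combined with the algebra homomorphism property of $\rho_V$, this shows that the $F_\C$-contribution of the blue strand equals $\rho_V(x_k) \circ \cdots \circ \rho_V(x_1) = \rho_V(y)$, proving (2).

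I would then deduce case (1) from the same local analysis. After a choice of basepoint $b$, the closed red strand produces an element $y \in \wh{\UH}$ whose image $\tru(y) \in \Uo$ is the factor of $J(\Gamma)$ attached to this component. In $\Gamma_V$, the loop becomes a closed $V$-colored blue cycle, and its $F_\C$-evaluation is the ordinary trace $\tr_\C(\rho_V(y))$. Cyclicity of $\tr_\C$ makes this independent of the basepoint, and it vanishes on $[\wh{\UH}, \wh{\UH}]$, so it descends to a well-defined linear form on $\Uo = \wh{\UH}/[\wh{\UH}, \wh{\UH}]$ equal to $\tr_\C^V \circ \rho_V$. This yields (1).

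The main obstacle will be the cup/cap bookkeeping in case (2): matching the four $g^{\pm 1}$ bead conventions of Figure \ref{Fig1} with the explicit pivotal dualities of $\cat^H$ under the functor $F_\C$. Once this elementary but careful pattern-match is in place, both statements reduce to the fact that $\rho_V$ is an algebra morphism and that $\tr_\C$ is cyclic.
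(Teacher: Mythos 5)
Your proposal is correct and takes essentially the same route as the paper: both arguments rest on the fact that the bead-placement rules of Figure \ref{Fig1} are color-blind, so recoloring the strand only changes how its beads are ultimately interpreted (multiplied in $\wh\UH$ vs.\ pushed through $\rho_V$ and contracted in $\Vect_\C$), and $\rho_V$ being an algebra morphism together with the cyclicity of $\tr_\C$ carries the comparison through. The one stylistic difference: the paper sidesteps the cup/cap bookkeeping you flag as "the main obstacle" by first collecting all beads into a single bead via the local relations \eqref{eq:mult-beads} and \eqref{eq:cap-beads}, and then invoking triviality of the braiding in $\Vect_\C$ to straighten the strand into a bare circle or vertical segment before applying $\rho_V$; this makes the $g^{\pm1}$-matching you describe automatic rather than a case analysis.
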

\begin{proof}
  This is a consequence from the fact that beads can be collected
  without paying attention to the color of the strand by using the
  relations \eqref{eq:mult-beads} and \eqref{eq:cap-beads}.  Then
  Penrose graphical calculus in
  $\Vect_\C$ does not depends of the embedding of a strand because the
  braiding in
  $\Vect_\C$ is trivial so we can assume that the strand is either a
  simple circle or a vertical string.
\end{proof}
\section{Invariant of $3$-manifolds}\label{Section of inv of 3-manifolds}
The goal of this section is to define an invariant of the compatible
triples $(M,\Gamma,\omega)$
where, loosely speaking,   $M$ is a
$3$-manifold, $\Gamma$ is a closed bichrome graph inside
$M$ and $\omega\in
H^1(M\setminus\Gamma,G)$. The main ingredients of the construction are
the universal invariant defined in previous section,
  the discrete Fourier transforms, the graded integral, the modified trace and the
modified integral.

\subsection{Compatible triple}\label{ss:Compat}
A closed bichrome ribbon
graph is a bichrome ribbon graph with no boundary vertices.  
Let $\Gamma$ be a $n$-string link graph in $M=\R^2\times[0,1]$ or a closed bichrome graph embedded in any oriented
closed
3-manifold $M$.
Let
$\wt \Gamma$ be the smoothing of $\Gamma$ in $M$.
Let $\omega\in H^1(M\setminus\wt\Gamma,G)$.
We now defined the notion of a \emph{compatible} triple, first in a simple situation and then the general case.  First, we 
consider the situation with the following two requirements:  1) 
the bilinear form
$\B$ (involve in the braiding) is non degenerate and 2) every closed
red component of
$\Gamma$ is a framed knot with no coupons.
With these requirements
we say the
triple $(M,\Gamma,\omega)$ is \emph{compatible} if:
\begin{enumerate}
\item For each blue edge $e$ of $\Gamma$ colored by $V\in\cat^H_g$ with
  oriented meridian $m_e$, one has $g=\omega(m_e)$.
\item For each $i$\ith closed red component of
  $\Gamma$ with oriented parallel $\ell_i$, one has $\omega(\ell_i)=0\in G$.
\end{enumerate}
For the general case, consider the element $\vec\omega_m\in
G^{m+n}$ obtained by evaluation of
$\omega$ on the meridians of the $m+n$ red components of
$\wt\Gamma$.  By Lemma \ref{lemma transfers quadratic to linear}, the
quadratic element
$\Q^c_\Gamma$ of Equation \eqref{eq:power-coupons} produces an element
$\wt
\Q^c_\Gamma(\vec\omega_m,h_\bullet)\in(G')^{m+n}$.  Let
$\lc_i^c\in G'$ be the $i$\textsuperscript{th} component of $\wt
\Q^c_\Gamma(\vec\omega_m,h_\bullet)+ \wb
L_\Gamma^c$.  Similarly, for $\Q(h_1,h_2)$ the exponant of
$\HH$ and for any $\ba\in
G$, we consider the element $\Q(\ba,h)\in
G'$.  Explicitly, if $\Q={\sum_i c_i(h)\otimes
  c'_i(h)}$, then $\Q(\ba,h)=\sum_i c_i(\ba)
c'_i$ where the
$c_i(\ba)\in\C/\Z$ are complex numbers modulo integers.  Then we say the
triple $(M,\Gamma,\omega)$ is \emph{compatible} if:
\begin{enumerate}
\item For each blue edge $e$ of $\Gamma$ colored by $V\in\cat^H_\bb$ with
  oriented meridian $m_e$, one has $\bb=\omega(m_e)$.
\item For each $i$\ith red closed component of
  $\Gamma$ with oriented parallel
  $\ell_i$, one has $2\Q(\omega(\ell_i),h)+\lc^c_i=0\in G'$.
\end{enumerate}

If $(M,\Gamma,\omega)$ is a compatible triple where $M$ is a closed
3-manifold presented by surgery on a link $L\subset S^3$ then
$S^3\setminus(\wt\Gamma\cup L)\subset M\setminus\wt\Gamma$ and in what follows we will 
still denote $\omega$ as the restriction of the cohomology class $\omega$ in
$H^1(S^3\setminus(\wt\Gamma\cup L),G)$.
\begin{Prop} \label{compatible of L union Gamma}
Let $(M,\Gamma,\omega)$ be a compatible triple where $M$ is a be an oriented closed
3-manifold presented by surgery on a link $L\subset S^3$. Then
  $(S^3,\Gamma\cup L,\omega)$ is also a compatible triple
  where the components of $L$ are all red.
\end{Prop}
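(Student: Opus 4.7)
The plan is to verify both compatibility conditions for $(S^3,\Gamma\cup L,\omega)$, where we regard $\omega$ as the pullback cohomology class under the inclusion $\iota: S^3\setminus(\wt\Gamma\cup L)\hookrightarrow M\setminus\wt\Gamma$ (which makes sense because $M\setminus\wt\Gamma$ is obtained from $S^3\setminus(\wt\Gamma\cup L)$ by Dehn-filling the boundary tori of tubular neighborhoods of the components of $L$). I will handle the blue edges, the red components of $\Gamma$, and the new red components coming from $L$ separately.

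First I would verify condition (1) on blue edges. The blue part of $\Gamma\cup L$ is identical to the blue part of $\Gamma$, and each meridian $m_e$ of a blue edge $e$ can be isotoped so that it lies in $S^3\setminus(\wt\Gamma\cup L)$ and represents the same homotopy class in $M\setminus\wt\Gamma$ as it did originally. Hence $\omega(m_e)$ takes the same value $\bb$ in either space, so the condition $\bb=\omega(m_e)$ is preserved. Next, for the red closed components already in $\Gamma$, the parallels $\ell_i$ similarly sit in $S^3\setminus(\wt\Gamma\cup L)$ away from $L$, the coupon-data $\lc_i^c$ is unchanged, and $\omega(\ell_i)\in G$ is the same as before; thus condition (2) for these components is inherited directly.

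The substantive step is verifying condition (2) for the new closed red components arising from $L$. Since the components $L_j$ carry no coupons, the coupon-contribution $\lc_{L_j}^c$ vanishes, so the condition reduces to $2\Q(\omega(\ell_{L_j}),h)=0\in G'$, where $\ell_{L_j}$ denotes the framing parallel of $L_j$. The key observation is that the Dehn surgery producing $M$ glues a solid torus onto each boundary component so that precisely the framing curve $\ell_{L_j}$ bounds a meridional disk in the new solid torus. Therefore $\ell_{L_j}$ is null-homologous in $M\setminus\wt\Gamma$ (it bounds a disk in $M\setminus\wt\Gamma$, since the disk avoids $\wt\Gamma$ for generic surgery), so $\omega(\ell_{L_j})=0\in G$. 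Consequently $\Q(\omega(\ell_{L_j}),h)=\Q(0,h)=0\in G'$, which gives the required vanishing.

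The only mild subtlety is the point where one argues that the filling disk in the glued solid torus can be taken disjoint from $\wt\Gamma$; this is automatic because $\wt\Gamma$ sits in the $S^3\setminus L$ part of the surgery decomposition and does not intersect the interior of the glued tori. Putting these three verifications together establishes compatibility of $(S^3,\Gamma\cup L,\omega)$, completing the proof. I expect the main conceptual point (rather than a real obstacle) to be simply unpacking that the surgery relation forces $\omega$ to vanish on framing parallels of $L$, which is exactly what is needed to make the compatibility condition on the new red strands trivial.
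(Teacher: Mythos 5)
Your proof is correct and takes essentially the same approach as the paper: condition (1) is preserved because the blue edges are unchanged, and condition (2) for the new red components of $L$ holds because the framing parallels bound disks in the filled solid tori (hence in $M\setminus\wt\Gamma$), forcing $\omega(\ell_{L_j})=0$, and $L$ carries no coupons so the coupon term vanishes.
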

\begin{proof}
 Condition (1) of a compatible triple is still satisfied because the blue edges $\Gamma$ and $\Gamma\cup L$  are the same.
 We need to check condition (2) for the red components
  of $L$.  The surgery link $L$ has no coupon and
  its parallels bound discs in $M\setminus\wt\Gamma$.  Hence for
  each parallel $\ell_i$ of a component of $L$, $\omega(\ell_i)=0$.
  Hence the triple $(S^3,\Gamma\cup L,\omega)$ is compatible.
\end{proof}

Kirby's Theorem (given in \cite{Kir1978}) loosely says that  two surgery presentations of a manifold can be connected by a sequence of isotopies and Kirby moves.   Consider the Kirby move given in Figure \ref{fig:Kirby}.  Notice that the Kirby 0 and II moves only occur on closed red components without coupon.  
\begin{figure}
  \centering
  \begin{tabular}{|ccc|}
    \hline &&\\
    $\put(-5,8){\ms{\ba}}\epsh{K0a}{8ex}\longleftrightarrow
    \epsh{K0b}{8ex}\put(0,8){\ms{-\ba}}$&&
                                           $\epsh{KIa}{4ex}\put(-31,-2){\ms{\overline0}}\longleftrightarrow\emptyset
                                           \longleftrightarrow\epsh{KIb}{4ex}\put(-24,-2){\ms{\overline0}}$\\&&\\
  Kirby 0 move&\hspace*{4ex}& Kirby I moves
  \\
    \hline
  \end{tabular}
  \\[2ex]
  \begin{tabular}{|c|}
    \hline \\
  {$\epsh{KIIa}{9ex}\put(-11,10){\ms{\ba}}\put(-24,1){\ms{\bb}}
  \quad\longleftrightarrow\quad
  \epsh{KIIb}{10ex}\put(-11,15){\ms{\ba}}\put(-39,1){\ms{\bb-\ba}}$}
  \\  \\
  Kirby II move (black edge can be blue or red)
  \\
    \hline
  \end{tabular}
  \\[2ex]
  The label on edges is the value of $\omega$ on their oriented meridian.
  \caption{Colored Kirby moves}
  \label{fig:Kirby}
\end{figure}
\begin{theo}\label{T:Kirby}
Let $(M,\Gamma,\omega)$ and $(M',\Gamma',\omega')$) be  compatible triple where $M$ and $M'$ are oriented closed 3-manifolds.  Let $M$ and $M'$ be presented by surgery on the links $L$ and  $L'$, respectively.  Suppose $f:M\to M'$ is an be an orientation-preserving diffeomorphism such that $f(\Gamma) = \Gamma'$ as framed graph and $f^*(\omega')=\omega$.  Then
 there exists a finite sequence of  isotopies, Kirby 0, Kirby I, Kirby II moves (see Figure  \ref{fig:Kirby}) relating $(S^3,\Gamma\cup L,\omega)$ and  $(S^3,\Gamma'\cup L',\omega')$  such that the induced isomorphism is isotopic to $f$.
\end{theo}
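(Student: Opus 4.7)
The plan is to promote the classical Kirby calculus theorem to the setting with a preserved graph $\Gamma$ and with cohomology labels, and then to verify that each classical move can be enhanced by one of the decorated moves of Figure~\ref{fig:Kirby}. First I would use $f$ to identify $M'$ with $M$ via pullback: replacing $(M',\Gamma',\omega')$ with $(M, f^{-1}(\Gamma'), (f^{-1})^*\omega')$ the hypothesis $f(\Gamma)=\Gamma'$ and $f^*\omega'=\omega$ reduces the problem to showing that any two surgery presentations $(S^3,\Gamma\cup L,\omega_L)$ and $(S^3,\Gamma\cup L',\omega_{L'})$ of the \emph{same} compatible triple $(M,\Gamma,\omega)$ are related by the moves listed. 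Here $\omega_L,\omega_{L'}$ denote the unique pullbacks of $\omega$ along the inclusions $S^3\setminus(\wt\Gamma\cup L)\hookrightarrow M\setminus\wt\Gamma$ and similarly for $L'$, so that the cohomology labels on the intermediate presentations are forced and need not be chosen.

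Next I would invoke the classical (relative) Kirby theorem in the complement of $\wt\Gamma$: since the surgery only touches the $L$-components, one obtains a finite sequence of ambient isotopies fixing $\Gamma$ and classical Kirby I/II moves on $L$ relating $L$ to $L'$, with the composition of induced diffeomorphisms isotopic to the identity of $M$ rel~$\Gamma$ (this is the standard relative version used, e.g., in \cite{FcNgBp14}, where the black edge in Kirby II is allowed to be any surgery or graph component, red or blue). The sequence of undecorated moves is the skeleton onto which we graft the cohomology data.

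It then remains to check that each elementary classical move exactly realises one of the decorated moves of Figure~\ref{fig:Kirby}. Isotopies keep all meridian classes intact. A Kirby II slide of a component labelled $\bb$ over a component labelled $\ba$ replaces the first by the band sum with a parallel of the second; in the meridian basis of the new link, the meridian of the slid component is the old meridian of $\bb$ minus the meridian of $\ba$, so its $\omega$-value becomes $\bb-\ba$, while the slid-over component and its meridian are unchanged. A Kirby~I stabilisation by a $\pm1$-framed unknot~$U$ forces $\omega_L$ to vanish on the meridian $m_U$: indeed the $\pm1$-framed longitude bounds in $S^3\setminus U$ and, after surgery, the surgery relation $\ell_U\pm m_U=0$ in $H_1$ of the new complement together with the fact that $\ell_U$ bounds in $M$ yields $\omega(m_U)=\overline 0$, which is exactly the label permitted by the move. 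Finally, reversing the orientation of a closed red component negates its meridian and hence its label, which is precisely the Kirby~0 move; this allows one to absorb any orientation discrepancy between $L$ and the particular representatives produced by the preceding step.

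The main obstacle, and the point requiring most care, is the bookkeeping of compatibility: at each intermediate stage one must check that condition~(2) of Subsection~\ref{ss:Compat} (the constraint $2\Q(\omega(\ell_i),h)+\lc^c_i=0$ on closed red components) is preserved. For closed red components of $\Gamma$ this is immediate because the Kirby moves never touch them and $\omega_i(\ell_i)$ is unchanged. For components of the surgery link (which carry no coupons, so $\lc^c_i=0$) one must verify the conclusion of Proposition~\ref{compatible of L union Gamma} is stable under each move: Kirby~I introduces/deletes a component whose parallel bounds by construction; Kirby~II leaves the new parallel of the slid component bounding in $M$ because it is homologous to the old parallel plus a meridian disk contribution coming from the slid-over handle; Kirby~0 merely reverses the sign of the parallel. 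Once these three local verifications are carried out, concatenating them along the classical Kirby sequence produces the required decorated sequence, and since the underlying classical sequence induces a diffeomorphism isotopic to~$f$, so does the decorated one.
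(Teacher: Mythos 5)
Your proposal follows essentially the same route as the paper: it invokes the graph-relative Kirby theorem (the paper cites Roberts \cite{Rob1997}; you appeal to the analogous statement used in \cite{FcNgBp14}) and then checks that each classical move carries a decoration matching Figure \ref{fig:Kirby}, a step the paper dispatches with the single remark that ``it is easy to see.'' Your expansion of that check is sound, and your observation near the start --- that the intermediate cohomology labels are forced by pullback from $M\setminus\wt\Gamma$ --- already makes the closing compatibility-bookkeeping paragraph redundant: each intermediate $(S^3,\Gamma\cup L_k,\omega)$ is still a surgery presentation of the same compatible triple, so Proposition \ref{compatible of L union Gamma} covers it with no move-by-move re-verification needed. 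One small wrinkle worth flagging: in your Kirby~II discussion you attribute the label change $\bb\mapsto\bb-\ba$ to the \emph{slid} component, whereas the paper's proof of Proposition~\ref{P:graded-Hennings} attributes it to the \emph{slid-over} red circle $K$ (the sliding edge $e$ keeps its label $\ba$); the resulting label transformation matches Figure~\ref{fig:Kirby} either way, but the homological justification you sketch is describing the wrong meridian, so you would want to redo that sentence before relying on it.
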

\begin{proof}
In \cite{Rob1997}, the Kirby theorem is extended to the case of manifolds containing graphs.  It is easy to see that the condition $f^*(\omega') = \omega$ implies that this extension give a sequence of moves relating $(S^3,\Gamma\cup L,\omega)$ and  $(S^3,\Gamma'\cup L',\omega')$.   Note a similar result was considered in  Theorem 5.2 of \cite{FcNgBp14}.
\end{proof}

\subsection{Invariant of $3$-manifolds}
Let $\Gamma$ be a $n$-string link graph with $m$ red closed
components.  Let $D$ be a planar diagram representing $\Gamma$ with a
choice of base point $b$ for each closed red component. Recall we constructed in Step 2 of Subsection
\ref{SS:UnivInv} the element
$$J_{D}^b \in (\UH)^{\wh\otimes m+n}
\otimes\Hom_{\C}(V,V')
\simeq\Ce\bp{h_\bullet}\otimes W^{\otimes m+n}
\otimes\Hom_{\C}(V,V').$$
So $J_{D}^b$ is a vector-valued entire function with set of variables
$h_\bullet=\{H_{i,j}\}_{i=1,\ldots,\rk,\;j=1,\ldots, m+n}$ which are a
basis of $\H^{(m+n)}$.

\begin{Lem}\label{proof univ invariant is l-periodic}
 Suppose $\Gamma$ is equipped with a compatible cohomology class $\omega$.  Let $\va=(\ba_1,\ldots, \ba_{m+n})$ be the values of $\omega$ on the meridians of the red components.  
  Then
  $
  J_{D}^b$
   is $\ell$-periodic in the first $rm$ variables $\{H_{i,j}\}_{i=1,\ldots,\rk,\;j=1,\ldots, m}$ on $\Lat_\va$.
 \end{Lem}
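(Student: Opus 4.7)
The plan is to reduce the claimed $\ell$-periodicity to an arithmetic identity for the exponent $P_D$ of $J_D^b$, and then match that identity with the compatibility condition (2) on closed red components.

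First, by Theorem~\ref{value of invariant}, $J_D^b\in\xi^{P_D}\,\U^{\otimes m+n}\otimes\Hom_\C(V,V')$ with $P_D=\Q_D+\Q_D^c+L_D+L_D^c$. The $\U^{\otimes m+n}$-factor is a Laurent polynomial in the $K_{i,j}=\xi^{H_{i,j}}$ and is therefore $\ell$-periodic in every $H_{i,j}$ on the nose. So it suffices to prove, for each $1\le i_0\le r$ and each $1\le j_0\le m$, that $\xi^{P_D}$ is invariant under $H_{i_0,j_0}\mapsto H_{i_0,j_0}+\ell$ when evaluated on $\Lambda_\va$.

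Write $P_D=\mathbf Q+\mathbf L$ with $\mathbf Q=\Q_D+\Q_D^c$ the quadratic part, which has integer coefficients on the basis $\{H_{i,j}\}$, and $\mathbf L=L_D+L_D^c$ the linear part; let $e_{i_0,j_0}$ be the $(i_0,j_0)$-unit vector of $\H^{(m+n)}$. Polarization gives
\begin{equation*}
P_D(h_\bullet+\ell e_{i_0,j_0})-P_D(h_\bullet)=\ell\,\wt{\mathbf Q}(e_{i_0,j_0},h_\bullet)+\ell\,\mathbf L(e_{i_0,j_0})+\ell^2\mathbf Q(e_{i_0,j_0}),
\end{equation*}
and the last summand lies in $\ell^2\Z$ so $\xi^{\ell^2\mathbf Q(e_{i_0,j_0})}=1$. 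Evaluated on $\Lambda_\va$, $\wt{\mathbf Q}(e_{i_0,j_0},h_\bullet)$ differs from $\wt{\mathbf Q}(e_{i_0,j_0},\va)$ by an integer, since $\wt{\mathbf Q}$ has integer coefficients. Hence the problem reduces to showing that the $(i_0,j_0)$-coordinate of $\wt\Q_D(\va,h_\bullet)+\wt\Q_D^c(\va,h_\bullet)+L_D+L_D^c\in\H^{(m+n)}$ vanishes in $G'=\C/\Z$.

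Now I would identify this coordinate with the left-hand side of compatibility condition (2). Using symmetry of $\Q$, the $j_0$-slot of $\wt\Q_D(\va,h_\bullet)$ modulo $\Lambda^*$ is $\Q(\mu^{\mathrm{red}}_{j_0},h)$ with $\mu^{\mathrm{red}}_{j_0}=\sum_{s\le m+n}(\lk_{j_0,s}+\lk_{s,j_0})\ba_s$, and the analogous expansion of $L_D$ contributes $\Q(\mu^{\mathrm{blue}}_{j_0},h)$ with $\mu^{\mathrm{blue}}_{j_0}=\sum_{s>m+n}(\lk_{j_0,s}+\lk_{s,j_0})\bb_s$. The key topological identity to verify is $\mu^{\mathrm{red}}_{j_0}+\mu^{\mathrm{blue}}_{j_0}=2\,\omega(\ell_{j_0})\in G$, which follows from the homological description of the oriented parallel $\ell_{j_0}$ in $H_1(M\setminus\wt\Gamma;\Z)$ (the diagonal entry $\lk_{j_0,j_0}$ encoding the framing self-linking). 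By the definition of $\lc^c_{j_0}$ in Subsection~\ref{ss:Compat}, the coupon contribution $\wt\Q_D^c(\va,h_\bullet)+L_D^c$ in the $j_0$-slot is exactly $\lc^c_{j_0}\in G'$. Thus the $(i_0,j_0)$-coordinate in question equals $2\Q(\omega(\ell_{j_0}),h)+\lc^c_{j_0}$ evaluated in the $H_{i_0}$-direction, which vanishes in $G'$ by compatibility condition (2). The main obstacle is the bookkeeping in this last step: one must verify carefully that $\sum_s(\lk_{j_0,s}+\lk_{s,j_0})[m_s]$ really is $2[\ell_{j_0}]$ in $H_1(M\setminus\wt\Gamma;\Z)$ under the sign conventions of the linking matrix, so that the factor of $2$ in the compatibility axiom matches exactly the factor produced by symmetrising the quadratic form.
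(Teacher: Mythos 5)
Your proof is correct and follows essentially the same route as the paper: start from Theorem~\ref{value of invariant}, polarize $P_D$, kill the $\ell^2$-term by integrality of the quadratic coefficients, reduce to a coordinate of $\wt\Q_D(\va,h_\bullet)+\wt\Q_D^c(\va,h_\bullet)+L_D+L_D^c$ vanishing in $G'$, and identify that coordinate with $2\Q(\omega(\ell_{j_0}),\cdot)+\lc^c_{j_0}$ via the homological identity $\sum_s\lk_{j_0,s}[m_s]=\sum_s\lk_{s,j_0}[m_s]=[\ell_{j_0}]$. The only stylistic difference is that you shift by $\ell e_{i_0,j_0}$ coordinate by coordinate while the paper shifts by $\ell\lambda_{j_0}$ for general $\lambda\in\Lambda$; the two are equivalent, and the homological fact you flag as the ``main obstacle'' is exactly the step the paper simply asserts without further justification.
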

\begin{proof}
Theorem \ref{value of invariant} implies 
$$J_{D}^b \in \xi^{P_{D}}\U^{\otimes m+n}\otimes \Hom_\C(V, V')\subset \mathcal
Q_{m+n}\mathcal L_{m+n}\U^{\otimes m+n}\otimes \Hom_\C(V, V')
$$
where $P_{D}=\Q_{D}+\Q_{D}^c+ L_{D}+ L_{D}^c\in S(\H^{(m+n)})$.
 Let us denote
$M=\R^2\times[0,1]$.  Now we show that
the compatibility of
$\omega\in H^1(M\setminus \Gamma;G)$ implies that the function
$\xi^{P_{D}}$ is $\ell$-periodic in the first $rm$ variables on
$\Lat_\va$.

We check the periodicity of $J^b_D(h_1,\ldots,h_{m+n})$ in the set of
variables $h_i=(H_{1,i},\ldots,H_{\rk,i})$ for $1\leq i\leq m$ by
computing the evaluation of $\xi^{P_{D}}$ at $\alpha+\ell \lambda_{i}$
with $\alpha\equiv\va$ mod $\Lambda^{m+n}$ and
$\lambda_i=(0,\ldots,\lambda,\ldots,0)\in(\H^*)^{m+n}$
($\lambda$ at the $i$\ith position) for some $\lambda\in\Lambda$.
 In the following we used the polarisation
of quadratic element defined in Equation \eqref{eq:polarisation}.
   One has
  \begin{align*}
    P_{D}&(\alpha+\ell \lambda_i)\\ 
              &=\Q_{D}(\alpha+\ell \lambda_i)
                +\Q_{D}^c(\alpha+\ell \lambda_i)\\
              & \qquad \qquad \qquad \qquad + L_{D}(\alpha
                +\ell \lambda_i)+ L_{D}^c(\alpha
                +\ell \lambda_i)\\
              &=P_{D}(\alpha)+ \Q_{D}(\ell\lambda_i)
                +\Q_{D}^c(\ell\lambda_i)+\ell \wt \Q_{D}( \alpha,\lambda_i)
                \\
              &\qquad +\ell\wt \Q_{D}^c( \alpha,\lambda_i)
                +\ell L_{D}(\lambda_i)
                +\ell L_{D}^c(\lambda_i) \\
              &=P_{D}(\alpha)+ \Q_{D}(\ell\lambda_i)
                +\Q_{D}^c(\ell\lambda_i) + X ,
  \end{align*}
  where $X$ mod $\ell\Z$ is given by 
  $$\bar X =  \wt \Q_{D}( \ell\va, \lambda_i)+\wt  \Q_{D}^c( \ell\va, \lambda_i)+ \ell\wb L_{D}(\lambda_i)+ \ell\wb L_{D}^c(\lambda_i).$$
  It is clear that
  $\Q_{D}(\ell\lambda_i)+\Q_{D}^c(\ell\lambda_i)=
  0 \text{ mod } \ell\Z$.  Now recall that $\Q=\Q(h_1,h_2) $ is also
  symmetric and linear in each set of variable thus
  \begin{align*}
    \wt \Q((h_1,h_2),(h'_1,h'_2))&=\Q(h_1+h'_1,h_2+h'_2)-\Q(h_1,h_2)-\Q(h'_1,h'_2)\\&=\Q(h_1,h'_2)+\Q(h_2,h'_1)
  \end{align*}
  then from \eqref{eq:Q_Gamma}, we get
  $$\wt \Q_{D}( \ell\va, \lambda_i)=\!\!\sum_{1\leq k,s\leq m+n}\!\!\lk_{ks}\bp{\Q(\ell\ba_k,\delta_i^s\lambda)+\Q(\ell\ba_s,\delta_i^k\lambda)}$$
  and
  $$
  \ell\wb L_{D}(\lambda_i)=\sum_{1\leq k\leq m+n<s\leq
    m+n+n_{D_{bl}}}\bp{\lk_{ks}+\lk_{sk}}\Q(\ell
  \bb_s,\delta_i^k\lambda).
  $$
  Let
  $X_D = \wt \Q_{D}( \ell\va,
  \lambda_i)+ \ell\wb L_{D}(\lambda_i)$.
  As $\bb_s=\omega (m_s)$,
   we have
  
  \begin{align*}
    X_D
               &=\sum_{1\leq k\leq m+n}\lk_{ki}\Q(\ell\ba_k,\lambda_i)+\lk_{ik}\Q(\ell\ba_k,\lambda_i)\\
               & \qquad\qquad\qquad+\sum_{m+n<s\leq m+n+n_{D_{bl}}}\bp{\lk_{is}+\lk_{si}}\Q(\ell\ba_s,\lambda_i )\\
               &=\Q\bp{\sum_{k=1}^{m+n+n_{D_{bl}}}\lk_{ik}\omega(m_k)+\sum_{k=1}^{m+n+n_{D_{bl}}}\lk_{ki}\omega(m_k), \ell\lambda_i}.
  \end{align*}
  In addition, for any $1\leq i \leq m$, in homology,
  $\sum_{k=1}^{m+n+n_{D_{bl}}}\lk_{ik}[m_k]=\sum_{k=1}^{m+n+n_{D_{bl}}}\lk_{ki}[m_k]$
  is the homology class of the parallel of the $i$\ith
  red closed component of $L\cup \Gamma$. So we have
  $$\sum_{k=1}^{m+n+n_{D_{bl}}}\lk_{ik}\omega(m_k)=\sum_{k=1}^{m+n+n_{D_{bl}}}\lk_{ki}\omega(m_k) =\omega(\ell_i).$$
  It implies that
  $X_D=2\ell\Q\bp{\omega(\ell_i),\lambda_i}$.  Thus with $\wt  \Q_{D}^c( \va, \lambda_i)+ \wb L_{D}^c(\lambda_i)=\sum_i\lc^c_i(h_i)$,
   we
  get
  \begin{align*}
    \bar X &=\ell\bp{2\Q\bp{\omega(\ell_i),\lambda_i}+ \sum_j\lc^c_j(\lambda_i)}\\
    &=\ell\bp{2\Q\bp{\omega(\ell_i),\lambda_i}+ \lc^c_i(\lambda_i)}\text{ mod }\ell\Z.
  \end{align*}
  By assumption $(M,\Gamma,\omega)$ is compatible triple,
  i.e.,
  $$2\Q\bp{\omega(\ell_i),h_i}+ \lc^c_i(h_i)=0\text{ modulo }\Lambda^*,$$ one gets
  $\bar X = 0 \mod \ell\Z$.  It implies that
  $$\xi^{P_{D}(\alpha+\ell\lambda_i)}=\xi^{P_{D}(\alpha)}.$$
  Hence one concludes $\xi^{P_{D}}$ is $\ell$-periodic on
  $\Lat_\va$ in the variables $H_{i,j}$ associated to the close red
  components.
\end{proof}
Lemma \ref{proof univ invariant is l-periodic} says that with the cohomology class $\omega$ then the entire function  $\xi^{P_{D}}u$ is $\ell$-periodic in the first $rm$ variables  on $\Lat_\va$, where $J_{D}^b= \xi^{P_{D}}\sum_i u_i \otimes f_i $. 
 Let  $$J_{D,\omega}^b=\sum_i\pi(\xi^{P_{D}}u_i)\otimes f_i $$  then Proposition \ref{P:quad-lin-modI} implies that 
$$J_{D,\omega}^b \in \U_{\ba_1}\otimes\cdots
  \otimes\U_{\ba_m}\otimes\LU_{\ba_{m+1}}\otimes\cdots\otimes\LU_{\ba_{m+n}}
  \otimes\Hom_\C(V, V')$$
where $\pi$ is the projection  $(\UH)^{\wh\otimes m+n}\to (\UH)^{\wh\otimes m+n}/ I_\va^H$.

\begin{Prop}\label{P:graded-Hennings}
  Let $\Gamma$ be a $n$-string link graph with $m$ red
  closed components in $\R^2\times[0,1]$ equipped with a compatible
  cohomology class $\omega$.  Let $\va=(\ba_1,\ldots, \ba_{m+n})$ be
  the values of $\omega$ on the meridians of the red components.
  Let
  \begin{align*}
    \label{eq:Fmu}
    F_\si(\Gamma,\omega)
    &=(\si_{\ba_1}\otimes\cdots\otimes\si_{\ba_m}\otimes\Id)\bp{J^b_{\Gamma,\omega}}\\
    &\in\LU_{\ba_{m+1}}\otimes\cdots\otimes\LU_{\ba_{m+n}}
      \otimes\Hom_\C(V, V').
  \end{align*}
  $$$$ Then $F_\si(\Gamma,\omega)$ does not depend
  on the base points, and is invariant by Kirby 0 and Kirby II moves.
\end{Prop}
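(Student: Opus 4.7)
The plan is to verify the three claims separately: independence of the base point, invariance under the Kirby 0 move, and invariance under the Kirby II move. All three rely on the algebraic properties of the symmetrized $G$-integral $\si$ collected in Subsection \ref{SS:symmetrizedGintegral}, together with the structure of the universal invariant built in Subsection \ref{SS:UnivInv}.

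For \textbf{base point independence}, I would observe that moving a base point past a single bead on a closed red component cyclically permutes the product of beads contributing to the corresponding tensor factor of $J^b_D$: if that factor reads $xy$, the new factor reads $yx$. This cyclic permutation survives the projection modulo $I_\va^H$, and Equation \eqref{eq:mucyc} gives $\si_{\ba_i}(xy)=\si_{\ba_i}(yx)$, so $F_\si(\Gamma,\omega)$ does not depend on $b$.

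For \textbf{Kirby 0 invariance}, I interpret the move as reversing the orientation of an isolated closed red component, with the simultaneous sign change $\ba\mapsto -\ba$ of the value of $\omega$ on its oriented meridian. Lemma \ref{L:reverse} applies because the closed red components affected by Kirby 0 carry no coupons, and it says that the reversal applies $S$ or $S^{-1}$ to the corresponding tensor factor, depending on the local orientation at the base point. After projecting to $\U_{-\ba_i}$ and applying $\si_{-\ba_i}$ in place of $\si_{\ba_i}$, the identity \eqref{eq:muS} gives $\si_{-\ba_i}(S_{\ba_i}^{\pm1}(x))=\si_{\ba_i}(x)$, and $F_\si$ is preserved.

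For \textbf{Kirby II invariance}, the crux of the proof, I would proceed as in the classical Hennings argument. Let $C_0$ be a closed red component without coupons, labeled $\ba$, and let $e$ be an arc labeled $\bb$ (blue or red) which is slid over $C_0$. Putting beads according to Subsection \ref{SS:UnivInv}, the two diagrams differ locally by an extra passage of $e$ around $C_0$; using the Hexagon axioms \eqref{eq:deltaR1}--\eqref{eq:deltaR2} this local modification replaces a bead $x$ on $e$ by $\Delta_{\ba,\bb}(y)$ threaded jointly through the $\ba$- and $\bb$-channels (accompanied by a power-linear prefactor coming from bichrome crossings). The key step is then to collapse this threading via the defining property of the right $G$-integral,
\[
(\si_{\ba}\otimes\Id)\Delta_{\ba,\bb}(y)=\si_{\ba+\bb}(y)\,1_{\bb}\qquad (y\in\U_{\ba+\bb}),
\]
which is Equation \eqref{eq:rightsi}. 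The main obstacle, and the bookkeeping that makes this step delicate, is to match the grading shift $\bb\mapsto\bb-\ba$ dictated by the change of oriented meridian of the slid edge: using Lemma \ref{lemma changes the order of elements} to commute the power-linear prefactor $L_{D}$ of Theorem \ref{value of invariant} past the algebraic factors picks up exactly the right exponent shift, so that the projection modulo $I_\va^H$ and the modified labels $(\ba,\bb-\ba)$ on the after-slide diagram reproduce the pre-slide value. Combining these ingredients yields the required equality of $F_\si$.
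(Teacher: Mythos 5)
Your overall structure matches the paper's proof, and your treatments of Kirby~0 (via Lemma~\ref{L:reverse} and Equation~\eqref{eq:muS}) and Kirby~II (comultiplication of the bead on the slid-over component together with Equation~\eqref{eq:rightsi}) are essentially the paper's arguments. The base-point step, however, has a genuine gap.

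You write that moving the base point replaces a factor $xy$ by $yx$ and then conclude directly from cyclicity \eqref{eq:mucyc} that $\si_{\ba_i}(xy)=\si_{\ba_i}(yx)$. But \eqref{eq:mucyc} is stated and proved only for $x,y\in\U_{\ba_i}$, whereas the two partial bead products you are swapping a priori live in the bigger algebra $\LU_{\ba_i}$ (they carry Cartan factors $\xi^{H}$ coming from the $\HH$-part of the $R$-matrix). Periodicity of the \emph{full} product, via Lemma~\ref{proof univ invariant is l-periodic} and Proposition~\ref{P:quad-lin-modI}, puts the closed-component tensor factor in $\U_{\ba_i}$, but it does not put each partial product there individually, so cyclicity of $\si_{\ba_i}$ cannot be invoked as if $x$ and $y$ were in $\U_{\ba_i}$. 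The paper closes this gap by viewing the difference as a commutator $\wb c_i$ in $\LU_{\ba_i}$ landing in $\U_{\ba_i}$, and splitting by weight: if $\wb c_i$ has nonzero weight it is killed by \eqref{eq:muW0}; if it has weight zero, the mechanism in Lemma~\ref{reducing commutator} pushes the Cartan factor onto the $\U$-side, showing $\wb c_i\in[\U_{\ba_i},\LU_{\ba_i}]\cap\U_{\ba_i}=[\U_{\ba_i},\U_{\ba_i}]$, where \eqref{eq:mucyc} legitimately applies. You need one of these two observations (weight vanishing or the commutator reduction) to make the base-point argument go through.
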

\begin{proof}
  Let us call $L$ the sub-link of $\Gamma$ formed by the $m$ red
  closed components.  If one moves the base point of the
  $i$\ith component of $L$ then the universal
  invariant $J^b_\Gamma$ only change by terms which have a commutator
  $c_i$ at the $i$\ith factor.  If such a term has non
  zero weight, then so will be its representant $\wb c_i\in\U_{\ba_i}$
  and then $\si_{\ba_i}(\wb c_i)=0$ by Equation \eqref{eq:muW0}.  Now
  if the weight of $\wb c_i$ is zero, it can be written as the
  commutator of two elements in $\LU_{\ba_i}$.  From the proof of
  Lemma \ref{reducing commutator} we see that $\wb c_i$ belongs to
  $[\U_{\ba_i},\LU_{\ba_i}]\cap\U_{\ba_i}=[\U_{\ba_i},\U_{\ba_i}]$.
  Then, from Equation \eqref{eq:mucyc} we see that
  $\si_{\ba_i}(\wb c_i)=0$.  Hence $F_\si(\Gamma,\omega)$ does not
  depend of the base points.

  We now prove $F_\si(\Gamma,\omega )$ does not change under
  orientation reversal of components of $L$ (i.e.,\ the Kirby 0 move).  First, by Lemma
  \ref{L:reverse}, changing the orientation of a component of $L$
  changes $J^b_\Gamma$ by applying an antipode on the factor
  corresponding to the component, so the property of $G$-trace in
  Equation \eqref{eq:muS} imply $F_\si(\Gamma,\omega )$ does not
  depend on the orientation.

\begin{figure}
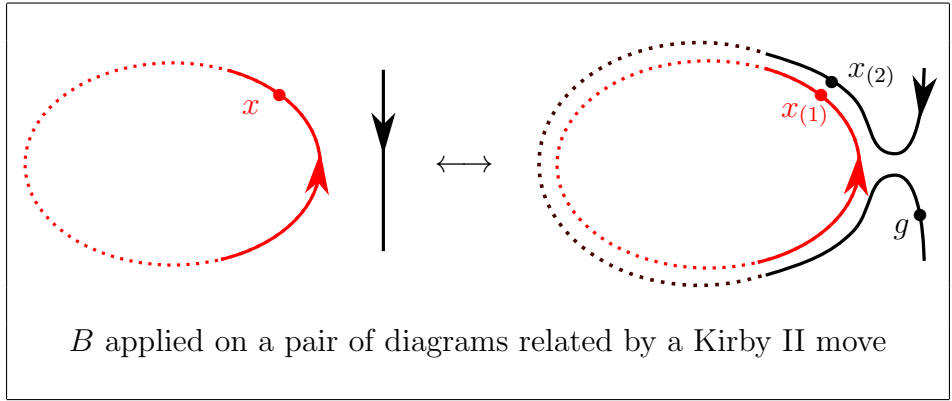

  \centering
  \begin{tabular}{|c|}
    \hline \\
    {$
    \epsh{KIIa}{15ex}
     \put(-46,26){{\color{red}$\bullet$}}
     \put(-57,22){{\color{red}$x$}}
  \quad\longleftrightarrow\quad
    \epsh{KIIb}{18ex}
    \put(-46,26){{\color{red}$\bullet$}}
     \put(-58,21){{\color{red}$x_{(1)}$}}
    \put(-42,31){{$\bullet$}}
    \put(-33,36){{$x_{(2)}$}}
    \put(-9,-19){{$\bullet$}}
    \put(-16,-23){{$g$}}
    $}
  \\  \\
  $B$ applied on a pair of diagrams related by a Kirby II move
  \\
  \\
    \hline
  \end{tabular}
  \\
  \caption{Invariance by Kirby II move}
  \label{Fig4}
\end{figure}
  Finally, to see the  Kirby II move hold, we consider a red component $K\subset L$
  with beads with product $x\in\wh\UH$ and a blue or red vertical edge
  $e$ with locally no bead or equivalently a bead $1$, as in the first
  component of Figure \ref{Fig4}.  Here the base point of $K$ is
  assumed to be near the vertical edge and the orientation of $K$ as
  in the figure. Let $(\omega',\va')$ be the image of $(\omega,\va)$
  by the second Kirby move.  After the move, a pivotal element $g$ has
  appeared on the slidding edge $e$ and since the two curves are
  parallel with the same orientation, all the elements along the
  component $K$ are replaced by $x_{(1)}$ on $K$ and $x_{(2)}$ on $e$
  where $\Delta x=x_{(1)}\otimes x_{(2)}$.  Since modulo $I_\va$,
  $x\in\U_{\bar\beta}$ we have that modulo $I_{\va'}$,
  $\Delta x\in\U_{\bar\beta-\bar\alpha}\otimes \U_{\bar\alpha}$.  Then
  \eqref{eq:rightsi} implies that the values of $F_\si$ before and
  after the Kirby II move are the same.
\end{proof}

Up to isotopy, one can identify closed bichrome graphs in
$\R^2\times[0,1]$ with closed bichrome  graphs in $S^3$.  The
same holds for pair $(\Gamma,\omega)$ where $\omega$ is a compatible
cohomology class in the complement of the smoothing $\wt \Gamma$ of
a closed bichrome graph
$\Gamma$.  Hence Proposition \ref{P:graded-Hennings} induce a complex
valued invariant of compatible triple $(S^3,\Gamma,\omega)$ that we
still denote by $F_{\si}(\Gamma,\omega)\in\C$.

\begin{theo}[Graded Hennings invariant]\label{T:GHI}
 Let $(M,\Gamma,\omega)$ be compatible triple where $M$ is a oriented closed 3-manifolds.  Let $L\subset S^3$ be a framed link which is a surgery presentation of $M$.  Denote $\omega$ as the restriction of the cohomology class $\omega$ in
$H^1(S^3\setminus(\wt\Gamma\cup L),G)$.
  Define
  \begin{equation}
    \He( M,\Gamma,\omega ) =\delta^{-s} F_{\si}(L\cup\Gamma,\omega)
  \end{equation}
  where
  $\delta= \si_{\overline{0}}(g_{\wb 0}^{-1}\theta_{\overline{0}})$
  and $s$ is the signature of the linking matrix of $L$.  Then
  $\He$ is a well defined invariant of 
  diffeomorphism class of $(M,\Gamma,\omega)$.
\end{theo}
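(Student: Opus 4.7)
The proof rests on Kirby's theorem for decorated $3$-manifolds, which has been summarised in Theorem \ref{T:Kirby}: if $(M,\Gamma,\omega)$ and $(M',\Gamma',\omega')$ are diffeomorphic compatible triples with surgery presentations $L$ and $L'$, then $(S^3,\Gamma\cup L,\omega)$ and $(S^3,\Gamma'\cup L',\omega')$ can be related by isotopy together with Kirby $0$, I, II moves. The strategy is therefore (i) to check that $F_{\si}(L\cup\Gamma,\omega)$ is a well defined complex number whenever $(S^3,L\cup\Gamma,\omega)$ is compatible, and (ii) to check that $\delta^{-s}F_{\si}(L\cup\Gamma,\omega)$ is invariant under each of the four move types. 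Step (i) is essentially already done: Proposition \ref{compatible of L union Gamma} ensures that $(S^3,L\cup\Gamma,\omega)$ is compatible with all components of $L$ coloured red and carrying meridian class $\bar 0\in G$, so Proposition \ref{P:graded-Hennings} produces $F_{\si}(L\cup\Gamma,\omega)\in\C$ independent of base points.

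For step (ii), invariance of $F_{\si}$ under planar isotopy is Theorem \ref{T:univ-inv} combined with Proposition \ref{P:graded-Hennings}, and invariance under Kirby $0$ (orientation reversal of a component of $L$) and Kirby II (handle slide) was proven in Proposition \ref{P:graded-Hennings}. None of these moves affects the signature $s$ of the linking matrix of $L$, so $\delta^{-s}F_{\si}$ is unchanged. The only remaining move is Kirby I, i.e.\ inserting or removing a disjoint $\pm1$-framed unknot $U_\pm$ whose meridian is coloured by $\bar 0\in G$. Here the signature shifts by $\pm1$, and one must show that the factor $\delta^{-s}$ absorbs the new contribution.

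The crux of the proof is thus the computation of $F_{\si}$ on such an unknot, which I expect to be the only real obstacle. For $U_+$ one computes the universal invariant of a $+1$-framed oriented circle: following the recipe of Subsection \ref{SS:UnivInv}, the bead around the circle multiplies to the element $u^{-1}=g^{-1}\theta\in\wh\UH$ (using the identities $\theta=gu^{-1}$ derived in the proof of Theorem \ref{main theorem 1}). Projecting modulo $I^H_{\bar0}$ one gets $g_{\bar0}^{-1}\theta_{\bar0}\in\U_{\bar0}$, and hence
\begin{equation*}
F_{\si}(U_+,\bar0)=\si_{\bar0}(g_{\bar0}^{-1}\theta_{\bar0})=\delta.
\end{equation*}
Reversing the orientation of the bead multiplication (equivalently considering $U_-$) gives $F_{\si}(U_-,\bar0)=\si_{\bar0}(g_{\bar0}^{-1}\theta_{\bar0}^{-1})=\lambda_{\bar0}(\theta_{\bar0}^{-1})=\bar\delta$. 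Multiplicativity of the universal invariant on disjoint unions, combined with the fact that $\si_{\bar 0}(1)\cdot(\cdots)$ factors, yields
\begin{equation*}
F_{\si}\bigl((L\sqcup U_+)\cup\Gamma,\omega\bigr)=\delta\cdot F_{\si}(L\cup\Gamma,\omega),
\end{equation*}
and analogously with $\bar\delta$ for $U_-$.

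To finish, I combine these computations with the signature shift. Inserting $U_+$ gives new signature $s+1$, so the normalised quantity becomes $\delta^{-(s+1)}\cdot\delta\cdot F_{\si}=\delta^{-s}F_{\si}$; inserting $U_-$ gives new signature $s-1$, so it becomes $\delta^{-(s-1)}\cdot\bar\delta\cdot F_{\si}=(\delta\bar\delta)\delta^{-s}F_{\si}=\delta^{-s}F_{\si}$ where the last equality is Axiom \ref{Ax:non-degenerate}. Thus $\delta^{-s}F_{\si}(L\cup\Gamma,\omega)$ is invariant under all Kirby moves and isotopy, so by Theorem \ref{T:Kirby} it depends only on the diffeomorphism class of $(M,\Gamma,\omega)$. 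This completes the proof.
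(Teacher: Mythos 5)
Your proof follows essentially the same route as the paper: reduce to the colored Kirby calculus via Theorem \ref{T:Kirby} and Proposition \ref{compatible of L union Gamma}, delegate well-definedness, Kirby~0 and Kirby~II to Proposition \ref{P:graded-Hennings}, and then handle Kirby~I by computing the contribution of a $\pm1$-framed unknot with meridian colour $\wb 0$ against the signature shift. The one place you differ is that you spell out the Kirby~I computation explicitly (bead product $u^{-1}=g^{-1}\theta$ for $U_+$, hence $F_{\si}(U_+,\wb0)=\si_{\wb0}(g_{\wb0}^{-1}\theta_{\wb0})=\delta$ and $F_{\si}(U_-,\wb0)=\wb\delta$), and your sign bookkeeping — multiply by $\delta^{\pm1}$ when adding $U_\pm$, then use $\delta\wb\delta=1$ from Axiom \ref{Ax:non-degenerate} to cancel against $\delta^{-(s\pm1)}$ — is in fact the correct one; the paper's text writes $F_\si\mapsto\delta^{\mp1}F_\si$ for $U_\pm$, which does not yield cancellation against $\delta^{-(s\pm1)}$ unless $\delta^2=1$, so it appears to be a sign typo that your argument silently repairs. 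In short, the content of your proof coincides with the paper's, presented with more explicit detail at the only nontrivial step.
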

\begin{proof}
  From Proposition \ref{compatible of L union Gamma}, we have 
  $(S^3,\Gamma\cup L,\omega)$ is a compatible triple and by Theorem
  \ref{T:Kirby} any two such presentation are related by Kirby moves.
  By Proposition \ref{P:graded-Hennings},
  $F_{\si}(L\cup\Gamma,\omega)$ is invariant by Kirby 0 and Kirby II
  moves.
  For the Kirby I move, if we add a $\pm1$-framed unknot to
  $L$ with meridian $m_0$, then $\omega(m_0)=\overline{0}$ because the
  class of $m_0$ is zero in $H_1(M\setminus \wt \Gamma,\Z)$.  Then the
  invariant $F'$ is multiplied by
  $\lambda_{\overline{0}}(\theta_{\overline{0}})^{\mp 1}=\delta^{\mp
    1}$. At the same time the signature of the linking matrix $s$
  changes by $\pm 1$, so $\He( M,\Gamma,\omega )$ does not
  change under the Kirby I move.
\end{proof}
In many known examples the invariant $ \He$ is zero for ``generic'' compatible triples.  In particular, in the case of Example \ref{ex1}, the invariant $ \He$ is zero when $\Gamma$ is colored with a projective module or when $\omega$ has non-integral values.  However, as we will now explain, m-traces can be used to renormalize $\He$ to define a non-zero invariant for these ``generic'' compatible triples.

\begin{Prop}\label{P:Fmu-equiv} Let $\Gamma$ be a $n$-string link graph
  and $(\R^2\times[0,1],\Gamma,\omega)$ be a compatible triple, then
  $F_\si(\Gamma,\omega)\in\LU_{\ba_{1}}\otimes\cdots\otimes\LU_{\ba_{n}}
  \otimes\Hom_\C(V, V')$ is equivariant: write
  $F_\si(\Gamma,\omega)=\sum_ix_i\otimes f_i$ then let
  $\UH_{\otimes\va}=\UH_{\ba_{1}}\wh\otimes\cdots\wh\otimes\UH_{\ba_{n}}$
  where the factors are equipped with the action of $\wh\UH$ by left
  multiplication.  Then
  $$\sum_iL_{x_i}\otimes f_i\in\Hom_\C\bp{\UH_{\otimes\va}\otimes V,
    \UH_{\otimes\va}\otimes V'}$$ is a morphism of $\wh\UH$-modules.
\end{Prop}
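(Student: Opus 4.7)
The proof tracks the $\wh\UH$-equivariance established in Theorem~\ref{T:univ-inv} through the projection modulo $I_\va^H$ and the application of the symmetrized $G$-integrals. The starting point is the pre-trace version of equivariance: applied to a horizontal line just above the bottom of a defining diagram of $\Gamma$, Theorem~\ref{T:univ-inv} tells us that, writing $J_\Gamma^b=\sum_I y_{I,1}\otimes\cdots\otimes y_{I,m+n}\otimes h_I$ and $\Delta^{m+n}(u)=\sum u^{(1)}\otimes\cdots\otimes u^{(m+n+1)}$ for $u\in\wh\UH$, the identity
\begin{equation*}
\sum_I y_{I,1}u^{(1)}\otimes\cdots\otimes y_{I,m+n}u^{(m+n)}\otimes h_I\circ\rho_V(u^{(m+n+1)})=\sum_I u^{(1)}y_{I,1}\otimes\cdots\otimes u^{(m+n)}y_{I,m+n}\otimes\rho_{V'}(u^{(m+n+1)})\circ h_I
\end{equation*}
holds in $(\UH)^{\wh\otimes m+n}\otimes\Hom_\C(V,V')$.

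The projection to $\U_{\ba_1}\otimes\cdots\otimes\U_{\ba_m}\otimes\LU_{\ba_{m+1}}\otimes\cdots\otimes\LU_{\ba_{m+n}}$ is compatible with this equivariance, because the coproduct of $\wh\UH$ descends to the graded coproducts $\Delta_{\ba_i,\bb}$ of the Hopf $G$-coalgebra $\UH_\bullet$ introduced in Section~\ref{S:Galg_DFT}. Projecting both sides yields the same identity for $J_{\Gamma,\omega}^b$, where each $u^{(i)}$ is replaced by its image $\pi^H_{\ba_i}(u^{(i)})\in\UH_{\ba_i}$.

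Applying $\si_{\ba_1}\otimes\cdots\otimes\si_{\ba_m}\otimes\Id$ to both sides, the two sides differ only in the order in which $y_{I,i}$ and $\pi^H_{\ba_i}(u^{(i)})$ appear on each of the first $m$ factors. The cyclicity property \eqref{eq:mucyc} of the symmetrized $G$-integral, combined with the weight-zero vanishing \eqref{eq:muW0}, yields the equality $\si_{\ba_i}(y_{I,i}\pi^H_{\ba_i}(u^{(i)}))=\si_{\ba_i}(\pi^H_{\ba_i}(u^{(i)})y_{I,i})$ for each $i\le m$. After cancellation of these scalar factors, the surviving identity in the last $n$ factors is exactly the statement that $\sum_j L_{x_j}\otimes f_j$ commutes with the $\wh\UH$-action, which is the claim.

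The main obstacle is justifying the extended cyclicity in the previous paragraph: the element $\pi^H_{\ba_i}(u^{(i)})$ may have nonzero Cartan components in $\UH_{\ba_i}\setminus\U_{\ba_i}$, while \eqref{eq:mucyc} is only stated for $\U_{\ba_i}$. Using the vector space decomposition $\UH_{\ba_i}=\U_{\ba_i}\otimes S\H$ coming from the semi-direct product $\UH=\U\rtimes S\H$, and noting that the commutator of a Cartan element $H_a\in\H$ with a homogeneous $y\in\U_{\ba_i}$ is the scalar multiple $[H_a,y]=a(\w y)\cdot y$ by Equation~\eqref{eq:weight}, all correction terms arising during commutation are of the same $\Lambda$-weight as $y$ and are therefore killed by $\si_{\ba_i}$ via the weight-zero property \eqref{eq:muW0} whenever the original $\si_{\ba_i}(y)$ could have been nonzero. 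Hence cyclicity extends in the precise form needed to complete the argument.
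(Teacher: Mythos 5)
Your proposal takes a genuinely different route from the paper's, and unfortunately both of your main steps contain gaps.

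The paper proves the proposition by \emph{induction on the number $m$ of closed red components}: it opens one closed component at a time, uses the explicit closure formula $F_\si(\Gamma,\omega)=\sum_i\si_{\ba}(y_{i}g^{-1})z_i\otimes f_i$ where $F_\si(\check\Gamma,\omega)=\sum_i y_i\otimes z_i\otimes f_i$ and $\check\Gamma$ has one more open strand, then inserts $\ve(u_a)=u_{a(2)}S^{-1}(u_{a(1)})$ so that the inductive equivariance hypothesis and the cyclicity \eqref{eq:mucyc} can be applied in tandem (the first Sweedler factor $u_{a(1)}$ being a spectator), and finishes the $\H$-linearity by combining \eqref{eq:muW0} with primitivity of $\H$. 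Your proof is instead a one-shot argument: project an alleged pre-trace equivariance of $J^b_D$, apply $\si^{\otimes m}\otimes\Id$, and cancel.

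The first gap is the starting point. Theorem~\ref{T:univ-inv} establishes equivariance only for $J(\Gamma)$, i.e.\ \emph{after} the trace $\tru^{\otimes m}$ has been applied to the closed components; before the trace, $J^b_D$ genuinely depends on the choice of base points, and moving a $u^{(i)}$-bead through a base point changes the product $y_{I,i}$ by conjugation rather than leaving it fixed. The pre-trace identity you write, with $y_{I,i}u^{(i)}$ on one side and $u^{(i)}y_{I,i}$ on the other for $i\le m$, is therefore not a consequence of Theorem~\ref{T:univ-inv} and should not be expected to hold.

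The second gap is the ``cancellation of scalar factors''. Even if the pre-trace identity held, applying $\si^{\otimes m}\otimes\Id$ and then using cyclicity $\si_{\ba_i}(y_{I,i}u^{(i)})=\si_{\ba_i}(u^{(i)}y_{I,i})$ produces an identity in which the first $m$ Sweedler legs $u^{(1)},\dots,u^{(m)}$ still appear \emph{inside} the $\si$'s, with the coproduct $\Delta^{m+n}(u)$. The statement to be proved involves the scalars $\si_{\ba_i}(y_{I,i})$ with no $u^{(i)}$ and the coproduct $\Delta^{n}(u)$. These are different identities, and neither implies the other: one cannot set $u^{(i)}=1$ for $i\le m$ for a general $u$, and because the $\si_{\ba_i}(y_{I,i}u^{(i)})$ depend on $I$ and are summed over, a common factor cannot be ``cancelled'' term by term. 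The paper's induction sidesteps this by treating a single closed component at a time, using the closure formula with a $g^{-1}$ (which you do not have) and the extra $\ve(u_a)=u_{a(2)}S^{-1}(u_{a(1)})$ factor which is exactly what lets cyclicity and equivariance of the smaller graph be used simultaneously. Your discussion of the Cartan correction terms in the final paragraph is in the right spirit (the paper also uses \eqref{eq:muW0} and the primitivity of $\H$), but it does not repair either of these two issues.
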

\begin{proof}
  The proof is by induction on the number $m$ of closed red
  components of $\Gamma$.  If $m=0$, by Equation \eqref{E:EquivarJ} of Theorem
  \ref{T:univ-inv}, the universal invariant $J(\check\Gamma)$ is
  $\wh \UH$-equivariant.  Since the projection modulo the ideals
  $I_\ba$ is equivariant, then $F_\si(\Gamma,\omega)$ is
  $\wh \UH$-equivariant.

  Now assume $m>0$ and consider a ($n$+1)-string link graph
  $\check\Gamma$ whose left braid closure of the first strand produces
  $\Gamma$.  Let $\check D$ be a diagram of $\check\Gamma$ and $D$ its
  left braid closure obtained by joining the $m$ top endpoints of
  $\check D$ to its $m$ bottom endpoints using an arc disjoint from
  the diagram on its left.  We still denotes by $\omega$ the
  restriction of the cohomology class to
  $\R^2\times[0,1]\setminus\check\Gamma\subset\R^2\times[0,1]\setminus\Gamma$. Then
  by induction $F_\si(\check \Gamma,\omega)$ is $\wh \UH$-equivariant.  Now
  computing the invariants using $\check D$ and $D$ with the new base point
  on the left arc, we have that if
  $F_\si(\check \Gamma,\omega)=\sum_iy_i\otimes z_i\otimes f_i$ with
  $y_i\in\U_{\ba}$ then
  \begin{equation}
    \label{eq:Fclosure}
    F_\si(\Gamma,\omega)=\sum_i\si_{\ba}(y_{i}g^{-1})z_i\otimes f_i.
  \end{equation}
  Now for $u\in\U$, let $\Delta^{(n+2)}u=u_a\otimes u_b\otimes u_c$
  with $u_a\in\U$, $u_b\in\U^{\otimes n}$ and $u_c\in \U$. We can
  write $\Delta^{n+1}u=\ve(u_a)u_b\otimes u_c$ and then
  \begin{align*}
    F_\si(\Gamma,\omega)(\Delta^{n+1}u)
    &=\sum_i\si_{\ba}(y_{i}\ve(u_a)g^{-1})z_iu_b\otimes f_i(u_c\cdot)\\
    &=\sum_i\si_{\ba}(y_{i}u_{a(2)}S^{-1}(u_{a(1)})g^{-1})z_iu_b\otimes f_i(u_c\cdot)\\
    &=\sum_i\si_{\ba}(u_{a(2)}y_{i}g^{-1}S(u_{a(1)}))u_bz_i\otimes u_cf_i\\
    &=\sum_i\si_{\ba}(S(u_{a(1)})u_{a(2)}y_{i}g^{-1})u_bz_i\otimes u_cf_i\\
    &=\sum_i\ve(u_a)\si_{\ba}(y_{i}g^{-1})u_bz_i\otimes u_cf_i
      =(\Delta^{n+1}u)F_\si(\Gamma,\omega),
  \end{align*}
  where the third equality uses the equivariance of
  $F_\si(\check \Gamma,\omega)$ and the cyclicity of the symmetrised
  integral \eqref{eq:mucyc}.  For the $\H$-linearity, we need to use
  Equation \eqref{eq:muW0}.  By this equality only the terms of
  \eqref{eq:Fclosure} where $y_i$ commute with any element $H$ of $\H$
  will contribute to the sum. But since $H$ is primitive,
  $\Delta^{(n+2)}H=H\otimes1+1\otimes\Delta^{(n+1)}H$ so projecting
  Equation \eqref{eq:Fclosure} on weight zero space for the first
  factor, we have
  $$\sum_{|y_i|=0}y_i\otimes \bp{(z_i\otimes f_i)\Delta^{(n+1)}H}=
  \sum_{|y_i|=0}y_i\otimes \bp{\Delta^{(n+1)}H(z_i\otimes f_i)}$$
  which implies the $H$ linearity of $F_\si(\Gamma,\omega)$.
\end{proof}
\begin{Def}\label{D:graph-admissible triple}
  The compatible triple $(M,\Gamma,\omega)$ is called a
  \emph{graph-admissible triple} if there exists a blue edge of $\Gamma$
  colored by $V\in \Proj(\cat^H)$.
\end{Def}

Let $(S^3, \Gamma, \omega)$ be a graph-admissible triple with $\Gamma$
is a closed bichrome graph embedded in $S^3$.  A cutting presentation
of $\Gamma$ is a bichrome graph $\Gamma_V\in \R^2\times[0,1]$ with
bottom and top end point $(V,+)$ whose braid closure is isotopic to
$(S^3, \Gamma)$.  It is equipped with the induced cohomology class
still denoted $\omega$.  Then Proposition \ref{P:Fmu-equiv} implies that $F_ \si(\Gamma_V,\omega)\in\Hom_\C\bp{ V,V}$ is a morphism of $\wh\UH$-modules (as $n=0$).  But $V$ is finite dimensional and so $F_ \si(\Gamma_V,\omega)$ is a morphism of $\UH$-modules, i.e.\    $F_ \si(\Gamma_V,\omega)\in\End_{\cat^H}(V)$.  Let
\begin{equation}
  \label{eq:F'}
  F'_\si(\Gamma, \omega)=\tv_V\bp{F_ \si(\Gamma_V,\omega)}.
\end{equation}
\begin{Prop}\label{P:F'invKirby} The modified invariant $F'_\si(\Gamma, \omega)$ of
  Equation \eqref{eq:F'} does not depend on the choice of a cutting
  presentation $\Gamma_V$.  Furthermore, it is invariant by Kirby 0
  and Kirby II move and for any closed compatible triple
  $(S^3,\Gamma',\omega')$, the modified invariant of the disjoint union
  is given by
  $$F'_\si(\Gamma\sqcup\Gamma',\omega'')=
  F'_\si(\Gamma, \omega)F_\si(\Gamma', \omega')$$
  where $\omega''$ is the cohomology class induced by $\omega$ and
  $\omega'$.
\end{Prop}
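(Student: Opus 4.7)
The plan is to verify the three claims separately, reducing each to a property of either $F_\si$ on string-link graphs (Propositions \ref{P:graded-Hennings} and \ref{P:Fmu-equiv}) or of the m-trace on $\Proj(\cat^H)$.

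For the independence of the cutting presentation, given two cutting presentations $\Gamma_V$ and $\Gamma_W$ of $\Gamma$, I would show $\tv_V(F_\si(\Gamma_V,\omega))=\tv_W(F_\si(\Gamma_W,\omega))$ in two steps. First, if the two cuts lie on the same projective blue edge colored by $V$, the resulting endomorphisms of $V$ are related by cyclic rotation: if $\alpha,\beta\in\End_{\cat^H}(V)$ are the morphisms induced by the two halves of the edge between the cut points, the values are $\tv_V(\beta\circ\alpha)$ and $\tv_V(\alpha\circ\beta)$, equal by cyclicity of $\tv$. Second, for cuts at distinct projective edges colored by $V$ and $W$, I would introduce a double-cut presentation $\Gamma_{V,W}$, a $2$-string link graph with boundary $V\otimes W$ at top and bottom whose braid closure is $\Gamma$, and identify
\begin{align*}
F_\si(\Gamma_V,\omega)&=\ptr_W F_\si(\Gamma_{V,W},\omega),\\
F_\si(\Gamma_W,\omega)&=\ptr_V F_\si(\Gamma_{V,W},\omega)
\end{align*}
(using the first step to absorb any reordering of the $V,W$ tensor factors that arises when one passes from one double cut to the other). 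Since $\Proj(\cat^H)$ is an ideal, $V\otimes W$ is projective, and the partial trace property of $\tv$ then gives
\begin{equation*}
\tv_V(\ptr_W F_\si(\Gamma_{V,W},\omega))=\tv_{V\otimes W}(F_\si(\Gamma_{V,W},\omega))=\tv_W(\ptr_V F_\si(\Gamma_{V,W},\omega)).
\end{equation*}

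For Kirby 0 and Kirby II invariance, both moves are supported in a small ball which, by isotopy, can be placed disjointly from the cut point of a cutting presentation $\Gamma_V$. The move then takes $\Gamma_V$ to a $1$-string link graph $\Gamma'_V$ related by the same local move performed away from the top and bottom boundary, and Proposition \ref{P:graded-Hennings} ensures $F_\si(\Gamma_V,\omega)=F_\si(\Gamma'_V,\omega')$ in $\End_{\cat^H}(V)$; applying $\tv_V$ yields invariance of $F'_\si$.

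For multiplicativity, a cutting presentation of $\Gamma\sqcup\Gamma'$ at the projective edge already chosen in $\Gamma$ is $\Gamma_V\sqcup\Gamma'$ (placed side by side in $\R^2\times[0,1]$). The universal invariant $F_\si$ is manifestly multiplicative under disjoint union, so
\begin{equation*}
F_\si((\Gamma\sqcup\Gamma')_V,\omega'')=F_\si(\Gamma',\omega')\cdot F_\si(\Gamma_V,\omega)\in\End_{\cat^H}(V),
\end{equation*}
where the scalar $F_\si(\Gamma',\omega')\in\C$ acts by multiplication; $\C$-linearity of $\tv_V$ then yields the stated formula. The main obstacle I expect is the second case in the independence of the cutting presentation, namely identifying the topological operation of \emph{closing one of the two strands of a double-cut graph} with the categorical partial trace $\ptr$ in $\cat^H$. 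This requires returning to the construction of Section \ref{SS:UnivInv} and checking that the pivotal duality data used to close a blue strand in the universal invariant match those defining $\ptr$ in the pivotal category $\cat^H$. Once this identification is in hand, the partial trace axiom of $\tv$ closes the argument.
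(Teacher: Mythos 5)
Your proof is correct and follows essentially the same line as the paper: realize the two cutting presentations as the partial braid closures of a common double-cut graph $\Gamma_{V,W}$, then use the partial trace and cyclicity properties of the m-trace, with Kirby invariance and multiplicativity following directly from the corresponding properties of $F_\si$. The only variation is in handling the reordering of the tensor factors: the paper conjugates $F_\si(\Gamma_{V,V'},\omega)$ by the braiding $c_{V',V}$ and uses cyclicity of $\tv$ to pass from $\tv_{V\otimes V'}$ to $\tv_{V'\otimes V}$ (always applying the \emph{right} partial trace), whereas you instead apply the left partial trace over $V$ and invoke the fact that $\tv$ is simultaneously a left and a right m-trace; these are equivalent, and both rest on the two-sidedness of $\tv$ established earlier in the paper.
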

\begin{proof}
  The proof of the first statement is similar to that in
  \cite{NgBpVt09}: indeed, if $\Gamma_V$ and $\Gamma_ {V'}$ are both
  cutting presentation of $\Gamma$ then there exists $\Gamma_{V,V'}$
  such that $\Gamma_V$ and $\Gamma_ {V'}$ are the partial braid
  closure of $\Gamma_{V,V'}$, and $\Gamma_{V,V'}$ conjugated by the
  braiding, respectively.  Then we have
  \begin{align*}
    \tv_V\bp{F_ \si(\Gamma_V,\omega)}
    &=\tv_V\bp{\ptr_{V'}\bp{F_ \si(\Gamma_{V,V'},\omega)}}\\
    &=\tv_{V\otimes V'}\bp{F_ \si(\Gamma_{V,V'},\omega)}\\
    &=\tv_{V'\otimes V}\bp{c_{V',V}^{-1}F_ \si(\Gamma_{V,V'},\omega)c_{V',V}}\\
    &=\tv_{V'}\bp{\ptr_{V}\bp{c_{V',V}^{-1}F_ \si(\Gamma_{V,V'},\omega)c_{V',V}}}\\
    &=\tv_{V'}\bp{F_ \si(\Gamma_{V'},\omega)}
  \end{align*}
  where the second and fourth equality (resp.\ third equality) follow from the partial trace property (resp.\ cyclicity) of the m-trace.
  Now the invariance by Kirby 0 and 2 moves of
  $F'_\si(\Gamma, \omega)$ follows from that of
  $F_ \si(\Gamma_V,\omega)$.  Finally for a disjoint union, just
  remark that is $\Gamma_V$ is a cutting presentation of $\Gamma$,
  then $\Gamma_V\sqcup\Gamma'$ is a cutting presentation of
  $\Gamma\sqcup\Gamma'$ and
  $F_ \si(\Gamma_V\sqcup\Gamma',\omega'')=F_ \si(\Gamma_V,\omega)F_
  \si(\Gamma',\omega')$.
\end{proof}
Then the modified invariant naturally extends to graph-admissible
triples $(M, \Gamma, \omega)$ with $\Gamma$ a closed bichrome graph
embedded in $M$: 
\begin{theo}[Modified graded Hennings invariant]\label{T:MGHI}
  Let $M$ be an oriented closed $3$-manifold and $(M,\Gamma,\omega)$ be a
  graph-admissible compatible triple with surgery presentation
  $(S^3,\Gamma\cup L,\omega)$.  Define
  \begin{equation}\label{eq:He'}
    \He'( M,\Gamma,\omega ) =\delta^{-s} F'_{\si}(L\cup\Gamma,\omega)
  \end{equation}
  where 
  $\delta= \si_{\overline{0}}(g_{\wb 0}^{-1}\theta_{\overline{0}})$
  and $s$ is the signature of the linking matrix of $L$.  Then $\He'$
  is a well defined invariant of diffeomorphism class of
  $(M,\Gamma,\omega)$.  Furthermore, if $( M',\Gamma',\omega' )$ is
  any closed compatible triple, the modified invariant of the
  connected sum is
  $$\He'( M\# M',\Gamma\sqcup\Gamma',\omega'')=
  \He'(M,\Gamma, \omega)\He(M',\Gamma', \omega')$$ where $\omega''$ is
  the cohomology class induced by $\omega$ and $\omega'$.
\end{theo}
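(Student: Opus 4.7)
The plan is to mirror the proof of Theorem \ref{T:GHI}, replacing $F_\si$ by the m-trace normalized invariant $F'_\si$ and invoking Proposition \ref{P:F'invKirby} in place of Proposition \ref{P:graded-Hennings}.

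First I would observe that graph-admissibility passes from $(M,\Gamma,\omega)$ to its surgery presentation $(S^3, L\cup\Gamma, \omega)$: the surgery link $L$ consists of framed knots without coupons or blue edges, so the projective blue edge of $\Gamma$ remains in $L\cup\Gamma$ and compatibility is preserved by Proposition \ref{compatible of L union Gamma}. Cutting $L\cup\Gamma$ along that projective edge provides a cutting presentation, so $F'_\si(L\cup\Gamma, \omega)$ is defined via Equation \eqref{eq:F'}; isotopy invariance is inherited from the universal invariant of Theorem \ref{T:univ-inv}, so it remains to establish invariance under the Kirby moves.

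By Theorem \ref{T:Kirby}, it suffices to verify invariance under Kirby 0, I, II moves. Kirby 0 and Kirby II invariance are given directly by Proposition \ref{P:F'invKirby}. For a Kirby I move, the added $\pm 1$-framed unknot $U_{\pm 1}$ is disjoint from $L\cup\Gamma$ and its meridian bounds a disk in the complement of $\wt\Gamma$, so the induced value of $\omega$ on that meridian is $\wb 0$; thus $(S^3, U_{\pm 1}, 0)$ is a closed compatible triple. The multiplicativity formula of Proposition \ref{P:F'invKirby} then gives
\[
F'_\si\bp{(L\cup U_{\pm 1})\cup\Gamma,\omega}
=F'_\si(L\cup\Gamma,\omega)\cdot F_\si(U_{\pm 1},0),
\]
and the scalar $F_\si(U_{\pm 1},0)$ is computed exactly as in the proof of Theorem \ref{T:GHI}, producing the same $\delta^{\pm 1}$ factor that compensates the change of signature $s\mapsto s\pm 1$. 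Axiom \ref{Ax:non-degenerate} is used here to ensure that $\delta$ is invertible, so that the normalization $\delta^{-s}$ makes sense and cancels correctly.

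For the connected sum formula, the manifold $M\# M'$ is presented by surgery on the disjoint union $L\sqcup L'\subset S^3$ with $L$ and $L'$ sitting in disjoint $3$-balls; the linking matrix is block diagonal, so $s(L\sqcup L')=s+s'$. The bichrome graph $\Gamma\sqcup\Gamma'$ carries the induced cohomology class $\omega''$, and because the projective blue edge lies in $\Gamma$, the combined triple is graph-admissible with a cutting presentation inside $L\cup\Gamma$. Proposition \ref{P:F'invKirby} then yields
\[
F'_\si\bp{(L\sqcup L')\cup(\Gamma\sqcup\Gamma'),\omega''}
=F'_\si(L\cup\Gamma,\omega)\cdot F_\si(L'\cup\Gamma',\omega'),
\]
and multiplying by $\delta^{-(s+s')}=\delta^{-s}\delta^{-s'}$ produces the desired product $\He'(M,\Gamma,\omega)\,\He(M',\Gamma',\omega')$. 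The hardest technical point is the sign bookkeeping in the Kirby I computation; everything else is a clean application of the properties of $F'_\si$ carefully packaged in Proposition \ref{P:F'invKirby}.
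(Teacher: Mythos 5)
Your proof is correct and takes essentially the same route as the paper: reduce to Kirby moves via Theorem~\ref{T:Kirby}, invoke Proposition~\ref{P:F'invKirby} for Kirby 0 and II, handle Kirby I via the unknot factor $\delta^{\pm1}$ against the signature change, and get the connected-sum formula from a block-diagonal surgery presentation together with the disjoint-union multiplicativity of Proposition~\ref{P:F'invKirby}. If anything, you are slightly more careful than the paper's own write-up: you explicitly invoke the multiplicativity clause of Proposition~\ref{P:F'invKirby} to see that the added unknot contributes a factor $F_\si(U_{\pm1},0)$ rather than just appealing to ``as in Theorem~\ref{T:GHI},'' and your sign bookkeeping ($\delta^{\pm1}$ for the $\pm1$-framed unknot, matching the signature change $s\mapsto s\pm1$) is the internally consistent one, whereas the paper writes $\delta^{\mp1}$, which appears to be a typo.
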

\begin{proof}
From  Theorem
  \ref{T:Kirby} it is enough to show that $\He'$ is invariant under the  Kirby moves.  
 Proposition \ref{P:F'invKirby} imply the Kirby 0 and Kirby II hold.  
 As in the proof of Theorem \ref{T:GHI},
if we add a $\pm1$-framed unknot to
  $L$ with meridian $m_0$, then $\omega(m_0)=\overline{0}$ and the  invariant $F'$ is multiplied by
  $\lambda_{\overline{0}}(\theta_{\overline{0}})^{\mp 1}=\delta^{\mp
    1}$.  So the Kirby I move holds.       
 The last  property of the theorem, follows from considering a surgery
  presentation of the connected sum, which is a disjoint union of surgery presentations in $S^3$.  
\end{proof}

\section{Modified symmetrized integral}\label{modified integral}
We introduce the notion of a modified integral which allows us to relax the
admissibility condition for the modified graded Hennings invariant and in particular gives an invariant for empty 3-manifolds.

Given $\ba_i\in G$, let $\va=(\ba_1,\ldots,\ba_n)$,  $\bb_j=\sum_{i=j}^n\ba_i$ for $j\leq n$ and $\bb=\bb_1$.  
Denote $\U_{\otimes\va}=\U_{\ba_1}\otimes \U_{\ba_2}\otimes ...\otimes \U_{\ba_n} $ and  let $\Delta_\va:\U_{\bb}\to \U_{\otimes\va}$ be the map 
$$\Delta_\va= (1\otimes ...\otimes 1 \otimes \Delta_{\ba_{n-1},\bb_n}) ... (1\otimes \Delta_{\ba_2,\bb_3})\Delta_{\ba_1, \bb_2}.$$
By definition of a Hopf $G$-coalgebra 
$\Delta_\va$ is an algebra morphism.
We denote by $Z_{\va}$ the centralizer of $\Delta_\va(\U_{\bb})$ which
is a subalgebra of $\U_{\otimes\va}$ formed by elements which commute
with any element of $\Delta_\va(\U_{\bb})$.  In particular, for $n=1$
and $\va=(\ba)$ then $Z_{\ba}$ is the center of $\U_{\ba}$.
\begin{Lem} Recall $L_g$ is the left multiplication of $g$.  We have
$$(\si_{\ba}L_{g_{\ba}^{-1}}\otimes\Id)(Z_{(\ba,\bb)})\subset
Z_{\bb}\text{ and }(\Id\otimes\si_{\bb}L_{g_{\bb}})(Z_{(\ba,\bb)})\subset
Z_{\ba}.$$
\end{Lem}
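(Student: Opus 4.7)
The plan is to first identify the two maps in the statement as partial integrals. From the definition $\si_\ba(x) = \lambda^R_\ba(g_\ba x)$ one immediately gets $\si_\ba L_{g_\ba^{-1}} = \lambda^R_\ba$. For the second map, using $\gamma_\bb = g_\bb^2$ from Proposition \ref{P:unibalanced} together with $\lambda^L_\bb = \lambda^R_\bb(\gamma_\bb\cdot)$ from Theorem \ref{Th:right-int}(3), one computes $\si_\bb L_{g_\bb}(x) = \si_\bb(g_\bb x) = \lambda^R_\bb(g_\bb^2 x) = \lambda^L_\bb(x)$. Thus the lemma reduces to the statement that $(\lambda^R_\ba\otimes\Id)(Z_{(\ba,\bb)})\subset Z_\bb$ and $(\Id\otimes\lambda^L_\bb)(Z_{(\ba,\bb)})\subset Z_\ba$, both of which are graded analogues of the classical fact that partial integrals send centralizer elements to central ones.

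For the first inclusion, I would write $z = \sum_i a_i\otimes b_i\in Z_{(\ba,\bb)}$, so that the defining relation $z\Delta_{\ba,\bb}(y)=\Delta_{\ba,\bb}(y)z$ for $y\in\U_{\ba+\bb}$ reads $\sum_i a_iy_{(1)}\otimes b_iy_{(2)} = \sum_i y_{(1)}a_i\otimes y_{(2)}b_i$ in Sweedler notation. Setting $u=\sum_i\lambda^R_\ba(a_i)b_i$, I need to verify $[u,x]=0$ for every $x\in\U_\bb$. The key is to rewrite $1_\ba\otimes x$ using a lift $\tilde x\in\U$ of $x$ together with the ungraded Hopf identity $1\otimes\tilde x = \sum\Delta(\tilde x_{(1)})(S(\tilde x_{(2)})\otimes 1)$, then push through the projection $\U\otimes\U\to\U_\ba\otimes\U_\bb$ to obtain an expression of $1_\ba\otimes x$ as a sum of terms of the form $\Delta_{\ba,\bb}(y)\cdot(S_{-\ba}(k)\otimes 1_\bb)$ with $y\in\U_{\ba+\bb}$ and $k\in\U_{-\ba}$ coming from the appropriate graded components of $\Delta(\tilde x)$. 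Inserting this into $[z,1_\ba\otimes x]$ and using the centralizer hypothesis to commute $z$ past each $\Delta_{\ba,\bb}(y)$, one is reduced to computing $(\lambda^R_\ba\otimes\Id)$ on expressions of shape $\Delta_{\ba,\bb}(y)\cdot[z,S_{-\ba}(k)\otimes 1]$. Applying the twist $\lambda^R_\ba(ab)=\lambda^R_\ba(g_\ba b g_\ba^{-1}a)$ from Theorem \ref{Th:right-int}(5) together with the right $G$-integral property $(\lambda^R_\ba\otimes\Id)\Delta_{\ba,\bb}(y)=\lambda^R_{\ba+\bb}(y)1_\bb$ causes the remaining terms to cancel in pairs, yielding $[u,x]=0$. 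The second inclusion is then entirely symmetric: one applies $\Id\otimes\lambda^L_\bb$ to the commutation relation and uses the left $G$-integral property $(\Id\otimes\lambda^L_\bb)\Delta_{\ba,\bb}(y)=\lambda^L_{\ba+\bb}(y)1_\ba$ together with the corresponding twist for $\lambda^L$ inherited from that of $\lambda^R$.

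The main obstacle is the graded bookkeeping. In the ungraded setting the identity $1\otimes x=\sum\Delta(x_{(1)})(S(x_{(2)})\otimes 1)$ gives the needed bridge in a single line, but in the Hopf $G$-coalgebra setting one cannot multiply across distinct components $\U_\bg$, so one must carefully track which graded pieces of $\Delta(\tilde x)$ and $\Delta^{(2)}(\tilde x)$ contribute to $1_\ba\otimes x$ after projection. In particular, the relevant contribution comes from $\Delta_{\ba+\bb,-\ba}(\tilde x)\in\U_{\ba+\bb}\otimes\U_{-\ba}$ followed by $\Delta_{\ba,\bb}$ on the first factor, and one must verify that these graded components indeed recover $1_\ba\otimes x$ after multiplication and antipode as witnessed by the group-like example $x=g_\bb$. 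Once this graded identity is secured, the rest of the argument is a direct application of the integral and twist properties established in Theorem \ref{Th:right-int}.
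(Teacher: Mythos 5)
Your opening reduction is correct and is a nice conceptual step the paper leaves implicit: $\si_\ba L_{g_\ba^{-1}}=\lambda^R_\ba$ is immediate from the definition of $\si$, and combining Proposition \ref{P:unibalanced} with Theorem \ref{Th:right-int}(3) gives $\si_\bb L_{g_\bb}=\lambda^R_\bb(\gamma_\bb\,\cdot)=\lambda^L_\bb$, so the lemma is equivalent to: partial right and left $G$-integrals carry centralizers to centers.

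However, the core of your argument rests on the identity $1\otimes\tilde x = \sum\Delta(\tilde x_{(1)})\bigl(S(\tilde x_{(2)})\otimes 1\bigr)$, and this is \emph{false} for non-cocommutative Hopf algebras. In Sweedler notation the right-hand side unwinds to $\sum\tilde x_{(1)}S(\tilde x_{(3)})\otimes\tilde x_{(2)}$; the two factors $\tilde x_{(1)}$ and $S(\tilde x_{(3)})$ are not adjacent legs of the iterated coproduct, so the antipode axiom cannot collapse them. Concretely, for $\tilde x=E$ in $\U_\xi(\mathfrak{sl}_2)$ one has $\Delta^{(2)}(E)=1\otimes 1\otimes E+1\otimes E\otimes K^2+E\otimes K^2\otimes K^2$, hence
\begin{equation*}
\sum E_{(1)}S(E_{(3)})\otimes E_{(2)}=-EK^{-2}\otimes 1+K^{-2}\otimes E+EK^{-2}\otimes K^2\ \neq\ 1\otimes E.
\end{equation*}
Your sanity check on the group-like $x=g_\bb$ cannot see this: $\Delta(g)=g\otimes g$ makes every reordering of the Sweedler legs produce the same element, so group-likes never distinguish a correct Hopf identity from an incorrect permutation of it.

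The correct identity --- the one the paper uses in its graded form --- is $\sum u_{(2)}S^{-1}(u_{(1)})\otimes u_{(3)}=1\otimes u$, which in your "factorized" style reads $1\otimes u=\sum\Delta(u_{(2)})\bigl(S^{-1}(u_{(1)})\otimes 1\bigr)$: note the \emph{inverse} antipode and the reversed Sweedler indices. In the $G$-graded setting this is exactly the paper's $\Delta_{(-\ba,\ba,\bb)}(u)=u_{(1)}\otimes u_{(2)}\otimes u_{(3)}$ followed by $\sum u_{(2)}S^{-1}_{-\ba}(u_{(1)})\otimes u_{(3)}=1_\ba\otimes u$. With this in place the proof is a direct four-step chain: multiply $z$ by this expression on the right, use Theorem \ref{Th:right-int}(5) (equivalently $\si$-cyclicity) to move $S^{-1}_{-\ba}(u_{(1)})$ to the left as $S_{-\ba}(u_{(1)})$, apply the centralizer hypothesis to slide $z_1\otimes z_2$ past $u_{(2)}\otimes u_{(3)}$, and then re-collapse via the ordinary antipode axiom $\sum S_{-\ba}(u_{(1)})u_{(2)}\otimes u_{(3)}=1_\ba\otimes u$. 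Your phrase "the remaining terms cancel in pairs" would need to be replaced by this explicit computation; as written the cancellation is not evident from the stated integral and twist properties alone.
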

\begin{proof}
  Let $z=\sum z_1\otimes z_2\in Z_{(\ba,\bb)}$, $u\in \U_\bb$ and let
  $\Delta_{(-\ba,\ba,\bb)}(u)=u_{(1)}\otimes u_{(2)}\otimes u_{(3)}$. Then
  $\sum u_{(1)}\otimes z_1u_{(2)}\otimes z_2u_{(3)}=\sum u_{(1)}\otimes
  u_{(2)}z_1\otimes u_{(3)}z_2$ by definition of $Z_{(\ba,\bb)}$ so
  $$\sum \si_{\ba}(g_{\ba}^{-1}z_1)z_2u=\sum  \si_{\ba}(g_{\ba}^{-1}z_1u_{(2)}S_{-\ba}^{-1}(u_{(1)}))z_2u_{(3)}$$
  $$=\sum  \si_{\ba}(g_{\ba}^{-1}S_{-\ba}(u_{(1)})z_1u_{(2)})\otimes z_2u_{(3)}=$$
  $$\sum  \si_{\ba}({g_{\ba}^{-1}}S_{-\ba}(u_{(1)})u_{(2)}z_1)\otimes u_{(3)}z_2=\sum \si_{\ba}(g_{\ba}^{-1}z_1)uz_2.$$ The proof is similar for the second inclusion.
\end{proof}
\newcommand{\Gss}{{G\setminus X}}
\begin{Def} \label{D:MI}
Let $X$ be a subset of $G$.
We say that a family of maps
  $\{\si'_{\ba}:Z_{\ba}\to\C\}_{\ba\in
    \Gss}$ is a \emph{modified integral} on $\Gss$ if for any $\ba,\bb\in \Gss$
  the following two linear forms are equal on $Z_{(\ba,\bb)}$:
  $$\si_{\ba}L_{g_{\ba}^{-1}}\otimes\si'_{\bb}
  =\si'_{\ba}\otimes \si_{\bb}L_{g_{\bb}}.$$

\end{Def}

\begin{theo}\label{T:Wow}
  Let $X$ be the subset of $G$ such that $\Gss$ is the set of $\ba$ where 
  $\U_{\ba}$ is semi-simple.  Then there exists a family
  of central elements
  $\{z_{\ba}\in
  Z_{\ba}\}_{\ba\in\Gss}$ such that   $\si_{\ba}(x)=\tr^\C_{\U_{\ba}}(L_{z_{\ba}}L_x)$ for all
  $x\in \U_{\ba}$. 
  Furthermore, there exists a modified integral on $\Gss$ defined by
  \begin{equation}
    \label{eq:mint}
    \si_{\ba}'(z):=\si_{\ba}({z_{\ba}}z)=\tr^\C_{\U_{\ba}}(L_{{z_{\ba}^2}z})
  \end{equation}
for all $z\in Z_{\ba}$.
\end{theo}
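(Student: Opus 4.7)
Because $\U_\ba$ is finite-dimensional and semisimple, the symmetric bilinear pairing
\[
(x,y) \longmapsto \tr^\C_{\U_\ba}(L_x L_y) = \tr^\C_{\U_\ba}(L_{xy})
\]
is non-degenerate on $\U_\ba$. Indeed, under the Wedderburn decomposition $\U_\ba \cong \bigoplus_i \End_\C(V_i^\ba)$, this pairing splits as a direct sum of the non-degenerate forms $(A,B) \mapsto \dim(V_i^\ba)\,\tr_{V_i^\ba}(AB)$. Hence every linear form on $\U_\ba$ is of the shape $x \mapsto \tr^\C_{\U_\ba}(L_{zx})$ for a unique $z \in \U_\ba$, and applying this to $\si_\ba$ defines $z_\ba$. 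The cyclicity of $\si_\ba$ (Equation \eqref{eq:mucyc}) together with the trace property of $\tr^\C_{\U_\ba} \circ L$ gives $\tr^\C_{\U_\ba}(L_{(z_\ba x - x z_\ba)y}) = 0$ for all $x,y$, whence non-degeneracy forces $z_\ba \in Z_\ba$. Explicitly, $z_\ba = \sum_i (d_i^\ba/\dim V_i^\ba)\,e_i^\ba$ with $e_i^\ba$ the central idempotent of $V_i^\ba$ and $d_i^\ba = \tv_{V_i^\ba}(\Id)$ the modified dimension.

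\textbf{Step 2: Reformulation of the modified integral identity.} Write $\Psi_\ba := \tr^\C_{\U_\ba}(L_{\cdot})$, so that $\si_\ba(x) = \Psi_\ba(z_\ba x)$ and $\si'_\ba(z) = \Psi_\ba(z_\ba^2 z)$ for $z \in Z_\ba$. Unwinding the two sides of Definition \ref{D:MI} on $Z = \sum z_1 \otimes z_2 \in Z_{(\ba,\bb)}$ gives
\[
   (\Psi_\ba \otimes \Psi_\bb)\bigl((z_\ba g_\ba^{-1} \otimes z_\bb^2 - z_\ba^2 \otimes z_\bb g_\bb)\cdot Z\bigr) = 0.
\]
Equivalently, writing $\si_\ba L_{g_\ba^{-1}} = \lambda_\ba$ and $\si_\bb L_{g_\bb} = \lambda^L_\bb$ (using Theorem \ref{Th:right-int}(3) and the unibalanced identity $\gamma = g^2$ of Proposition \ref{P:unibalanced}), this says
$(\lambda_\ba \otimes \si'_\bb)(Z) = (\si'_\ba \otimes \lambda^L_\bb)(Z)$ on $Z_{(\ba,\bb)}$. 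Because $\Psi_\ba \otimes \Psi_\bb$ is a non-degenerate trace form on the semisimple algebra $\U_\ba \otimes \U_\bb$, the orthogonal complement of $Z_{(\ba,\bb)}$ is the commutator subspace $[\Delta_{\ba,\bb}(\U_{\ba+\bb}),\,\U_\ba \otimes \U_\bb]$, so the identity reduces to showing that
\[
   D \;:=\; z_\ba g_\ba^{-1} \otimes z_\bb^2 \;-\; z_\ba^2 \otimes z_\bb g_\bb
\]
lies in this commutator subspace.

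\textbf{Step 3: Verifying the commutator condition.} I would test the identity against a generating family for $Z_{(\ba,\bb)}$. On $Z = \Delta_{\ba,\bb}(x)$ with $x \in Z_{\ba+\bb}$, the G-integral identity \eqref{eq:rightsi} (and its left analogue) collapses both sides to multiples of $\lambda_{\ba+\bb}(x)$ and $\lambda^L_{\ba+\bb}(x)$, and the unibalanced relation $\lambda^L = \lambda^R(g^2 \cdot)$ combined with the Wedderburn computation of $\si'_{\wb\bullet}(1)$ matches the two. For the remaining generators of $Z_{(\ba,\bb)}$ (namely elements of $Z_\ba \otimes Z_\bb$ and the ``intertwiners'' of isotypic components of $V_i^\ba \otimes V_j^\bb$ regarded as $\U_{\ba+\bb}$-modules via $\Delta_{\ba,\bb}$), I would factor $D = (z_\ba \otimes z_\bb)((g_\ba^{-1} \otimes z_\bb) - (z_\ba \otimes g_\bb))$ and exploit the grouplike identities $\Delta(g^{\pm 1}) = g_\ba^{\pm1} \otimes g_\bb^{\pm1}$ to pull out $\Delta(g^{-1})$-factors, so that cyclicity of $\Psi_\ba \otimes \Psi_\bb$ combined with the centralizer relation $Z\cdot\Delta(u) = \Delta(u)\cdot Z$ transports the remaining ``twist'' from one tensor factor to the other.

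\textbf{Main obstacle.} The delicate point is Step 3: while the vanishing on $\Delta_{\ba,\bb}(Z_{\ba+\bb})$ and on $Z_\ba \otimes Z_\bb$ is a direct computation, the centralizer $Z_{(\ba,\bb)}$ is strictly larger, containing ``off-diagonal'' intertwiners among the multiplicity spaces of the decomposition of each $V_i^\ba \otimes V_j^\bb$ into $\U_{\ba+\bb}$-simples. Controlling these requires identifying, on each Wedderburn block of $\U_\ba \otimes \U_\bb$, the ratio of the modified dimension to the quantum dimension, and using that in the semisimple subcategory $\cat_\ba$ with $\ba \in \Gss$ the modified trace is (by uniqueness of the G-integral together with the ribbon structure $\qdim_L = \qdim_R$ of $\cat^H$) proportional to the categorical trace $\tr(g\cdot)$. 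This proportionality collapses the desired equality into a tautology, completing the proof.
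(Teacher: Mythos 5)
Your Step 1 agrees with the paper essentially point-for-point: the non-degenerate trace form on the semisimple algebra $\U_\ba$ produces a unique $z_\ba$ with $\si_\ba=\tr^\C_{\U_\ba}(L_{z_\ba}L_\cdot)$, cyclicity of $\si_\ba$ forces $z_\ba\in Z_\ba$, and the coefficient on the $i$-th Wedderburn block identifies as the ratio of the modified dimension to the ordinary dimension of $V_i^\ba$.

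Steps 2--3 depart from the paper's route, and this is where a genuine gap appears. The paper's verification of Definition \ref{D:MI} is short and avoids analysing $Z_{(\ba,\bb)}$ altogether: for $x\in Z_{(\ba,\bb)}$ the endomorphism $f=L_{(z_\ba\otimes z_\bb)x}$ of $\U_\ba\otimes\U_\bb$ is a morphism in $\cat$ \emph{precisely because} $x$ commutes with $\Delta_{\ba,\bb}(\U_{\ba+\bb})$, so the fact that $\tv$ is simultaneously a left and a right m-trace (Proposition \ref{P:unibalanced} and the discussion following it) gives
$\tv_{\U_\ba}\bigl(\ptr_{\U_\bb}(f)\bigr)=\tv_{\U_\ba\otimes\U_\bb}(f)=\tv_{\U_\bb}\bigl(\ptr_{\U_\ba}(f)\bigr)$.
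Unwinding the two partial traces (each a $\Vect_\C$ partial trace twisted by a pivot $g^{\pm1}$) yields exactly the equality required. You instead attempt to check the identity on generators of $Z_{(\ba,\bb)}$, and you correctly flag that beyond $\Delta_{\ba,\bb}(Z_{\ba+\bb})$ and $Z_\ba\otimes Z_\bb$ the centralizer contains intertwiners among multiplicity spaces of the $V_i^\ba\otimes V_j^\bb$ regarded as $\U_{\ba+\bb}$-modules; your computations do not reach these, and the sketched manipulation of $g^{\pm1}$ factors is not a proof.

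The repair you offer in the final paragraph also fails: you claim that on $\cat_\ba$ with $\ba\in\Gss$ the modified trace is proportional to the categorical trace $\tr(g\cdot)$. On a single simple $V_i^\ba$ the two are of course proportional (both live on $\End(V_i^\ba)\cong\C$), but the ratio $d_i/\qdim(V_i^\ba)$ varies with $i$, and in the motivating examples (e.g.\ unrolled $\mathfrak{sl}_2$) the simples of a generic $\cat_\ba$ have $\qdim=0$ while $d_i\neq0$, so no nonzero global proportionality constant exists. Neither uniqueness of the $G$-integral nor the ribbon structure rescues this. Consequently Step 3 does not close. The clean way through is the two-sided partial-trace property of the m-trace as used in the paper; once you observe that $L_{(z_\ba\otimes z_\bb)x}$ is a $\cat$-morphism for $x\in Z_{(\ba,\bb)}$, the rest is a two-line computation.
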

\begin{proof}
  Since $\U_{\ba}$ is a finite-dimensional semi-simple
  algebra for $\ba\in \Gss$ then it is isomorphic to a product of matrix algebras:
  $\U_{\ba}\cong\bigoplus_i\operatorname{Mat}(V_i)$ where
  $V_i=\C^{n_i}$ are its irreducible representations.  Then the
  identity matrices of each summand form a basis $\{z_i\}_i$ of the
  center $Z_{\wb \alpha}$ and the characters
  $x\mapsto\tr^\C_{\U_{\ba}}(L_{z_i}L_x)=n_i\tr_{V_i}^\C(\rho_{V_i}(x))$
  form a basis of $\text{H\!H}_{0}(\U_{\ba})^*:=(\U_{\ba}/[\U_{\ba},\U_{\ba}])^*$.
  Finally since
  $\si_{\ba}\in \text{H\!H}_{0}(\U_{\ba})^*$ there exists
  $\{\delta_i\in\C\}_i$ such that
  $\si_{\ba}(x)=\sum_i\delta_i\tr^\C_{\U_{\ba}}(L_{z_i}L_x)$ for all $
  x\in\U_{\ba}$.
We define $z_{\ba}=\sum_i\delta_iz_i$.

  Remark that since $z_iz_j=\delta_i^jz_i$, applying the above formula
  for $x=z_i$ gives
  $\si_{\ba}(z_i)=\tr^\C_{\U_{\ba}}(L_{z_i})\delta_i=n_i^2\delta_i$
  because $L_{z_i}$ is the identity of the block
  $\operatorname{Mat}(V_i)$. But the symmetrized integral of $z_i$ is
  the modified trace of the induced endomorphism of $\U_{\ba}$ (by
  left or right multiplication) which is a projector on
  $V_i^{\oplus n_i}$ then $\si_{\ba}(z_i)=n_i d_i$ where $d_i$  is
  the modified dimension of $V_i$ (see \cite{Ha18}), thus $\delta_i=\frac{d_i}{n_i}$.  Finally, one gets
  \begin{equation}
    \label{eq:int=kirby}
     \si_{\ba}=\sum_id_i\tr_{V_i}^\C\circ\rho_{V_i}.
  \end{equation}
  Let us now prove that
  $\si_{\ba}'=\si_{\ba}({z_{\ba}}\
  \cdot)$ is a modified integral.  Let
  $x\in Z_{\ba,\bb}$ and consider the
  endomorphism $f$ of $\U_{\ba}\otimes \U_{\bb}$ given by left
  multiplication by
  $(z_{\ba}\otimes z_{\bb})x$.  Then the
  properties of the partial trace imply that
  \begin{equation}
    \label{eq:ptr}
    \tv_{\U_{\ba}}(\ptr_{\U_{\bb}}(f))=\tv_{\U_{\bb}}(\ptr_{\U_{\ba}}(f))
  \end{equation}
  now the partial trace in the category is the partial trace in Vect
  twisted by the action of the pivotal elements and the modified trace
  on $\U_{\ba}$ is given by
  $\tv_{\U_{\ba}}=\si_{\ba}=\tr^\C_{\U_{\ba}}(L_{z_{\ba}}\
  \cdot)$ thus Equation \eqref{eq:ptr} becomes
  $$\tr^\C_{\U_{\ba}\otimes \U_{\bb}}(L_{z_{\ba}^2\otimes g_{\bb}z_{\bb}}L_x)=\tr^\C_{\U_{\ba}\otimes \U_{\bb}}(L_{z_{\ba}g_{\ba}^{-1}\otimes z_{\bb}^2}L_x)$$
  with
  $\tr^\C_{\U_{\ba}\otimes \U_{\bb}}=\tr^\C_{\U_{\ba}}\otimes
  \tr^\C_{\U_{\bb}}$ so
  $\si'_\ba\otimes\si_\bb((1\otimes
  g_{\bb})x)=\si_\ba\otimes\si'_\bb((g_{\ba}^{-1}\otimes1)x)$ which
  proves that $\si'$ is a modified integral.
\end{proof}
Remark that the modified integral $\si_\ba'$ of Theorem \ref{T:Wow} has a
natural extension as a linear form on the whole algebra $\U_\ba$.  We
call $\si_\ba'$ the \emph{canonical} modified integral.

Recall the definition of a graph-admissible triple given in Definition~\ref{D:graph-admissible triple}.
\begin{Def}
  A compatible triple $(M,\Gamma,\omega)$ is called a \emph{$G$-admissible
  triple} if there exists a red edge of $\Gamma$ colored by
  ${\ba}\in G\setminus X$. A triple $(M,\Gamma,\omega)$ is called an
  \emph{admissible triple} if $(M,\Gamma,\omega)$ is a $G$-admissible triple or
  a graph-admissible triple.
\end{Def}
As for graph-admissible closed bichrome graph, we can introduce the
notion of cutting presentation $\Gamma_{\ba}$ of $G$-admissible triple
$(S^3,\Gamma,\omega)$ by cutting a red edge rather than a blue edge:
$\Gamma_{\ba}$ is a {$1$-string link graph} whose braid closure is
$\Gamma$ and where $\omega$ take on the meridian of the open strand
the value $\ba\in G\setminus X$.  Remark that the universal invariant
of such a graph is $\ell$-periodic in its $r$-variables and equivariant
so $F_\si(\Gamma_{\ba})\in Z_\ba$.
\begin{Prop}
  There exists unique extensions of $F'_\si$ to $G$-admissible
  bichrome closed graphs (resp. $\He'$ to $G$-admissible triples)
 ) 
 with
  $$F'_\si(\Gamma,\omega)=\si_{\ba}'(F_\si(\Gamma_{\ba},\omega))$$
  where $\Gamma_{\ba}$ is a cutting presentation of $\Gamma$.  The
  extension of $\He'$ is given by 
   \begin{equation*}
    \He'( M,\Gamma,\omega ) =\delta^{-s} F'_{\si}(L\cup\Gamma,\omega)
  \end{equation*}
  where  $(S^3,\Gamma\cup L,\omega)$ is a surgery presentation of an admissible triple  $(M,\Gamma,\omega)$, the constant $\delta$ is $ \si_{\overline{0}}(g_{\wb 0}^{-1}\theta_{\overline{0}})$
  and $s$ is the signature of the linking matrix of $L$.
\end{Prop}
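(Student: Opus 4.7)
The plan is threefold: first, to establish that the proposed formula $F'_\si(\Gamma,\omega) = \si'_\ba(F_\si(\Gamma_\ba,\omega))$ does not depend on the choice of cut red edge when $(S^3,\Gamma,\omega)$ is $G$-admissible; second, to verify that when a triple is both $G$-admissible and graph-admissible the new formula agrees with the earlier one; third, to deduce Kirby invariance of $\He'$ on admissible triples directly from the previous Kirby arguments.

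For the first step, consider a $G$-admissible $(S^3,\Gamma,\omega)$ with two red edges colored by $\ba,\bb\in G\setminus X$ and form a $2$-string link graph $\Gamma_{\ba,\bb}$ obtained by opening both edges. The equivariance statement of Proposition \ref{P:Fmu-equiv}, combined with the $\ell$-periodicity in Lemma \ref{proof univ invariant is l-periodic} and Proposition \ref{P:quad-lin-modI}, places $F_\si(\Gamma_{\ba,\bb},\omega)$ in the centralizer $Z_{(\ba,\bb)}\subset \U_\ba\otimes \U_\bb$. Braid-closing one of the two open strands recovers a $1$-string cutting presentation; by the closure computation leading to Equation \eqref{eq:Fclosure}, closing on the $\bb$-strand corresponds to applying $\Id\otimes \si_\bb L_{g_\bb^{-1}}$, and similarly for the $\ba$-strand. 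Thus the two candidate values of $F'_\si(\Gamma,\omega)$ take the form $(\si'_\ba\otimes \si_\bb L_{g_\bb^{-1}})(F_\si(\Gamma_{\ba,\bb},\omega))$ and $(\si_\ba L_{g_\ba^{-1}}\otimes \si'_\bb)(F_\si(\Gamma_{\ba,\bb},\omega))$ applied to the same element of $Z_{(\ba,\bb)}$, and the defining property of modified integrals recorded in Definition \ref{D:MI} forces their equality. Invariance under isotopies not crossing the cut edge is inherited from Theorem \ref{T:univ-inv}; closure-type isotopies are handled already in Proposition \ref{P:graded-Hennings}.

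For the second step, if $\Gamma$ carries simultaneously a projective blue edge of color $V$ and a red edge of color $\ba\in G\setminus X$, build the mixed cutting $\Gamma_{V,\ba}$. Its universal invariant lies in $\End_{\cat^H}(V)\otimes Z_\ba$, and the two possible orders of partial closure, one using $\tv_V$ first and then $\si'_\ba$, the other in reverse, can be reduced to step one plus the partial-trace and cyclicity axioms of the m-trace (as in the proof of Proposition \ref{P:F'invKirby}), giving a common scalar that equals both definitions of $F'_\si(\Gamma,\omega)$. For the third step, the Kirby invariance of the extended $\He'$ repeats the proofs of Theorems \ref{T:GHI} and \ref{T:MGHI} without change: Kirby 0 and II moves act only on components of the surgery link $L$ that are disjoint from the chosen cut edge of $\Gamma$, so Propositions \ref{P:graded-Hennings} and \ref{P:F'invKirby} carry over verbatim; a Kirby I move adds a $\pm 1$-framed unknot whose meridian has trivial $\omega$-value, contributing the factor $\lambda^R_{\wb 0}(\theta_{\wb 0})^{\mp 1} = \delta^{\mp 1}$ that cancels against the shift in the signature $s$.

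The technical obstacle I anticipate is the bookkeeping of pivot factors. Definition \ref{D:MI} carries $L_{g^{-1}}$ on one tensor factor and $L_{g}$ on the other, whereas Equation \eqref{eq:Fclosure} places a uniform $L_{g^{-1}}$ at each braid-closure. This asymmetry is resolved by orienting the two open strands of $\Gamma_{\ba,\bb}$ oppositely at their closing arcs and invoking the antipode symmetry of $\si$ from Equation \eqref{eq:muS}; since $F_\si(\Gamma_{\ba,\bb},\omega)\in Z_{(\ba,\bb)}$ commutes with $\Delta_{(\ba,\bb)}(\U_{\ba+\bb})$, any residual $\U_{\ba+\bb}$-valued corrections slide freely across. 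Once the orientation choices and pivot placement are reconciled, the modified integral identity applies directly, and the rest of the proof amounts to transcribing the arguments of Proposition \ref{P:F'invKirby} and Theorem \ref{T:MGHI} with $\si'_\ba$ in place of $\tv_V$.
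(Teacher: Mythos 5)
Your three-step decomposition mirrors the paper's own argument (two $G$-admissible cuts via Definition~\ref{D:MI}; the mixed $G$-admissible/graph-admissible case; Kirby invariance by reuse of earlier theorems), and step~3 is fine as stated. Step~1 is right in spirit but your handling of the pivot factor is off: closing the first strand of $\Gamma_{\ba,\bb}$ (left closure) contributes $\si_\ba L_{g_\ba^{-1}}$ while closing the second strand around the first (right closure) contributes $\si_\bb L_{g_\bb}$, not $L_{g_\bb^{-1}}$. That built-in asymmetry is exactly what Definition~\ref{D:MI} is calibrated to, so no orientation reversal is needed at all -- the equality $(\si_\ba L_{g_\ba^{-1}}\otimes\si'_\bb)=(\si'_\ba\otimes\si_\bb L_{g_\bb})$ on $Z_{(\ba,\bb)}$ is applied directly. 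Your attempted fix via Lemma~\ref{L:reverse} and Equation~\eqref{eq:muS} papers over a discrepancy that isn't actually there.

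Step~2 is the real gap. You assert that the mixed case ``can be reduced to step one plus the partial-trace and cyclicity axioms of the m-trace,'' but step~1 concerns two red cuts and does not apply to a red/blue pair, and the m-trace axioms alone say nothing about $\si'_\ba$. The paper's proof is a genuine computation: write $F_\si(T,\omega)=L_x\otimes f$ with $x\in\U_\ba$, $f\in\End_\C(V)$ (note: $x$ need not lie in $Z_\ba$; only the full closure does, and $\si'_\ba$ has to be used via its canonical extension to all of $\U_\ba$); substitute $\si'_\ba(x)=\si_\ba(z_\ba x)=\tv_{\U_\ba}(L_{z_\ba}L_x)$ from Theorem~\ref{T:Wow} and Equation~\eqref{eq:mint}; realize the central element $z_\ba$ as a bead on the open strand, which turns a scalar into the modified trace of an endomorphism; and only \emph{then} apply the already-established independence of $F'_\si$ on the choice of projective cut (Proposition~\ref{P:F'invKirby}) together with Equation~\eqref{eq:t} to convert to $\tv_V(F_\si(\Gamma_V,\omega))$. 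This $z_\ba$-insertion trick and the formula $\si'_\ba(z)=\si_\ba(z_\ba z)$ are the essential content; without them your ``reduction'' has no mechanism linking the two candidate values, and the claimed equality remains unproved.
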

\begin{proof}
  Suppose that $\Gamma$ admits two $G$-admissible cutting
  presentations $\Gamma_{\ba}$ and $\Gamma_{\bb}$.  Then cutting the
  two edges we get a {$2$-string link graph} whose image by $F_\si$ is
  in $Z_{(\alpha,\beta)}$ because of Proposition \ref{P:Fmu-equiv}. Then
  Definition \ref{D:MI} implies that the invariant computed by
  $\Gamma_{\ba}$ and $\Gamma_{\bb}$ are the same.

  Suppose now that $\Gamma$ admits a $G$-admissible cutting
  presentation $\Gamma_{\ba}$ obtained by cutting a red edge and a
  graph-admissible cutting presentation $\Gamma_V$ for some projective
  $V\in\cat^H_\bb$ obtained by cutting a blue edge. Then cutting the
  two edges lead to a graph $T$ whose image by $F_\si$ is in
  $\U_\ba\otimes\End_\C(V)$ (see Figure 
  \ref{Cutting at blue edge and at red edge}).  Let us write
  $F_\si(T,\omega)=L_x\otimes f$ (we omit the sum).
\begin{figure}
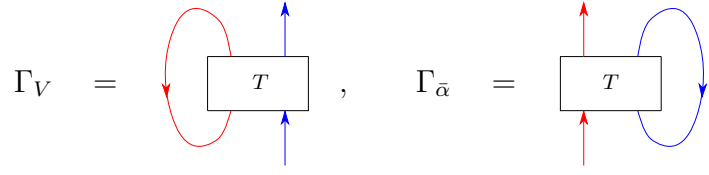

$$
\Gamma_V\quad =\quad \epsh{left_morphism_T}{12ex}\put(-32,1){\ms{\ \ \ \ T}}\quad ,\qquad \Gamma_\ba\quad = \quad\epsh{right_morphism_T}{12ex}\put(-50,1){\ms{\ \ \ \ T}}
$$
\caption{Cutting presentation $\Gamma_V$ at blue edge and $\Gamma_{\ba}$ at red edge}
\label{Cutting at blue edge and at red edge}
\end{figure}
Then
\begin{align*}
\si'_{\wb\alpha}(F_\si(\Gamma_\ba,\omega))&= \si'_{\wb\alpha}\bp{\epsh{right_morphism_T}{10ex}\put(-42,1){\ms{L_x\otimes f}}}=\si'_{\wb\alpha}(x)\tr_{\U_{\wb\beta}}^{\C}\bp{\rho_{V}(g_{\wb\beta})f}\\
&=\si_{\wb\alpha}(z_{\wb\alpha}x)\tr_R^{\cat^H}(f)=\tv_{\U_{\wb\alpha}}\bp{L_{z_{\wb\alpha}}L_x}\tr_{\U_{\wb\beta}}^{\C}\bp{\rho_{V}(g_{\wb\beta})f}\\
&=F'_\si\bp{\epsh{morphism_T}{10ex}\put(-47,-1){\ms{L_x\otimes f}}\put(-45,-18){\ms{z_{\wb\alpha}}}}=\tv_{V}\bp{\epsh{left_morphism_T_blue}{12ex}\put(-33,1){\ms{L_x\otimes f}}\put(-28,-18){\ms{z_{\wb\alpha}}}}\\
&= \tr_{\U_{\wb\alpha}}^{\C}\bp{L_{z_{\wb\alpha}}L_{g_{\wb\alpha}^{-1}}L_x}\tv_V(f)=\si_{\wb\alpha}(xg_{\wb\alpha}^{-1})\tv_V(f)\\
                        &=\tv_V\bp{\epsh{left_morphism_T}{10ex}\put(-29,1){\ms{L_x\otimes f}}}=\tv_V\bp{F_ \si(\Gamma_V,\omega)}
\end{align*}
where the third equality follows because $x\tr_R^{\cat^H}(f)\in Z_\ba$
is central and so
$\tr_R^{\cat^H}(f)L_{z_\ba x}=\tr_R^{\cat^H}(f)R_{z_\ba x}$ has its
modified trace given by Equation~\eqref{eq:t}.  
\end{proof}

\section{Relations with other non semi-simple invariants}
\label{s:rel}
We compare the invariant of this paper with three previously defined
invariants: (1) the one from the second author with
$\mathfrak{sl}(2|1)$, (2) with the invariant defined by the first and
last author with F. Costantino using the weight representations of the
unrolled quantum groups and (3) the modified Hennings invariants.
\subsection{The $\mathfrak{sl}(2|1)$ case}
Let $\ell\ge3$ be an odd integer.
In \cite{Ha18a} the second author considers the semi-restricted super algebra $\U_\xi\mathfrak{sl}(2|1) $ and unrolled super algebra $\UH_\xi\mathfrak{sl}(2|1)$ associated with the super Lie algebra $\mathfrak{sl}(2|1)$.
It is shown that $\wh {\UH_\xi}\mathfrak{sl}(2|1)$ is a topological
ribbon Hopf super algebra and its bosonization
$\wh {\UH_\xi}\mathfrak{sl}(2|1)^\sigma=\wh
{\UH_\xi}\mathfrak{sl}(2|1)\rtimes \Z/2\Z$ is a topological ribbon
Hopf algebra.  Consider the bosonization
$\U_\xi\mathfrak{sl}(2|1)^\sigma$ of the semi-restricted super
quantum group.
This algebra can be realized as the  $\U$ of this paper where $\Lambda$ is the root lattice of  $\mathfrak{sl}(2|1)$ and  $W$ is the $\Lambda$-graded vector space generated by
the ``E" and ``F" parts 
of the Poincaré–Birkhoff–Witt basis.
 It follows from \cite{Ha18a} that $\U$ satisfies the
five Axioms \ref{pivotal axiom}--\ref{A:proj}.  Moreover, one can identify
$\wh {\UH_\xi}\mathfrak{sl}(2|1)^\sigma$ with the unrolled version
$\wh\UH$ of $\U$ as defined in Section \ref{topo quntum and uni
  invariant}.

The construction of \cite{Ha18a} was the motivation for this paper. 
 It produces
an invariant $\mathcal{J}(M,\omega)$ which is equal to the graded
Hennings invariant of this paper:
$$\mathcal{J}(M,\omega)=\He(M,\emptyset,\omega).$$
In addition, a special property for $\mathfrak{sl}(2|1)$ is that
$\delta=1$ (see \cite[Lemma 4.13]{Ha18a}).

\subsection{Comparison with invariants from nilpotent weight modules
  of unrolled quantum group}\label{ss:mgH=CGP}
In \cite{FcNgBp14} an invariant of a compatible triple $(M,\Gamma,\omega)$
where $\Gamma$ is a 
blue $\cat^H$-ribbon graph embedded in
$M$ was defined from the data of a relative premodular\footnote{In \cite{FcNgBp14} the term 
  relative modular is used instead of relative premodular, later in \cite{MDR17} it was shown that an additional requirement is need to define a TQFT and so the use of term was changed.}
  $G$-category $\cat^H$
with translation group $Z$.  Many examples of unrolled quantum groups
lead to such categories and in particular quantum groups of Lie
algebras at suitable root of unity.
Let us discuss how relative premodular categories fit into this paper.  

Let $\Lambda$ be a lattice and $W$ a $\Lambda$-graded vector space
which give rise to an algebra $\U$ satisfying Axioms \ref{pivotal
  axiom} and \ref{quasi axiom}.  One of the requirements of a relative
premodular category is the existence of a translation group.  The
lattice naturally gives a translation group, in the
category of weight modules,
which is the family of one dimensional modules
$(\C_\lambda)_{\lambda\in\Lambda}$ where $w\in W$ acts by
multiplication by $\ve(w)$ and $H_i$ by $\ell\lambda(H_i)$.
Here our group $G$ is $\H^*/\Lambda$.
Another requirement of relative premodular
$G$-category is the (generic) semisimplicity of the $\cat^H_g$ for
$g\in G\setminus X$ with $X$ being ``small.''  For the rest of the
subsection, we assume that $\U$ satisfies Axioms \ref{pivotal
  axiom}--\ref{A:proj} and that $\cat^H$ is a relative premodular
$G$-category.  We have proved this is the case for all semi-restricted
quantum groups associated to simple Lie algebras.

Let $\ba\in G\setminus X$ so that $\U_\ba$ is semisimple.
Let $\Theta_\ba$ be a set of simple
$\UH_\ba$-module such that $\{\ResF(V_i):V_i\in \Theta_\ba\}$ is a set
of representant of the isomorphism classes of simple $\U_\ba$-modules.
Then Equation \eqref{eq:int=kirby} implies that the character given by
the Kirby color
$x\mapsto\sum_{V_i\in \Theta_\ba}d_i\tr_\C(\rho_{V_i}(x))$ is equal on
$\U_\ba$ to the symmetrized integral $\si_\ba$.  Hence we get
\begin{Prop}\label{P:H'=CGP}
  Let $\Gamma$ be a monochrome blue $\cat^H$-ribbon graph and
  $(M,\Gamma,\omega)$ a compatible triple.  The invariant 
  $\mathsf N_{\cat^H}(M,\Gamma,\omega)$, defined in  \cite{FcNgBp14}, coming from the relative
  premodular category $\cat^H$  is equal to the
  modified Hennings invariant $\He'(M,\Gamma,\omega)$.
\end{Prop}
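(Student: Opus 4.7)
The plan is to relate both invariants to the same universal invariant of a surgery presentation, and to use Equation~\eqref{eq:int=kirby} to bridge the symmetrised integral on the red surgery components and the Kirby colour used in the CGP construction.

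First I would fix a surgery presentation $(S^3,\Gamma\cup L,\omega)$ of $(M,\Gamma,\omega)$ in which each meridian $m_i$ of a component $L_i\subset L$ takes a value $\omega(m_i)\in G\setminus X$, so that $\U_{\omega(m_i)}$ is semi-simple. The longitudes of the surgery components are forced by compatibility to satisfy $\omega(\ell_i)=0$ (they bound discs in $M$ and $L$ carries no coupons), but the meridian values are unconstrained. Since $G\setminus X$ is dense in $G$ and both $\He'$ and $\mathsf N_{\cat^H}$ are Kirby-invariant, I may assume, after possibly stabilising by $\pm 1$-framed unknots and applying Kirby II moves, that every surgery meridian is generic.

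Next I would unwind $\He'$. By Theorem~\ref{T:MGHI} one has $\He'(M,\Gamma,\omega)=\delta^{-s}F'_\si(L\cup\Gamma,\omega)$, and $F'_\si$ is obtained from the universal invariant $J$ by first projecting modulo the ideals $I^H_{\omega(m_i)}$ (legitimate by Lemma~\ref{proof univ invariant is l-periodic}), then applying $\si_{\omega(m_i)}$ to each closed red component, and finally the modified trace $\tv$ on a projective blue edge (or $\si'$ on a central factor in the $G$-admissible case). By Equation~\eqref{eq:int=kirby}, for $\ba\in G\setminus X$,
\[
\si_\ba=\sum_{V_i\in\Theta_\ba}d_i\,\tr^{\C}_{V_i}\circ\rho_{V_i},
\]
and since each $V_i$ is a $\U_\ba$-module, the representation $\rho_{V_i}$ factors through the projection $\wh\UH\to\U_\ba$. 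Combining this with Proposition~\ref{P:univ-red}(1)---which identifies the effect of replacing a closed red strand by a blue strand coloured by $V$ with the application of $\tr^{\C}_V\circ\rho_V$ to the matching $\U^0$-factor of $J$---shows that $F_\si(L\cup\Gamma,\omega)$ is the Reshetikhin--Turaev-type evaluation in $\cat^H$ of the graph obtained by colouring each surgery component $L_i$ with the Kirby colour
\[
\Omega_{\omega(m_i)}=\sum_{V_i\in\Theta_{\omega(m_i)}}d_i\,[V_i].
\]
Proposition~\ref{P:F'invKirby} (compatibility with the partial modified trace and with cutting presentations) then transfers this identification from $F_\si$ to $F'_\si$.

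Finally I would match normalisations. Applying Equation~\eqref{eq:int=kirby} to the universal invariant of a $+1$-framed unknot, whose image in $\U_{\wb 0}$ is $g_{\wb 0}^{-1}\theta_{\wb 0}$, gives
\[
\delta=\si_{\wb 0}\bigl(g_{\wb 0}^{-1}\theta_{\wb 0}\bigr)
=\sum_{V_i\in\Theta_{\wb 0}}d_i\,\tr^{\C}_{V_i}\bigl(\rho_{V_i}(g_{\wb 0}^{-1}\theta_{\wb 0})\bigr),
\]
which is precisely the CGP normalisation scalar of a $+1$-framed Kirby-coloured unknot in \cite{FcNgBp14}; hence $\delta^{-s}$ matches the CGP signature normalisation and the two invariants agree. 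The main obstacle I anticipate is the first step: verifying that a generic-meridian surgery presentation can always be reached without breaking the (graph- or $G$-)admissibility hypothesis---especially in the graph-admissible case, where admissibility is carried by a blue projective edge of $\Gamma$ rather than by a surgery component---together with a careful comparison between the precise normalisation and sign conventions of $\mathsf N_{\cat^H}$ in \cite{FcNgBp14} and those used to define $\delta$ and $F'_\si$ here. Everything else is a mechanical translation via Equation~\eqref{eq:int=kirby} and Proposition~\ref{P:univ-red}.
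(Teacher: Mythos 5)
Your argument is essentially the same as the paper's: the paper also reduces to a surgery presentation with generic ($\ba\notin X$) degrees on the surgery components (the ``computable presentation'' of \cite{FcNgBp14}), and then invokes Equation~\eqref{eq:int=kirby} (stated just before the proposition) together with Proposition~\ref{P:univ-red} to replace Kirby-coloured blue surgery circles by red ones evaluated against $\si_\ba$. The ``obstacle'' you flag --- reaching a generic-meridian presentation without breaking admissibility --- is exactly what CGP's notion of a computable presentation is invoked to settle, so your worry is resolved by citing that, rather than needing an extra argument.
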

\begin{proof}
  As in \cite{FcNgBp14}, choose
  a computable presentation of
  $(M,\Gamma,\omega)$ then one gets a diagram where
  the surgery components are colored by Kirby colors of degree
  $\ba\notin X$.  From Proposition \ref{P:univ-red}, the
  invariant of this diagram is equal to the invariant $F'_\si$ of the
  diagram obtained by replacing the Kirby-colored component with red
  component.  But this is precisely the process for computing $\He'$.
\end{proof}
\subsection{Comparison with the modified Hennings invariants}
Let $\U$ be an algebra as in Section \ref{Section unrolled qt group} satisfying Axioms \ref{pivotal axiom}--\ref{A:proj} and $\UH$ its unrolled version.
Recall that the quotient $\UH_{\wb0}$ of $\UH$ contains a finite
dimensional ribbon Hopf algebra $\U_{\wb0}$ with the same R-matrix
$\RR_{\wb0}\in\mathcal Q_2{\U}^{\otimes2}/I_{(\wb0,\wb0)}\simeq
\U_{\wb0}\otimes \U_{\wb0}$. There is an obvious associated ribbon
functor $\cat^H_{\wb0}\to\U_{\wb0}$-Mod. The construction of
\cite{DGP} produces a modified Hennings invariant since Axiom
\ref{Ax:non-degenerate} ensure that $\U_{\wb0}$ is a non-degenerate
finite dimensional Hopf algebra.
\begin{theo}
  Let $(M,\Gamma,\omega)$ be compatible triple with $\omega=0$.  Then
   the invariant
  $\He'(M,\Gamma,\omega)$
 of this paper, associated to $\U$,
  is equal to the (non graded) modified
  Hennings invariant of \cite{DGP} associated to the finite
  dimensional ribbon Hopf algebra $\U_{\wb0}$.
\end{theo}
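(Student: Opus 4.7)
The plan is to show that when $\omega=0$, every ingredient in the definition of $\He'(M,\Gamma,\omega)$ reduces modulo $I_{\wb 0}$ to the corresponding ingredient in the modified Hennings construction of \cite{DGP} applied to the finite dimensional ribbon Hopf algebra $\U_{\wb 0}$. The comparison then amounts to matching the universal invariant, the graded symmetrized integral, and the modified trace on projective objects.

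First I would observe that the surgery presentation $(S^3,\Gamma\cup L,\omega=0)$ trivially satisfies the compatibility conditions of Subsection \ref{ss:Compat}, so every meridian is labelled by $\wb 0\in G$, every blue edge is colored by a module of $\cat^H_{\wb 0}$, and the Fourier transform data $\vec\alpha$ is $(\wb 0,\ldots,\wb 0)$. As noted at the end of Subsection \ref{SS:symmetrizedGintegral}, the projection $\pi_{\wb 0}^H\colon\wh\UH\to\UH_{\wb 0}$ restricts to a morphism of ribbon Hopf algebras $\wh\UH\to\U_{\wb 0}$, since $\RR=\HH\check\RR$ projects to $\RR_{\wb 0}=\HH_0\check\RR$ and the twist $\theta$ projects to $\theta_{\wb 0}$. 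Consequently, the image of the universal invariant $J(\Gamma\cup L)$ under $\pi_{\wb 0}^{\otimes(m+n)}$ is precisely the Hennings-style universal invariant of $\Gamma\cup L$ computed in the ribbon Hopf algebra $\U_{\wb 0}$, with beads on red strands multiplied in $\U_{\wb 0}$ and blue edges/coupons evaluated in the pivotal category $\U_{\wb 0}\text{-mod}$. (For coupons with red legs one uses the smoothing convention of Figure \ref{Fig2}; for $\omega=0$ both the smoothing of the present paper and that of \cite{DGP} produce beads lying in $\U_{\wb 0}$, and the resulting element is the same one because the coupon morphism is $\wh\UH$-equivariant and everything is computed in $\U_{\wb 0}$-mod after projection.)

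Next I would identify the evaluation on closed red components. In the modified Hennings invariant of \cite{DGP}, each closed surgery component contributes the modified trace $\tv_{\U_{\wb 0}}(L_x)$ of the left multiplication by the product of beads. In this paper, the corresponding contribution is $\si_{\wb 0}(x)$. By Equation \eqref{eq:t} together with the fact that the m-trace is both a left and a right m-trace when $\U_{\bullet}$ is unibalanced (Proposition \ref{P:unibalanced}), one has $\si_{\wb 0}(x)=\tv_{\U_{\wb 0}}(R_x)=\tv_{\U_{\wb 0}}(L_x)$, so the two contributions match. For the renormalization coming from the modified trace, I would use the cutting presentation $\Gamma_V$ of Section \ref{Section of inv of 3-manifolds}: cutting a blue edge colored by a projective $V\in\cat^H_{\wb 0}$, the invariant $F_\si(\Gamma_V,\omega)$ projects to an endomorphism of $V$ in $\U_{\wb 0}\text{-mod}$, and $\tv_V$ in $\cat^H$ equals $\tv_V$ in $\U_{\wb 0}\text{-mod}$ because the forgetful functor $\cat^H_{\wb 0}\to\U_{\wb 0}\text{-mod}$ is pivotal and commutes with partial traces (as used in the proof of the proposition following Axiom \ref{A:proj}).

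Finally I would compare the normalizations. The scalar $\delta=\si_{\wb 0}(g_{\wb 0}^{-1}\theta_{\wb 0})$ appearing in Theorem \ref{T:MGHI} is defined purely in terms of data of $\U_{\wb 0}$, and it coincides with the normalization used in \cite{DGP}. Since the signature $s$ of the linking matrix of $L$ is common to both constructions, the normalized invariants $\He'(M,\Gamma,0)=\delta^{-s}F'_\si(\Gamma\cup L,0)$ and the modified Hennings invariant of \cite{DGP} agree on any surgery presentation, hence as invariants of $(M,\Gamma)$.

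The main obstacle I would expect is the coupon bookkeeping: the smoothing convention for mixed (red and blue) coupons used here differs from that of \cite{DGP}, and one must verify that after projection to $\U_{\wb 0}$ both conventions assign the same element of the universal invariant. I would handle this by writing the coupon morphism in the form \eqref{coupon-morphism} with exponent $(q,\bar l)$: when $\omega=0$, the power quadratic and power linear factors $\xi^q$ and $\xi^l$ project to elements of $\U_{\wb 0}$ by Proposition \ref{P:quad-lin-modI}, and the equivariance of the coupon morphism shows that whether these factors are placed on the smoothed red strands or absorbed into the blue part yields the same endomorphism in $\U_{\wb 0}\text{-mod}$.
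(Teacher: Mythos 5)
Your proposal takes essentially the same approach as the paper's (extremely terse) proof, which simply observes that for $\omega=0$ everything reduces to $\si_{\wb0}$ and the universal invariant can be computed in $\U_{\wb0}$ as in \cite{DGP}. You fill in the details faithfully: the Fourier data is trivial, the quotient to $\U_{\wb0}$ is a ribbon Hopf morphism, and the m-trace and normalization $\delta^{-s}$ agree.

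One small imprecision worth fixing: you write $\si_{\wb0}(x)=\tv_{\U_{\wb0}}(R_x)=\tv_{\U_{\wb0}}(L_x)$, but $L_x$ (left multiplication by a non-central $x$) is not a morphism of left $\U_{\wb0}$-modules, so $\tv_{\U_{\wb0}}(L_x)$ is not defined; the unibalance property (Proposition \ref{P:unibalanced}) equates the left and right partial-trace versions of $\tv$, not $L_x$ with $R_x$. In fact no such identification is needed: in \cite{DGP} the closed red components are also evaluated by the twisted integral $\lambda(g\cdot)=\si_{\wb0}$ rather than a modified trace of $L_x$, so the match is immediate from Equation \eqref{eq:t} and the definitions.
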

\begin{proof}
  If $\omega=0$, then the definition of $\He'$ only uses the integral
  $\si_{\wb0}=\lambda_{\wb0}(g\cdot)$ and the computation of the
  universal invariant can be done directly in $\U_{\wb0}$ as it is
  done in \cite{DGP}.
\end{proof}
Remark that combining this theorem with Proposition \ref{P:H'=CGP}
gives a new proof that the invariants of \cite{FcNgBp14} and \cite{DGP} coincide for
zero cohomology classes, as was first shown in \cite{DGP2}.
\bibliographystyle{plain}
\bibliography{Reference}

\end{document}